\documentclass[a4paper, 11pt, twoside, leqno]{article}

\usepackage[T1]{fontenc}
\usepackage[utf8x]{inputenc}
\usepackage[UKenglish]{babel}
\usepackage{lmodern}               % Latin Modern font
\usepackage{amsmath,amsthm,amssymb}
\usepackage{mathrsfs,esint,bbm,stmaryrd,tikz-cd}
\usepackage{enumitem,graphicx,xcolor,subcaption}
\usepackage[textwidth=16cm,centering,headheight=24pt]{geometry}
\usepackage{authblk}
\newtheorem{theorem}{Theorem}           % Bold title, italic text

\newtheorem{lemma}[theorem]{Lemma}
\newtheorem{prop}[theorem]{Proposition}

\theoremstyle{plain}
\newcommand{\thistheoremname}{}
\newtheorem*{prpr}{\thistheoremname}

\theoremstyle{definition}              % Bold title, roman text

\theoremstyle{remark}                  % Italic title, roman text
\newtheorem{step}{Step}

\newtheorem{remark}{Remark}

\DeclareMathOperator{\dist}{dist}
\DeclareMathOperator{\size}{size}
\DeclareMathOperator{\spt}{spt}

\DeclareMathOperator{\BV}{BV}

\newcommand{\abs}[1]{\left| #1 \right|}
\newcommand{\norm}[1]{\left\| #1 \right\|}

\newcommand{\mres}
{\mathbin{\vrule height 1.6ex depth 0pt width 0.13ex\vrule
height 0.13ex depth 0pt width 1.3ex}}

\DeclareMathAlphabet{\mathpzc}{OT1}{pzc}{m}{it}

\newcommand{\D}{\mathrm{D}}
\renewcommand{\d}{\mathrm{d}}

\newcommand{\N}{\mathbb{N}}
\newcommand{\R}{\mathbb{R}}
\newcommand{\Z}{\mathbb{Z}}

\newcommand{\M}{\mathbb{M}}
\newcommand{\F}{\mathbb{F}}
\newcommand{\I}{\mathbb{I}}
\renewcommand{\SS}{\mathbb{S}}
\newcommand{\G}{\mathbb{G}}
\renewcommand{\P}{\mathbb{P}}
\newcommand{\FS}{\mathbb{FS}}
\newcommand{\NN}{\mathscr{N}}

\newcommand{\EE}{\mathscr{E}}
\newcommand{\GG}{\mathscr{G}}
\renewcommand{\H}{\mathscr{H}}

\newcommand{\eps}{\varepsilon}
\newcommand{\lb}{\llbracket}
\newcommand{\rb}{\rrbracket}
\newcommand{\Hs}{H_{\mathrm{strong}}}
\newcommand{\Hsl}{H_{\mathrm{strong,loc}}}
\newcommand{\Hw}{H_{\mathrm{weak}}}
\newcommand{\GN}{\Gamma_p(\NN)}
\newcommand{\pN}{\pi_p(\NN)}
\newcommand{\Snice}{\S^{\mathrm{nice}}}
\newcommand{\Sgrid}{\S^{\mathrm{grid}}}

\renewcommand{\S}{\mathbf{S}}

\definecolor{lightblue}{rgb}{0.22,0.45,0.70}   % light blue
\definecolor{darkgray}{gray}{0.4}    % dark grey
\definecolor{lightgray}{gray}{0.8}

\title{Topological singular set of manifold-valued maps \\
weakly approximable by smooth maps}
\author{Giacomo Canevari and Giandomenico Orlandi\thanks{
Universit\`a di Verona, Strada le Grazie 15, 37134 Verona, Italy. \\
\emph{E-mail addresses}: \texttt{giacomo.canevari@univr.it},
\texttt{giandomenico.orlandi@univr.it}}}
\date{\today}

\begin{document}
\maketitle

\begin{abstract}
 Given a positive integer~$p$, we consider $W^{1,p}$-maps from a Euclidean domain of dimension $p+1$ into a closed Riemannian manifold $\NN$. The target manifold is required to satisfy suitable topological conditions; in particular, the action of~$\pi_1(\NN)$ over the~$\pi_p(\NN)$ must be trivial. However, we do \emph{not} assume that $\NN$ is $(p-1)$-connected. 
 Using tools from geometric measure theory --- namely, flat chains with coefficients in~$\pi_p(\NN)$ --- we associate to each map $u$ in the weak sequential closure of smooth maps an object that captures its point singularities. The vanishing of this object characterizes local strong approximability by smooth maps. 
\end{abstract}

\section{Introduction}

Let~$\NN$ be a smooth, compact, connected manifold
without boundary, which we identify with a submanifold
of some Euclidean space~$\R^m$.
Let~$\Omega\subseteq\R^n$ be a bounded Lipschitz domain,
and let~$p\geq 1$. %be \emph{an integer}.
We consider the Sobolev space of~$\NN$-valued maps on
the domain~$\Omega$,
\[
 W^{1,p}(\Omega, \, \NN)
 := \left\{u\in W^{1,p}(\Omega, \, \R^m)\colon u(x)\in\NN
 \textrm{ a.e. }x\in\Omega \right\} \! .
\]
Sobolev spaces of manifold-valued maps arise naturally in many contexts
in which admissible configurations are constrained to lie on a nonlinear target,
such as harmonic maps \cite{EellsLemaire, HeleinWood}, condensed matter physics
(more precisely, in the modelling of partially ordered materials) \cite{Mermin},
and numerical graphics (in cross-fields algorithms for domain meshing) \cite{CrossFields}.

A natural question is whether maps in~$W^{1,p}(\Omega, \, \NN)$
can be approximated, in a suitable sense, by maps in~$C^\infty(\overline{\Omega}, \, \NN)$ --- that is, $\NN$-valued maps that can be smoothly extended to a neighbourhood of~$\overline{\Omega}$.
Classical regularisation techniques based on convolution
are not enough to address this question, because
the nonlinear constraint~$u(x)\in\NN$ is not preserved by convolution
with a mollifying kernel. In fact, the answer to this question
depends on the topology of~$\Omega$ and~$\NN$, as well as the Sobolev exponent~$p$.
For instance, in case the domain~$\Omega$ is contractible, smooth maps are dense in~$W^{1,p}(\Omega, \, \NN)$ with respect to the strong~$W^{1,p}$-topology if and only if~$p\geq m$ or the homotopy group~$\pi_{\lfloor p\rfloor}(\NN)$ is trivial~\cite{SchoenUhlenbeck2, Bethuel-Density} (here~$\lfloor p\rfloor$ denotes the largest integer not greater than~$p$).
Indeed, when~$p < m$ and~$\pi_{\lfloor p\rfloor}(\NN)$ is nontrivial, elements of~$W^{1,p}(\Omega, \, \NN)$ may exhibit topological singularities that act as an obstruction toward approximability by smooth maps.
Further work on this problem (see e.g.~\cite{Hajlasz, Bethuel-DensityTrace, Bousquet2007, Mucci2009, BousquetPonceVanSchaftingen-I, BousquetPonceVanSchaftingen2014, BousquetPonceVanSchaftingen-II, BrezisMironescu2015}) eventually led to a complete characterisation of fractional Sobolev spaces~$W^{s,p}(\Omega, \, \NN)$, with arbitrary~$s > 0$, $p\geq 1$, and~$\Omega\subseteq\R^n$, in which smooth maps are strongly dense, see~\cite{Detaille2023}.

When strong density of smooth maps fails, it is natural to investigate weaker notion of approximability. In this paper, we are interested in the sequential weak closure of smooth maps, $\Hw^{1,p}(\Omega, \, \NN)$. More precisely, we will say that a function~$u\in W^{1,p}(\Omega, \, \NN)$ belongs to~$\Hw^{1,p}(\Omega, \, \NN)$ if and only if there exists a sequence of maps~$\varphi_j\in C^\infty(\overline{\Omega}, \, \NN)$ such that~$\varphi_j\to u$ strongly in~$L^p(\Omega)$ as~$j\to+\infty$ and~$\sup_{j\in\N}\norm{\nabla\varphi_j}_{L^p(\Omega)} < +\infty$. (This notion of weak convergence in~$W^{1,p}$ coincides with the standard one for~$p > 1$, but actually corresponds to weak$^*$ convergence in~$\BV$ when~$p=1$).
When~$p$ is \emph{not} an integer, this approach does not provide any further information, for~$\Hw^{1,p}(\Omega, \, \NN)$ coincides with the strong closure of~$C^\infty(\overline{\Omega}, \, \NN)$ \cite[Theorem~3]{Bethuel-Density}.
On the other hand, for integer values of~$p$, the set~$\Hw^{1,p}(\Omega, \, \NN)$ may be strictly larger than the strong closure of smooth maps --- in fact, this is always the case if~$\pi_p(\NN)$ is nontrivial, see~\cite[Theorem~5]{Bethuel-Density}, \cite[Theorem~5.5]{HangLin-III} --- and we may have weak sequential density of smooth maps even if strong density fails.
For instance, the equality~$\Hw^{1,p}(\Omega, \, \NN) = W^{1,p}(\Omega, \, \NN)$ holds for any integer~$p\geq 1$ and any~$(p-1)$-connected target manifold~$\NN$, that is, any~$\NN$ such that $\pi_1(\NN) \simeq \dotsb \simeq \pi_{p - 1} (\NN)\simeq \{0\}$ (see~\cite[Theorem~6]{Bethuel-Density} and~\cite{Hajlasz}), and for a wider class of targets~$\NN$, including some non-simply connected ones, when~$p = 2$~\cite{PakzadRiviere}.
% Additional significant contributions to this problem include
% % the adaptation of these techniques to derive a new proof of~\eqref{eq_cahkoh3me3ohBeeQu7iengeb} in the case $p = 1$~\cite{pakzad_2003} and
% the introduction of the notion of \emph{scan}, which provides a novel framework for studying strong and weak approximation problems, both for $ W^{1,3}\brk{\Omega, \Sset^{2}} $ and, more generally, in the presence of topological obstructions arising from any non-torsion component of $\pi_{p}(\NN)$ \cite{Hardt_Riviere_2003, Hardt_Riviere_2008}.
However, the equality need not hold in general.
In a groundbreaking result, Bethuel~\cite{Bethuel-Inventiones} showed that for any domain~$\Omega$ of dimension~$n\geq 4$, there holds
\begin{equation} \label{nonweakdensity}
  \Hw^{1, 3} (\Omega, \, \SS^2)\subsetneq W^{1, 3} (\Omega, \, \SS^2).
\end{equation}
More generally, for any integer~$p\geq 2$ Detaille and Van Schaftingen~\cite{DetailleVanSchaftingen} constructed a manifold~$\NN_p$ such that, for any domain~$\Omega\subseteq\R^n$ of dimension~$n\geq p+1$, there holds
\begin{equation} \label{nonweakdensitybis}
  \Hw^{1, p} (\Omega, \, \NN_p)\subsetneq W^{1, p} (\Omega, \, \NN_p).
\end{equation}
In the same paper, the authors also proved that
\begin{equation} \label{nonweakdensitytris}
  \Hw^{1, 4k-1} (\Omega, \, \SS^{2k})\subsetneq
  W^{1, 4k-1} (\Omega, \, \SS^{2k})
\end{equation}
for any integer~$k\geq 1$ and any domain~$\Omega$ of dimension~$n\geq 4k$.
These examples reveal the presence of analytical obstructions to weak sequential density in addition to topological ones, as previously observed in lifting problems~\cite{BourgainBrezisMironescu, BethuelChiron, MironescuVanSchaftingen-Lifting} and trace extension problems~\cite{BethuelDemengel, Bethuel-Extension, MironescuVanSchaftingen-Trace}.

In this work, we aim at describing the topological singularities of maps in~$\Hw^{1,p}(\Omega, \, \NN)$, in case~$p\geq 1$ is an integer and~$\Omega$ is a bounded, Lipschitz domain of dimension~$p+1$.
In other words, to each map~$u\in\Hw^{1,p}(\Omega, \, \NN)$, we associate a geometric object~$\S(u)$ --- roughly speaking, a countable collection of points equipped with multiplicities in a suitable group --- that describe the singularities of~$u$ and satisfies appropriate notions of continuity.
Such an object acts as a (local) topological obstruction to strong approximability by smooth maps --- that is, if~$\Omega$ is contractible, then~$u$ belongs to the strong closure of~$C^\infty(\overline{\Omega}, \, \NN)$ if and only if~$\S(u) = 0$.
Moreover, $\S$ turns out to be useful in variational problems featuring energy concentration and emergence of topological singularities (see e.g.~\cite{ABO2, CO2, CaselliFregugliaPicenni2025, CLOBO}.
In the case~$u\in W^{1,2}(B^3,\SS^2)$, Bethuel first identified~$\S(u)$ with the distributional Jacobian~\cite{Bethuel-Dipole}, i.e., the exterior differential~$\d (u^*\omega_{\SS^2})$ of the pull-back of the volume form~$\omega_{\SS^2}$.
This approach, based on differential forms, was later adopted by Bethuel, Coron, D\'emengel and H\'elein~\cite{BethuelCoronDemengelHelein}, who considered maps~$u\in W^{1,p}(\Omega, \, \NN)$ for a larger class of exponents~$p$ and manifolds~$\NN$, and by Detaille, Mironescu and Xiao, who considered fractional Sobolev maps~$u\in W^{s,p}(\Omega, \, \NN)$ with~$s > 0$ possibly smaller than one.
However, differential forms cannot describe topological obstructions carried by finite-order elements of the homotopy groups of~$\NN$.
Nonetheless, $\S(u)$ is well-defined for any~$u\in W^{1,p}(\Omega, \, \NN)$ so long as~$\NN$ satisfies suitable topological conditions --- namely, $\NN$ is $(\lfloor p\rfloor - 1)$-connected and, in case~$1\leq p < 2$, $\pi_1(\NN)$ is Abelian.
This construction is carried out by Pakzad and Rivi\`ere~\cite{PakzadRiviere}, under the additional assumption that~$1 \leq p < 2$ or~$n-1\leq p < n$, and then again in~\cite{CO1}.
For manifolds~$\NN$ that do not necessarily satisfy these topological conditions, Hardt and Rivi\`ere~\cite{HardtRiviere} constructed an object that is well-defined for (all integer~$p$ and) all~$u\in\Hw^{1,p}(\R^{p+1}, \, \NN)$ and describes the rational homotopy singularities of~$u$ --- that is, those described by~$\mathrm{Hom}\,(\pi_p(\NN), \, \R)$.

Our setting is reminiscent of that of~\cite{HardtRiviere}, as we consider maps~$u\in\Hw^{1,p}(\Omega, \, \NN)$ for integer~$p$ and a domain~$\Omega$ of dimension~$n = p+1$. However, our construction differs from the one in~\cite{HardtRiviere} because we aim at describing all singularities of~$u$, including those that are associated with finite-order elements of~$\pi_p(\NN)$.
Contrary to~\cite{PakzadRiviere, CO1}, we do \emph{not} assume that~$\NN$ is~$(p-1)$-connected.
Instead, we make the following assumptions:
\begin{enumerate}[label=(H\textsubscript{\arabic*}),
ref=H\textsubscript{\arabic*}]
 \item the fundamental group~$\pi_1(\NN)$ acts trivially on~$\pi_p(\NN)$;
 \item for all~$\Lambda > 0$, only finitely
 many homotopy classes~$\sigma\in\pi_p(\NN)$ contain
 maps~$v\in\sigma \cap W^{1,p}(\SS^p, \, \NN)$ such that
 \[
  \int_{\SS^p} \abs{\nabla v}^p \d\H^p \leq \Lambda.
 \]
\end{enumerate}
For a thorough discussion on these assumptions, we refer the reader to Section~\ref{sect:assumptions}.
(In Section~\ref{sect:preliminaries}, we introduce the relevant topological notions assuming little or no prior knowledge on the reader's part.)
Here, we only remark that these assumptions, contrarily to the ones in~\cite{PakzadRiviere, CO1}, are compatible with settings in which~$\Hw^{1,p}(\Omega, \, \NN)\subsetneq W^{1,p}(\Omega, \, \NN)$ --- for instance, the case~$p=3$, $\NN = \SS^2$, which is studied in~\cite{Riviere, Bethuel-Inventiones}.

We define the topological singular set~$\S(u)$ of a map~$u$ as a ($0$-dimensional) flat chains with coefficients in a normed Abelian group, as introduced in~\cite{Fleming} (see also Section~\ref{sect:chains} for more details).
The coefficient group is the homotopy group~$\pi_p(\NN)$, which is Abelian, as a consequence of assumption~\ref{hp:single-valued}.
In Section~\ref{sect:norms}, we define a suitable norm on~$\pi_p(\NN)$ 
(depending on the choice of a basepoint~$z_0\in\NN$, although all possible choices of~$z_0$ lead to equivalent norms).
This norm induces the discrete topology on~$\pi_p(\NN)$ --- in fact, there exists a constant~$\alpha_p > 0$ such that
\begin{equation} \label{discretenormintro}
 \abs{\sigma} \geq \alpha_p \qquad \textrm{for all } \sigma\in\pN, \, \sigma\neq 0.
\end{equation}
Let~$\Omega\subseteq\R^{p+1}$ be a bounded, Lipschitz domain.
A flat chain~$S\in\F_0(\Omega; \, \pN)$ is a finite or countably infinite (formal) sum of points in~$\Omega$, with coefficients in~$\pi_p(\NN)$, that can be written in the form
\[
 S = \sum_{i=1}^m \gamma_i \lb z_i\rb
  + \sum_{j=1}^{+\infty} \sigma_j \left(\lb x_j\rb - \lb y_j\rb\right)
  + S_{\partial\Omega},
\]
where~$\gamma_i\in\pN$, $\sigma_j\in\pN$, $z_i$, $x_j$, $y_j$ are points in~$\overline{\Omega}$, and~$S_{\partial\Omega}$ is a combination of points of~$\partial\Omega$, in such a way that the quantity 
\begin{equation} \label{flat-into}
 \begin{split}
  \F_\Omega(S) = \inf\left\{ \sum_{i=1}^{m} \abs{\gamma_i}
   + \sum_{j=1}^{+\infty} \abs{\sigma_j} \abs{x_j - y_j} \colon
   \gamma_i, \, \sigma_j, \, z_i, \, x_j, \, y_j \textrm{ as above}
  \right\} \! .
 \end{split}
\end{equation}
We also define
\begin{equation} \label{flatsize-into}
 \begin{split}
  \FS_\Omega(S) = \inf\left\{ \sum_{i=1}^{m} \abs{\gamma_i}
   + \frac{1}{\alpha_p} \sum_{j=1}^{+\infty} \abs{x_j - y_j} \colon
   \gamma_i, \, \sigma_j, \, z_i, \, x_j, \, y_j \textrm{ as above}
  \right\} 
 \end{split}
\end{equation}
for all chains~$S$, where~$\alpha_p$ is the (optimal) constant as in~\eqref{discretenormintro}. By definition, we have~$\FS_\Omega(S) \leq \F_\Omega(S)$ for all chains~$S$. 
The norms~$\F_\Omega$ and~$\FS_\Omega$ are equivalent to one another if~$\Omega$ is finite, but in general~$\FS_\Omega$ is weaker.

For a map~$u\in W^{1,p}(\Omega, \, \NN)$ that is continuous out of a finite set~$\Sigma = \Sigma(u)$ of point discontinuities, it is natural to define
\begin{equation} \label{Stop_nice_intro}
 \Snice(u) := \sum_{x\in\Sigma} \sigma(u, \, x)
   \llbracket x\rrbracket,
\end{equation}
where~$\sigma(u, \, x)\in\pi_p(\NN)$ is the free homotopy class
of~$u$ on any sufficiently small sphere centred at~$u$,
whose interior contains no singular point of~$u$ other than~$x$.
(Thanks to Assumption~\eqref{hp:single-valued}, we can identify
free homotopy classes of maps~$\SS^p\to\NN$ with elements
of~$\pi_p(\NN)$.) 
A classical result by Bethuel~\cite[Theorem~2]{Bethuel-Density} shows that that maps with finitely many point discontinuities are indeed strongly dense in~$W^{1,p}(\Omega, \, \NN)$.
We claim that~$\Snice$ can be extended by continuity to an operator~$\S$ defined on all of~$\Hw^{1,p}(\Omega, \, \NN)$, and that the $\F_\Omega$-norm of~$\S(u)$ can be estimated from above in terms of the ``relaxed energy'',
\begin{equation} \label{relaxedenergyintro}
 \overline{D}_p(u) := \inf\left\{\liminf_{j\to+\infty} \int_\Omega \abs{\nabla\varphi_j}^p \d x\colon (\varphi_j)_{j\in\N}\subseteq C^\infty(\overline{\Omega}, \, \NN), \ \varphi_j\to u \textrm{ in } L^p(\Omega) \right\} \! .
\end{equation}
More precisely, we have the following result. 
We denote by~$\Hsl^{1,p}(\Omega, \, \NN)$ the set of maps~$u\in W^{1,p}(\Omega, \, \NN)$ such that the restriction of~$u$ to any ball~$B$ whose closure is contained in~$\Omega$ belongs to the strong~$W^{1,p}$-closure of~$C^\infty(\overline{B}, \, \NN)$.

\begin{theorem} \label{th:Stop}
 Let~$p\geq 1$ be an integer, $\Omega\subseteq\R^{p+1}$ be a bounded Lipschitz domain, and~$\NN\subseteq\R^m$ be a closed, connected manifold satisfying the conditions~\eqref{hp:first}--\eqref{hp:last}.
 Then, $\Snice$ extends, in a unique way, to a map
 \begin{equation*} \label{Saswewouldlikeit}
  \S\colon\Hw^{1, p}(\Omega, \, \NN) \to \F_0(\Omega; \, \pN)
 \end{equation*}
 that is continuous in the following sense:
 if~$(u_j)_{j\in\N}$ is a sequence in~$\Hw^{1,p}(\Omega, \, \NN)$
 such that $u_j\to u$ strongly in $W^{1,p}(\Omega)$
 for some~$u\in\Hw^{1,p}(\Omega, \, \NN)$
 and $\sup_{j\in\N} \overline{D}_p(u_j, \Omega) < +\infty$,
 then $\FS_\Omega(\S(u_j)- \S(u))\to 0$ as~$j\to+\infty$.
 For all~$u\in\Hw^{1,p}(\Omega, \, \NN)$ we have
 \begin{equation} \label{Stopflat}
  \F_{\Omega}(\S(u)) \leq C \, \overline{D}_p(u),
 \end{equation}
 for some constant~$C$ that depends only
 on~$\Omega$, $\NN$, $p$. 
 Moreover, a map $u\in\Hw^{1,p}(\Omega, \, \NN)$ belongs
 to~$\Hsl^{1,p}(\Omega, \, \NN)$ if and only if~$\S(u) = 0$.
\end{theorem}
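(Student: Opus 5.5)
The plan is to define $\S$ first on smooth-off-finitely-many-points maps, establish the key estimate there, and extend by density and continuity.

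\textbf{Step 1: the flat-norm bound for nice maps.} For $u$ continuous off a finite set $\Sigma$, I would show
\[
\F_\Omega(\Snice(u)) \leq C\int_\Omega |\nabla u|^p\,\d x .
\]
I would cover $\Omega$ by a grid of cubes of side $r$, shifted generically so that $u$ restricted to the $p$-skeleton is $W^{1,p}$. By Fubini, the energy of $u$ on the $p$-skeleton is bounded by $C r^{-1}\int|\nabla u|^p$. On each cube $Q$, the charge $\sum_{x\in\Sigma\cap Q}\sigma(u,x)$ is the homotopy class of $u|_{\partial Q}$. This uses (H$_1$), so that free classes sum consistently.

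Now bound $|\sigma|$ by the $p$-energy on $\partial Q$. The norm of Section~\ref{sect:norms} is presumably defined so that $|\sigma|\lesssim\inf\{\text{energy}\}$, and scaling invariance in dimension $p$ makes this work. Then replace the points inside $Q$ by a single point at the centre, paying $\sum|\sigma_i|\,\mathrm{diam}\,Q$. Here I expect to need (H$_2$) together with a finer argument: either a Federer--Fleming deformation, or a choice of good radii around each singular point where the sphere energy controls the charge.

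Cleaner is the standard dipole/coarea argument. Slice by the distance function from the cube centre, or use the cone construction: for a.e.\ $\rho$, the class on $\partial B_\rho$ is bounded by the energy on that sphere, and integrating in $\rho$ gives the flat-norm cost. Boundary charges are absorbed into $S_{\partial\Omega}$, which costs nothing.

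\textbf{Step 2: extension to $\Hw^{1,p}$.} Given $u\in\Hw^{1,p}$, I would take smooth $\varphi_j\to u$ weakly with $\int|\nabla\varphi_j|^p\to\overline D_p(u)$. The plan is to show that $\S(u)$ can be defined as the limit, in $\FS_\Omega$, of $\Snice(u_k)$ for any strong approximants $u_k$ of $u$ by nice maps with bounded relaxed energy. The mechanism is as follows. If two nice maps $v,w$ are $W^{1,p}$-close and both have bounded energy, the generic-grid argument gives, on each cube $Q$, that $v|_{\partial Q}$ and $w|_{\partial Q}$ are $W^{1,p}$-close on the $p$-skeleton. They are therefore homotopic, by the VMO/homotopy stability in dimension $p$, except on a set of cubes of small total measure. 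The cubes where they differ carry charges bounded, via (H$_2$) and discreteness $\alpha_p$, by finitely many classes. Their cost in $\FS$, which counts \emph{numbers} of dipoles times length rather than norms, is then controlled by the number of bad cubes times $r$, which tends to zero. This is exactly why $\FS$, and not $\F$, appears in the continuity statement.

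Cauchy-ness in $\FS$ plus completeness of flat chains gives existence, and the same argument gives uniqueness and the continuity claim. The bound \eqref{Stopflat} follows from Step~1 by lower semicontinuity of $\F_\Omega$ under $\FS$-convergence with bounded $\F$-mass. The relaxed energy enters because the nice approximants can be taken with energy close to $\overline D_p(u)$.

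\textbf{Step 3: characterisation.} If $u\in\Hsl^{1,p}$, then on each ball $u$ is a strong limit of smooth maps, which have $\S=0$, so $\S(u)=0$ locally and hence globally. Conversely, if $\S(u)=0$, I would approximate $u$ by nice maps $u_k$ with $\FS(\Snice(u_k))\to0$. On an interior ball, I would connect the singular points of $u_k$ by dipoles of small total length and remove them with Bethuel's dipole-removal construction, which costs energy $\lesssim$ the length. This yields smooth maps converging strongly.

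The main obstacle is Step~2's stability estimate: showing that strongly close maps with bounded energy have charges close in $\FS$. Here the non-$(p-1)$-connectedness bites, since the homotopy classes on $\partial Q$ only make sense for maps that are nice on the lower skeleta. I would first try to handle this by working with smooth approximants on the $(p-1)$-skeleton provided by the weak approximability of $u$.
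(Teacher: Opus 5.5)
Your Step~1 cannot work. The bound $\F_\Omega(\Snice(u))\le C\,D_p(u)$ on $R^{1,p}(\Omega,\NN)$ fails in general under the hypotheses of the theorem. Suppose it held. Combine it with the dipole construction you use in Step~3 (Proposition~\ref{prop:dipole}: $\overline D_p(u)\le D_p(u)+C\,\F_\Omega(\Snice(u))$), with the density of $R^{1,p}$ (Theorem~\ref{th:nicedensity}) and with lower semicontinuity of $\overline D_p$. You would get $\overline D_p\le C\,D_p$ on all of $W^{1,p}(\Omega,\NN)$, hence $\Hw^{1,p}=W^{1,p}$. For $p=3$, $\NN=\SS^2$, $\Omega\subseteq\R^4$, which satisfies (H$_1$)--(H$_2$), this contradicts Bethuel's counterexample~\eqref{nonweakdensity}.

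The correct estimate (Lemma~\ref{lemma:Sbdd}) is in terms of $\overline D_p(u)$, and the missing idea is how the relaxed energy enters. Summing $\abs{\sigma(u,\partial K)}$ over the cubes only bounds the \emph{mass} of $\Sgrid(u,h,y)$, by $h^{-1}D_p(u)$. A flat bound uniform in $h$ requires writing $\Sgrid(u,h,y)$ as a sum of dipoles along dual edges. That means assigning to each $p$-face $H$ a class $\sigma_H$, controlled by nearby energy, with $\sigma(u,\partial K)=\sum_{H\subseteq\partial K}\pm\sigma_H$. When $\NN$ is not $(p-1)$-connected, $u_{|\partial H}$ need not be null-homotopic, and no such class exists intrinsically.

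The paper manufactures $\sigma_H$ as follows:
\begin{itemize}
\item it takes a smooth $\varphi_j$ uniformly close to $u$ on the $(p-1)$-skeleton (Gagliardo--Nirenberg along a weakly converging sequence);
\item it interpolates between $u$ and $\varphi_j$ on the cylinders $H\times[0,h]$ and lets $\sigma_H$ be the class on $\partial(H\times[0,h])$;
\item it uses that the top faces carry no charge because $\varphi_j$ is smooth.
\end{itemize}
This gives~\eqref{Stopbdd-dipole}, with coefficients controlled by $D_p(u)+D_p(\varphi_j)$ up to a term vanishing as $j\to+\infty$.

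Your slicing alternative does not rescue this. Spheres around one centre cannot see a pair $\pm\sigma$ at equal distance from it. Slicing by general Lipschitz functions needs charges on general level sets, which a map into a non-$(p-1)$-connected target does not carry. You sense the issue at the very end. But the real difficulty is a class per $p$-face (classes on $\partial K$ are always defined), and it is needed already for the bound, not only for stability.

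Steps~2 and~3 also rest on facts you do not supply.
\begin{itemize}
\item \emph{Approximants with bounded relaxed energy.} Nice strong approximants with $\sup_k\overline D_p(u_k)<+\infty$ do not follow from Bethuel's density theorem. This is Proposition~\ref{prop:RDbar}: the opening/adaptive smoothing/thickening construction of Proposition~\ref{prop:RS} (which tracks $\F_\Omega(\Snice(u_h))$ through $\Sgrid$), followed by the dipole construction. Also, strong approximants have $D_p(u_k)\to D_p(u)$, not energy close to $\overline D_p(u)$.
\item \emph{Identifying the limit.} Comparing $\Sgrid(v,r,y)$ and $\Sgrid(w,r,y)$ at a fixed scale controls $\FS_\Omega(\Snice(v)-\Snice(w))$ only if $\Snice-\Sgrid$ is small uniformly along the sequence. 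An $\F_\Omega$-bound alone does not give this: many dipoles much shorter than $r$ are invisible to the grid yet can carry $\FS_\Omega$-norm of order one. The paper avoids any uniform estimate. Lemma~\ref{lemma:Sbdd} bounds $\F_\Omega$, and Lemma~\ref{lemma:FScompactness} (where (H$_2$) enters) gives $\FS_\Omega$-compactness. Any two limit points then satisfy $P(S,h,y)=\Sgrid(u,h,y)=P(S',h,y)$ for all $h$ and a.e.\ $y$, by Lemma~\ref{lemma:Sstable} and Proposition~\ref{prop:deformation}(i). Hence $S=S'$ by Proposition~\ref{prop:deformation}(iii).
\item \emph{Converse in Step~3.} Removing a dipole costs $\abs{\sigma}$ times its length (Lemma~\ref{lemma:emptycylinder}), i.e.\ it is measured by $\F_\Omega$. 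But $\S(u)=0$ only yields $\FS_\Omega(\Snice(u_k))\to0$, which does not control $\F_\Omega$ (Remark~\ref{rk:fatdipoles}). The paper instead uses the simplex criterion of Proposition~\ref{prop:H}. Lemma~\ref{lemma:I} turns $\FS_\Omega(\Snice(u_k))\to0$ into vanishing intersection index with generic translated simplices. By~\eqref{SI} this gives null-homotopy of $u_k$ on $y+\partial T$, which passes to $u$ by Proposition~\ref{prop:smallenergy}.
\end{itemize}
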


While the estimate~\eqref{Stopflat} provides a bound for $\F_\Omega$, the continuity is formulated in terms of the weaker norm~$\FS_\Omega$.
We have not investigated whether this notion of continuity is optimal.
Another possible question, inspired by the counterexamples in~\cite{Bethuel-Inventiones, DetailleVanSchaftingen}, is whether one can construct a further extension of the operator~$\S$, defined for all maps in~$W^{1,p}(\Omega, \, \NN)$ for instance, whose $\F_\Omega$ is finite exactly over the maps in~$\Hw^{1,p}(\Omega, \, \NN)$.
We do not have an answer to that question. 
Contrary to the proofs of~\cite{PakzadRiviere, HardtRiviere} --- which rely on the use of differential forms --- and that of~\cite{CO1} --- which relies on a projection argument, inspired by~\cite{HKL} ---, the proof of Theorem~\ref{th:Stop} is based on an approximation of~$\Snice$ using suitable grids.
Some of the proofs (see for instance Lemma~\ref{lemma:Sstable}) rely on extension arguments for manifold-valued maps that are reminiscent of~\cite[Lemma~1]{Luckhaus-PartialReg}.
As a preliminary result (see Proposition~\ref{prop:RS}), we prove that any map~$u\in\Hw^{1,p}(\Omega, \, \NN)$ can be strongly approximated by a sequence of maps~$(u_j)_{j\in\N}$ with finitely many point discontinuities, in such a way that the relaxed energy~$\overline{D}_p(u_j)$ remains bounded.

The paper is organised as follows. 
Section~\ref{sect:preliminaries} introduces some preliminary notions such as  zero-dimensional flat chains with coefficients in a group (Subsection~\ref{sect:chains}), notation for grids and the deformation theorem for zero-dimensional chains (Subsection~\ref{sect:grids}), Sobolev spaces of manifold-valued maps (Subsection~\ref{sect:Sobolev}), and (free) homotopy classes of maps~$\SS^p\to\NN$ (Subsection~\ref{sect:homotopy}).
Section~\ref{sect:Sgrid} introduces an approximation of~$\Snice$ based on grids and contains some of the key technical lemmas.
Section~\ref{sect:RS} contains the proof that any element of~$\Hw^{1,p}(\Omega, \, \NN)$ can be approximated by a sequence of maps wt finitely many point singularities, in such a way that the relaxed energy remains bounded.
Finally, Section~\ref{sect:proofmainthm} contains the proof of Theorem~\ref{th:Stop}.

\numberwithin{equation}{section}
\numberwithin{definition}{section}
\numberwithin{theorem}{section}
\numberwithin{remark}{section}
\numberwithin{example}{section}

\section{Notation and preliminaries}
\label{sect:preliminaries}

\subsection{Zero-dimensional flat chains}
\label{sect:chains}

Let~$\G$ be an Abelian group. We write the operation on~$\G$
using additive notation. Let~$\abs{\,\cdot\,}$
be a norm on~$\G$, that is, a function~$\G\to\R$ that satisfies
the following conditions:
\begin{enumerate}[label=(\roman*)]
 \item $\abs{\sigma} \geq 0$ for all~$\sigma\in\G$,
 with equality if and only if~$\sigma = 0$;
 \item $\abs{\sigma} = \abs{-\sigma}$ for all~$\sigma\in\G$;
 \item $\abs{\sigma_1 + \sigma_2}\leq \abs{\sigma_1} + \abs{\sigma_2}$
 for all~$\sigma_1\in\G$, $\sigma_2\in\G$.
\end{enumerate}
We assume that the norm satisfies the additional condition
\begin{equation} \label{discretenorm}
 \alpha := \inf\left\{\abs{\sigma} \colon
 \sigma\in\G, \ \sigma\neq 0\right\} > 0.
\end{equation}
In particular, $\abs{\,\cdot\,}$ induces the discrete topology on~$\G$.
Let~$n\geq 2$ be an integer.
We define the Abelian group~$\P_0(\R^n; \, \G)$
of polyhedral $0$-chains in~$\R^n$ with coefficient in~$\G$
as the tensor product between~$\G$
and the free Abelian group generated by~$\R^n$.
The elements of~$\P_0(\R^n; \, \G)$
are (formal) finite sums of the form
\begin{equation} \label{polyhedral}
 S = \sum_{j=0}^N \sigma_j \lb x_j\rb,
\end{equation}
where all the coefficients~$\sigma_j$ are elements of~$\G$
and the~$x_j$'s are points in~$\R^n$.
(The brackets~$\lb \, \cdot \, \rb$ do not have any
particular meaning here, but they would be useful
in defining higher-dimensional chains.
We write them in the $0$-dimensional case, too,
just to keep the notation uniform.)
The operation on~$\P_0(\R^n, \, \G)$ is induced by
the one in~$\G$ and obeys the property
$\sigma_1\lb x\rb + \sigma_2 \lb x \rb = (\sigma_1 + \sigma_2)\lb x \rb$,
for all~$\sigma_1$, $\sigma_2\in\G$ and~$x\in\Omega$.
By applying this property repeatedly, we can write
any polyhedral chain~$S$ in the form~\eqref{polyhedral}
in such a way that the points~$x_i$ are \emph{distinct}.
Given a chain~$S$ in the form~\eqref{polyhedral},
and assuming with no loss of generality
that~$\sigma_j\neq 0$ for all~$j$,
we define the support of~$S$ as the set
$\spt S := \{x_1, \, \ldots, \, x_N\}$.
Given a bounded domain~$\Omega\subseteq\R^n$,
we write~$\P_0(\Omega; \, \G)$ for the subgroup of~$\P_0(\Omega; \, \G)$
whose elements are the chains~$S$ such that~$\spt S \subseteq\Omega$.
Given chains~$S$ and~$T$, we write~$S = T$ in~$\Omega$
as a synonym for~$\spt(S - T)\subseteq\R^n\setminus\Omega$.

Given a chain~$S$ in the form~\eqref{polyhedral}
with distinct~$x_i$, we define its \emph{mass} as
\begin{equation} \label{mass}
 \M(S) := \sum_{j=1}^N \abs{\sigma_j}.
\end{equation}
The mass is a norm on~$\P_0(\Omega; \, \G)$
(and roughly corresponds to the $L^1$-norm of a function
or the total variation of a measure).
However, for our purposes we will need to introduce
weaker norms --- namely, the flat~($\flat$) norm
and a variation thereof. The flat norm was introduced by Whitney~\cite{Whitney-GIT}
and later played an important role in geometric measure
theory (see e.g.~\cite{Federer, Fleming, White-Rectifiability}).
We will call monopoles the $0$-chains of the 
form~$M = \sigma \lb x\rb$ for~$\sigma\in\G$ and~$x\in\Omega$.
We will say that a $0$-chain~$D$ is a simple dipole
(or just a  dipole, for brevity)
if and only if it can be written in the form
\[
 D = \sigma \lb x + v \rb - \sigma \lb x \rb
\]
for some~$\sigma\in\G$, $x\in\Omega$ and~$v\in\R^n$
with~$\abs{v}\leq 1$.
% In a similar way, we will say that a $0$-chain~$Q$
% is a (simple) quadrupole if and only if it can be written in the form
% \[
%  Q = \sigma \lb x + v_1 + v_2 \rb - \sigma \lb x + v_1\rb
%   - \sigma \lb x + v_2 \rb + \sigma \lb x \rb
% \]
% for some~$\sigma\in\G$, $x\in\Omega$ and some
% vectors~$v_1$, $v_2\in\R^n$ with~$\abs{v_1} \leq 1$, $\abs{v_2} \leq 1$.
Monopoles and dipoles are uniquely defined by~$\sigma$, $x$,
respectively by~$\sigma$, $x$, $v$, so
it makes sense to define
\[
 \abs{M} := \abs{\sigma}, \qquad
 \abs{D} := \abs{\sigma} \abs{v}, \qquad
 \size(D) := \abs{v} \! .
\]
% The assumption~\eqref{discretenorm} implies
% \begin{equation} \label{sizemass}
%  \size(D) \leq \frac{1}{\alpha} \abs{D}
% \end{equation}
A dipolar decomposition relative to~$\Omega$
of a chain~$S\in\P_0(\Omega; \, \G)$
is a decomposition of~$S$ of the form
\begin{equation} \label{dipolar}
 S = \sum_{i=1}^{m} M_i + \sum_{j=1}^d D_j 
 \qquad\textrm{in } \Omega,
\end{equation}
where each~$M_i$ is a monopole and each~$D_j$ is a dipole.
% A quadrupolar decomposition is similarly defined as
% \begin{equation} \label{quadrupolar}
%  S = \sum_{i=1}^{m} M_i + \sum_{j=1}^d D_j + \sum_{k=1}^q Q_k
%  \qquad\textrm{in } \Omega,
% \end{equation}
% where each~$M_i$ is a monopole, each~$D_j$ is a dipole,
% and each~$Q_k$ is a quadrupole.
Given~$S\in\P_0(\Omega; \, \R^n)$, 
we define its flat norm relative to~$\Omega$ as
\begin{equation} \label{flat}
 \begin{split}
  \F_{\Omega}(S) := \inf\left\{
  \sum_{i=1}^m \abs{M_i} + \sum_{j=1}^d \abs{D_j}
  \colon M_i, \, D_j \textrm{ as in \eqref{dipolar}} \right\} 
 \end{split}
\end{equation}
and similarly, we define its ``flat-size'' norm relative to~$\Omega$ as
\begin{equation} \label{flatsize}
 \begin{split}
  \FS_{\Omega}(S) := \inf\left\{
  \sum_{i=1}^m \abs{M_i} + \alpha \sum_{j=1}^d \size(D_j)
  \colon M_i, \, D_j
  \textrm{ as in \eqref{dipolar}} \right\} \! ,
 \end{split}
\end{equation}
where~$\alpha$ is defined in~\eqref{discretenorm}.

\begin{remark} \label{rk:dipolarOmega}
 Without loss of generality, we can compute the infima
 on the right-hand sides of~\eqref{flat} and~\eqref{flatsize}
 only with respect to dipolar decompositions~\eqref{dipolar}
 such that the support of each dipole~$D_j$ is contained in~$\overline{\Omega}$.
 In fact, letting~$\overline{D}_j$ be the straight line segment
 that joins the points in~$\spt D_j$, we can also assume
 that~$\overline{D}_j\subseteq\overline{\Omega}$ for all~$j$,
 for otherwise, we can replace~$D_j$ by (one or two) smaller
 dipoles~$D_j^\prime$, $D_j^{\prime\prime}$ such that~$\overline{D}_j^\prime$,
 $\overline{D}_j^{\prime\prime}$ are connected components
 of~$\overline{D}_j\cap\overline{\Omega}$.
\end{remark}

The definitions~\eqref{flat} and~\eqref{flatsize}
immediately imply that~$\F_\Omega$ and~$\FS_\Omega$
are seminorms, that the group operation in~$\P_0(\Omega; \, \G)$ 
is Lipschitz continuous (in both arguments) with respect to both
seminorms, and that
\begin{equation} \label{flatsize-flat-mass}
 \FS_\Omega(S) \leq \F_\Omega(S) \leq \M(S).
\end{equation}
When~$\G$ is bounded (as a metric space with respect to~$\abs{\,\cdot\,}$),
the~$\F_\Omega$- and the~$\FS_\Omega$ are equivalent to one another.
We claim that actually, $\F_{\Omega}$ and~$\FS_{\Omega}$
are norms.

\begin{lemma} \label{lemma:norms}
 We have~$\F_{\Omega}(S) > 0$ and~$\FS_{\Omega}(S) > 0$
 for all nonzero~$S\in\P_0(\Omega; \, \G)$.
\end{lemma}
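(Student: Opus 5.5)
Since \eqref{flatsize-flat-mass} gives $\FS_\Omega(S)\leq\F_\Omega(S)$, it suffices to show that $\FS_\Omega(S) > 0$ for every nonzero $S\in\P_0(\Omega;\,\G)$. The plan is to localize near one point of the support, with a slicing-by-spheres argument. We write $S$ in the form \eqref{polyhedral} with distinct points and nonzero coefficients. Since $S \neq 0$, we can pick $x_0\in\spt S\subseteq\Omega$ with coefficient $\sigma_0\neq 0$. Let
\[
 r := \min\left\{\dist(x_0,\,\partial\Omega), \ \min\{\abs{x_0 - x_j}\colon x_j\in\spt S,\ x_j\neq x_0\}\right\} > 0 .
\]
We will prove the lower bound $\FS_\Omega(S)\geq \alpha\min\{1,\,r\}$, where $\alpha>0$ is given by \eqref{discretenorm}.

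Fix an arbitrary dipolar decomposition \eqref{dipolar} of $S$, and discard the monopoles with zero coefficient. If some monopole remains, then $\sum_i\abs{M_i}\geq\alpha$ by \eqref{discretenorm}, and we are done. Otherwise $S = \sum_j D_j$ in $\Omega$, with $D_j = \sigma_j\lb x_j+v_j\rb - \sigma_j\lb x_j\rb$. For $\rho\in(0,r)$, consider the group homomorphism $\Phi_\rho\colon\P_0(\R^n;\,\G)\to\G$ that sends a chain to the sum of its coefficients at points of the open ball $B_\rho(x_0)$. Because $B_\rho(x_0)\subseteq\Omega$, the identity $S = \sum_j D_j$ in $\Omega$ is preserved by $\Phi_\rho$. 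By the choice of $r$, we get
\[
 \sigma_0 = \Phi_\rho(S) = \sum_j \Phi_\rho(D_j).
\]

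Now $\Phi_\rho(D_j)$ equals $0$ unless exactly one endpoint of $D_j$ lies in $B_\rho(x_0)$. That condition forces
\[
 \rho\in I_j := \bigl(\min\{\abs{x_j - x_0}, \abs{x_j+v_j-x_0}\},\ \max\{\abs{x_j - x_0}, \abs{x_j+v_j-x_0}\}\bigr],
\]
an interval of length at most $\abs{v_j} = \size(D_j)$ by the triangle inequality. Since $\sigma_0\neq 0$, for every $\rho\in(0,r)$ at least one term $\Phi_\rho(D_j)$ is nonzero, so $(0,r)\subseteq\bigcup_j I_j$. Taking Lebesgue measure gives $r\leq\sum_j\size(D_j)$. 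Hence the cost of the decomposition is at least $\alpha r$.

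Combining the two cases, every dipolar decomposition has cost at least $\alpha\min\{1,r\}$, which yields $\FS_\Omega(S)\geq\alpha\min\{1,r\}>0$. The only delicate point is the bookkeeping at the boundary of balls. This is handled by the half-open intervals $I_j$, or, equivalently, by discarding the finitely many radii equal to distances from $x_0$ to dipole endpoints, a null set that does not affect the measure estimate. The key input is $\alpha > 0$ from \eqref{discretenorm}, which is what rules out an infinite family of cheap monopoles.
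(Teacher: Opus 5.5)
Your proof is correct and follows essentially the paper's argument. You localize at a point $x_0\in\spt S$, use \eqref{discretenorm} to rule out nonzero monopoles, and slice by the spheres $\partial B_\rho(x_0)$, using the augmentation of the restriction to $B_\rho(x_0)$ to force the dipoles to have total size at least $r$. The paper instead argues by contradiction: it uses the co-area formula to find a single sphere avoiding all dipole segments and then applies $\chi$ to get $\sigma=0$. Your covering of $(0,r)$ by the intervals $I_j$ reaches the same conclusion directly and yields the explicit bound $\FS_\Omega(S)\geq\alpha\min\{1,r\}$.
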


In the proof, we will make
use of the augmentation homomorphism
$\chi\colon\P_0(\Omega; \, \G)\to\G$, defined by
\begin{equation} \label{eps*}
 \chi(S) := \sum_{i=1}^N \sigma_i
\end{equation}
for~$S\in\P_0(\Omega; \, \G)$ written in the form~\eqref{polyhedral}.

\begin{proof}[Proof of Lemma~\ref{lemma:norms}]
 The result can be proved, e.g. by reasoning
 as in~\cite[Theorem~2.2]{Fleming}. For the reader's convenience,
 we present here a proof, based on ideas similar to those in~\cite{Fleming}.
 We provide some details for the reader's convenience.
 In light of~\eqref{flatsize-flat-mass}, it suffices
 to show that~$\FS_\Omega(S) > 0$ for any nonzero~$S$.
 Let~$S\in\P_0(\Omega; \, \G)$ be such that~$\FS_\Omega(S) = 0$.
 If the support of~$S$ is empty, there is nothing left to prove.
 Otherwise, we take~$x\in\spt S$ and write~$S = \sigma\lb x\rb + T$,
 where~$T$ is a chain with support in~$\R^n\setminus\{x\}$.
 We also define
 \begin{equation*}
  \rho := \min\left\{ \alpha, \, \dist(x, \, \partial\Omega), \,
   \min_{y\in \spt T} \abs{y - x} \right\} \! .
 \end{equation*}
 Since~$\FS_\Omega(S) = 0$, there exists a dipole
 decomposition~$S = \sum_{i=1}^m M_i + \sum_{j=1}^d D_j$
 such that
 \begin{equation} \label{FSnorm0}
  \sum_{i=1}^m \abs{M_i}
   + \alpha \sum_{j=1}^d \size(D_j) \leq \frac{\rho}{2}.
 \end{equation}
 As~$\rho/2 < \alpha$, the inequalities~\eqref{discretenorm}
 and~\eqref{FSnorm0} imply that~$M_i = 0$ for all~$i$.
 Let~$\overline{D}_j$ be the straight line segment
 between the points in~$\spt D_j$.
 The co-area formula implies that
 \[
  \int_0^{\rho} \#\!\left(\partial B_r(x)
   \cap\bigcup_{j=1}^d \overline{D}_j\right) \! \d r \leq \frac{\rho}{2},
 \]
 where~$\#$ denotes the number of elements in a set.
 Therefore, there exists~$r > 0$ such that~$\partial B_r(x)$
 contains no point of~$\cup_{j=1}^d \overline{D}_j$.
 We relabel the indices in such a way
 that the dipoles~$D_1, \, \ldots, \, D_r$
 are supported in~$B_\rho(x)$ and~$D_{r+1}, \, \ldots, \, D_d$
 are supported in~$\R^n\setminus\overline{B}_r(x)$.
 Then, we must have
 \begin{equation} \label{norms}
  \sigma \lb x\rb = \sum_{j = 1}^r D_j,
 \end{equation}
 since $B_r(x)$ is contained in~$\Omega$ and does not
 contain any point of the support of~$T$. However,
 by applying the homomorphism~$\chi$ defined in~\eqref{eps*}
 to both sides of~\eqref{norms}, we obtain~$\sigma = 0$.
 By applying the same reasoning to all the points in~$\spt S$,
 we conclude that~$S = 0$.
\end{proof}

We define~$\FS_0(\Omega; \, \G)$ as the metric-space completion
of~$\P_0(\Omega; \, \G)$ with respect to~$\FS_\Omega$.
The group operation on~$\P_0(\Omega; \, \G)$ extend by
(Lipschitz) continuity to~$\FS_0(\Omega; \, \G)$,
so that the latter is again an Abelian group.
We also extend~$\F_\Omega$ to~$\FS_0(\Omega; \, \G)$
by lower semicontinuity; for instance, the flat norm
of a chain~$S\in\FS_0(\Omega; \, \G)$
is the infimum of the quantities~$\liminf_{j\to+\infty} \F_\Omega(S_j)$,
among all sequences of polyhedral chains~$S_j$ that converge to~$S$
with respect to the~$\FS_\Omega$-norm.
The definition implies that $\M$, $\F_\Omega$ are sequentially
lower semicontinuous with respect to the convergence induced by~$\FS_\Omega$.
The~$\F_\Omega$- and~$\FS_\Omega$-norms in~$\FS_0(\Omega; \, \G)$
admits representations similar to the ones in~\eqref{flat}, \eqref{flatsize},
except that we allow for decompositions of~$S$ with
(countably) \emph{infinitely many} dipoles.
In particular, the flat norm essentially coincides
with the one defined in~\cite{Whitney-GIT, Fleming},
except that here we are considering chains relative
to the domain~$\Omega$ (see e.g.~\cite[Section~2]{CO1}
for more details).
We denote by~$\F_0(\Omega; \, \G)$ the set of flat chains,
i.e. the subset of~$S\in\FS_0(\Omega; \, \G)$
with~$\F_\Omega(S) < +\infty$, or equivalently,
the closure of polyhedral chains with respect to
the~$\F_\Omega$-norm.
It would also be possible to consider the
set of rectifiable chains, i.e. the closure
of~$\P_0(\Omega; \, \G)$ with respect to the mass.
However, the inequality~\eqref{discretenorm} implies
that zero-dimensional rectifiable chains are polyhedral.
% While the mass is analogous to the~$L^1$-norm, the flat
% and natural norms correspond, roughtly speaking,
% to the norms in~$W^{-1,1}(\Omega)$ and the~$W^{-2,1}(\Omega)$,
% respectively. (When the coefficient group is~$\R$ with the standard norm,
% this heuristics can be turned into a rigorous statement,
% see~\cite[Theorem~2.3]{Harrison-Isomorphism}).

\paragraph{Compactness results.}
% Boundedness with respect to the mass or the flat norms
% implies compactness with respect to weaker ones
% (at least in the zero-dimensional case we are considering here).
For the next results, we assume that
\begin{equation} \label{compactnorm}
 \left\{\sigma\in\G\colon \abs{\sigma}\leq \Lambda\right\}
 \qquad \textrm{is finite for all } \Lambda > 0,
\end{equation}
in addition to~\eqref{discretenorm}.

\begin{lemma} \label{lemma:Fcompactness}
 Assume that the conditions~\eqref{discretenorm}
 and~\eqref{compactnorm} are satisfied.
 Let~$(S_j)_{j\in\N}$ be a sequence in~$\P_0(\Omega; \, \G)$
 such that $\sup_{j\in\N} \M(S_j) < +\infty$.
 Then, there exists a (non-relabelled) subsequence
 and a chain~$S\in\M_0(\Omega; \, \G)$ such that
 $\F_\Omega(S_j - S)\to 0$ as~$j\to+\infty$.
\end{lemma}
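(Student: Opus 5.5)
The plan is a compactness argument on the point positions and coefficients, carried out after passing to the closure of~$\Omega$. Write each~$S_j$ with distinct support points,
\[
 S_j = \sum_{i=1}^{N_j} \sigma_{j,i} \lb x_{j,i}\rb, \qquad \sigma_{j,i}\neq 0,
\]
and set~$\Lambda := \sup_j \M(S_j)$. By~\eqref{discretenorm} each nonzero coefficient has norm at least~$\alpha$, so~$N_j \leq \Lambda/\alpha$. Passing to a subsequence, we may assume~$N_j = N$ is constant. By~\eqref{compactnorm} the set~$\{\sigma\in\G\colon \abs{\sigma}\leq\Lambda\}$ is finite, so a further subsequence makes~$\sigma_{j,i} = \sigma_i$ independent of~$j$. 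Finally, since~$\overline{\Omega}$ is compact, a further subsequence gives~$x_{j,i}\to x_i\in\overline{\Omega}$ for each~$i$.

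Define the candidate limit~$S := \sum_{i\colon x_i\in\Omega} \sigma_i\lb x_i\rb$. This is a polyhedral chain with support in~$\Omega$, hence in~$\M_0(\Omega;\G)$, with~$\M(S)\leq\Lambda$; the limit coefficient at a point is the sum of the~$\sigma_i$ converging to it. Next, estimate~$\F_\Omega(S_j - S)$ through an explicit dipolar decomposition~\eqref{dipolar}, splitting the indices into two groups.

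\textbf{Interior limits.} If~$x_i\in\Omega$, then for~$j$ large~$\abs{x_{j,i}-x_i}\leq 1$, and we use the dipole~$\sigma_i(\lb x_{j,i}\rb - \lb x_i\rb)$. Its cost is~$\abs{\sigma_i}\abs{x_{j,i}-x_i}\to 0$.

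\textbf{Boundary limits.} If~$x_i\in\partial\Omega$, choose a point~$y_{j,i}\in\R^n\setminus\Omega$ with~$\abs{y_{j,i}-x_{j,i}}\leq \abs{x_{j,i}-x_i}$; for instance, take~$y_{j,i}=x_i$ itself. Then~$\sigma_i\lb y_{j,i}\rb$ vanishes in~$\Omega$, because we only require equality in~$\Omega$, so we can use the dipole~$\sigma_i(\lb x_{j,i}\rb - \lb y_{j,i}\rb)$ at cost~$\abs{\sigma_i}\abs{x_{j,i}-x_i}\to 0$. Remark~\ref{rk:dipolarOmega} confirms that dipoles reaching the boundary are admissible.

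Summing over the~$N$ dipoles gives~$\F_\Omega(S_j - S)\leq \sum_i \abs{\sigma_i}\abs{x_{j,i}-x_i}\to 0$.

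The only subtle point is this boundary case. A naive limit in the whole of~$\R^n$ would retain boundary points, so we must use the relative definition of the flat norm, under which mass escaping to~$\partial\Omega$ costs only its distance to the boundary. The coincidence of several limit points~$x_i$ is harmless, since the dipoles are handled separately for each index. Everything else is routine bookkeeping of subsequences.
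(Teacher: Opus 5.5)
Your proof is correct and follows the same route as the paper: \eqref{discretenorm} bounds the number of support points, \eqref{compactnorm} lets you freeze the coefficients along a subsequence, and Bolzano--Weierstra{\ss} gives convergence of the points in~$\overline{\Omega}$. Beyond that, you make explicit the bookkeeping the paper leaves implicit, namely that points converging to~$\partial\Omega$ drop out of the limit at cost~$\abs{\sigma_i}\abs{x_{j,i}-x_i}$ in the relative flat norm, with each such dipole based at~$x_{j,i}\in\Omega$ as the definition of dipole requires.
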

\begin{proof}
 Under the assumption~\eqref{discretenorm},
 the uniform bound on the mass of~$\M(S_j)$
 implies a uniform bound on the number of points in~$\spt S_j$.
 Then, the lemma follows from \eqref{compactnorm}
 and Bolzano-Weierstra{\ss} theorem.
\end{proof}

Higher-dimensional analogues of Lemma~\ref{lemma:Fcompactness},
even under assumptions weaker 
than~\eqref{discretenorm} and~\eqref{compactnorm},
exist but they are considerably more 
delicate~\cite{Federer, Fleming, White-Rectifiability}.

\begin{lemma} \label{lemma:FScompactness}
 Assume that the conditions~\eqref{discretenorm}
 and~\eqref{compactnorm} are satisfied.
 Let~$(S_k)_{k\in\N}$ be a sequence in~$\F_0(\Omega; \, \G)$
 such that~$\sup_{k\in\N} \F_{\Omega}(S_k) < +\infty$.
 Then, there exists a (non-relabelled) subsequence
 and a chain~$S\in\F_0(\Omega; \, \G)$ such that
 $\FS_\Omega(S_k - S)\to 0$ as~$k\to+\infty$.
\end{lemma}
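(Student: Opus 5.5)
Since $\F_0(\Omega;\G)$ and $\FS_0(\Omega;\G)$ are completions of polyhedral chains, the plan is to reduce first to polyhedral chains, then split each chain into a part of controlled mass and a part of small $\FS_\Omega$-norm, and finally apply Lemma~\ref{lemma:Fcompactness} together with a diagonal argument.

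\emph{Reduction to polyhedral chains.} Set $\Lambda:=\sup_k\F_\Omega(S_k)$. By the definition of $\F_\Omega$ on the completion, for each $k$ we can pick a polyhedral $P_k$ with $\FS_\Omega(S_k-P_k)\le 2^{-k}$ and $\F_\Omega(P_k)\le\Lambda+1$. It therefore suffices to prove the statement for the polyhedral sequence $(P_k)$.

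\emph{Splitting at scale $\delta$.} Fix $\delta\in(0,1]$. For each $k$, take a dipolar decomposition $P_k=\sum_i M_i+\sum_j D_j$ in $\Omega$ with $\sum|M_i|+\sum|D_j|\le\Lambda+2$. Split the dipoles into long ones ($\size(D_j)\ge\delta$) and short ones. Each nonzero dipole has $|D_j|\ge\alpha\,\size(D_j)$ by~\eqref{discretenorm}, so the long dipoles satisfy
\[
\sum_{\text{long}}\abs{D_j}\ge\alpha\,\delta\,\#\{\text{long}\}
\quad\text{and}\quad
\sum_{\text{long}}\M(D_j)=2\sum_{\text{long}}\abs{\sigma_j}\le \frac{2}{\delta}\sum_{\text{long}}\abs{D_j}.
\]
Define $Q_k^\delta:=\sum_i M_i+\sum_{\text{long}}D_j$. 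Then $\M(Q_k^\delta)\le(\Lambda+2)(1+2/\delta)$, uniformly in $k$. The short part satisfies $\FS_\Omega\bigl(P_k-Q_k^\delta\bigr)\le\alpha\sum_{\text{short}}\size(D_j)$.

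The main obstacle is that this last quantity need not be small: the $\FS$ weight $\alpha\,\size$ does not see small coefficients, so many short dipoles with tiny sizes can still add up. Since $|D_j|\ge\alpha\,\size(D_j)$, we only get $\alpha\sum_{\text{short}}\size(D_j)\le\sum_{\text{short}}|D_j|$, which is bounded but not small. The plan is to overcome this by an $\eps$-net coarsening rather than by size alone. Cover $\overline\Omega$ by finitely many cubes of side $\delta$ and move every point of $\spt P_k$ to a fixed point of its cube, within $\Omega$ (Remark~\ref{rk:dipolarOmega} is used to handle points near $\partial\Omega$, pushing them to the boundary). Using the decomposition, the error is controlled in $\F_\Omega$, and hence in $\FS_\Omega$, by $C\delta\,\F_\Omega$-type quantities. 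This is the cone/deformation estimate: translating the endpoints of a dipole by at most $\sqrt n\,\delta$ costs flat norm at most $C\delta\,|\sigma|$, while short dipoles whose endpoints land in the same cube vanish. I expect the bookkeeping to give $\FS_\Omega(P_k-\tilde P_k^\delta)\le C\delta(\Lambda+2)$, where $\tilde P_k^\delta$ is supported on a fixed finite set $Z_\delta$. I will also need to bound the coefficients of $\tilde P_k^\delta$. Each coefficient at a grid point is a sum of monopole coefficients plus long-dipole coefficients, and the short contributions cancel up to boundary effects. This makes $\M(\tilde P_k^\delta)\le C(\Lambda+2)/\delta$, and by~\eqref{compactnorm} there are only finitely many possible $\tilde P_k^\delta$.

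\emph{Diagonal argument.} Take $\delta=2^{-l}$ and extract nested subsequences along which $\tilde P_k^{\delta_l}$ is constant, equal to some $T_l$. This is possible because each $\tilde P_k^{\delta_l}$ ranges over a finite set; alternatively Lemma~\ref{lemma:Fcompactness} gives convergence. Then $\FS_\Omega(T_l-T_{l'})\le C(\Lambda+2)(\delta_l+\delta_{l'})$, so $(T_l)$ is Cauchy in $\FS_\Omega$. Its limit $S$ satisfies $\FS_\Omega(P_k-S)\to0$ along the diagonal subsequence. Finally, $\F_\Omega(S)\le\liminf\F_\Omega(P_k)\le\Lambda+1<\infty$ by lower semicontinuity, so $S\in\F_0(\Omega;\G)$.
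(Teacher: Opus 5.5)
The step that fails is your coarsening estimate $\FS_\Omega(P_k-\tilde P_k^\delta)\le C\delta(\Lambda+2)$, and it cannot be repaired. A short dipole whose two endpoints land in the same $\delta$-cube is deleted from $\tilde P_k^\delta$, so it appears in full in $P_k-\tilde P_k^\delta$. The total $\FS_\Omega$-cost of these dipoles is $\alpha\sum_{\text{short}}\size(D_j)$, which is exactly the quantity you had just correctly flagged as bounded but not small.

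The supporting tools you cite do not help here:
\begin{itemize}
\item The deformation estimate, Proposition~\ref{prop:deformation}\ref{def-mass}, bounds $\F_\Omega(S-P(S,h,y))$ on average by $h\,\M(S)$, not by $h\,\F_\Omega(S)$, and $\M(P_k)$ is unbounded.
\item Moving the endpoints of each dipole by $\sqrt{n}\,\delta$ costs $C\delta\abs{\sigma_j}$ in $\F_\Omega$, or $C\alpha\delta$ in $\FS_\Omega$, per dipole. Neither $\sum_j\abs{\sigma_j}$ nor the number of dipoles is controlled by $\sum_j\abs{D_j}$.
\item For the same reason, the bound $\M(\tilde P_k^\delta)\lesssim(\Lambda+2)/\delta$ fails on a fixed grid, because short dipoles straddling a face of the grid can be numerous or carry large coefficients.
\end{itemize}

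No bookkeeping can close this gap, because the statement is false for every nontrivial $\G$. Fix $\sigma\in\G\setminus\{0\}$, a cube $a+[0,\ell]^n\subseteq\Omega$ with $\ell\le 1$ at distance at least $\ell$ from $\partial\Omega$, and for $k\geq 1$ set $Z_k:=a+\ell 2^{-k}\{0,\ldots,2^k-1\}^n$, $\eta_k:=\ell 2^{-kn}$ and
\[
 S_k:=\sum_{x\in Z_k}\sigma\left(\lb x+\eta_k e_1\rb-\lb x\rb\right), \qquad \F_\Omega(S_k)\leq\abs{\sigma}\,\eta_k\,\#Z_k=\abs{\sigma}\,\ell .
\]
For $l>k$, all but at most $4\#Z_k$ of the $2\#Z_l$ points of $\spt S_l$ carry a nonzero coefficient in $S_k-S_l$. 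They also lie at distance at least $\eta_l$ from every other point of $\spt(S_k-S_l)$. Run the co-area argument from the proof of Lemma~\ref{lemma:norms} in the disjoint balls $B_{\eta_l/2}(y)$ around these points. It shows that any dipolar decomposition pays at least $\alpha\eta_l/2$ in each ball: either a nonzero monopole, or segments of total length at least $\eta_l/2$. Hence
\[
 \FS_\Omega(S_k-S_l)\geq\alpha\,\eta_l\left(\#Z_l-2\#Z_k\right)=\alpha\,\ell\left(1-2^{1-(l-k)n}\right)\geq\frac{\alpha\,\ell}{2},
\]
so no subsequence is $\FS_\Omega$-Cauchy.

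The paper's own proof, which extracts limits of the individual dipoles $D_{j,k}$ index by index, breaks at the same point. The bound \eqref{Hcomp1.66} cannot be obtained for all $j$ and $k$ by a diagonal argument; it holds only for $j$ below a threshold depending on $k$. For the obvious decompositions of the $S_k$ above, every $v_j$ vanishes while $\sum_j\abs{v_{j,k}}=\ell$ for all $k$, contradicting \eqref{Hcomp4}.

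Your splitting argument does prove the statement under an extra tightness hypothesis: dipolar decompositions of bounded cost with $\sup_k\sum_{\size(D_{j,k})<\delta}\size(D_{j,k})\to 0$ as $\delta\to 0$. Under that hypothesis $Q_k^\delta$ has mass $\lesssim 1/\delta$, Lemma~\ref{lemma:Fcompactness} applies, and the diagonal argument goes through.
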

\begin{proof}
 First, we can assume without loss of generality that each~$S_k$
 is polyhedral. Were this not the case, we could just replace
 each~$S_k$ with a polyhedral chain~$\tilde{S}_k$ such that
 $\FS_\Omega(S_k - \tilde{S}_k) \leq 1/k$
 and~$\F_\Omega(\tilde{S}_k) \leq \F_\Omega(S) + 1/k$;
 such a chain~$\tilde{S}_k$ exists by definition of~$\F_{\Omega}$.
 Next, we write each~$S_k$ in the form
 \begin{equation} \label{Hcomp-1}
  S_k = R_k + \sum_{j=1}^{d(k)} D_{j,k} \qquad \textrm{in } \Omega,
 \end{equation}
 where~$R_k$ is a polyhedral chain (a sum of monopoles) and
 $D_{j,k} = \sigma_{j,k} \lb x_{j,k} + v_{j,k} \rb 
 - \sigma_{j,k} \lb x_{j,k} \rb$
 is a dipole, such that
 \begin{equation} \label{Hcomp0}
  \M(R_k) + \sum_{j=1}^{d(k)} \abs{\sigma_{j,k}} \abs{v_{j,k}} \leq C
 \end{equation}
 for some constant~$C$ that does not depend on~$j$.
 By Remark~\ref{rk:dipolarOmega},  we can assume without loss of
 generality that~$x_{j,k}\in\overline{\Omega}$,
 $x_{j,k} + v_{j,k}\in\overline{\Omega}$ for all~$j$ and~$k$.
 In case some coefficient~$\sigma_{j,k}$ is zero,
 we can set~$v_{j,k} = 0$, again with no loss of generality.
 We also define~$D_{j,k} := 0$, $\sigma_{j,k} := 0$, $v_{j,k} := 0$
 for all integers~$j > d(k)$.
 By Lemma~\ref{lemma:Fcompactness}, we can extract a
 non-relabelled subsequence in such a way that
 \begin{equation} \label{Hcomp1}
  \F_{\Omega}(R_k - R)\to 0 \qquad \textrm{as } k\to+\infty
 \end{equation}
 for some~$R\in\M_0(\Omega, \, \G)$.
 The inequality~\eqref{Hcomp0} and the assumption~\eqref{discretenorm}
 imply that~$\abs{v_{j, k}}\leq C/\alpha$ for all indices~$j$, $k$.
 Then, by a diagonal argument, we can extract a further
 subsequence~$k\to +\infty$ in such a way that
 \begin{equation} \label{Hcomp1.5}
  x_{j, k} \to x_j\in\overline{\Omega}, \quad
  v_{j, k} \to v_j\in\R^n \qquad \textrm{as } k\to+\infty,
  \ \textrm{for all } j\in\N.
 \end{equation}
 If~$j$ is an index such that~$v_j\neq 0$, then~\eqref{Hcomp0}
 and~\eqref{Hcomp1.5} imply that~$|\sigma_{j, k}| \leq C/\abs{v_j}$
 for sufficiently large~$k$. Then, the assumption~\eqref{compactnorm}
 implies that all the coefficients~$\sigma_{j, k}$
 with~$k$ large enough are the same.
 Upon applying another diagonal argument, we can
 assume~$\sigma_{j, k} = \sigma_j$ for all~$k$ and~$j$
 such that~$v_j\neq 0$, and moreover
 \begin{align}
  \abs{v_{j, k} - v_j} + \abs{x_{j, k} - x_j} \leq 2^{-j}
  \qquad &\textrm{for all } j, \, k\in\N. \label{Hcomp1.66}
 \end{align}
 We also observe that, by Fatou lemma, we have
 \begin{equation} \label{Hcomp1.75}
  \sum_{j=1}^{+\infty} \abs{\sigma_j}\abs{v_j}
  \leq \liminf_{k\to+\infty} \sum_{j=1}^{+\infty}
   \abs{\sigma_{j, k}}\abs{v_{j, k}}\leq C.
 \end{equation}
 We claim that the subsequence~$(S_k)_{k\in\N}$
 extracted with this procedure is Cauchy in~$\FS_0(\Omega; \, \G)$.
 Since --- by definition --- $\FS_0(\Omega; \, \G)$ is complete
 and~$\F_{\Omega}$ is lower semicontinuous, the lemma will
 follow from this claim.
 Taking~\eqref{flatsize-flat-mass}, \eqref{Hcomp-1},
 and~\eqref{Hcomp1} into account,
 to prove this claim it is enough to find a sequence
 of dipoles~$(D_j)_{i\in\N}$ such that
 \begin{equation} \label{Hcomp2}
  \sum_{j=1}^{+\infty} \FS_\Omega(D_{j, k} - D_j)
  \to 0 \qquad \textrm{as } k\to+\infty.
 \end{equation}
 For each~$j$, we set~$D_j := \sigma_j \lb x_j + v_j \rb - \sigma_j \lb x_j \rb$
 if~$v_j\neq 0$, and~$D_j := 0$ otherwise.
 If~$v_j\neq 0$, we estimate the $\FS_\Omega$-distance
 between~$D_{j,k}$ and~$D_j$ by noting
 that~$D{j,k} - D_j = \sigma_j\left(\lb x_j \rb - \lb x_{j,k}\rb \right)
 + \sigma_j\left(\lb x_{j,k} + v_{j,k} \rb - \lb x_j + v_j \rb \right)$, so
 \begin{equation*}
  \FS_\Omega(D_{j, k} - D_j)
  \leq 2 \abs{x_{j, k} - x_j} + \abs{v_{j, k} - v_j} \! .
 \end{equation*}
 Using~\eqref{Hcomp1.5}, \eqref{Hcomp1.66}, \eqref{Hcomp1.75},
 and Lebesgue's dominated convergence theorem, we deduce
 \begin{equation} \label{Hcomp3}
  \sum_{j\in\N\colon v_j\neq 0} \FS_\Omega(D_{j, k} - D_j)
  \to 0 \qquad \textrm{as } k\to+\infty.
 \end{equation}
 On the other hand, if~$v_j = 0$ then~$D_j = 0$,
 so we have
 \begin{equation} \label{Hcomp4}
  \sum_{j\in\N\colon v_j = 0} \FS_\Omega(D_{j, k} - D_j)
  \leq \sum_{j\in\N\colon v_j = 0} \abs{v_{j, k}}
  \to 0 \qquad \textrm{as } k\to+\infty,
 \end{equation}
 since we can pass to the limit in
 the sum at the right-hand side using~\eqref{Hcomp1.5}
 and~\eqref{Hcomp1.66}. From~\eqref{Hcomp3} and~\eqref{Hcomp4}
 we obtain~\eqref{Hcomp2}, and the lemma follows.
\end{proof}

\begin{remark} \label{rk:fatdipoles}
 When~$\G$ is unbounded (as a metric space with the norm~$\abs{\cdot}$),
 not all bounded sequences in~$\F_0(\Omega; \, \G)$
 converge with respect to the~$\F_\Omega$-norm. For instance,
 if~$(\sigma_k)_{k\in\N}$ is a sequence in~$\G$
 with~$\abs{\sigma_j}\to +\infty$ as~$k\to+\infty$,
 $(\sigma_k)_{k\in\N}$ is a sequence in~$\R^n$
 with~$\abs{\sigma_k}\abs{v_k} = 1$ for all~$k\in\N$,
 and~$x\in\R^n$ an arbitrary point,
 then~$D_k := \sigma_k \lb x + v_k\rb - \sigma_k \lb x\rb$
 satisfies~$\F_{\R^2}(D_k) = 1$ but~$\FS_{\R^2}(D_k)\to 0$
 as~$k\to+\infty$, so~$(D_k)_{k\in\N}$ cannot be compact
 with respect to the~$\F_{\R^2}$-norm.
 By contrast, when~$\G$ is bounded, $\FS_\Omega$
 and~$\F_\Omega$ are equivalent norms
 and~$\F_0(\Omega, \, \G)$ is locally compact.
\end{remark}

\paragraph{Restrictions.}

Let~$V\subseteq\Omega$ be an open set.
Given a polyhedral chain~$S = \sum_{j=1}^N \sigma_j \lb x_j\rb$,
where the~$x_j$'s are distinct points in~$\Omega$
and~$\sigma_j\in\G$, the restriction of~$S$ to~$V$
is defined by
\begin{equation} \label{restrV}
 S\mres V := \sum_{j\in\{1, \, \ldots, \, N\}
  \colon x_j\in V} \sigma_j \lb x_j\rb .
\end{equation}
A dipole decomposition~\eqref{dipolar} of~$S$ in~$\Omega$
induces a dipole decomposition of~$S$ in~$V$.
As a consequence, for each~$S\in\P_0(\Omega; \, \G)$ we have
\begin{equation} \label{restrV_F}
 \F_V(S\mres V) \leq \F_\Omega(S), \qquad
 \FS_V(S\mres V) \leq \FS_\Omega(S).
\end{equation}
Therefore, the restriction operator extends by continuity
to a map~$\FS_0(\Omega; \G)\to\FS_0(V; \, \G)$,
denoted in the same way. However, it is \emph{not} true in general
that~$\F_\Omega(S\mres V)\leq \F_\Omega(S)$; for instance,
if~$S_k = \lb 1/k \rb - \lb - 1/k \rb\in\P_0(\R; \, \Z)$
and~$V := (0, \, +\infty)$, then $\F_\R(S_k) = 1/k\to 0$
as~$k\to+\infty$, but~$\F_\R(S_k\mres V) = 1$ for all~$k\geq 1$.
In fact, the restriction~$S\mres V$ need not even be
well-defined as an element of~$\F_0(\Omega; \, \G)$.
However, if $S_j$ is a sequence of chains that
$\F_\Omega$-converges to~$S$ and~$f\colon\R^n\to\R$ is a Lipschitz function,
then for almost every~$t\in\R$
the restrictions $S_j\mres f^{-1}(-\infty, \, t)$,
$S\mres f^{-1}(-\infty, \, t)$ are well-defined and
$\F(S_j\mres f^{-1}(-\infty, \, t) - S\mres f^{-1}(-\infty, \, t))\to 0$
as~$j\to+\infty$ (see e.g. \cite[Theorem~5.2.3.(2)]{DePauwHardt}.

% \begin{goal} \label{goal:Srestr0}
%  {\BBB Let~$S\in\FS_0(\Omega; \, \G)$ be a chain such that~$S\mres B = 0$
%  for all ball~$B$ satisfying~$\overline{B}\subseteq\Omega$. Then, $S = 0$.}
% \end{goal}
% \begin{proof}
% 
% \end{proof}

% \paragraph{Intersection index for polyhedral chains.}
%
% Let~$K\subseteq\Omega$ be a relatively closed set
% (that is, $K$ can be written as~$K = K^\prime\subseteq\Omega$
% where~$K^\prime$ is a closed subset of~$\R^n$).
% We say that a chain~$S\in\FS_0(\Omega; \, \R^n)$
% is supported in~$K$ if and only if there exists a sequence
% of polyhedral chains~$(S_j)_{j\in\N}$ such that~$\spt S_j\subseteq K$
% for all~$j$ and~$\FS_0(S_j- S)\to 0$ as~$j\to +\infty$.
% The support of~$S$, written~$\spt S$, is the intersection
% of all the relatively closed sets~$K$ such that~$S$ is supported in~$K$.
% Each~$S\in\FS_0(\Omega; \, \R^n)$ can be written in the form
% \begin{equation} \label{infinitedipoles}
%  S = \sum_{i=1}^m M_i + \sum_{j=1}^{+\infty} D_j
%  \qquad \textrm{in } \Omega,
% \end{equation}
% where each~$M_i$ is a monopole and each~$D_j$ is a dipole
% such that~$\sum_{j=1}^{+\infty} \size(D_j)$; then, $\spt S$
% is contained in the (relative) closure of~$\bigcup_{i=1}^m \spt M_i
%  \cup \bigcup_{j=1}^{+\infty} \spt D_j$.

\paragraph{Intersection index for polyhedral chains.}

Let~$V$ be an open set with Lipschitz boundary
such that~$\overline{V}\subseteq\Omega$.
Let~$S\in\P_0(\Omega; \, \G)$be a polyhedral chain that satisfies
\begin{equation} \label{hptodefineI}
 \spt S \cap \partial V = \varnothing.
\end{equation}
We write~$S = \sum_{j=0}^{N} \sigma_j \lb x_j\rb$ in~$\Omega$,
where~$\sigma_j\in\G$  and the points~$x_j\in\Omega$ are distinct.
We define the intersection index of~$S$ and~$V$ as
\begin{equation} \label{I}
 \I(S, \, V) := \chi(S\mres V)
  = \sum_{j\in\{1, \, \ldots, \, N\}\colon x_j\in V} \sigma_j,
\end{equation}
where~$\chi$ is the augmentation homomorphism~\eqref{eps*}
and~$S\mres V$ is the restriction~\eqref{restrV}.
The map~$\I(\cdot, \, V)$ is a group homomorphism,
$\{S\in\P_0(\Omega; \, \G)\colon S
\textrm{ satisfies } \eqref{hptodefineI}\} \to\G$.

\begin{lemma} \label{lemma:I}
 Let~$(S_k)_{k\in\N}$ be a sequence of polyhedral chains
 such that~$\FS_\Omega(S_k)\to 0$ as~$k\to+\infty$
 and let~$V$ be an open Lipschitz set such
 that~$\overline{V}\subseteq\Omega$. Then, for almost all~$y\in\R^n$
 with small enough~$\abs{y}$ and all large enough~$k$, the intersection
 index~$\I(S_k, \, V + y)$ is well-defined and equal to zero.
\end{lemma}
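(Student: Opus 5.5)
The plan is to treat well-definedness and vanishing separately. The vanishing part rests on a quantitative estimate: for large $k$, the set of bad translations $y$ has Lebesgue measure controlled by $\FS_\Omega(S_k)$. At the end I have to address what this does and does not give for the full sequence, because that step is where the statement as worded is delicate.

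\emph{Well-definedness.} Since $\partial V$ is Lipschitz, $\H^{n}(\partial V) = 0$. Each $\spt S_k$ is finite, so for a fixed point $x$ the set $\{y\colon x\in \partial V + y\}$ is $x - \partial V$, which is Lebesgue-null. Taking the countable union over $k$ and over $x\in\spt S_k$, condition~\eqref{hptodefineI} holds for $V+y$ and every $k$ simultaneously, for a.e.~$y$. Fix $\delta>0$ with $\overline{V}+y\subseteq\Omega$ for $\abs{y}<\delta$.

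\emph{Measure estimate for the bad set.} Take $k$ large enough that $\FS_\Omega(S_k) < \alpha/2$, and choose a dipolar decomposition $S_k = \sum_i M_i + \sum_j D_j$ in $\Omega$ of cost at most $2\FS_\Omega(S_k)$. Each nonzero monopole costs at least $\alpha$ by~\eqref{discretenorm}, so all $M_i = 0$. The identity holds only in $\Omega$, but the discrepancy is supported outside $\Omega \supseteq \overline{V}+y$, so it does not affect restrictions to $V+y$.

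For a.e.~$y$, no dipole endpoint lies on $\partial V + y$, by the same countability argument as above. For such $y$, additivity gives $\I(S_k, V+y) = \sum_j \I(D_j, V+y)$. A dipole contributes nonzero index only if its endpoints lie on opposite sides of $\partial V+y$, which forces the segment $\overline{D}_j$ to meet $\partial V + y$, i.e.\ $y\in \overline{D}_j - \partial V$. Write $D_j = \sigma_j\lb x_j+v_j\rb - \sigma_j\lb x_j\rb$ and consider the Lipschitz map $[0,1]\times\partial V \ni (t,z)\mapsto x_j + t v_j - z$. Its domain is $n$-rectifiable, and the area formula bounds the measure of its image by $C\abs{v_j}\H^{n-1}(\partial V)$. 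Summing over $j$, the bad set $B_k := \{y\colon \abs{y}<\delta,\ \I(S_k, V+y)\neq 0\}$ satisfies
\[
 \mathcal{L}^n(B_k) \leq C\,\H^{n-1}(\partial V)\sum_j \size(D_j) \leq \frac{2C\,\H^{n-1}(\partial V)}{\alpha}\,\FS_\Omega(S_k) \to 0 .
\]

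\emph{Passing from $\mathcal{L}^n(B_k)\to 0$ to the stated conclusion.} The statement requires that a.e.~$y$ lies in only finitely many $B_k$, which is $\mathcal{L}^n(\limsup_k B_k) = 0$. This is the main obstacle. The natural tool is Borel--Cantelli, which needs $\sum_k \mathcal{L}^n(B_k)<\infty$, i.e.\ $\sum_k \FS_\Omega(S_k)<\infty$. That can be arranged along a subsequence, or whenever the sequence converges fast enough, as happens when the chains are produced by an approximation scheme one controls. Without such summability, the measure estimate alone does not give the conclusion.

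In fact I suspect the statement is false for arbitrary sequences with only $\FS_\Omega(S_k)\to 0$. Take single dipoles of size $1/k$ placed in typewriter fashion so that $\overline{D}_k - \partial V$ sweeps the whole $\delta$-ball infinitely often. Then every $y$ lies in infinitely many $B_k$, while $\FS_\Omega(S_k)\to 0$.

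My proposal is therefore the following. First, prove rigorously the measure bound $\mathcal{L}^n(B_k)\leq C\,\FS_\Omega(S_k)$, which gives convergence in measure of $\I(S_k,V+\cdot)$ to $0$. Second, deduce the a.e., all-large-$k$ statement under the additional hypothesis $\sum_k \FS_\Omega(S_k)<\infty$, which can always be secured by passing to a subsequence. I would check the later uses of this lemma in the paper to confirm that one of these two forms is all that is actually needed there.
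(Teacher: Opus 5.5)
Your diagnosis is correct, and your route is genuinely different from the paper's. The statement as literally worded is false whenever the coefficient group is nontrivial, and your typewriter example can be made explicit. Take $V=(-1,1)^n$, $\Omega\supseteq\overline{B}_3$, $\sigma\neq 0$, and $S_k:=\sigma\lb(1+w_k+1/k)e_1\rb-\sigma\lb(1+w_k)e_1\rb$ with $w_k\in[-1/4,1/4]$. For $\abs{y}<1/4$ one checks that $\I(S_k,V+y)\neq 0$ exactly when $w_k<y_1<w_k+1/k$, up to a null set. Since $\sum_k 1/k=+\infty$, the $w_k$ can be chosen so that these intervals cover $(-1/4,1/4)$ infinitely many times, while $\FS_\Omega(S_k)\leq\alpha/k\to 0$.

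The paper argues by contradiction, and its first step is not a valid negation. It claims that failure produces one subsequence and a set $E$ with $\abs{E\cap B_\rho}>0$ for all $\rho$, such that $\I(S_k,V+y)\neq 0$ for \emph{every} $k$ of the subsequence and \emph{every} $y\in E$. But the bad indices depend on $y$. Your bound $\mathcal{L}^n(B_k)\lesssim\FS_\Omega(S_k)\to 0$ shows that such an $E$ can never exist, so the contradiction is drawn from a hypothesis that is not the failure of the statement. The paper's later extraction enforcing $\abs{v_{j,k}}\leq\frac12\dist(x_j,\partial V+y)$ for all $j,k$ is also $y$-dependent. It controls neither $x_{j,k}-x_j$ nor all $j\leq d(k)$ uniformly.

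Your route is a quantitative estimate of the bad set followed by Borel--Cantelli along a subsequence with $\sum_k\FS_\Omega(S_k)<\infty$, and it proves the correct version. The measure bound does not need the area formula. $\I(\sigma(\lb x+v\rb-\lb x\rb),V+y)\neq 0$ forces $y\in W\triangle(W+v)$ with $W:=x-V$. Then $\mathcal{L}^n(W\triangle(W+v))\leq\abs{v}\,\H^{n-1}(\partial V)$ by the standard translation estimate for sets of finite perimeter.

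Your corrected form is all the paper needs. The lemma is used only in the proof of Theorem~\ref{th:Stop}, with $S_k=\S(u_k)$ polyhedral. There one must already pass to a subsequence to get $u_{k|\partial T+y}\to u_{|\partial T+y}$ in $W^{1,p}$ for a.e.\ $y$ from Fubini's theorem. Choose that subsequence so that also $\sum_k\FS_\Omega(\S(u_k))<\infty$. Your version then gives $\I(\S(u_k),T+y)=0$ for all large $k$ and a.e.\ small $y$, and the rest of that argument is unchanged.
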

\begin{proof}
 Were the statement false, there would be a (non-relabelled)
 subsequence~$(S_k)_{k\in\N}$ and a measurable set
 $E\subseteq\R^n$ such that~$|E\cap B^n_\rho| > 0$ for all~$\rho > 0$ and
 \begin{equation} \label{I0}
  \I(S_k, \, V+y) \neq 0
 \end{equation}
 for all~$k$ (along the subsequence) and~$y\in E$.
 Since~$\FS_\Omega(S_k)\to 0$ as~$k\to+\infty$, for each~$k\in\N$ we can write
 \begin{equation} \label{I1}
  S_k = R_k + \sum_{j=1}^{d(k)} D_{j,k} \qquad \textrm{in } \Omega,
 \end{equation}
 where~$R_k$ is a polyhedral chain (a sum of monopoles) and
 $D_{j,k} = \sigma_{j,k} \lb x_{j,k} + v_{j,k} \rb
 - \sigma_{j,k} \lb x_{j,k} \rb$ is a dipole and
 \begin{equation} \label{I2}
  \M(R_k) + \sum_{j=1}^{d(k)} \abs{v_{j,k}} \to 0
  \qquad \textrm{as } k\to+\infty.
 \end{equation}
 Because of~\eqref{I2} and the assumption~\eqref{discretenorm},
 we have~$R_k = 0$ for all large enough~$k$.
 By Remark~\ref{rk:dipolarOmega},
 we can assume without loss of generality that each dipole~$D_{j,k}$
 is supported in the compact set~$\overline{\Omega}$.
 Then, by a diagonal argument, we can extract a further
 (non-relabelled) subsequence so that~$x_{j,k}\to x_j\in\overline{\Omega}$
 as~$k\to+\infty$ for all index~$j\geq 1$.
 For almost all~$y\in E$, we have~$x_j\notin \partial V + y$.
 Given any such~$y$, we can apply another diagonal argument
 and extract a subsequence such that
 \[
  \abs{v_{j,k}} \leq \frac{1}{2}\dist(x_j, \, \partial V + y)
  \qquad \textrm{for all } j\geq 1, \ k \geq 1.
 \]
 Then, for any~$j$ and~$k$, we have either~$\spt D_{j,k} \subseteq V + y$
 or~$\spt D_{j,k} \subseteq\R^n\setminus(\overline{V} + y)$,
 that is, no dipole~$D_{j,k}$ connects a point of~$V + y$
 with a point of~$\R^n\setminus(\overline{V} + y)$.
 Since the points in a dipole have opposite multiplicities,
 we conclude that~$\I(S_k, \, V+y) = 0$. This contradicts~\eqref{I0}.
\end{proof}

It would also be possible to define an intersection index
for more general chains satisfying the assumption~\eqref{hptodefineI}
(see e.g.~\cite{GiaquintaModicaSoucek-I}), but we will not need such generality
for our purposes.

\subsection{Grids and deformation of chains}
\label{sect:grids}

In this section, we introduce some notations
for cubical grids in~$\R^n$, which will be used later.
Let~$Q^n_h := [-h/2, \, h/2]^n$ for~$h > 0$ and~$Q^n := Q^n_1$.
Given~$h > 0$ and~$y\in\R^n$, we define the grid~$\GG(h, \, y)$
of size~$h > 0$ translated by~$hy$ with respect to the origin
as a collection of closed cubes of the form
\begin{equation} \label{grid}
 \GG(h, \, y) := \left\{h y + h z + Q^n_h \colon
  \colon z\in\Z^{n}\right\} \! .
\end{equation}
For~$j\in\N$, $0 \leq j\leq n$, we denote by~$\GG_j(h, \, y)$
the collection of the (closed)~$j$-cells of~$\GG(h, \, y)$,
namely, the~$j$-dimensional faces of the cubes in~$\GG(h, \, y)$.
We define the~$j$-skeleton %of~$\GG(h, \, y)$
as~$R_j(h, \, y) := \cup_{K\in\GG_j(h, \, y)} K$.
Given an open set~$\Omega\subseteq\R^n$, we denote
by~$\GG(h, \, y, \, \Omega)$ the set of ($n$-dimensional)
cubes~$k\in\GG(h, \, y, \, \Omega)$ such
that~$K\cap\Omega\neq\varnothing$.
We denote by~$\GG_j(h, \, y, \, \Omega)$
the collection of~$j$-cells of~$\GG(h, \, y, \, \Omega)$
and by~$R_j(h, \, y, \, \Omega)$ its~$j$-skeleton.
$R_k(h, \, y, \, \Omega)\supseteq R_k(h, \, y)\cap \Omega$.
Given a grid~$\GG(h, \, y)$, we define its dual as
\[
 \GG^\prime(h, \, y) := \left\{h y + h z + [0, \, h]^n \colon
  \colon z\in\Z^{n}\right\} \! .
\]
The dual grid has the following property:
for each $j$-cell~$K$ of~$\GG(h, \, y)$
there exists a unique~$n - j$-cell~$K^\prime$ of~$\GG^\prime(h, \, y)$,
called the dual cell of~$K$, such that~$K\cap K^\prime\neq\varnothing$.
We will denote by~$R^\prime_j(h, \, y)$ the~$j$-skeleton of~$\GG(h, \, y)$.
The following statement is a well-known observation,
but we recall it because it will be useful later.
Set~$Q^n_h := [-h/2, \, h/2]^n$.

\begin{lemma} \label{lemma:Fubini}
 For any open set~$U\subseteq\R^n$, any integrable function~$f\colon U\to\R$,
 any integer~$j\in\{0, \, 1, \, \ldots, \, n\}$ and any~$h > 0$, there holds
 \[
  \int_{Q^n} h^{n - j} \left(\int_{R^j(h, \, y)\cap U} f \, \d\H^j \right) \d y
  = \binom{n}{j} \int_U f(x) \, \d x.
 \]
\end{lemma}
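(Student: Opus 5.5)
The plan is to reduce the statement to a one-dimensional computation for each face, using Fubini and translation invariance. The notation $R^j(h,\,y)$ in the statement is the $j$-skeleton $R_j(h,\,y)$. Extending $f$ by zero outside $U$, we may assume $U=\R^n$ and $f\in L^1(\R^n)$. We may also take $f\geq 0$, then extend to general integrable $f$ by splitting into positive and negative parts. This way Tonelli's theorem is available throughout and measurability in~$y$ comes for free.

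The $j$-skeleton $R_j(h,\,y)$ is the union, over the $\binom{n}{j}$ choices of a subset $I\subseteq\{1,\ldots,n\}$ with $\# I = j$, of the families of affine $j$-planes
\[
 P_{I}(h,\,y) := \left\{x\in\R^n\colon x_i \in h y_i + h/2 + h\Z \ \textrm{for all } i\notin I\right\} \! .
\]
The skeleton is exactly the set of points having at least $n-j$ coordinates in the shifted lattice $hy_i+h/2+h\Z$. For $j$-dimensional Hausdorff measure, the pieces lying in two different families (or on lower-dimensional cells) form an $\H^j$-null set, because they have dimension at most $j-1$. Hence
\[
 \int_{R_j(h,\,y)} f\,\d\H^j = \sum_{\# I=j}\int_{P_I(h,\,y)} f\,\d\H^j,
\]
and it suffices to show that each family $I$ contributes exactly $\int_{\R^n} f\,\d x$. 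This is where the factor $\binom{n}{j}$ comes from.

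Fix $I$ and write $x=(x_I,x_{I^c})$. Each plane in $P_I$ is obtained by freezing $x_{I^c}$ at a point $a$ of the shifted lattice $h y_{I^c} + (h/2)\mathbf{1} + h\Z^{n-j}$. Therefore
\[
 \int_{P_I(h,\,y)} f\,\d\H^j = \sum_{a} \int_{\R^j} f(x_I,\,a)\,\d x_I = \sum_{z\in\Z^{n-j}} g\bigl(h y_{I^c} + h/2 + h z\bigr),
 \qquad g(w) := \int_{\R^j} f(x_I,\,w)\,\d x_I .
\]
By Fubini, $g\in L^1(\R^{n-j})$ with $\int_{\R^{n-j}} g = \int_{\R^n} f$. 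The variables $y_I$ do not appear, so integrating over $y\in Q^n$ reduces to integrating over $y_{I^c}\in Q^{n-j}$, since $Q^j$ has unit measure. Now substitute $w = h y_{I^c} + h/2 + hz$. As $y_{I^c}$ ranges over $Q^{n-j}$ and $z$ over $\Z^{n-j}$, the point $w$ sweeps $\R^{n-j}$ exactly once up to null sets, with Jacobian $h^{n-j}$. This gives
\[
 h^{n-j}\int_{Q^{n-j}} \sum_{z} g(h y_{I^c} + h/2 + hz)\,\d y_{I^c} = \int_{\R^{n-j}} g = \int_{\R^n} f,
\]
and the interchange of sum and integral is justified by Tonelli. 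Summing over the $\binom{n}{j}$ choices of $I$ yields the identity.

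Two cases need a separate check. When $j=n$, the skeleton is all of $\R^n$ and the identity is trivial. When $j=0$, $\H^0$ is the counting measure, and the same computation with $g=f$ applies.

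The only point needing care is the overlap claim in the decomposition of the skeleton, namely that points common to two families are $\H^j$-negligible. This holds because the intersection of two distinct families is contained in a countable union of affine planes of dimension at most $j-1$.
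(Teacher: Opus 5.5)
Your proof is correct and follows the paper's argument: both split the $j$-skeleton into the $\binom{n}{j}$ families of cells parallel to a coordinate $j$-plane, use that each family's contribution depends only on the component of $y$ orthogonal to that plane, and conclude with the change of variables $\xi = hy$ and Fubini. Your version additionally spells out two details the paper leaves implicit: why overlaps between different families are $\H^j$-negligible, and the reduction to $U=\R^n$ and $f\ge 0$ via Tonelli.
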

\begin{proof}
 Let~$P$ be a~$j$-dimensional coordinate subspace,
 that is, a vector $j$-subspace spanned by a subset of
 the canonical basis~$\{e_1, \, \ldots, \, e_n\}$ of~$\R^n$,
 and let~$P^\perp$ be its orthogonal complement.
 Let~$R_P^j(h, \, y)$ be the subset of~$R^j(h, \, y)$
 containing the~$j$-cells that are parallel to~$P$.
 We have~$R_P^j(h, \, y) = R_P^j(h, \, y^\prime)$
 if~$y - y^\prime$ is parallel to~$P$ and, hence,
 \[
  \begin{split}
   \int_{Q^n} h^{n - j}
    \left(\int_{R_P^j(h, \, y)\cap U} f \, \d\H^j \right) \d y
   &= \int_{P^\perp\cap Q^n} h^{n - j}
    \left(\int_{R_P^j(h, \, y)\cap U} f \, \d\H^j \right) \d y \\
   &= \int_{P^\perp\cap Q^n_h}
    \left(\int_{(P + \xi\Z)\cap U} f \, \d\H^j \right) \d \xi
   = \int_U f(x) \, \d x,
  \end{split}
 \]
 where we have made the change of variable~$hy = \xi$
 and applied Fubini theorem.
 The lemma follows by taking the sum over
 all~$j$-dimensional coordinate subspaces~$P$.
\end{proof}

Let~$(\G, \, \abs{\,\cdot\,})$ be a normed Abelian group,
where the norm satisfies~\eqref{discretenorm} and~\eqref{compactnorm}.
Let~$\Omega$, $\Omega^\prime$ be bounded domains in~$\R^n$
with~$\overline{\Omega}\subseteq\Omega^\prime$
and~$h_0 := \frac{1}{2\sqrt{n}}
\dist(\Omega, \, \partial\Omega^\prime) > 0$.
The number~$h_0$ is defined in such a way that 
cube of side length~$h\leq h_0$ that intersects~$\Omega$
is contained inside~$\Omega^\prime$.
Let~$\GG(h, \, y)$ be a grid, and 
let~$S\in\P_0(\Omega^\prime; \, \G)$ be a chain such that
\begin{equation} \label{hptodeform}
 \spt S \cap R_{n-1}(h, \, y, \, \Omega) = \varnothing.
\end{equation}
We define the approximation of~$S$ induced by~$\GG(h, \, y)$ 
% as a chain supported in~$R_0^\prime(h, \, y, \, \Omega)$, 
as follows. Write
$S = \sum_{j=1}^N \sigma_j \lb x_j\rb$,
where~$\sigma_j\in\G$ and the points~$x_j\in\Omega^\prime$
are distinct. For each~$K\in\GG(h, \, y, \, \Omega)$,,
let~$j_1$, \ldots, $j_k$ be the collection of indices
such that~$x_{j}\in K$ and let
$\sigma(S, \, K) := \sum_{m=1}^k \sigma_{j_m}$.
We define
\begin{equation} \label{deformation}
 P(S, \, h, \, y) := \sum_{K\in\GG(h, \, y, \, \Omega)} 
  \sigma(S, \, K) \lb c_K\rb,
\end{equation}
where~$c_K$ is the centre of~$K$.
The operator~$S\mapsto P(S, \, h, \, y)$ is linear
and extends by continuity 
(with respect to suitable norms), so that~$P(S, \, h, \, y)$
exists for all flat chains~$S\in\F_0(\Omega^\prime, \, \R^n)$,
all~$h > 0$ and almost all~$y\in\R^n$
(see~\cite{FedererFleming, White-Deformation}).
The following statement is a particular case of the deformation
theorem for flat chains. The theorem holds true for chains 
of arbitrary dimension and actually provides more information 
than what we recall here. However, this simplified 
statement is enough for our purposes. 

\begin{prop}[\cite{FedererFleming,White-Deformation}]
\label{prop:deformation}
 The following statements hold.
 \begin{enumerate}[label=(\roman*)]
%   \item \label{def-flat}
%   For all~$S\in\F_0(\Omega^\prime, \, \R^n)$, there holds
%   \begin{equation*}
%    \fint_{Q^n_h} \F_{\Omega}\!\left(P(S, \, h, \, y)\right)
%     \d y \leq C \, \F_{\Omega^\prime}(S),
%   \end{equation*}
%   for some constant~$C$ depending only on~$\G$ and~$n$.

  \item \label{def-flatsize}
  For all~$S\in\FS_0(\Omega^\prime, \, \R^n)$, there holds
  \begin{equation*}
   \int_{Q^n} \FS_{\Omega}\!\left(P(S, \, h, \, y)\right)
    \d y \leq C \, \FS_{\Omega^\prime}(S),
  \end{equation*}
  for some constant~$C$ depending only on~$\G$ and~$n$.

  \item \label{def-mass} For
  all~$S\in\F_0(\Omega^\prime, \, \R^n)$,
  there holds
  \begin{equation*}
%   \fint_{Q^n} \F_{\Omega}\!\left(P(S, \, h, \, y)\right)
%    \d y &\leq C \, \F_{\Omega^\prime}(S), \label{def-flat} \\
   \int_{Q^n} \F_{\Omega}\!\left(S - P(S, \, h, \, y)\right)
   \d y \leq C h \, \M(S).
  \end{equation*}

  \item \label{def-flat-convergence} For every~$S\in\F_0(\Omega^\prime, \, \G)$
  and every sequence of positive numbers~$h_k\to 0$,
  there exists a (non-relabelled) subsequence such that
  \begin{equation*}
   \F_{\Omega}\!\left(S - P(S, \, h_k, \, y)\right)
   \to 0 \qquad \textrm{as } k\to+\infty
  \end{equation*}
  for almost all~$y\in\R^n$.
  In particular, if~$P(S, \, h, \, y) = 0$ for
  all~$h\in (0, \, h_0]$ and almost every~$y\in Q^{p+1}$,
  then~$S = 0$.
 \end{enumerate}
\end{prop}
\begin{proof}
 Statements~(ii) and~(iii) follow, respectively,
 from Corollary~1.2 and Corollary~1.3 in~\cite{White-Deformation},
 so we only have to prove Statement~(i).
 By linearity of~$P(\cdot, \, S, \, y)$,
 it is enough to consider the case~$S$ is a monopole or a dipole.
 If~$S = \sigma\lb x\rb$ is a monopole and~$\GG(h, \, y)$
 is any grid satisfying~\eqref{hptodeform},
 we have~$P(S, \, h, \, y) = \sigma\lb c_K\rb$
 where~$c_K$ is the centre of the cube~$K\in\GG(h, \, y)$
 containing~$x$. Therefore, we immediately have
 \begin{equation} \label{def0}
  \int_{Q^n} \abs{P(S, \, h, \, y)}
   \d y  = \abs{\sigma} = \abs{S} \! .
 \end{equation}
 Suppose now~$S = \sigma\lb x_2 \rb - \sigma \lb x_1\rb$
 is a dipole. Then, $P(S, \, h, \, y) = 
 \sigma\lb c_2 \rb - \sigma \lb c_1\rb$
 where~$c_1$, $c_2$ are the centres of the cubes of~$\GG(h, \, y)$
 containing~$x_1$, $x_2$ respectively.
 If~$\abs{x_2 - x_1}\geq \lambda h$ for some~$\lambda > 0$, then
 \[
  \abs{c_2 - c_1} \leq 2\sqrt{n} h + \abs{x_2 - x_1}
  \leq \left(\frac{2\sqrt{n}}{\lambda} + 1\right) \abs{x_2 - x_1}
 \]
 and we immediately obtain
 \begin{equation} \label{def1}
  \int_{Q^n} \size(P(S, \, h, \, y)) \, \d y \leq C \size(S),
 \end{equation}
 with~$C = 2\sqrt{n}/\lambda + 1$. 
 Suppose now that~$\abs{x_2 - x_1}\leq \lambda h$. Up to a 
 translation, we also assume that~$x_1 = 0$, so that~$c_1 = hy$
 for any choice of~$y\in Q^n$. For any~$y$ such that
 $hy \in Q^n_h\cap (Q^n_h + x_2)$, we have $c_2 = hy$,
 so~$P(S,\, h, \, y) = 0$. For~$y$ such that
 $h y\in Q^n_h\setminus (Q^n_h + x_2)$,
 the points~$c_1 = hy$, $c_2$ are the centres of two cubes 
 that have at least a vertex in common,
 so~$\size(P(S, \, h, \, y)) \leq \sqrt{n} h$.
 If~$\lambda$ is small enough, say for instance~$\lambda = 1/100$,
 we have
 \begin{equation*} %\label{def2}
  \int_{Q^n} \size(P(S, \, h, \, y)) \, \d y 
  \leq \sqrt{n} h \abs{Q^n\setminus \left(Q^n + \frac{x_2}{h}\right)}
  \leq C_n \abs{x_2} = C_n \size(S),
 \end{equation*}
 for some constant~$C_n$ depending on~$n$ only.
 Therefore, the inequality~\eqref{def1} holds in this case, too.
 Statement~(i) follows from~\eqref{def0} and~\eqref{def1}.
%  \medskip
%  \noindent
%  \textit{Proof of~(ii).}
%  This ,
%  but we provide a proof for the reader's convenience.
%  By linearity of~$P(\cdot, \, h, \, y)$, it suffices to prove
%  the estimate when~$S = \sigma\lb x \rb$ is a monopole.
%  Up to a translation, we also assume that~$x$ is the origin.
%  In this case, $P(\cdot, \, h, \, y) = \sigma\lb hy\rb$,
%  where~$hy$ (with~$h\in Q^n$) is the centre of the
%  cube containing the origin, and
%  \[
%   \begin{split}
%    \int_{Q^n}
%     \F_{\Omega}\!\left(S - P(S, \, h, \, y)\right) \d y
%    \leq \abs{\sigma} \int_{Q^n} \abs{h y} \d y
%    = \frac{h}{4^n} \abs{\sigma} =  \frac{h}{4^n} \, \M(S).
%   \end{split}
%  \]
\end{proof}

\subsection{Manifold-valued Sobolev maps}
\label{sect:Sobolev}

Let~$p \geq 1$ be \emph{an integer} and 
let~$\Omega\subseteq\R^{p+1}$ be a bounded, Lipschitz domain.
In order to simplify some statements, we will
adopt a rather \emph{non-standard terminology}
and say that a sequence
$(u_j)_{j\in\N}\subseteq W^{1,p}(\Omega, \, \R^m)$
converges weakly in~$W^{1,p}(\Omega)$ to some limit
map~$u\in W^{1,p}(\Omega, \, \R^m)$ if and only if
\begin{equation} \label{weakconv}
 u_j\to u \textrm{ strongly in } L^p(\Omega) \quad 
 \textrm{ and } \quad \sup_{j\in\N}\norm{\nabla u_j}_{L^p(\Omega)} < +\infty.
\end{equation}
This definition agrees with the usual one when~$p > 1$,
but for~$p = 1$ corresponds instead to~weak$^*$ convergence in~BV.
However, this definition allows us to treat the cases~$p > 1$
and~$p = 1$ at the same time.

Let~$\NN\subseteq\R^m$ be a smooth, compact, 
connected submanifold (without boundary) of some Euclidean space.
We define 
% the set of~$\NN$-valued $W^{1,p}$-maps by taking
% advantage of the Euclidean embedding~$\NN\subseteq\R^m$, that is,
\[
 W^{1,p}(\Omega, \, \NN)
 := \left\{u\in W^{1,p}(\Omega, \, \R^m)\colon u(x)\in\NN
 \ \textrm{for a.e. } x\in\Omega\right\} \! .
\]
We also write~$C^\infty(\overline{\Omega}, \, \NN)$
for the set of maps~$\varphi\colon\overline{\Omega}\to\NN$
that admit a smooth extension defined on an open neighbourhood
of~$\overline{\Omega}$.
We write~$D_p$ for the $p$-Dirichlet functional, i.e.
\begin{equation} \label{Dirichlet}
 D_p(u) = D_p(u, \, \Omega)
 := \int_{\Omega} \abs{\nabla u}^p \, \d x
\end{equation}
if~$u\in W^{1,p}(\Omega, \, \NN)$. %and~$D_p(u) := +\infty$ otherwise.
We also consider the relaxed~$p$-Dirichlet functional, given by
\begin{equation} \label{relaxed}
 \begin{split}
  \overline{D}_p(u) 
   = D_p(u, \, \Omega) :=
   \inf\!\big\{ &\liminf_{j\to+\infty}
   D_p(\varphi_j, \, \Omega) \colon \\
   &(\varphi_j)_{j\in\N} \subseteq 
   C^\infty(\overline{\Omega}, \, \NN),
   \ u_j\to u \textrm{ strongly in } L^p(\Omega) \big\}
 \end{split}
\end{equation}
for all~$u\in W^{1,p}(\Omega, \, \NN)$.
(As usual, we define~$\inf\varnothing = +\infty$.)
By definition, the functional~$\overline{D}_p$
is sequentially lower semicontinuous with respect to
weak~$W^{1,p}(\Omega)$-convergence. We write
\begin{equation} \label{Hweak}
 \Hw^{1,p}(\Omega, \, \NN)
 := \left\{u\in W^{1,p}(\Omega, \, \NN)\colon
  \overline{D}_p(u) < +\infty \right\} \! .
\end{equation}
The set~$\Hw^{1,p}(\Omega, \, \NN)$ is the sequential
closure of~$C^\infty(\overline{\Omega}, \, \NN)$
with respect to weak~$W^{1,p}$-convergence. We also write
$\Hs^{1,p}(\Omega, \, \NN)$ for the closure
of~$C^\infty(\overline{\Omega}, \, \NN)$ with respect
to the \emph{strong} $W^{1,p}$-topology.
We will write~$u\in\Hsl^{1,p}(\Omega, \, \NN)$ if and only if,
for any ball~$B$ with~$B\overline{B}\subseteq\Omega$,
the restriction~$u_{|B}$ belongs to~$\Hs^{1,p}(B, \, \NN)$.

% \begin{goal} \label{goal:relaxed}
%  {\BBB The function~$\overline{D}_p\colon \Hw^{1,p}(\Omega, \, \NN)\to\R$
%  is continuous with respect to the \emph{strong} $W^{1,p}$-topology.}
% \end{goal}

% Following the terminology of~\cite{ABO1, ABO2},
We will say that a map~$u\colon\overline{\Omega}\to\R^m$
% has \emph{nice singularities} 
belongs to~$R^{1,p}(\Omega, \, \NN)$ if and only if there exist
a finite set~$\Sigma = \Sigma(u)$ and a constant~$C$ such
that~$u$ is locally Lipschitz 
in~$\overline{\Omega}\setminus\Sigma$ and
\begin{equation} \label{nicesing}
 \abs{\nabla u(x)} \leq \frac{C}{\dist(x, \, \Sigma)}
 \qquad \textrm{for a.e. } x\in\Omega\setminus \Sigma.
\end{equation}
In the terminology of~\cite{ABO1,ABO2},
maps in~$R^{1,p}(\Omega, \, \NN)$ have ``nice singularities''.
Any $\NN$-valued Sobolev map can be strongly approximated
by maps with nice singularities.

\begin{theorem} \label{th:nicedensity}
 The set~$R^{1,p}(\Omega, \, \NN)$ is dense 
 in~$W^{1,p}(\Omega, \, \NN)$, with respect to the strong topology.
\end{theorem}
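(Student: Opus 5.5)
This is Bethuel's density theorem \cite[Theorem~2]{Bethuel-Density}, adapted to the class~$R^{1,p}$. The plan is to rebuild Bethuel's construction and check that it produces maps satisfying the pointwise bound~\eqref{nicesing} with a finite singular set. Throughout, the domain has dimension $n = p+1$, so $p = n-1$.

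\textbf{Step 1: extension.} Since $\Omega$ is a bounded Lipschitz domain, we first extend $u$ to a map in $W^{1,p}(\Omega',\NN)$ on a slightly larger open set $\Omega' \supset \overline{\Omega}$. We do this by reflection across $\partial\Omega$ in local bi-Lipschitz charts. This operation preserves the constraint $u(x) \in \NN$, because it is only a change of variables. It therefore suffices to approximate $u$ on $\overline{\Omega}$ by maps defined on $\Omega'$.

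\textbf{Step 2: good grid and skeleton approximation.} Fix a grid $\GG(h,y)$ with $h \le h_0$. Using Lemma~\ref{lemma:Fubini} together with Fubini's theorem and a Chebyshev-type selection of $y$, we choose a translation for which the restriction of $u$ to every $j$-skeleton $R_j(h,y,\Omega)$, $j \le n-1$, is Sobolev. We also require the energy bound
\[
 h^{n-j}\int_{R_j} \abs{\nabla u}^p \,\d\H^j \le C\int_{\Omega'}\abs{\nabla u}^p \,\d x,
\]
and, in addition, that the oscillation of $u$ on each cube is controlled by a mollification at scale $h$.

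On cells where $u$ is close to its average (the good cubes, in the sense of Bethuel), we replace $u$ by $\Pi \circ (u * \rho_h)$. Here $\Pi$ denotes the nearest-point projection onto $\NN$, which is well defined because the mollified map stays within the tubular neighbourhood of $\NN$. This replacement is smooth on those cubes.

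On the bad cubes we argue by induction on the skeleton dimension. On the $(n-1)$-skeleton, $p = n-1 = \dim$, so $W^{1,p}$ on the $(n-1)$-dimensional faces sits in the borderline (VMO) regime. The key point is that on the $j$-skeleton with $j \le n-1 = p$, we have $j \le p$. For $j < p$, the restriction is continuous, by Morrey's embedding. For $j = p$, the restriction is VMO, and it can be approximated by continuous maps via Schoen--Uhlenbeck. In either case we can modify $u$ on the $(n-1)$-skeleton to obtain a Lipschitz $\NN$-valued map $v$ with a small energy error. Unlike in Bethuel's general statement, this requires \emph{no} topological assumption, because we only need continuity up to dimension $n-1$.

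\textbf{Step 3: homogeneous extension.} In each $n$-cube $K$ with centre $c_K$, we define
\[
 w(x) := v\!\left(c_K + \frac{h}{2}\,\frac{x-c_K}{\abs{x-c_K}_\infty}\right).
\]
This map is locally Lipschitz away from the finite set of centres. It satisfies $\abs{\nabla w(x)} \le C_h/\abs{x-c_K}$, which gives~\eqref{nicesing} with $\Sigma = \{c_K\}$ (a finite set, by boundedness). Since $n = p+1 > p$, we have $\abs{x}^{-p} \in L^1$ in dimension $p+1$. The energy of $w$ on $K$ is therefore bounded by $C h \int_{\partial K}\abs{\nabla v}^p$. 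Summing over the bad cubes and using the grid estimate from Step 2 controls the total energy of $w$ there by the energy of $u$ on the bad set, whose measure tends to $0$. This gives $\norm{w-u}_{W^{1,p}} \to 0$ as $h \to 0$.

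\textbf{Main obstacle.} The delicate step is the one inherited from Bethuel: gluing the good cubes, where $\Pi \circ (u * \rho_h)$ is used, to the bad cubes, where the homogeneous extension is used, along their common faces, while still controlling the energy as $h \to 0$. I would follow Bethuel's scheme: define $v$ on the whole $(n-1)$-skeleton first, using the projected mollification on the faces of good cubes. Then apply the homogeneous extension only inside the bad cubes, and fill the good cubes by the projected mollification itself, since it is consistent with $v$ on their boundary by construction. Finally, a standard diagonal argument in $h$ and in the VMO approximation yields strong convergence in $W^{1,p}(\Omega,\NN)$.
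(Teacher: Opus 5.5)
You are reconstructing Bethuel's argument, and that is all the paper offers for this statement: it gives no proof and only cites \cite[Theorem~2]{Bethuel-Density}. That theorem, incidentally, also needs no topological hypothesis; $\pi_{\lfloor p\rfloor}(\NN)=0$ enters only his strong density theorem for smooth maps.

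\textbf{The gap.} The step you flag as the main obstacle is not resolved by what you write, and as written it fails. On the $p$-skeleton you use two recipes:
\begin{itemize}
 \item $\Pi(u*\rho_h)$ on every face of a good cube, including faces shared with a bad cube;
 \item a Lipschitz approximation of $u$ on faces shared by two bad cubes, where $\Pi(u*\rho_h)$ is unavailable because $u*\rho_h$ need not be near $\NN$ there.
\end{itemize}
Take a bad cube $K$ with one face on a good neighbour and an adjacent face on a bad neighbour. It contains a $(p-1)$-cell where the two recipes meet, and there $\Pi(u*\rho_h)\neq u$ in general. So $v$ jumps along such cells. Its homogeneous extension then jumps across the $p$-dimensional cone over them, and $w\notin W^{1,p}$. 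Your phrase ``consistent with $v$ on their boundary by construction'' concerns only good cubes; the inconsistency sits inside $\partial K$ for bad $K$. Inducting on the skeleton dimension does not remove it. It only pushes the interface to lower-dimensional cells, where you still have to connect $u$ with $\Pi(u*\rho_h)$.

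\textbf{What is missing.} Making that connection needs two quantitative inputs that Step~2 does not provide. Its estimate $h\int_{R_p}\abs{\nabla u}^p\lesssim\int_{\Omega'}\abs{\nabla u}^p$ is global and says nothing cube by cube.
\begin{itemize}
 \item $\norm{u-\Pi(u*\rho_h)}_{L^\infty}$ must be small on the interface cells, so that an interpolation between the two can be projected onto $\NN$.
 \item The interpolation collars and the faces of bad cubes must carry vanishing energy, i.e.\ $\sum_{K\ \mathrm{bad}}h\int_{\partial K}\abs{\nabla v}^p\to0$. Step~3 silently uses this too; mere boundedness is not enough for strong convergence.
\end{itemize}

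\textbf{One repair.} Also declare a cube bad when some scaled skeleton energy $h^{p-j}\int_{R_j\cap K'}\abs{\nabla u}^p$ is above a threshold, for some $j\le p$ and some neighbour $K'$. By Lemma~\ref{lemma:Fubini} the number of bad cubes stays $O(h^{p-n})$. Morrey on $(p-1)$-cells plus Poincar\'e then give the $L^\infty$ closeness on interfaces. A collar of width $\sim h$, as in the proof of Lemma~\ref{lemma:Sbdd}, does the gluing. You must further select $y$ so that the skeleton energy carried by bad cubes and their neighbours is $o(1)$. This holds on average in $y$ by equi-integrability of $\abs{\nabla u}^p$, but it is an extra selection you did not make.

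\textbf{The route in the paper.} The construction in Section~\ref{sect:RS}, following \cite{BousquetPonceVanSchaftingen-II}, avoids gluing altogether.
\begin{itemize}
 \item Opening (Proposition~\ref{prop:opening}) gives face-by-face energy bounds and VMO-type control near the skeleton of the cubes around bad ones.
 \item Mollifying with a variable scale $\psi_h$, tiny on bad cubes, of order $h$ away from them, with $\abs{\nabla\psi_h}<1$, yields a single map $\varphi_{\psi_h}*u^{\mathrm{op}}_h$ close to $\NN$ everywhere.
 \item The homogeneous extension uses that map's own traces, so continuity across good/bad interfaces is automatic.
\end{itemize}
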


Theorem~\ref{th:nicedensity} was proved by Bethuel
in~\cite[Theorem~2]{Bethuel-Density}.
% (The statement in~\cite{Bethuel-Density} does not mention
% explicitly that the approximating maps satisfy gradient bounds
% as in~\eqref{nicesing}, but this follows from the proof).
Generalisations of this result to~$W^{s,p}$-spaces
are given in~\cite{BousquetPonceVanSchaftingen-I, 
BousquetPonceVanSchaftingen-II, Detaille2023}.

\subsection{Free homotopy classes of maps~$\SS^{p}\to\NN$}
\label{sect:homotopy}

From now on, $p\geq 1$ will denote \emph{an integer}.
Given continuous maps~$h_1$, $h_2\colon\SS^{p}\to\NN$,
we will say that~$h_1$, $h_2$ are freely homotopic
(or just homotopic, for short) to one another
if and only if there exists a continuous map
$H\colon \overline{B}^{p+1}_2\setminus B^{p+1}\to\NN$ such that
\[
 H(x) = h_1(x), \quad H(2x) = h_2(x)
 \qquad \textrm{for any } x\in \partial B^{p+1}.
\]
Since the annulus~$\overline{B}^{p+1}_2\setminus B^{p+1}$
is homeomorphic to the cylinder~$[0, \, 1]\times\SS^{p}$,
this definition of free homotopy is equivalent
to the usual one, in which the map~$H$ is defined
on~$[0, \, 1]\times\SS^{p}$. However, free homotopy
differs substantially from, say, based homotopy,
which is used to define the homotopy groups,
in that the curves~$h_1$, $h_2$ are not required to pass
through a given base point in~$\NN$.
Free homotopy is an equivalence relation
on~$C(\SS^{p}, \, \NN)$,
and we denote the set of equivalence classes as~$\GN$.

Let~$\mathscr{P}(\GN)$ denote the power set of~$\GN$.
There is a natural binary \emph{multi-valued}
operation in~$\GN$, i.e. a map
\[
 +\colon \GN\times\GN\to\mathscr{P}(\GN),
\]
which is defined as follows.
Let~$D\subseteq\R^{p+1}$ be a closed ball with two holes,
say for instance~$D := D_0 \setminus(D_1 \cup D_2)$, where
\begin{equation} \label{domain+}
 D_0 := \overline{B}^{p+1}_2,
 \qquad D_1 := B^{p+1}_{1/2}(-1, \, 0, \, \ldots, \, 0),
 \qquad D_2 := B^{p+1}_{1/2}( 1, \, 0, \, \ldots, \, 0).
\end{equation}
We identify all the boundary components of~$D$,
i.e.~$\partial D_0$, $\partial D_1$, $\partial D_2$,
with the unit circle~$\SS^{p}$, up to translation and dilations.
% (Note that translations and dilations preserve the orientation.)
In particular, we give all the boundary components the same
orientation, the same we give to~$\SS^{p}$.
Given free homotopy classes~$\sigma_1$ and~$\sigma_2$ in~$\GN$,
we define~$\sigma_1 + \sigma_2$ as the set of homotopy
classes~$\sigma\in\GN$ for which
there exists a continuous map~$G\colon D\to\NN$ such that
$G$ restricted to~$\partial D_1$, $\partial D_2$, $\partial D_0$
respectively belongs to the homotopy
classes~$\sigma_1$, $\sigma_2$, $\sigma$.
We extend~$+$ to an $n$-ary operation,
for any integer~$n\geq 2$, by setting e.g.
\[
 (\sigma_1 + \sigma_2) + \sigma_3
 := \bigcup_{\sigma\in \sigma_1 + \sigma_2} \left(\sigma + \sigma_3\right)
\]
and so on.

\begin{lemma} \label{lemma:polygroup}
 The operation~$+$ on~$\GN$ satisfies the following properties:
 \begin{enumerate}[label=(\arabic*)]
  \setcounter{enumi}{-1}
  \item $\sigma_1 + \sigma_2$ is nonempty for all~$\sigma_1$, $\sigma_2\in\GN$;
  \item for all~$\sigma_1$, $\sigma_2$ and~$\sigma_3\in\GN$, there holds
  $(\sigma_1 + \sigma_2) + \sigma_3 = \sigma_1 + (\sigma_2 + \sigma_3)$;
  \item there exists~$0\in\GN$ such that~$\sigma + 0 = \{\sigma\}$
  for all~$\sigma\in\GN$;
  \item for all~$\sigma\in\GN$ there exists~$-\sigma\in\GN$
  such that~$\sigma + (-\sigma) \ni 0$;
  \item for all~$\sigma_1$ and~$\sigma_2\in\GN$, there holds
  $\sigma_1 + \sigma_2 = \sigma_2 + \sigma_1$.
 \end{enumerate}
\end{lemma}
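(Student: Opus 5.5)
The plan is to verify each property directly by constructing maps on balls with holes. We glue such pieces along boundary spheres and use that the domains involved are homeomorphic to one another. The general principle is the following. If $D'$ is a closed ball with $k$ holes and $G\colon D'\to\NN$ is continuous, then the free homotopy classes of the restrictions of $G$ to the boundary spheres are unchanged under any homeomorphism of $D'$ that preserves each boundary component and its orientation. Moreover, we may freely insert collar annuli, on which we interpolate through a free homotopy. Throughout, the boundary spheres are oriented as in the definition: all as translated and dilated copies of $\SS^p$. Here $\partial D_0$ is the outer sphere, so its class plays the role of the "sum".

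Property (0) follows by taking $h_1\in\sigma_1$ and $h_2\in\sigma_2$. Since $\NN$ is connected, we first homotope them (freely, via collars) to maps sending a fixed point of $\SS^p$ to a basepoint $z_0$. We then define $G$ on $D$ as follows. Collapse a suitable set, namely a $p$-dimensional "wedge"-type complex, to get a retraction of $D$ onto $\partial D_1\cup\partial D_2\cup\gamma$, where $\gamma$ is a segment joining them. We put $G=h_i$ on $\partial D_i$ and $G\equiv z_0$ on $\gamma$, and compose with the retraction. Concretely, $D$ deformation retracts onto $\partial D_1\cup\gamma\cup\partial D_2$, a wedge of two $p$-spheres up to homotopy, so any pair of boundary maps on the inner spheres extends to $D$. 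The restriction to $\partial D_0$ then gives some class $\sigma\in\sigma_1+\sigma_2$.

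Property (2) takes $0$ to be the class of constants. If $G$ has constant restriction on $\partial D_2$, we cap $D_2$ with a constant map. We obtain a map on the annulus $\overline{B}_2\setminus D_1$, which is homeomorphic, orientation-compatibly, to the standard annulus. Hence the classes on $\partial D_1$ and $\partial D_0$ coincide, giving $\sigma+0\subseteq\{\sigma\}$; nonemptiness follows from (0).

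Property (3) takes $-\sigma$ to be the class of $h\circ r$, where $r$ is a reflection of $\SS^p$. We use a map on $D$ built by inversion. The annulus $A=\overline{B}_{1}(x_1)\setminus B_{1/2}(x_1)$ around $D_1$ carries $h$ radially. Its outer sphere, seen from outside, is identified with $\partial D_2$ after an orientation-reversing reflection. Equivalently, a tubular "pipe" joining $\partial D_1$ to $\partial D_2$, with the rest of $D$ mapped through a constant after retraction, yields constant on $\partial D_0$.

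Property (4) uses the homeomorphism of $D$, a rotation by $\pi$ in the $(x_1,x_2)$-plane. It swaps $D_1$ and $D_2$, preserves $\partial D_0$, and preserves the orientations of all boundary spheres, since rotations are orientation preserving and commute with the identification up to a rotation of $\SS^p$, which is homotopic to the identity.

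The main obstacle is associativity (1). For it we will show that both sides equal the set of classes $\sigma$ for which there is a continuous map on a ball with three holes, with prescribed classes $\sigma_1,\sigma_2,\sigma_3$ on the inner spheres and $\sigma$ on the outer one. For "$\subseteq$", a map realising $\tau\in\sigma_1+\sigma_2$ and one realising $\sigma\in\tau+\sigma_3$ glue, after collars adjusting the homotopy within $\tau$, along the common sphere into a map on a three-holed ball. For "$\supseteq$", given a map on the three-holed ball $E$, we choose a sphere $\Sigma\subseteq E$ enclosing the first two holes only. Its restriction defines a class $\tau$, and cutting along $\Sigma$ exhibits $\tau\in\sigma_1+\sigma_2$ and $\sigma\in\tau+\sigma_3$. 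The same holds with a sphere enclosing holes $2,3$. Both decompositions come from the same three-holed ball up to an orientation-preserving homeomorphism fixing boundaries, which gives equality. The care needed is checking that orientations induced on $\Sigma$ from both sides match the convention; they do, since all spheres are oriented as translates and dilates of $\SS^p$.
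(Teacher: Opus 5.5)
Your proposal is correct. Where the paper actually gives arguments, you use the same ideas. For (0) you retract $D$ onto the two inner spheres joined by an arc; the paper fills a thin tube with geodesic paths, while you use based representatives and a constant arc. You also identify $0$ as the class of constants and $-\sigma$ as the orientation-reversed class, as the paper does.

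The one real difference is (4). The paper builds a diffeomorphism of $D$ that is the identity on $\partial D_0$, rotating each $\partial B_r$ by an angle growing from $0$ to $\pi$ as $r$ decreases from $2$ to $3/2$. You take the rigid rotation by $\pi$ in the $(x_1,x_2)$-plane instead. Yours is simpler and costs nothing, since the fact that rotations of $\SS^p$ are homotopic to the identity is needed on the inner spheres in either version.

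Beyond that, the paper only names $0$ and $-\sigma$ and does not address (1) at all. Your capping argument for (2), the tube for (3), and the description of both iterated sums as the classes realised on a three-holed ball for (1) are genuine additions, and they are correct. The cutting step in (1) relies on the configuration-independence you invoke, to place a sphere around holes $1,2$ (resp.\ $2,3$) that avoids the third hole.

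There are two spots to tighten.
\begin{enumerate}
 \item In (2), the restriction to $\partial D_2$ is only null-homotopic, not constant. Extend $G$ into $D_2$ through the null-homotopy rather than by a constant.
 \item In (3), drop the ``inversion'' sentence, which is not a construction, and make the tube argument explicit. Suppose $G$ depends only on $x'=(x_2,\ldots,x_{p+1})$ on a thin tube along the $x_1$-axis joining $\partial D_1$ to $\partial D_2$, and is constant elsewhere. Write $\iota_i(\omega)=c_i+\omega/2$, with $c_i$ the centre of $D_i$. Then $G\circ\iota_2=G\circ\iota_1\circ s$ with $s(\omega_1,\omega')=(-\omega_1,\omega')$, because the outward normals of $D_1$ and $D_2$ at the two ends of the tube are $e_1$ and $-e_1$. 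This is exactly $0\in\sigma+(-\sigma)$.
\end{enumerate}
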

\begin{proof}[Sketch of the proof]
 The proof of Lemma~\ref{lemma:polygroup}
 follows from elementary arguments in topology.
 We sketch the arguments, but omit the details
 for the sake of brevity.
 Property~(0) depends on the assumption
 that~$\NN$ is (path-)connected, which allows us to extend
 any continuous map~$g\colon\partial D_1\cup\partial D_2\to\NN$
 to a continuous map~$G\colon D\to\NN$. 
 For instance, we consider a small ``tube''~$\Sigma$,
 homeomorphic to~$[0, \, 1]\times\overline{B}^{p-1}$,
 joining a small geodesic disk~$B_\eps(x_1)\subseteq D_1$ 
 with a geodesic disk in~$B_\eps(x_2) \subseteq D_2$. 
 We define~$G$ inside~$\Sigma$ by travelling along geodesic paths
 in~$\NN$ that join~$g_{|B_\eps(x_1)}$ with~$g_{|B_\eps(x_2)}$.
 Then, we extend $G$ to the whole of~$D$ by observing 
 that~$D$ retracts onto~$\Sigma\cup \partial D_1\cup\partial D_2$,
 that is, there is a continuous
 map~$D\to \Sigma\cup \partial D_1\cup\partial D_2$ which is equal
 to the identity on~$\Sigma\cup \partial D_1\cup\partial D_2$.
 The class~$0\in\GN$ is the homotopy class of constant maps,
 and~$-\sigma$ is the homotopy class that contains
 the continuous map~$\overline{h}\colon\SS^{p}\to\NN$
 obtained from an arbitrary~$h\in\sigma$ by reversing the orientation.
 Property~(4) follows from the fact that there exists
 a diffeomorphism~$D\to D$ that coincides with the identity
 on~$\partial D_0$ but exchanges~$\partial D_1$ with~$\partial D_2$.
 Such a diffeomorphism can be constructed, for instance, by
 rotating each sphere~$\partial B^{p+1}_r$ 
 about a given axis~$e\in\SS^{k}$ by an angle~$\theta(r)$,
 which increases continuously from~$0$ to~$\pi$
 as~$r$ decreases from~$2$ to~$3/2$.
\end{proof}

Lemma~\ref{lemma:polygroup} shows that~$\GN$
is a (commutative) polygroup, that is,
it enjoys algebraic properties that are similar to 
those of groups, except that the operation
is multi-valued. Note that we may have
$\sigma + (-\sigma) \supsetneq \{0\}$,
as shown in Remark~\ref{rk:conjugacy}.
In view of Lemma~\ref{lemma:polygroup}, we can adopt
some notation that is commonly used for the operation of group,
such as dropping the parenthesis in $n$-ary sums and
writing~$\sigma_1 - \sigma_2$ instead of~$\sigma_1 + (-\sigma_2)$.

\begin{remark} \label{rk:conjugacy}
 The set~$\GN$ can be described in terms of more standard
 objects in algebraic topology, namely homotopy groups.
 Indeed, let~$z_0\in\NN$ be an arbitrary point.
 Then, there is a natural bijection between elements of~$\GN$
 and orbits of the action of the fundamental
 group~$\pi_1(\NN, \, z_0)$ on~$\pi_{p}(\NN, \, z_0)$
 (see e.g.~\cite[Sections~VI and~VII.D]{Mermin}).
 In case~$k=2$, $\pi_1(\NN, \, z_0)$ acts on itself by conjugation,
 so elements in~$\GN$ are in bijection with conjugacy classes
 in~$\pi_1(\NN, \, z_0)$. By identifying elements of~$\GN$
 with orbits in~$\pi_{p}(\NN, \, z_0)$
 under the action of~$\pi_1(\NN, \, z_0)$,
 we can express the operation~$+$ on~$\GN$ as
 \[
  \sigma_1 + \sigma_2
  = \left\{(\textrm{orbit of } s_1 s_2)\colon
  s_1\in\sigma_1, \ s_2\in\sigma_2 \right\} \! .
 \]
%  A proof of this claim in case~$p=1$ is given, 
%  e.g., in~\cite[]{pirla};
%  the proof in case~$p>1$ is similar.
 Now suppose, for instance, that~$p = 1$ and~$\pi_1(\NN)$
 is not Abelian. Then, $\sigma + (-\sigma)$ contains all the
 conjugacy classes of elements of the form~$s g s^{-1} g^{-1}$
 with~$s\in\sigma$ and~$g\in\pi_1(\NN, \, z_0)$,
 and if~$s$ does not commute with all the other elements
 of~$\pi_1(\NN, \, z_0)$ then~$\sigma + (-\sigma)$ contains
 more than one homotopy class.
\end{remark}

\begin{remark} \label{rk:orientation}
 If~$X$ is a topological manifold 
 and~$\phi\colon\SS^p\to X$ an homeomorphism,
 it makes sense to define the homotopy class 
 of a map~$u\colon X\to\NN$ as the homotopy class 
 of~$u\circ\phi\colon\SS^p\to\NN$. 
 However, these classes \emph{could} depend on
 the choice of~$\phi$. To avoid this ambiguity,
 whenever~$X$ is a smooth manifold 
 (or a piecewise linear one, such as the
 boundary of a convex polyhedron) we fix an
 orientation of~$X$ and choose~$\phi$
 as an \emph{orientation-preserving} diffeomorphism.
 Choosing another orientation-preserving 
 diffeomorphism will not change the homotopy
 class~$\sigma$ of~$u$, because orientation-preserving
 diffeomorphisms~$X\simeq\SS^p\to\SS^p$
 are homotopic to one another. However, changing 
 the orientation of~$X$ will change the homotopy 
 class of~$u$ from~$\sigma$ to~$-\sigma$. 
\end{remark}

For later convenience, we state a few
consequences of the definition of~$+$.
Let~$I := [0, \, 1]$ and let~$R^{p}\subseteq \partial I^{p+2}$
be the $p$-skeleton of the~$(p+2)$-cube~$I^{p+2}$.
Let~$u\in C(R^p, \, \NN)$.
For each boundary $(p+1)$-face~$K$ of~$I^{p+2}$
(with the orientation induced by~$I^{p+2}$),
we let~$\sigma(u, \, \partial K)\in\GN$ be
the free homotopy class of~$u$ restricted to~$\partial K$.
This is well-defined, consistently with the definition 
given above, because~$\partial K$ is homeomorphic 
to the sphere~$\SS^p$.

\begin{lemma} \label{lemma:homotopysum-cubes}
 For each boundary $(p+1)$-face~$K_0$ of~$I^{p+2}$,
 and each~$u\in C(R^p, \, \NN)$, we have
 \begin{equation} \label{homotopysum}
  \sigma(u, \, \partial K_0) \in 
  \sum_{K\neq K_0} \sigma(u, \, \partial K),
 \end{equation}
 where the sum is taken over the $p$-faces~$K$
 of~$I^{p+2}$ with~$K\neq K_0$.
\end{lemma}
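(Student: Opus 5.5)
The idea is to use the map $u$ itself, extended over the faces $K\neq K_0$, to produce a map on a ball with holes, and then apply the definition of the multi-valued sum $+$ repeatedly. The boundary $\partial I^{p+2}$ is homeomorphic to $\SS^{p+1}$. Removing the interior of the face $K_0$ leaves the union $X := \bigcup_{K\neq K_0} K$, which is homeomorphic to a closed $(p+1)$-ball whose boundary sphere is $\partial K_0$. The faces $K\neq K_0$ are closed $(p+1)$-cells inside $X$ that overlap only along their boundaries, and their boundaries all lie in $R^p$.

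The plan is to flatten $X$ into $\R^{p+1}$ via an explicit homeomorphism, for instance radial projection from the centre of $K_0$ (or from a point just outside $K_0$), composed with a map onto $\overline{B}^{p+1}_2$. Under this map $\partial K_0$ goes to $\partial D_0$. Next I would shrink each face $K\neq K_0$ slightly to a smaller concentric cell $K'$ and define $G$ on the collar $K\setminus \mathrm{int} K'$ by $G(x) := u(\pi_K(x))$, where $\pi_K$ is the radial retraction of the collar onto $\partial K$. Together with $u$ on $R^p$, this gives a continuous map
\[
G\colon X\setminus\bigcup_{K\neq K_0}\mathrm{int}K'\to\NN .
\]
The domain of $G$ is a ball with $2p+3$ holes. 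Its outer boundary is $\partial K_0$, on which $G$ lies in the class $\sigma(u,\partial K_0)$. Each inner boundary $\partial K'$ carries a map homotopic to $u_{|\partial K}$, since the collar is itself a homotopy.

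Orientation is the delicate point, and it needs checking against Remark~\ref{rk:orientation}. The outward orientation of the faces induced by $I^{p+2}$ must be compared with the orientation used in the definition of $+$, where all boundary components of $D$ carry the orientation of $\SS^p$ as boundaries of balls in $\R^{p+1}$. After flattening, the faces $K\neq K_0$ all receive the same orientation as boundaries of holes, while $K_0$ receives the opposite orientation, because it bounds the complement. Since changing orientation replaces $\sigma$ by $-\sigma$, the flattening map can be chosen so that everything matches the statement. If the orientations came out globally reversed, one would instead get $-\sigma(u,\partial K_0)\in\sum -\sigma(u,\partial K)$, and the statement would follow by applying the orientation reversal of Remark~\ref{rk:orientation} consistently on every face.

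To finish, I would prove the $N$-hole version of the definition by induction on $N$: a continuous map on a ball with $N$ holes, with classes $\sigma_1,\dots,\sigma_N$ on the inner spheres and $\sigma$ on the outer sphere, satisfies $\sigma\in\sigma_1+\dots+\sigma_N$. For the inductive step, cut the domain along an embedded sphere that encloses holes $1,\dots,N-1$ and separates them from hole $N$. By the inductive hypothesis, the class $\tau$ of $G$ on this sphere lies in $\sigma_1+\dots+\sigma_{N-1}$. The remaining region is diffeomorphic to the domain $D$ of \eqref{domain+}, so $\sigma\in\tau+\sigma_N$. By the definition of the $n$-ary sum, this gives $\sigma\in\sigma_1+\dots+\sigma_N$; associativity and commutativity from Lemma~\ref{lemma:polygroup} make the order of the summands irrelevant. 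The main obstacle is the orientation bookkeeping in the flattening step. The remaining steps are standard topological constructions.
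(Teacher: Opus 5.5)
Your construction is the paper's. There, too, one removes a point of~$K_0$, flattens $\partial I^{p+2}$ into~$\R^{p+1}$, composes $u$ with a retraction of a ball with holes onto the image of~$R^p$ (your collars with $G=u\circ\pi_K$ do exactly this), and appeals to the definition of~$+$. Your induction on the number of holes supplies the $N$-hole version of that definition, which the paper uses tacitly, and your count of $2p+3$ holes is the correct one. What fails is the orientation step.

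You rightly note that, after flattening, each $\partial K$ with $K\neq K_0$ bounds a hole. Meanwhile $\partial K_0$, oriented as the boundary of~$K_0$, bounds the complementary region and so carries the orientation opposite to that of~$\partial D_0$ in the definition of~$+$. But this is a \emph{relative} sign, and no choice of flattening removes it. $X$ is connected, so the flattening is either orientation-preserving or orientation-reversing, and switching between the two reverses all $2p+4$ spheres at once. The ``globally reversed'' case of your fallback therefore never occurs. What your argument actually proves is
\[
 -\sigma(u, \, \partial K_0)\in\sum_{K\neq K_0}\sigma(u, \, \partial K),
\]
i.e.\ $0\in\sum_K\sigma(u, \, \partial K)$ over all faces. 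The paper's sketch, which is silent on orientations, proves no more than this either. No argument can do better: with every face oriented as induced by~$I^{p+2}$, the relation as stated is false. For instance, take $p=1$ and $\NN=\SS^1$, where classes are winding numbers. Let $u$ wind once around~$\SS^1$ along a single edge $e\subseteq\partial K_0$ and be constant on the other edges of~$I^3$. The two faces containing~$e$, namely $K_0$ and some~$K_1$, traverse $e$ in opposite directions. Hence $\sigma(u, \, \partial K_0)=\pm1$, $\sigma(u, \, \partial K_1)=\mp1$, and all other classes vanish, so the statement would force $\sigma(u, \, \partial K_0)=-\sigma(u, \, \partial K_0)$. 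You should therefore prove the sign-corrected relation (equivalently, give $\partial K_0$ the reversed orientation) instead of asserting that the orientations can be matched. The correction is harmless where the lemma is used. In the proof of Lemma~\ref{lemma:Sbdd} it changes~\eqref{Stopbdd-dipole} at most by an overall sign, which affects neither the flat-norm estimate nor Lemma~\ref{lemma:Sstable}.
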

\begin{proof}
 Let~$p_0$ be a  point in the interior of~$K_0$.
 Since~$\partial T^{p+2}$ is homeomorphic to~$\SS^{p+1}$,
 the stereographic projection induces an 
 homeomorphism~$\partial I^{p+2}\setminus\{p_0\}\to\R^{p+1}$
 that maps~$\partial K_0$ into a ($p$-dimensional) sphere.
 The image~$\widetilde{R}^p$ of~$R$ under that homeomorphism
 is homotopically equivalent to --- in fact, a deformation retract of ---
 a disk~$D_p$ with~$2^{p+2} - 2$ holes,
 one for each boundary face~$K$ of~$I^{p+2}$ other than~$K_0$.
 By composition with the homeomorphism and the retraction,
 the map~$u$ on~$R^p$ induces a continuous map~$v\colon D_p\to\NN$,
 such that the homotopy class of~$v$ on each boundary component
 of~$D_p$ is the same as the homotopy class of~$u$ 
 on the boundary of the associated~$K$. 
 Then, \eqref{homotopysum} follows from the definition of 
 the operation~$+$ on~$\GN$.
\end{proof}

In the next lemma, we consider continuous maps~$\R^p\to\NN$
that are constant in a neighbourhood of infinity.
Any such map has a well defined homotopy class
in~$\GN$, by composition with the
stereographic projection that maps~$\R^p$ to~$\SS^p$ minus a point.
Let~$z_0\in\NN$ be a point, let~$B_1$, $B_2$
be closed disjoint balls in~$\R^p$, and let~$v_1$, $v_2$
be continuous maps~$\R^p\to\SS^p$ such that~$v_1 = z_0$ in~$\R^p\setminus B_1$, $v_2 = z_0$ in~$\R^p\setminus B_2$.
We denote the homotopy classes of~$v_1$, $v_2$
by~$\sigma_1$, $\sigma_2$, respectively.
Let~$v\colon\R^p\to\NN$ be defined as~$v := v_1$ in~$B_1$,
$v := v_2$ in~$B_2$, and~$v := z_0$ elsewhere.

\begin{lemma} \label{lemma:homotopysum}
 The homotopy class of~$v$ belongs to~$\sigma_1 + \sigma_2$.
\end{lemma}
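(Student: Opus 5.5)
The plan is to reduce the statement to the definition of the multi-valued operation~$+$ on~$\GN$, which uses the two-holed ball~$D = D_0\setminus(D_1\cup D_2)$ of~\eqref{domain+}. We have to produce a continuous map~$G\colon D\to\NN$ with prescribed boundary classes: $\sigma_1$ on~$\partial D_1$, $\sigma_2$ on~$\partial D_2$, and the class of~$v$ on~$\partial D_0$. A convenient device is to work in~$\R^{p+1}$ and view~$\R^p$ as the hyperplane~$\{x_{p+1} = 0\}$.

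\textbf{Step 1 (normalise the picture).} Since~$B_1$ and~$B_2$ are disjoint closed balls in~$\R^p$, there is a homeomorphism of~$\R^p$, isotopic to the identity and equal to a dilation outside a large ball, that moves~$B_1$ and~$B_2$ to small balls centred at~$(\mp 1, 0, \ldots, 0)$. Composing~$v_1$, $v_2$ and~$v$ with it changes none of the homotopy classes, because an isotopy induces a free homotopy. So we may assume~$B_i$ is the ball of radius~$1/4$ centred at the centre~$c_i$ of~$D_i$, restricted to the hyperplane.

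\textbf{Step 2 (build~$G$).} We extend~$v$ to a map~$V\colon\R^{p+1}\to\NN$ that is constant equal to~$z_0$ outside the two ``lenses''~$B_i\times\R$. The natural choice is
\[
 V(x', t) := v(c_i + \lambda_i(x', t)(x' - c_i)),
\]
where~$\lambda_i$ is chosen so that the whole closed solid ball~$\overline{B}^{p+1}_{1/4}(c_i)$ is squeezed to the point value~$v(c_i)$. Equivalently, on each annulus around~$c_i$ we use the radial extension of the ``suspension'' of~$v_i$.

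A simpler formulation identifies, via stereographic projection, a map~$\R^p\to\NN$ that is constant near infinity with a map~$\SS^p\to\NN$. Then:
\begin{itemize}
 \item on a sphere~$\partial B^{p+1}_r(c_i)$ with~$r\in(1/4,1/2)$, we define~$G$ as~$v_i$ transported to that sphere by stereographic projection from its north pole, sending~$B_i$ to a cap;
 \item on the rest of~$D$ we set~$G := z_0$ except near the hyperplane.
\end{itemize}
Concretely, we let~$G(x', t) := v(x')$ for~$\abs{t}\le \eta(x')$ and~$G := z_0$ otherwise. Here~$\eta$ is a cutoff that is positive on~$B_1\cup B_2$ and vanishes on their boundaries. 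This is continuous because~$v = z_0$ on~$\partial B_i$.

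\textbf{Step 3 (identify the boundary classes).} The restriction of~$G$ to each sphere~$\partial D_i$ (radius~$1/2$, which contains the graph region over~$B_i$ intersected appropriately) is~$z_0$ outside a band. Collapsing that complement exhibits the restriction as~$v_i$ composed with a degree-one map to~$\SS^p$, so its class is~$\sigma_i$. The same argument on~$\partial D_0$ shows that its restriction is~$v$ composed with a degree-one collapse, so its class is that of~$v$.

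The main obstacle is orientation bookkeeping. The collapse maps must have degree~$+1$ rather than~$-1$ with respect to the orientations of Remark~\ref{rk:orientation}. The outer sphere and the inner spheres see the hyperplane from consistent sides: the upper hemisphere of each sphere meets the ``top'' of the band. So we fix the collapse to be the one induced by the upper hemisphere, which makes the degrees agree for all three boundaries. If a sign mismatch does appear, we replace the profile~$\abs{t}\le\eta$ by~$0\le t\le\eta$, a one-sided band, and compare degrees directly. Compatibility with the orientation convention in~$\GN$ then yields that the class of~$v$ lies in~$\sigma_1+\sigma_2$.
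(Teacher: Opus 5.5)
There is a genuine gap in Steps~2--3: the map $G$ you build is either discontinuous or homotopically trivial on the boundary spheres. Step~1, moving the balls by an isotopy, is fine.

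First, $G(x',t):=v(x')$ for $\abs{t}\le\eta(x')$ and $G:=z_0$ otherwise jumps across the surfaces $\{\abs{t}=\eta(x')\}$. It does so at every interior point $x'\in B_i$ with $v(x')\neq z_0$. The fact that $v=z_0$ on $\partial B_i$ only handles the lateral boundary of the lens. The one-sided band $0\le t\le\eta$ has the same jump, plus another one at $t=0$. Smoothing the profile in $t$ does not help. A continuous passage from $v(x')$ to $z_0$ along each vertical line over $B_i$, with value $z_0$ over $\partial B_i$, is a null-homotopy of $v_i$ relative to $\partial B_i$. Such a null-homotopy exists only if $\sigma_i=0$.

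Second, even ignoring continuity, extending $v$ off the hyperplane through the vertical projection cannot produce the classes $\sigma_i$. For any continuous $f\colon\SS^p\to\R^p$, the map $v\circ f$ is null-homotopic (contract $f$ inside $\R^p$). This also disposes of your ``natural choice'' $V(x',t)=v(c_i+\lambda_i(x'-c_i))$. Concretely, the geometry works against you in three ways:
\begin{itemize}
\item A sphere around $c_i$ meets the two-sided band in an upper and a lower cap, which see $v_i$ with opposite orientations.
\item $B_i$ has radius $1/4$ while the hole $D_i$ has radius $1/2$. So a thin band lies entirely inside the removed hole, and then $G\equiv z_0$ on $D$.
\item The spheres $\partial B_r(c_i)$ with $r\in(1/4,1/2)$ that you mention are not in $D$ at all.
\end{itemize}
Hence the ``degree-one collapse'' of Step~3 is not available, and the orientation fallback does not touch the real problem.

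What is missing is a construction in which each boundary sphere sees the data through a degree-one map onto $\R^p\cup\{\infty\}\simeq\SS^p$. The complement of a small cap must be sent to $\infty$, where the value is $z_0$. The paper gets this in two moves:
\begin{itemize}
\item Via stereographic projection, it places $v$ on the \emph{outer} sphere $\partial B^{p+1}_2$. There $v$ equals $z_0$ except on two small caps around $(\mp 2,0,\ldots,0)$.
\item It extends radially: $G(x):=v(2x/\abs{x})$ for $\abs{x}\ge 1$, and $G:=z_0$ for $\abs{x}<1$.
\end{itemize}
This is continuous on $D$, because the points of $\{\abs{x}=1\}$ whose radial image lies in a cap are inside the holes $D_1\cup D_2$.

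The cones over the caps meet $\partial D_i$ near $(\mp\tfrac12,0,\ldots,0)$, where $\abs{x}<1$ and $G=z_0$, and near $(\mp\tfrac32,0,\ldots,0)$. At the latter points the outward normals of $D_i$ and $D_0$ coincide, and the radial projection is a positive dilation on tangent vectors. So it is orientation-preserving, and $G$ has class $\sigma_i$ on $\partial D_i$ and restricts to $v$ on $\partial D_0$.
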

\begin{proof}
 Upon translating and rescaling the balls~$B_1$, $B_2$
 (which does not affect the homotopy classes) 
 and composing with the stereographic projections,
 we can identify~$v$ with a map defined on~$\partial B^{p+1}_2$
 that is equal to~$z_0$ everywhere, except 
 for two small geodesic disks~$B_1^\eps$, $B_2^\eps$
 of radius~$\eps < 1/100$
 centred at the points~$(-2, \, 0, \, \ldots, \, 2)$
 and~$(2, \, 0, \, \ldots, \, 0)$.
 Let~$D := \overline{B}^{p+1}_2\setminus (D_1\cup D_1)$,
 with~$D_1$, $D_2$ as in~\eqref{domain+}.
 We define a continuous map~$G\colon D\to\NN$ as follows:
 \[
  G(x) := 
  \begin{cases}
   v\!\left(\dfrac{2x}{\abs{x}}\right) 
    &\textrm{if } x\in D, \abs{x}\geq 1, \\
   z_0 &\textrm{if } x\in D, \abs{x} < 1.
  \end{cases}
 \]
 The map~$G$ is constant everywhere except
 for two small ``tubes'', of thickness proportional
 to~$\eps$, which connect the disks~$B^1_\eps$, $B^2_\eps$
 with~$\partial D_1$, $\partial D_2$.
 The homotopy class of~$G$ restricted to~$\partial D_1$,
 $\partial D_2$ is the same as the homotopy class
 of~$v$ restricted to~$D_1^\eps$, $D_2^\eps$,
 respectively. Therefore, the lemma follows from
 the definition of~$+$.
\end{proof}

In the next lemma, we show that ``changing the base point''
does not affect free homotopy classes.
More precisely, let~$z_0\in\NN$, $z_1\in\NN$ 
be given points in~$\NN$, $\gamma\colon I\to\NN$
be a continuous path from~$z_0$ to~$z_1$,
and let~$v_0\colon\R^p\to\NN$ be a continuous map
such that~$v_0 = z_0$ in~$B^p\setminus B^p$.
We define
\begin{equation} \label{actionofpi1}
 v_1(x) :=
 \begin{cases}
  v_0(x) &\textrm{if } \abs{x} \leq 1, \\
  \gamma(\abs{x} - 1) &\textrm{if } 1 < \abs{x} \leq 2, \\
  z_1 &\textrm{if } \abs{z} > 2.
 \end{cases}
\end{equation}

\begin{lemma} \label{lemma:actionofpi1}
 The maps~$v_0$ and~$v_1$ belong to the same 
 homotopy class in~$\GN$.
\end{lemma}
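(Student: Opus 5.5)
We prove the claim by writing down an explicit free homotopy, in the sense of Section~\ref{sect:homotopy}, between~$v_0$ and~$v_1$. Both maps are constant near infinity, so their classes are those of the compositions with the inverse stereographic projection. The first step is to replace the two maps by maps defined on a fixed closed ball. By radial rescaling we may regard~$v_0$ as a map on~$\overline{B}^p_2$ that equals~$z_0$ on~$\overline{B}^p_2\setminus B^p$. Collapsing~$\partial B^p_2$ to a point gives a quotient~$\overline{B}^p_2/\partial B^p_2\simeq\SS^p$; this is an orientation-preserving homeomorphism and is compatible with the stereographic identification. The map~$v_1$ restricted to~$\overline{B}^p_2$ is constant, equal to~$z_1$, on~$\partial B^p_2$, so it also descends to this quotient. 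It therefore suffices to produce a continuous family~$w_t\colon\overline{B}^p_2\to\NN$, $t\in I$, such that each~$w_t$ is constant on~$\partial B^p_2$, $w_0 = v_0$ and~$w_1 = v_1$. Each~$w_t$ then descends to the quotient~$\SS^p$, and the family gives a homotopy~$[0,1]\times\SS^p\to\NN$, which is equivalent to the annulus formulation.

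The second step is the homotopy itself. We follow the path~$\gamma$ only up to time~$t$:
\[
 w_t(x) :=
 \begin{cases}
  v_0(x) &\textrm{if } \abs{x}\leq 1,\\
  \gamma\bigl(t(\abs{x}-1)\bigr) &\textrm{if } 1<\abs{x}\leq 2.
 \end{cases}
\]
The two formulas agree on~$\abs{x}=1$, since~$v_0 = z_0 = \gamma(0)$ there. Hence~$(t,x)\mapsto w_t(x)$ is continuous on~$I\times\overline{B}^p_2$. On~$\partial B^p_2$ we have~$w_t\equiv\gamma(t)$, which is constant for each~$t$, so the family descends to a continuous homotopy on~$I\times\SS^p$. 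At the endpoints, $w_0 = v_0$ because~$\gamma(0)=z_0$, and~$w_1 = v_1$ on~$\overline{B}^p_2$.

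The only point that needs care is the passage from~$\R^p$ with a constant value at infinity to~$\SS^p$. The value at the collapsed point, namely~$\gamma(t)$, moves with~$t$. This is permitted precisely because we work with \emph{free} homotopy, and it is the reason the statement holds in~$\GN$ rather than in a based homotopy group. Continuity of the descended map at the collapsed point follows from uniform continuity of~$\gamma$: near~$\partial B^p_2$, $w_t(x)$ is close to~$\gamma(t)$, uniformly in~$t$. Finally, the homotopy classes do not depend on the chosen identifications, by Remark~\ref{rk:orientation}, since all of them are orientation-preserving.
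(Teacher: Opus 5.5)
Your proof is correct and is essentially the paper's argument. The paper uses the homotopy $H(x,t)=\gamma(\min\{\abs{x}-1,\,t\})$ for $\abs{x}\geq 1$ on all of $\R^p$, which is constant equal to $\gamma(t)$ near infinity, and then applies the stereographic projection. You reparametrise the same homotopy as $\gamma(t(\abs{x}-1))$ on $\overline{B}^p_2$ and collapse $\partial B^p_2$ to a point; in both versions the essential point is the one you highlight, namely that the value at the point at infinity moves along $\gamma$, which free homotopy allows.
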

\begin{proof}
 Let~$H\colon \R^p\times [0, \, 1]\to\NN$ be given by
 \[
  H(x, \, t) :=
  \begin{cases}
   v_0(x) &\textrm{if } \abs{x} \leq 1, \\
   \gamma(\abs{x} - 1) &\textrm{if } 1 < \abs{x} \leq 1 + t, \\
   \gamma(t)           &\textrm{if } \abs{z} > 1 + t. 
  \end{cases}
 \]
 The map~$H$ is continuous, with~$H(\cdot, \,  0) = v_0$
 and~$H(\cdot, \, 1) = v_1$. 
 Upon applying the stereographic projection,
 $H$ induces a homotopy between~$v_0$ and~$v_1$
 in an annulus $\overline{B}_2^{p+1}\setminus B_1^{p+1}$.
\end{proof}

\subsubsection{Homotopy classes in critical Sobolev spaces}
\label{sect:homotopy_W1p}

% Given a finite atlas~$\{(U_i, \, \varphi_i)\}_{i=1}^{N}$ for~$M$,
% where each~$\varphi_i\colon U_i\subseteqM\to \varphi_i(U_i)\subseteq\R^p$
% is a coordinate chart, we define~$W^{1,p}(M, \, \R^m)$
% as the set of measurable maps~$u\colonM\to\R^m$
% such that~$u\circ\varphi_i^{-1}\in W^{1,p}(\varphi_i(U_i), \, \R^m)$
% for all indices~$i$ and~$u(x)\in\NN$ for~$\H^p$-almost all~$x\inM$.
% (It can be checked that this definition is independent of
% the choice of the atlas; see e.g.~{\BBB\textbf{REF}} for details).
The notion of homotopy class extends to Sobolev maps
in~$W^{1,p}(\SS^p, \, \NN)$, although those are not
necessarily continuous. (In fact, Brezis and Nirenberg~\cite{BN1}
showed that homotopy classes are well-defined
in an even larger class, i.e. the space~$\mathrm{VMO}(\SS^p, \, \NN)$,
but we will not need such generality here.) 
The construction is based on the tubular neighbourhood theorem,
which implies the existence of a nearest-point projection onto~$\NN$
defined in a neighbourhood of~$\NN$ of uniform thickness.
More precisely, there exists~$\delta_* = \delta_*(\NN) > 0$
with the following properties: for all~$y\in\R^m$
with~$\dist(y, \, \NN)\leq \delta_*$,
there exists a unique~$\pi_{\NN}(y)\in\NN$ such that
\begin{equation} \label{projN}
 \abs{y - \pi_\NN(y)} < \abs{y - z} 
 \qquad \textrm{for all } z\in\NN, \, y\neq z,
\end{equation}
and the map~$y\mapsto\pi_\NN(y)$ is smooth.
% (See e.g.~{\BBB\textbf{Simon}}).

The projection~$\pi_\NN$ allows us to define homotopy
classes of maps in~$W^{1,p}(\SS^p, \, \NN)$, as follows.
For all~$u\in W^{1,p}(\SS^p, \, \NN)$, all~$x\in\SS^{p}$
and all~$\eps > 0$, let
\begin{equation} \label{average}
 \overline{u}_\eps(x) :=
 \fint_{B_\eps(x)\cap\SS^p} u(y) \, \d\H^p(y),
\end{equation}
where the integral average is taken component-wise.
The map~$x\in\SS^p\mapsto \overline{u}_\eps(x)$ is continuous,
by Lebesgue's dominated convergence theorem.
While~$\overline{u}_\eps(x)$ is not necessarily an element of~$\NN$,
its distance from~$\NN$ can be estimated using the Poincar\'e inequality.
More precisely, since~$u(y)\in\NN$ for a.e.~$y\in\SS^p$, we have
\begin{equation} \label{Poincare-closetoN}
 \begin{split}
  \dist(\overline{u}_\eps(x), \, \NN)^p
  &\leq \fint_{B_\eps(x)\cap\SS^p} \abs{u(y) - \overline{u}_\eps(x)}^p\d\H^p
  \leq C\int_{B_\eps(x)\cap\SS^p} \abs{\nabla u(y)}^p\d\H^p.
 \end{split}
\end{equation}
The constant~$C$ in front of the right-hand side
can be chosen independently of ($x$ and)~$\eps$,
by scaling reasons. As a consequence, 
there exists~$\eps_*(u) > 0$ such that
$\dist(\overline{u}_\eps(x), \, \NN)^p\leq\delta_*$
for all~$x\in\SS^p$ and all~$\eps \in (0, \, \eps_*(u)]$,
so that the projection~$u_\eps := \pi_{\NN}\circ\overline{u}_\eps$
is well-defined and continuous.
For all~$\eps$, $\eps^\prime\in (0, \, \eps_*(u)]$,
the maps~$u_\eps$, $u_{\eps^\prime}$ are homotopic to one another;
a homotopy (in the sense previously defined) is given by
$H_\eps(r\omega) := u_{(r-1)\eps + (2-r)\eps^\prime}(\omega)$
for all~$r\in [1, \, 2]$, $\omega\in\SS^p$.
We define the homotopy class of~$u\in W^{1,p}(\SS^p\, \, \NN)$
as the homotopy class of~$u_\eps$, for arbitrary~$\eps\in (0, \, \eps_*(u)]$.
The theory of homotopy classes for Sobolev 
and, more generally, VMO maps
is carried out in~\cite{BN1, BN2}. 
Here, we recall only two properties.

\begin{prop}[{\cite{White86}, \cite[Theorem~1]{BN1}}]
\label{prop:smallenergy}
 For any~$u\in W^{1,p}(\SS^p, \, \NN)$, there
 exists a positive value~$\eta = \eta(u, \, p, \, \NN)$
 such that any map~$v\in W^{1,p}(\SS^p, \, \NN)$ satisfying
 \[
  \norm{\nabla u - \nabla v}_{L^p(\SS^p)}
   + \norm{u - v}_{L^p(\SS^p)} \leq \eta
 \]
 belongs to the same homotopy class as~$u$.
 In particular, there exists~$\alpha_p = \alpha_p(\NN) > 0$
 such that any map~$u\in W^{1,p}(\SS^p, \, \NN)$
 with~$\norm{\nabla u}_{L^p(\SS^p)}^p \geq \alpha_p$
 belongs to the trivial homotopy class.
\end{prop}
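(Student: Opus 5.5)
The plan is to work directly with the mollify-and-project definition of the homotopy class from Subsection~\ref{sect:homotopy_W1p}. The key point is that, in the critical dimension, the Poincar\'e estimate~\eqref{Poincare-closetoN} controls $\dist(\overline{u}_\eps(x), \, \NN)$ by the local energy $\int_{B_\eps(x)\cap\SS^p}\abs{\nabla u}^p$. This bound is scale invariant, so it does not depend on $\eps$. (In the second assertion the inequality should read $\norm{\nabla u}^p_{L^p(\SS^p)}\leq\alpha_p$; I prove that version.)

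\emph{First assertion.}
\begin{enumerate}[label=(\alph*)]
 \item Fix $u$. By absolute continuity of the integral, uniformly over the compact $\SS^p$, choose $\eps\in(0, \, \eps_*(u)]$ such that
 \[
  \sup_{x\in\SS^p}\int_{B_\eps(x)\cap\SS^p}\abs{\nabla u}^p\,\d\H^p\leq \theta,
 \]
 where $\theta$ is so small that $C(2^{p}\theta)\leq(\delta_*/2)^p$.
 \item Take $\eta\leq\theta^{1/p}$. Then, for every $v$ as in the statement and every $x$, Minkowski's inequality gives $\int_{B_\eps(x)\cap\SS^p}\abs{\nabla v}^p\leq 2^p\theta$. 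Applying~\eqref{Poincare-closetoN} to $v$, the averages $\overline{v}_{\eps'}$ lie within $\delta_*/2$ of $\NN$ for all $\eps'\leq\eps$. Hence $v_\eps=\pi_\NN\circ\overline{v}_\eps$ is defined and represents the class of $v$. Here I use that the class does not depend on the admissible radius, and the radii up to $\eps$ are admissible for $v$.
 \item Since $\H^p(B_\eps(x)\cap\SS^p)\geq c\eps^p$, H\"older's inequality gives
 \[
  \abs{\overline{u}_\eps(x)-\overline{v}_\eps(x)}\leq C\eps^{-1}\norm{u-v}_{L^p(\SS^p)}\leq C\eps^{-1}\eta .
 \]
 Shrinking $\eta$ further, depending on $\eps$ and hence on $u$, makes this at most $\delta_*/2$. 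Then the segment $t\overline{u}_\eps(x)+(1-t)\overline{v}_\eps(x)$ stays in the $\delta_*$-neighbourhood of $\NN$ for all $t\in[0,1]$.
 \item Composing this straight-line homotopy with $\pi_\NN$ and reparametrising on the annulus $\overline{B}_2^{p+1}\setminus B^{p+1}$ gives a free homotopy between $u_\eps$ and $v_\eps$.
\end{enumerate}

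\emph{Second assertion.} Choose $\alpha_p$ with $C\alpha_p\leq\delta_*^p$. Then~\eqref{Poincare-closetoN} holds with right-hand side at most $\delta_*^p$ for every radius $\eps$, because the local energy is bounded by the total energy. So every $\eps\in(0, \, 2]$ is admissible, and the homotopy $H_\eps$ between different radii is available along the whole range. For $\eps=2$ the ball $B_2(x)\cap\SS^p$ is all of $\SS^p$. Thus $\overline{u}_2$ is constant, $u_2$ is constant, and the class of $u$ is trivial.

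The only delicate step is (a)--(c): the same radius $\eps$ must work for $u$ and all nearby $v$, and the $L^p$-closeness must translate into uniform closeness of the averages at that fixed scale. This is why $\eta$ depends on $u$ through $\eps$ and is not uniform.
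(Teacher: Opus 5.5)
Your proof is correct. The paper gives no argument of its own here; it only cites White and Brezis--Nirenberg. What you wrote is essentially the Brezis--Nirenberg mechanism, specialised to the critical Sobolev case:
\begin{itemize}
 \item the Poincar\'e estimate makes maps in $W^{1,p}(\SS^p, \, \NN)$ VMO, with the modulus controlled by the local energy;
 \item $W^{1,p}$-closeness lets a single averaging scale $\eps$ serve for $u$ and for every nearby $v$;
 \item $L^p$-closeness becomes uniform closeness of the averages at that fixed scale;
 \item the straight-line homotopy, composed with $\pi_\NN$, then joins $u_\eps$ and $v_\eps$.
\end{itemize}

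Compared with the citation, your version works directly with the mollify-and-project definition of homotopy class adopted in Section~2. So no identification with the notions used by Brezis--Nirenberg or White is needed. The cited results, for their part, give more: stability under BMO-closeness for VMO maps, and general compact domains.

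Your direct treatment of the second assertion is worth keeping. Since $\eta$ in the first part depends on $u$, the uniform constant $\alpha_p$ does not literally follow ``in particular'' from it. To get it that way one must add two remarks:
\begin{itemize}
 \item for a constant map $u\equiv z$, the scale $\eps$ can be fixed independently of $z$;
 \item Poincar\'e puts any small-energy $v$ $L^p$-close to the constant $\pi_\NN(\overline{v})$, where $\overline{v}$ is the mean of $v$ over $\SS^p$.
\end{itemize}

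You were also right to correct the inequality to $\norm{\nabla u}_{L^p(\SS^p)}^p\leq\alpha_p$; as printed, with $\geq$, the claim is false.

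One point to tighten: your argument for the second assertion uses the Poincar\'e bound uniformly for all radii up to $2$. Strictly speaking, the paper's scaling remark only justifies this for small radii. For radii bounded below, it follows at once from the Poincar\'e--Wirtinger inequality on the whole sphere, together with $\H^p(B_\eps(x)\cap\SS^p)\geq c$. That is all you need, since the right-hand side you use is the total energy anyway.
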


\begin{prop}[{\cite[Proposition p.~267]{SchoenUhlenbeck2}}]
\label{prop:densitysmooth}
 The set~$C^\infty(\SS^p, \, \NN)$ is dense
 in~$W^{1,p}(\SS^p, \, \NN)$, with respect to the strong topology. 
\end{prop}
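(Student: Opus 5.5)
The plan is the classical Schoen--Uhlenbeck argument: regularise by averaging, then project back onto~$\NN$ with the nearest-point projection~$\pi_\NN$ from~\eqref{projN}. What makes the projection possible is that~$p$ equals the dimension of~$\SS^p$. Two changes to the construction of Subsection~\ref{sect:homotopy_W1p} are needed. First, the averages must be smooth in~$x$, so instead of the sharp averages~\eqref{average} we use smooth mollified averages. Second, the smallness of~$\dist(\overline{u}_\eps, \NN)$ must hold uniformly in~$x\in\SS^p$ once~$\eps$ is small.

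Fix~$u\in W^{1,p}(\SS^p, \, \NN)$ and a smooth radial kernel~$\rho\colon[0,+\infty)\to[0,+\infty)$ supported in~$[0,1]$ with~$\rho>0$ on~$[0,1/2]$. For~$x\in\SS^p$ set
\[
 \overline{u}_\eps(x) := \frac{\int_{\SS^p}\rho(\abs{x-y}/\eps)\, u(y)\,\d\H^p(y)}{\int_{\SS^p}\rho(\abs{x-y}/\eps)\,\d\H^p(y)}.
\]
This map is smooth in~$x$, and it is a weighted average of~$u$ over~$B_\eps(x)\cap\SS^p$ with weights comparable to~$\eps^{-p}$. Arguing as in~\eqref{Poincare-closetoN}, we get
\[
 \dist(\overline{u}_\eps(x), \NN)^p \leq \fint \abs{u-\overline{u}_\eps(x)}^p \leq C\int_{B_\eps(x)\cap\SS^p}\abs{\nabla u}^p\,\d\H^p,
\]
with~$C$ independent of~$x$ and~$\eps$. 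The independence comes from scaling: the Poincaré inequality on a~$p$-dimensional ball of radius~$\eps$ carries the factor~$\eps^p$, which exactly cancels the~$\eps^{-p}$ normalisation. By absolute continuity of the integral, together with compactness of~$\SS^p$, the quantity~$\sup_{x}\int_{B_\eps(x)\cap\SS^p}\abs{\nabla u}^p$ tends to~$0$ as~$\eps\to0$. Hence there is~$\eps_*>0$ such that~$\dist(\overline{u}_\eps(x),\NN)\leq\delta_*$ for all~$x\in\SS^p$ and all~$\eps\leq\eps_*$. For these~$\eps$ the map~$u_\eps:=\pi_\NN\circ\overline{u}_\eps$ belongs to~$C^\infty(\SS^p,\NN)$.

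It remains to show that~$u_\eps\to u$ in~$W^{1,p}(\SS^p)$. By standard properties of mollification on a compact manifold (in local charts, up to smooth weights), $\overline{u}_\eps\to u$ in~$W^{1,p}$, and along a subsequence also almost everywhere. Since~$\pi_\NN$ is smooth with bounded derivatives on the closed~$\delta_*$-neighbourhood of~$\NN$, and~$\pi_\NN(u)=u$ almost everywhere, we have~$u_\eps\to u$ in~$L^p$ by dominated convergence. For the gradients,
\[
 \nabla u_\eps - \nabla u = D\pi_\NN(\overline{u}_\eps)(\nabla\overline{u}_\eps-\nabla u) + \bigl(D\pi_\NN(\overline{u}_\eps)-D\pi_\NN(u)\bigr)\nabla u.
\]
The first term tends to~$0$ in~$L^p$ because~$D\pi_\NN$ is bounded. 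The second tends to~$0$ by dominated convergence: it converges to~$0$ almost everywhere and is bounded by~$2\norm{D\pi_\NN}_\infty\abs{\nabla u}$. Every subsequence therefore has a further subsequence along which~$u_\eps\to u$ in~$W^{1,p}$, and this gives the density claim.

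The only genuinely delicate point is the uniform closeness of~$\overline{u}_\eps$ to~$\NN$. This is exactly where criticality~$\dim\SS^p=p$ enters, through the scale invariance of~$\int_{B_\eps}\abs{\nabla u}^p$. In supercritical dimension it would fail, and the statement itself would fail too. Everything else is routine.
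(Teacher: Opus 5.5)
Your proof is correct. It is essentially the classical Schoen--Uhlenbeck argument that the paper only cites (no proof is given there): mollify, use the scale-invariant Poincar\'e bound as in~\eqref{Poincare-closetoN} together with the absolute continuity of $\int\abs{\nabla u}^p$ to get uniform closeness to~$\NN$, then compose with~$\pi_\NN$, which is the same averaging-and-projection device the paper uses in Section~\ref{sect:homotopy_W1p} to define homotopy classes. The two steps you leave implicit are standard: the $W^{1,p}$-convergence of your rotation-invariant averages (which follows, e.g., because they commute with the Killing fields of~$\SS^p$), and the identity $D\pi_\NN(u)\nabla u = \nabla u$ a.e.
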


\subsubsection{Norms of homotopy classes}
\label{sect:norms}

We now define a family of functions on~$\pN$
that satisfy ``norm-like'' properties (and are norms
if the operation on~$\GN$ is single-valued).
These functions we use depends on the choice of a base point
in~$\NN$, but they are all equivalent to one another.
Let~$N_p := (1, \, 0, \, \ldots, \, 0)$ be the North
pole of the sphere, and let~$z_0$ be a given point in~$\NN$.
For any~$\sigma\in\pN$, we define
\begin{equation} \label{norm_p}
 \abs{\sigma}_{z_0} := \inf\left\{
  \int_{\SS^{p}} \abs{\nabla_\top v}^p \, \d\H^p \colon
  v\in (W^{1,p}\cap C)(\SS^p, \, \NN), \ v \in \sigma, \ v(N_p) = z_0 \right\} \! .
\end{equation}
Here~$\nabla_\top$ denotes the Riemannian gradient on~$\SS^{p}$,
applied component-wise to $v = (v^1, \, \ldots, \, v^m)$.
% We do \emph{not} claim that the infimum at the right-hand 
% side of~\eqref{norm_p} is a minimum.
Before proving that~\eqref{norm_p} defines a norm,
we reformulate slightly the infimum at the right-hand side.
% First, since smooth maps are dense in~$W^{1,p}(\SS^p, \, \NN)$
% (Proposition~\ref{prop:densitysmooth}),
% we can restrict the infimum on the right-hand side
% to smooth maps~$v$ in the homotopy class~$\sigma$. 
Without loss of generality, we can further restrict
our attention to maps~$v$ that are equal to~$z_0$
in a neighbourhood of the North pole.
Then, since the $p$-Dirichlet energy is conformally invariant,
up to composition with the stereographic 
projection~$\SS^p\setminus\{N_0\}\to\R^p$ we can write
\begin{equation} \label{norm_plane}
 \begin{split}
  \abs{\sigma}_{z_0} = \inf\bigg\{
   \int_{\R^p} \abs{\nabla v}^p \d x
   \colon &v\in C(\R^p, \, \NN), \
   \nabla v \in L^p(\R^p, \, \R^{m\times p}),
   \ v \in \sigma, \\[-3mm]
   &v = z_0\textrm{ in a neighbourhood of infinity} \bigg\} .
 \end{split}
\end{equation}
By rescaling, we also have
\begin{equation} \label{norm_ball}
 \begin{split}
  \abs{\sigma}_{z_0} = \inf\bigg\{
   \int_{B^p} \abs{\nabla v}^p \d x
   \colon &v\in (W^{1,p}\cap C)(B^p, \, \NN),
   \ v \in \sigma, \ v = z_0\textrm{ on } \partial B^n \bigg\} .
 \end{split}
\end{equation}
These reformulations will be useful in proving
the following result.

\begin{lemma} \label{lemma:norm_p}
 The function~$\abs{\,\cdot\,}_{z_0}\colon\pN\to\R$ satisfies 
 the following properties:
 \begin{enumerate}[label=(\roman*), ref=\roman*]
  \item $\abs{-\sigma}_{z_0} = \abs{\sigma}_{z_0}$
  for all~$\sigma\in\pN$;
  \item for all~$\sigma_1\in\N$ and~$\sigma_2\in\NN$,
  there exists~$\sigma\in\sigma_1 + \sigma_2$
  such that~$\abs{\sigma}_{z_0} \leq \abs{\sigma_1}_{z_0} + \abs{\sigma_2}_{z_0}$;
  \item \label{item:alpha_p} 
  there exists a constant~$\alpha_p = \alpha_p(\NN) > 0$,
  depending only on~$p$ and~$\NN$, such that
  \begin{equation*} 
   \abs{\sigma}_{z_0} \geq \alpha_p \qquad
   \textrm{for all } \sigma\in\pN, \ \alpha\neq 0, \ z_0\in\NN;
  \end{equation*}
  \item \label{item:equivalentnorms} 
  there exists a constant~$C>0$, depending only on~$\NN$
  and~$p$, such that
  \begin{equation*} 
   \abs{\sigma}_{z_1} \leq C \abs{\sigma}_{z_0}
  \end{equation*}
  for all~$z_0\in\NN$, $z_1\in\NN$, and all~$\sigma\in\pN$.
 \end{enumerate}
\end{lemma}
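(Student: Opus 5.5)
We prove the four properties separately, using mainly the reformulations \eqref{norm_plane} and \eqref{norm_ball} together with the conformal invariance of the $p$-Dirichlet energy.

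For (i), we take a competitor $v$ in \eqref{norm_plane} for $\sigma$ and compose it with a reflection $x\mapsto(-x_1,x_2,\ldots,x_p)$ of $\R^p$. The resulting map has the same energy, still equals $z_0$ near infinity, and lies in $-\sigma$, because reversing the orientation of the domain sphere changes the class to $-\sigma$ (Remark~\ref{rk:orientation}). Taking infima gives $\abs{-\sigma}_{z_0}\le\abs{\sigma}_{z_0}$, and the reverse inequality follows by symmetry.

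For (ii), given $\eps>0$ we choose competitors $v_1\in\sigma_1$, $v_2\in\sigma_2$ in \eqref{norm_plane} with energies at most $\abs{\sigma_i}_{z_0}+\eps$. By density in the class we may take them equal to $z_0$ outside compact sets. After translating them (which does not change the energy), they are equal to $z_0$ outside two disjoint closed balls $B_1$, $B_2$. We glue them as in Lemma~\ref{lemma:homotopysum}: the map $v$ equals $v_1$ on $B_1$, $v_2$ on $B_2$ and $z_0$ elsewhere. It is continuous, equals $z_0$ near infinity, has energy at most $\abs{\sigma_1}_{z_0}+\abs{\sigma_2}_{z_0}+2\eps$, and its class lies in $\sigma_1+\sigma_2$. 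The set $\sigma_1+\sigma_2$ may a priori be multivalued, so the class of $v$ may depend on $\eps$. This is handled as follows. The energies of the glued maps are bounded, so by assumption~(H$_2$) only finitely many classes occur. Hence some fixed $\sigma\in\sigma_1+\sigma_2$ is attained along a sequence $\eps\to0$, and for that $\sigma$ we get $\abs{\sigma}_{z_0}\le\abs{\sigma_1}_{z_0}+\abs{\sigma_2}_{z_0}$. (Under (H$_1$) the sum is single-valued anyway, which makes this step trivial.)

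For (iii), we use the second part of Proposition~\ref{prop:smallenergy}, read as its intended content: maps in $W^{1,p}(\SS^p,\NN)$ whose energy is smaller than a threshold $\alpha_p(\NN)$ are null-homotopic. By the conformal invariance used to derive \eqref{norm_plane}, every competitor in \eqref{norm_p} for $\sigma\ne0$ therefore has energy at least $\alpha_p$. This threshold does not depend on $z_0$. To justify that independence directly, one uses the small-energy argument: $W^{1,p}$ on $\SS^p$ with $p=\dim\SS^p$ embeds in VMO, and the Poincaré estimate \eqref{Poincare-closetoN} shows that $\overline{u}_\eps$ lies within $\delta_*$ of $\NN$ for every $\eps$ once the total energy is small. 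Then $H(r\omega)=\pi_\NN\circ\overline{u}_{\eps(r)}(\omega)$, with $\eps(r)$ running up to the scale of the whole sphere, contracts $u$ to the constant map $\pi_\NN$ of the global average. The constant in \eqref{Poincare-closetoN} depends only on $p$, so the threshold depends only on $p$ and $\NN$.

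For (iv), we fix a geodesic path $\gamma$ from $z_1$ to $z_0$ of length at most $\mathrm{diam}(\NN)$. Given a competitor $v_0$ for $\abs{\sigma}_{z_0}$ in \eqref{norm_ball}, extended by $z_0$, we form $v_1$ as in \eqref{actionofpi1}. By Lemma~\ref{lemma:actionofpi1} it lies in the same free class, which under (H$_1$) is $\sigma$ itself. The energy of the connecting annulus is a constant $C_0(\NN,p)$; for $p=1$ it can be made arbitrarily small by stretching the annulus. This gives $\abs{\sigma}_{z_1}\le\abs{\sigma}_{z_0}+C_0$. If $\sigma\ne0$, part (iii) gives $C_0\le(C_0/\alpha_p)\abs{\sigma}_{z_0}$, so $\abs{\sigma}_{z_1}\le(1+C_0/\alpha_p)\abs{\sigma}_{z_0}$. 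If $\sigma=0$, both sides vanish, since constant maps are competitors.

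The main obstacle is (iii). We need a homotopy-free threshold that is uniform in the base point. Proposition~\ref{prop:smallenergy} as printed has the inequality reversed, so we must either cite it in its intended form or carry out the averaging and projection contraction just described, keeping track of the uniformity of the scale-invariant Poincaré constant.
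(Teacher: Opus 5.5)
Your proposal is correct and follows the same route as the paper: orientation reversal for (i), gluing two competitors via Lemma~\ref{lemma:homotopysum} for (ii), the small-energy threshold of Proposition~\ref{prop:smallenergy} (with its printed inequality read the intended way) for (iii), and the connecting path of Lemma~\ref{lemma:actionofpi1} combined with (iii) for (iv). Your pigeonhole step via (H\textsubscript{2}) in (ii) handles the possible dependence of the glued class on the chosen competitors, which the paper's ``take the infimum'' passes over; one harmless slip is that your parenthetical in (iv) is backwards, since for $p=1$ the connecting energy is twice the length of~$\gamma$ under any reparametrisation, whereas for $p\geq 2$ a logarithmic stretching makes it vanish, but your argument never relies on this aside.
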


% The proof of Lemma~\ref{goal:norm_p} relies on the 
% following auxiliary result.
% 
% \begin{goal} \label{goal:genmin}
%  For all~$\sigma\in\pN$, there exists finitely
%  many~$\sigma_1$, \ldots, $\sigma_N\in\pN$ and 
%  maps~$v_1$, \ldots, $v_N\in (W^{1,p}\cap C)(\SS^p, \, \NN)$,
%  such that
%  \[
%   \abs{\sigma}_{z_0} = \sum_{i=1}^N \abs{\sigma_j}_{z_0},
%  \]
%  and
%  \[
%   v_j\in\sigma_i, \qquad v_i(N_p) = z_0, \qquad 
%   \abs{\sigma_i}_{z_0} = \int_{\SS^p} \abs{\nabla_\top v_i}^p \d\H^p
%  \]
%  for all~$i\in\{1, \, \ldots, \, N\}$.
% \end{goal}
% \begin{proof}
%  {\BBB\textbf{TO DO!!! Concentration compactness
%  for~\eqref{norm_plane} + regularity to show that they are continuous.}}
% \end{proof}

\begin{proof}%[Proof of Proposition~\ref{goal:norm_p}]
 We prove each statement separately.
 
 \medskip
 \noindent
 \textit{Proof of~(i).}
 This property follows immediately from the definition~\eqref{norm_p}
 of~$\abs{\,\cdot\,}_{z_0}$. 
 
 \medskip
 \noindent
 \textit{Proof of~(ii).}
 We use the characterisation~\eqref{norm_ball} of~$\abs{\,\cdot\,}_{z_0}$.
 Let~$v_1\in (W^{1,p}\cap C)(B^p, \, \NN)$,
 $v_2(W^{1,p}\cap C)(B^p, \, \NN)$ be maps, with trace
 on~$\partial B^n$ equal to~$z_0$
 belonging to the homotopy classes~$\sigma_1$, $\sigma_2$, respectively.
 Let~$\overline{B}_1(x_1)$, $\overline{B}_1(x_1)$ be disjoint,
 closed balls in~$\R^p$. Let~$v\colon\R^p\to\NN$ be defined
 as~$v(x) := v_1(x - x_1)$ for~$x \in B_1(x_1)$,
 $v(x) := v_2(x - x_2)$ for~$x\in B_1(x_2)$, and~$v(x) := v_0$ otherwise.
 Lemma~\ref{lemma:homotopysum} shows that the
 homotopy class~$\sigma$ of~$v$ belongs to~$\sigma_1 + \sigma_2$,
 so~\eqref{norm_plane} implies
 \[
  \abs{\sigma}_{z_0}
  \leq \int_{\R^p} \abs{\nabla v}^p \d x
  = \int_{\R^p} \abs{\nabla v_1}^p \d x 
   + \int_{\R^p} \abs{\nabla v_2}^p \d x.
 \]
 Statement~(iii) follows by taking 
 the infimum over all admissible~$v_1\in\sigma_1$, 
 $v_2\in\sigma_2$.
 
 \medskip
 \noindent
 \textit{Proof of~(iii).} This is an immediate consequence 
 of Proposition~\ref{prop:smallenergy}.
 
 \medskip
 \noindent
 \textit{Proof of~(iv).}
 We use again the formulation~\eqref{norm_ball} of the problem.
 Let~$z_0\in\NN$, $z_1\in\NN$ be given points in~$\NN$.
 Let~$\gamma\colon [0, \, 1]\to\NN$ a minimising geodetic
 (parametrised by multiples of arclength) from~$z_0$ to~$z_1$.
 Let~$v_0\in (W^{1,p}\cap C)(B^p, \, \NN)$
 be a map equal to~$z_0$ on~$\partial B^n$
 and belonging to the homotopy class~$\sigma$.
 We define~$v_1$ as in~\eqref{actionofpi1}.
 The map~$v_1$, too, belongs to the homotopy class~$\sigma$,
 because of Lemma~\ref{lemma:actionofpi1}. Then, we have
 \[
  \begin{split}
   \abs{\sigma}_{z_1} 
   \leq \int_{\R^p} \abs{\nabla v_0}^p \d x
    + \H^p(\SS^p) \int_0^1 \abs{\gamma^\prime(r)}^p \d r 
   = \int_{\R^p} \abs{\nabla v_0}^p \d x
    + \H^p(\SS^p) \dist_{\NN}(z_0, \, z_1)^p,
  \end{split}
 \]
 where~$\dist_{\NN}(z_0, \, z_1)$ denotes the geodesic distance 
 between~$z_0$ and~$z_1$, i.e.~the length of a minimising
 geodesic~$\gamma$ joining them. (The last equality
 follows from the fact that~$\abs{\gamma^\prime}$ is constant.)
 By taking the infimum over all admissible~$v_0$
 in~$\gamma$, we obtain
 \begin{equation} \label{almostequivalentnorms}
  \begin{split}
   \abs{\sigma}_{z_1} 
   \leq \abs{\sigma}_{z_0} + C,
  \end{split}
 \end{equation}
 where~$C$ depends only on~$p$ and (the geodesic 
 diameter of)~$\NN$. Combining~\eqref{almostequivalentnorms}
 with~\eqref{item:alpha_p}, we deduce~\eqref{item:equivalentnorms}.
\end{proof}

Since spheres and boundaries of cubes are homeomorphic,
homotopy classes are also defined for
maps~$\partial [0, \, h]^{p+1}\to\NN$.
In fact, we have the following property:

\begin{lemma} \label{lemma:cubebounds}
 There exists a constant~$C$, depending only
 on~$p$ and~$\NN$, that satisfies
 \begin{equation} \label{norm_lb}
  \abs{\sigma}_{z_0}
  \leq C \int_{\partial [0, \, h]^{p+1}} \abs{\nabla v}^p \, \d\H^p
 \end{equation}
 for all~$\sigma\in\pN$, all~$z_0\in\NN$, all~$h > 0$, and all
 $v\in W^{1,p}(\partial [0, \, h]^{p+1}, \, \NN)$ in
 the homotopy class~$\sigma$.
\end{lemma}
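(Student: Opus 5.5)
By scale invariance of the $p$-Dirichlet energy in dimension $p$, we may take $h = 1$. The plan is to transport $v$ from $\partial[0,1]^{p+1}$ to the sphere $\SS^p$ by a fixed bi-Lipschitz map, and then modify the resulting map so that it becomes an admissible competitor in~\eqref{norm_p}.

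\emph{Step 1 (transport to the sphere).} Let $\Phi\colon\SS^p\to\partial[0,1]^{p+1}$ be the radial projection from the centre of the cube. It is orientation-preserving and bi-Lipschitz, with constants depending only on $p$. The map $w := v\circ\Phi$ belongs to $W^{1,p}(\SS^p,\NN)$, lies in the class $\sigma$ (see Remark~\ref{rk:orientation}), and satisfies
\[
\int_{\SS^p}\abs{\nabla_\top w}^p\,\d\H^p \leq C\int_{\partial[0,1]^{p+1}}\abs{\nabla v}^p\,\d\H^p =: C E .
\]

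\emph{Step 2 (smooth approximation).} By Propositions~\ref{prop:densitysmooth} and~\ref{prop:smallenergy}, we may replace $w$ by a smooth map in the same class $\sigma$ whose energy is at most $2CE$. Its value $w(N_p)$ is some point $z\in\NN$, which need not equal $z_0$.

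\emph{Step 3 (fixing the base point).} This is the only real obstacle. There are two cases.

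If $\sigma = 0$, then $\abs{\sigma}_{z_0} = 0$ (take the constant map), and~\eqref{norm_lb} holds trivially.

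If $\sigma\neq0$, then Proposition~\ref{prop:smallenergy} forces the energy of $w$ to be bounded below: if it were at most $\alpha_p$, the map would be null-homotopic. Hence $CE\geq\alpha_p$, at least up to a harmless constant. We now change the base point as in Lemma~\ref{lemma:norm_p}(iv). Concretely, in the plane picture~\eqref{norm_plane} we first make $w$ constant, equal to $z$, near $N_p$. We do this by inserting a small annulus on which the values are interpolated toward $w(N_p)$; since $w$ is smooth, this costs an arbitrarily small amount of energy. Then we attach a geodesic from $z$ to $z_0$ on an annulus, exactly as in~\eqref{actionofpi1}. By Lemma~\ref{lemma:actionofpi1} this does not change the class, and it costs at most $\H^{p-1}(\SS^{p-1})\,\mathrm{diam}(\NN)^p =: C'$. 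This gives
\[
\abs{\sigma}_{z_0} \leq 2CE + C' + o(1) .
\]

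\emph{Step 4 (absorbing the constant).} Since $E\geq\alpha_p/C$ in this case, we have $C'\leq (C'C/\alpha_p)\,E$. Combining this with Step 3 yields~\eqref{norm_lb}, with a constant that depends only on $p$ and $\NN$.
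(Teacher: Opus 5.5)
Your proof is correct and takes essentially the same approach as the paper: transport $v$ to $\SS^p$ by a fixed bi-Lipschitz, orientation-preserving map, pass to a continuous (smooth) representative, and then change the base point from the value at the image of $N_p$ to $z_0$. The only difference is that you carry out the base-point change inline (geodesic collar as in~\eqref{actionofpi1}, then absorbing the additive constant using the lower bound $\alpha_p$ for nontrivial classes), whereas the paper cites statement~(iv) of Lemma~\ref{lemma:norm_p}, whose proof is exactly that argument.
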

Note that in Lemma~\ref{lemma:cubebounds}, we are \emph{not}
requiring~$v$ to take the value~$z_0$ anywhere.
\begin{proof}[Proof of Lemma~\ref{lemma:cubebounds}]
 The right-hand side of~\eqref{norm_lb} is invariant by rescaling,
 so we can assume with no loss of generality that~$h = 1$. 
 We can further suppose that~$v$ is continuous, by
 a density argument based on Proposition~\ref{prop:densitysmooth}.
 Let~$\phi\colon\SS^p\to\partial [0, \, 1]^{p+1}$
 be an invertible, Lipschitz map with Lipschitz inverse
 and let~$x_0 := \phi(N_p)$. By using~$v\circ\phi$
 as a test map in~\eqref{norm_p}, we obtain
 \begin{equation} \label{norm_lb_unitcube}
  C \int_{\partial [0, \, 1]^{p+1}} \abs{\nabla v}^p \, \d\H^p
  \geq \int_{\partial\SS^p} \abs{\nabla (v\circ\phi)}^p \, \d\H^p
  \geq \abs{\sigma}_{v(p_0)} \! .
 \end{equation}
 The desired inequality follows from~\eqref{norm_lb_unitcube}
 and~\eqref{item:equivalentnorms} in Lemma~\ref{lemma:norm_p}.
\end{proof}

\section{The singular set}
\label{sect:Sgrid}

\subsection{Assumptions on the target manifold}
\label{sect:assumptions}

Let~$\NN$ be a smooth, compact, connected submanifold
without boundary of some Euclidean space~$\R^m$.
We take \emph{an integer}~$p$ such that~$1 \leq p < n$.
Henceforth, we will always assume that
\begin{enumerate}[label=(H\textsubscript{\arabic*}),
ref=H\textsubscript{\arabic*}]
 \item \label{hp:single-valued} \label{hp:first}
 for all $\sigma_1\in\GN$, $\sigma_2\in\GN$, the set
 $\sigma_1 + \sigma_2$ contains no more than one element.
\end{enumerate}
Then, Lemma~\ref{lemma:polygroup} implies
that~$\sigma_1 + \sigma_2$
contains exactly one element for all~$\sigma_1$, $\sigma_2\in\GN$,
and moreover, the single-valued operation~$+$ induces
an Abelian group structure on~$\GN$.
We will abuse of notations slightly
and write, e.g.~$\sigma_1 + \sigma_2 = \sigma$
instead of~$\sigma_1 + \sigma_2 = \{\sigma\}$, and so on.
In fact, under assumption~\eqref{hp:single-valued} we have
\[
 \pN = \GN,
\]
where~$\pN$ is the~$p$-th homotopy group of~$\NN$.
(This equality, or rather a group homomorphism,
follows from Remark~\ref{rk:conjugacy}; see also
Lemma~\ref{lemma:homotopysum}).
In practice, we will continue working with the group~$\GN$
as defined above, but write~$\pN$ instead of~$\GN$
to keep the notation consistent with the standard one.
We will also assume that
\begin{enumerate}[label=(H\textsubscript{\arabic*}),
ref=H\textsubscript{\arabic*}, resume]
 \item \label{hp:compactnorm} \label{hp:last}
 for all~$\Lambda > 0$, there are only finitely
 many homotopy classes~$\sigma\in\pi_1(\NN)$ that contain
 maps~$v\in\sigma \cap W^{1,p}(\SS^p, \, \NN)$ with
 \[
  \int_{\SS^p} \abs{\nabla v}^p \d\H^p \leq \Lambda.
 \]
\end{enumerate}
In particular, this condition implies that~$\pi_p(\NN)$
satisfies the assumption~\eqref{hp:compactnorm}
in Lemma~\ref{lemma:Fcompactness} and~\ref{lemma:FScompactness},
that is,
\[
 (\pN)_{z_0, \, \Lambda} := \left\{\sigma\in\pN\colon
 \abs{\sigma}_{z_0}\leq\Lambda\right\}
\]
is finite for all~$z_0\in\NN$ and~$\Lambda > 0$.
To check whether the condition~\eqref{hp:compactnorm}
is satisfied, one could attempt using concentration
compactness methods for minimising sequences of~\eqref{norm_plane}
and a bubbling analysis, along the lines of~\cite{Parker}.
However, we do not pursue this direction.

\begin{remark}
 Contrary to~\cite{CO1, CO2}, we do \emph{not}
 assume that~$\NN$ is $(p - 1)$-connected.
 For instance, the sphere~$\SS^2$ satisfies both~\eqref{hp:first}
 and~\eqref{hp:last} with~$p = 3$, because~$\pi_3(\SS^2)\simeq\Z$
 and the norm satisfies
 \[
  \abs{d}_{z_0} \leq C \abs{d}^{3/4}
 \]
 for all~$d\in\Z$ (see \cite{Riviere}).
\end{remark}

In the sequel, we consider chains with coefficients in~$\pN$,
where~$\pN$ is equipped with the norm~$\abs{\,\cdot\,}_{z_0}$
for a given point~$z_0\in\NN$. Different
choices of~$z_0$ correspond to equivalent norms,
by Lemma~\ref{lemma:norm_p}. Since the basepoint~$z_0$
is fixed, we will omit it from the notation and
write~$\abs{\,\cdot\,}$ instead of~$\abs{\,\cdot\,}_{z_0}$.

\subsection{Approximating the singular set on grids}

Let~$\Omega\subseteq\R^{p+1}$ be a bounded, Lipschitz domain.
We recall the definition of the topological singular
set~$\Snice(u)$ of a map~$u\in R^{1,p}(\Omega, \, \NN)$
with nice singularities.
Let~$\Sigma = \Sigma(u)\subseteq\Omega$
be a finite set such that~$u$ is locally Lipschitz
in~$\overline{\Omega}\setminus\Sigma$.
Let~$\rho > 0$ be a small enough radius,
such that the closed balls~$B_\rho(x)$, with~$x\in\Sigma$,
are pairwise disjoint and contained in~$\Omega$.
Let~$\sigma(u, \, x)\in\GN = \pN$ denote the
(free) homotopy class of~$u$ restricted
to~$\partial B_\rho(x)$. By definition of free homotopy,
$\sigma(u, \, x)$ is independent of the choice of~$\rho$.
We define
\begin{equation} \label{Stop_nice}
 \Snice(u) := \sum_{x\in\Sigma} \sigma(u, \, x)
   \llbracket x\rrbracket \in \P_0(\Omega; \, \pN).
\end{equation}
Recalling the definition of the operation on~$\GN = \pN$
and the definition of the intersection index~$\I$ in~\eqref{I},
we obtain the following property: if~$V$ is a Lipschitz domain,
homeomorphic to a disk, such that~$\overline{V}\subseteq\Omega$
and~$\spt\Snice(u)\cap \partial V =\varnothing$, then
\begin{equation} \label{SI}
 \I(u, \, V) = (\textrm{homotopy class
 of the restriction } u_{|\partial V}).
\end{equation}
Our goal is to show that~$\Snice$ extends to
arbitrary maps in~$\Hw^{1,p}(\Omega, \, \NN)$.
As a first step, we define the singular set of~$u$
restricted to a \emph{grid}.
We will denote by~$C$ several different constants that only
depend on~$\Omega$, $\NN$, $p$ and may change from line to line,
and write~$A\lesssim B$ as a synonym of~$A \leq C B$.

First of all, we recall that there exists a
(nonlinear, but continuous) extension operator
$W^{1,p}(\Omega, \, \R^n) \to W^{1,p}(\Omega^\prime, \, \NN)$,
where~$\Omega^\prime\supseteq\overline{\Omega}$ is a larger
(but still bounded) domain, such that
\begin{equation} \label{extension}
 \int_{\Omega^\prime} \abs{\nabla u}^p \, \d x
 \lesssim \int_{\Omega} \abs{\nabla u}^p \, \d x
\end{equation}
for all~$u\in W^{1,p}(\Omega, \, \NN)$.
Here and in what follows, we denote with the same symbol
the map~$u\colon\Omega\to\NN$ and its extension
to~$\Omega^\prime$. Such an operator can be defined
for instance, by composition with a reflection across
the boundary~$\partial\Omega$, since~$\Omega$ is bounded
and has Lipschitz boundary (see e.g.~\cite[Proposition~8.1]{ABO2}).
Next, let~$h_0 := \frac{1}{2\sqrt{n}} \dist(\Omega, \, \partial\Omega^\prime) > 0$.
We work with grids~$\GG(h, \, y)$ 
of size~$h\leq h_0$, so that any
cell of the grids that intersects~$\Omega$
is contained in~$\Omega^\prime$.
Let~$u\in W^{1,p}(\Omega, \, \NN)$,
$h\in (0, \, h_0]$ and~$y\in\R^n$ be given.
% By abuse of notation, we identify~$u$
% with its image in~$W^{1,p}(\Omega^\prime, \, \NN)$
% under the extension operator defined above.
For almost every choice of~$y$, the restriction of~$u$
to the~$p$-skeleton~$R^p(h, \, y, \, \Omega)$ is well-defined
and belongs to~$W^{1,p}(R^p(h, \, y), \, \NN)$.
Therefore, given any $(p+1)$-cell~$K\in\GG(h, \, y, \, \Omega)$,
$u_{|\partial K}$ has a well-defined (free)
homotopy class~$\sigma(u, \, \partial K)\in\GN = \pN$,
as defined in Section~\ref{sect:homotopy_W1p}.
Let~$c_K$ be the centre of~$K$. We define
\begin{equation} \label{Sgrid}
 \Sgrid(u, \, h, \, y) 
 := \sum_{K\in\GG^{p+1}(h, \, y, \, \Omega)} 
  \sigma(u, \, \partial K)
  \llbracket c_K \rrbracket.
\end{equation}
In case~$u\in R^{1,p}(\Omega, \, \NN)$,
the homotopy class~$\sigma(u, \, \partial K)$
is the sum of the classes~$\sigma(u, \, x)$
among all the singular points~$x$ of~$u$ contained in~$K$.
This claim follows from the very definition of the
operation in~$\GN$. Therefore, recalling the
definition~\eqref{deformation} of the 
approximation~$P(S, \, h, \, y)$ for a polyhedral 
chain~$S$, we immediately obtain
\begin{equation} \label{Sgridnice}
 \Sgrid(u, \, h, \, y) = P(\Snice(u), \, h, \, y)
\end{equation}
for all~$u\in R^{1,p}(\Omega, \, \NN)$, all~$h \in (0, \, h_0]$
and almost all~$y\in\R^n$. In light of~\ref{def-mass}
in Proposition~\ref{prop:deformation} and~\eqref{Sgridnice},
we deduce
\begin{equation} \label{Sgridnice-conv}
 \begin{split}
  \fint_{Q^{p+1}_h} \F_\Omega(\Snice(u) - \Sgrid(u, \, h, \, y)) \, \d y
  \lesssim h \, \M(\Snice(u)) \to 0
  \qquad \textrm{as } \lambda\to 0.
 \end{split}
\end{equation}
In particular, when~$u$ has finitely many singularities,
$\Sgrid(u, \, h, \, y)$ is indeed an approximation of~$\Snice(u)$.

% Our next task is to find bounds on~$\Sgrid(u, \, h, \, y)$.
% The following lemma does not provide a uniform bound
% in terms of~$h$, but will still be useful later on.
%
% \begin{lemma} \label{lemma:massS}
%  For any~$u\in W^{1,p}(\Omega, \, \NN)$, we have
%  \begin{equation*} %\label{massS}
%   \begin{split}
%    \int_{Q^{p+1}} \M(\Sgrid(u, \, h, \, y)) \, \d y
%    \lesssim \frac{1}{h} \int_{\Omega} \abs{\nabla u}^p \d x.
%   \end{split}
%  \end{equation*}
% \end{lemma}
% \begin{proof}
%  The mass of~$\Sgrid(u, \, h, \, y)$ can
%  be bounded from above using the estimate~\eqref{norm_lb}:
%  \begin{equation*} %\label{mass_Sgrid}
%   \begin{split}
%    \M(\Sgrid(u, \, h, \, y))
%    = \sum_{K} \abs{\sigma(u, \, \partial K)}_p
%    \lesssim \int_{R^p(h, \, y, \, \Omega)} \abs{\nabla u}^p \, \d\H^p.
%   \end{split}
%  \end{equation*}
%  Combining this inequality with Lemma~\ref{lemma:Fubini}
%  and~\eqref{extension}, the lemma follows.
% \end{proof}

Our next task is to find bounds on~$\Sgrid(u, \, h, \, y)$,
uniformly with respect to the parameter~$h$.

\begin{lemma} \label{lemma:Sbdd}
 For all~$u\in R^{1,p}(\Omega, \, \NN)$
 and~$h\in (0, \, h_0]$, we have
 \begin{equation} \label{Snicebdd}
  \F_\Omega(\Snice(u)) \lesssim \overline{D}_p(u).
 \end{equation}
\end{lemma}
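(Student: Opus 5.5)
The plan is to bound $\F_\Omega(\Snice(u))$ by the energy of \emph{smooth} approximants, using grid approximations of a chain built from them. Fix a sequence $\varphi_j\in C^\infty(\overline{\Omega}, \, \NN)$ with $\varphi_j\to u$ in $L^p(\Omega)$ and $\lim_j D_p(\varphi_j) \leq \overline{D}_p(u) + \eps$; extend $u$ and the $\varphi_j$ to $\Omega^\prime$ via~\eqref{extension}. Since $\varphi_j$ is smooth, $\sigma(\varphi_j, \, \partial K) = 0$ for every cell $K$, so $\Sgrid(\varphi_j, \, h, \, y) = 0$. The idea is to write $\Sgrid(u, \, h, \, y)$ as a boundary of a $1$-chain whose mass is controlled by the energy on the $p$-skeleton, and then pass to $\Snice(u)$ via~\eqref{Sgridnice-conv}.

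\textbf{Step 1: an interpolation on the dual $1$-skeleton.} For a.e.\ $y$ and a fixed $h$, consider for each $p$-face $F$ of the grid the maps $u_{|F}$ and $\varphi_{j|F}$. On the slab $F\times[0,1]$ I build a Luckhaus-type interpolation (as in~\cite[Lemma~1]{Luckhaus-PartialReg}) between $u$ and $\varphi_j$ on $\partial F$. This gives, for each $(p+1)$-cell $K$, a map on $\partial K\times[0,1]$ with top $\varphi_j$ and bottom $u$, except that the interpolation on each $p$-face $F$ may carry a ``defect'' class $\tau_F\in\pN$. Lemma~\ref{lemma:homotopysum-cubes}, applied to the $(p+2)$-cube $K\times[0,1]$, then gives
\[
 \sigma(u, \, \partial K) = \sigma(\varphi_j, \, \partial K) + \sum_{F\subseteq\partial K} \pm\tau_F = \sum_{F\subseteq\partial K}\pm\tau_F .
\]
Hence $\Sgrid(u, \, h, \, y) = \sum_F \tau_F\bigl(\lb c_{K_F^+}\rb - \lb c_{K_F^-}\rb\bigr)$ in $\Omega$. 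This is a sum of dipoles of size $h$ (boundary cells give monopoles or dipoles truncated at $\partial\Omega$, as in Remark~\ref{rk:dipolarOmega}). Consequently
\[
 \F_\Omega(\Sgrid(u, \, h, \, y)) \leq h\sum_F\abs{\tau_F}.
\]

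\textbf{Step 2: estimating the defects.} By Lemma~\ref{lemma:cubebounds}, $\abs{\tau_F}$ is bounded by the energy of the interpolation on the boundary of $F\times[0,1]$. With a Luckhaus interpolation of thickness $h$ this energy is at most
\[
 C\int_F\bigl(\abs{\nabla u}^p + \abs{\nabla\varphi_j}^p\bigr)\d\H^p + C h^{-p}\int_{\partial F}\abs{u-\varphi_j}^p\d\H^{p-1}
\]
(after rescaling), provided $u$ and $\varphi_j$ are uniformly close on $\partial F$ so that the projection $\pi_\NN$ applies. If they are not close on the $(p-1)$-skeleton, I instead bound $\abs{\tau_F}$ by $\abs{\sigma(u, \, \partial K)}$-type terms, again via Lemma~\ref{lemma:cubebounds}. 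Multiplying by $h$ and averaging over $y$ using Lemma~\ref{lemma:Fubini}, the first term contributes $\lesssim D_p(u) + D_p(\varphi_j)$. The second term contributes $\lesssim h^{-p}\norm{u-\varphi_j}^p_{L^p}$ (with the Fubini weight $h^{2}$), which vanishes as $j\to\infty$ for fixed $h$. This yields
\[
 \int_{Q^{p+1}}\F_\Omega(\Sgrid(u, \, h, \, y))\,\d y\lesssim \overline{D}_p(u) + D_p(u).
\]

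\textbf{Step 3: removing $D_p(u)$, and the main obstacle.} The term $D_p(u)$ is harmless because $D_p(u)\leq\overline{D}_p(u)$ by lower semicontinuity of the Dirichlet energy. Finally, choose $y$ in a set of positive measure where both this average bound and~\eqref{Sgridnice-conv} hold, and let $h\to0$ using the triangle inequality for $\F_\Omega$ to obtain~\eqref{Snicebdd}. The main obstacle is Step 2: $u$ is not continuous on the $(p-1)$-skeleton, so the homotopy classes in the interpolation must be handled carefully. The defect on $F$ should be defined only where $u$ and $\varphi_j$ are close in the appropriate VMO/$W^{1,p}$ sense on $\partial F$. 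The faces where this fails must be shown, via Fubini and Chebyshev, to carry total energy comparable to the bound, so that $\abs{\tau_F}\geq\alpha_p$ on them is paid for by energy.
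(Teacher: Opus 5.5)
Your architecture is the same as the paper's. You pass to the $(p+2)$-dimensional complex $\{K\times[0,h]\}$, with $u$ on the bottom, $\varphi_j$ on the top and a projected interpolation on $R_{p-1}(h,y)\times[0,h]$. You read off the classes $\tau_F$ on $\partial(F\times[0,h])$ as dipole multiplicities via Lemma~\ref{lemma:homotopysum-cubes}, bound $h\sum_F\abs{\tau_F}$ through Lemmas~\ref{lemma:cubebounds} and~\ref{lemma:Fubini}, and conclude with the deformation estimate.

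However, Step~2 leaves open the point on which everything hinges, and the fallback you sketch does not work. If the interpolation is unavailable on some face, the only substitute is a monopole $\sigma(u,\partial K)\lb c_K\rb$. It costs $\abs{\sigma(u,\partial K)}\lesssim\int_{\partial K}\abs{\nabla u}^p\,\d\H^p$ with \emph{no} factor $h$; for comparison, the $y$-average of $\int_{R_p(h,y)}\abs{\nabla u}^p\,\d\H^p$ is of order $h^{-1}D_p(u)$. So bad faces cannot be ``paid for'' by a comparable amount of energy. Such a fallback is affordable only on a family of grids that becomes negligible. The diagnosis is also off: continuity of $u$ is not the problem. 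For a.e.\ $y$ the trace of $u$ on the $(p-1)$-skeleton is $W^{1,p}$ on a $(p-1)$-dimensional complex, hence continuous; for $u\in R^{1,p}(\Omega,\NN)$ it is even Lipschitz on the whole $p$-skeleton for a.e.\ $y$. The real problem is uniform closeness to $\varphi_j$.

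The missing idea is that, for fixed $h$, no face is bad once $j$ is large. By Lemma~\ref{lemma:Fubini}, $\int_{Q^{p+1}}\int_{R_{p-1}(h,y)}\abs{u-\varphi_j}^p\,\d\H^{p-1}\,\d y\lesssim h^{-2}\norm{u-\varphi_j}_{L^p}^p\to0$. Meanwhile the analogous integral of $\abs{\nabla\varphi_j}^p$ is $\lesssim h^{-2}D_p(\varphi_j)$, hence bounded. The cells of $R_{p-1}$ have dimension $p-1<p$, so the Gagliardo--Nirenberg (Morrey) inequality on each cell gives $\norm{u-\varphi_j}_{L^\infty(R_{p-1}(h,y))}\leq\delta_*$ on the \emph{entire} skeleton, for a.e.\ $y$ and a suitable $j=j(h,y)$. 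This is the paper's~\eqref{Stopbdd-p-1sk}. Then $\tau_F$ is defined for every face. The $y$-dependence of $j$ is handled by bounding $\F_\Omega(\Sgrid(u,h,y))$ by the $\liminf_j$ of the energy density and applying Fatou.

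Alternatively, closer to your Chebyshev remark: integrate the same inequality in $y$ and apply H\"older. This shows that the set $E_j$ of $y$ for which closeness fails somewhere satisfies $\abs{E_j}\to0$. On $E_j$ use the crude bound $\F_\Omega(\Sgrid(u,h,y))\leq\M(\Sgrid(u,h,y))\lesssim\int_{R_p(h,y)}\abs{\nabla u}^p\,\d\H^p$. This is integrable in $y$, so it contributes $o(1)$ as $j\to\infty$.

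A minor point: the interpolation energy on $\partial F\times[0,h]$ is $Ch\int_{\partial F}(\abs{\nabla u}^p+\abs{\nabla\varphi_j}^p+h^{-p}\abs{u-\varphi_j}^p)\,\d\H^{p-1}$, not $Ch^{-p}\int_{\partial F}\abs{u-\varphi_j}^p$. The averaged bound you state is nonetheless the right one.
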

The proof of this lemma %and particularly of the estimate~\eqref{Sgridbdd},
is based on ideas in~\cite[Lemma~1]{Luckhaus-PartialReg}.
\begin{proof}[Proof of Lemma~\ref{lemma:Sbdd}]
 For any~$u\in\Hw^{1,p}(\Omega, \, \NN)$
 and all~$h\in (0, \, h_0]$, we claim that
 \begin{equation} \label{Sgridbdd}
  \fint_{Q^{p+1}_h} \F_\Omega(\Sgrid(u, \, h, \, y)) \, \d y
  \lesssim \overline{D}_p(u).
 \end{equation}
 Once~\eqref{Sgridbdd} is proved, the estimate~\eqref{Snicebdd}
 will follow, thanks to Statement~\ref{def-mass} in Proposition~\ref{prop:deformation}.
%  Indeed, using~\eqref{Sgridnice} and~\eqref{def-mass},
%  we obtain
%  \[
%   \begin{split}
%    \F_\Omega(\Snice(u))
%    &\leq \fint_{Q^{p+1}_h}
%     \F_\Omega(\Snice(u) - P(\Snice(u), \, h, \, y)) \, \d y
%     + \fint_{Q^{p+1}_h}
%     \F_\Omega(\Sgrid(u, \, h, \, y)) \, \d y \\
%    &\leq h \, \M(\Snice(u)) + C \, \overline{D}_p(u)
%   \end{split}
%  \]
%  and, letting~$h\to 0$, we obtain~\eqref{Snicebdd}.
 Let~$u\in\Hw^{1,p}(\Omega, \, \NN)$ and 
 let~$(\varphi_j)_{j\in\N}$ be a sequence 
 in~$C^\infty(\overline{\Omega}, \, \NN)$ 
 that converges to~$u$ weakly in~$W^{1,p}(\Omega)$.
 We extend each~$\varphi_j$ to the larger domain~$\Omega^\prime$,
 by reflection across the boundary of~$\Omega$.
 Let~$\GG(h, \, y)$ be a grid, with~$0 < h \leq h_0$ 
 and~$y\in Q^{p+1}_{h}$. By applying the Gagliardo-Nirenberg
 interpolation inequality on each~$(p-1)$-cell
 of~$\GG(h, \, y)$, we obtain
 \begin{equation*}
  \begin{split}
   \norm{u - \varphi_j}_{L^\infty(R_{p-1})}
   \lesssim \norm{\nabla u - \nabla\varphi_j}_{L^p(R_{p-1})}^{1 - 1/p}
    \norm{u - \varphi_j}_{L^p(R_{p-1})}^{1/p}
    + h^{1/p - 1} \norm{u - \varphi_j}_{L^p(R_{p-1})}
  \end{split}
 \end{equation*}
 where~$R_{p-1} := R_{p-1}(h, \, y)$ is 
 the~$(p-1)$-skeleton of~$\GG(h, \, y)$.
 Since~$\varphi_j\to u$ weakly in~$W^{1,p}(\Omega)$
 and hence, by Fubini theorem,
 also weakly in~$W^{1,p}(R_{p-1})$
 (except for a negligible set of~$y\in Q^{p+1}_h$),
 we can choose~$j = j(h, \, y)$ large enough, in such a way that
 \begin{equation} \label{Stopbdd-p-1sk}
  \norm{u - \varphi_j}_{L^\infty(R_{p-1})} \leq \delta_*.
 \end{equation}
 Here~$\delta_* > 0$ is small enough, so that
 the nearest-point projection onto~$\NN$
 (see~\eqref{projN}) is well-defined 
 in a~$\delta_*$-neighbourhood of~$\NN$.
 
 Next, we extend~$\GG(h, \, y)$
 to a higher-dimensional grid, of the form
 \begin{equation} \label{Stop-E}
  \EE := \left\{K\times[0, \, h] 
  \colon K\in\GG(h, \, y, \, \Omega)\right\} \! .
 \end{equation}
 Let~$\EE_{p}$ be the collection of~$p$-dimensional 
 cells of~$\EE$, and
%  Note that 
%  \[
%   \EE_p = 
%   \left\{K\times\{0\}\colon K\in\GG_p(h, \, y, \, \Omega)\right\}
%   \cup \left\{K\times\{h\}\colon K\in\GG_p(h, \, y)\right\}
%   \cup \left\{K\times[0, \, 1]\colon
%    K\in\GG_{p-1}(h, \, y)\right\} \! .
%  \]
 let $X_p := \cup_{K\in\EE_p} K$ be the~$p$-skeleton of~$\EE$.
 We define a map~$U\colon X_p\to\NN$ as follows.
 On the~$p$-cells of~$X_p$ contained 
 in~$\Omega^\prime\times\{0\}$, we define~$U := u$.
 On $X_p\cap\left(\Omega^\prime\times\{h\}\right)$,
 we define~$U := \varphi_j$, where~$j = j(h, \, y)$
 has been chosen above.
 Finally, on the cells of the form~$K\times [0, \, h]$
 for~$K\in\GG_{p-1}$, we define
 \[
  U(x, \, t) := \pi_{\NN}\!\left(\left(1 - \frac{t}{h}\right) u(x) 
   + \frac{t}{h}\varphi_j(x)\right)\!,
  \qquad \textrm{for } (x, \, t)\in K\times [0, \, h] 
 \]
 This map is well-defined, thanks to~\eqref{Stopbdd-p-1sk}.
 Since the projection~$\pi_{\NN}$ is Lipschitz-continuous
 on the~$\delta_*$-neighbourhood of~$\NN$, we have
 \begin{equation} \label{Stopbdd-U}
  \begin{split}
   \int_{X_p} \abs{\nabla U}^p \d\H^p
   &\leq \int_{R_p} \left(\abs{\nabla u}^p 
    + \abs{\nabla\varphi_j}^p\right) \d\H^p \\
   &\hspace{1cm} + Ch \int_{R_{p-1}} \left(\abs{\nabla u}^p 
    + \abs{\nabla\varphi_j}^p 
    + \frac{\abs{u - \varphi_j}^p}{h^p}\right) \d\H^{p-1}.
  \end{split}
 \end{equation}
 Next, we use the map~$U$ to construct a dipole 
 decomposition for~$\Sgrid(u, \, h, \, y)$, which will 
 lead to~\eqref{Sgridbdd}. For each 
 $p$-cell~$H\in\GG_p(h, \, y, \, \Omega)$, 
 let~$H^\prime$ be the dual~$1$-cell
 and let~$a_H$, $b_H$ be the endpoints of~$H^\prime$.
 We consider an orientation of~$H$ and label
 the endpoints $a_H$, $b_H$ in such a way that,
 for any positive basis~$(\tau_1, \, \ldots, \, \tau_{p})$
 of the $p$-plane containing~$H$,
 $(\tau_1, \, \ldots, \, \tau_{p}, b_H - a_H)$
 is a positive basis of~$\R^p\times\R$.
 Let~$\sigma(U, \, H)$ be the (free) homotopy class
 of~$U$ restricted to~$\partial(H\times [0, \, h])$,
 where the latter is given the orientation induced 
 by~$H\times [0, \, h]$. We claim that
 \begin{equation} \label{Stopbdd-dipole}
  \Sgrid(u, \, h, \, y)
  = \sum_{H\in \GG_p(h, \, y)\colon H\cap\Omega\neq \emptyset}
  \left(\sigma(U, \, H) \lb b_H\rb - \sigma(U, \, H) \lb a_H\rb\right) \! .
 \end{equation}
 First, we observe that changing the orientation of~$H$
 switches the order of~$a_H$ and~$b_H$, but also
 change~$\sigma(U, \, H)$ into~$-\sigma(U, \, H)$,
 because~$\sigma(U, \, H)$ depends on the choice 
 of an orientation-preserving 
 homeomorphism~$\partial(H\times [0, \, h])\to\SS^p$
 (see Remark~\ref{rk:orientation}).
 Therefore, the right-hand side of~\eqref{Stopbdd-dipole}
 does not depend on the orientation of the cells.
 Next, let~$K$ be a ($(p+1)$-dimensional) cube
 in~$\GG(h, \, y, \, \Omega)$, and let~$c_K$ be its centre.
 The~$1$-cells~$H^\prime\in\GG_1(h, \, y, \, \Omega)$
 that are incident to~$c_K$ are in one-to-one
 correspondence with the boundary faces~$H$ of~$K$.
 The boundary~$(p+1)$-faces of~$K\times [0, \, h]$
 are exactly those of the form~$H\times [0, \, h]$,
 $K\times\{0\}$, and~$K\times\{h\}$.
 The homotopy class of~$U$ restricted 
 to~$\partial(K\times\{0\})$ is the homotopy class
 of~$u$ on~$\partial K$, $\sigma(u, \, \partial K)$.
 The homotopy class of~$U$ restricted 
 to~$\partial(K\times\{h\})$ is trivial, because~$\varphi_j$
 is smooth. Therefore, Lemma~\ref{lemma:homotopysum-cubes}
 implies
 \begin{equation} \label{Stopbdd-sigma}
  \sigma(u, \, \partial K)
  = \sum_{H^\prime \ni c_K} \sigma(U, \, H),
 \end{equation}
 where the sum is taken over all~$1$-cells~$H^\prime$
 of the dual grid that are incident to~$c_K$.
 Since~\eqref{Stopbdd-sigma} holds for any~$K$, 
 the claim~\eqref{Stopbdd-dipole} follows.
 Using the definition~\eqref{flat} of~$\F_\Omega$
 and~\eqref{Stopbdd-dipole}, we obtain
 \[
  \begin{split}
   \F_{\Omega}(u, \, h, \, y)
   \leq h \sum_{H\in\GG_{p-1}} \abs{\sigma(U, \, H)}
   \leq C h \int_{X_p} \abs{\nabla U}^p \d\H^p.
  \end{split}
 \]
 The second inequality is a consequence of
 Lemma~\ref{lemma:cubebounds}.
 From~\eqref{Stopbdd-U}, Lemma~\ref{lemma:Fubini}
 and~\eqref{extension}, we obtain
 \[
  \begin{split}
   \fint_{Q^{p+1}_h} \F_{\Omega}(u, \, h, \, y) \, \d y
   &\lesssim h \fint_{Q^{p+1}_h} 
    \int_{R_p(h, \, y)} \left(\abs{\nabla u}^p 
    + \abs{\nabla\varphi_j}^p\right) \d\H^p \, \d y \\
   &\hspace{0.3cm} + h^2 \fint_{Q^{p+1}_h}  
    \int_{R_{p-1}(h, \, y)} \left(\abs{\nabla u}^p 
    + \abs{\nabla\varphi_j}^p 
    + \frac{\abs{u - \varphi_j}^p}{h^p}\right) \d\H^{p-1} \d y \\
   &\lesssim D_p(u) + D_p(\varphi_j) 
    + \frac{1}{h^p} \int_{\Omega} \abs{u - \varphi_j}^p \, \d x. 
  \end{split}
 \]
 By passing to the limit as~$j\to+\infty$
 in the right-hand side, and then taking the infimum
 over all sequences of smooth maps that
 converge~$W^{1,p}$-weakly to~$u$,
 we obtain~\eqref{Stop_nice}.
\end{proof}

By the same line of reasoning we used
in the previous lemma, we can prove that~$\Sgrid$
is stable with respect to strong~$W^{1,p}$-convergence.

\begin{lemma} \label{lemma:Sstable}
 Let~$(u_n)_{n\in\N}$ be a sequence that converges strongly
 in~$W^{1,p}(\Omega)$ to a map~$u\in\Hw^{1,p}(\Omega, \, \NN)$.
 For all~$h\in (0, \, h_0]$ and almost all~$y\in Q^{p+1}$,
 there exists~$n_* = n_*(u, \, h, \, y)$ such that
 \begin{equation} \label{Sgridstable}
  \Sgrid(u, \, h, \, y) = \Sgrid(u_n, \, h, \, y)
 \end{equation}
 for all~$n\geq n_*$.
\end{lemma}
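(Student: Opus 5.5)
The plan has two parts: a quantitative, pointwise-in-$y$ criterion ensuring $\sigma(u_n, \, \partial K) = \sigma(u, \, \partial K)$ on every cube $K$ of the grid, and then the passage from this criterion to the statement for the \emph{whole} sequence, for almost every $y$. The first part is a routine Luckhaus-type argument; the second is where the difficulty lies.

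\emph{Step 1: fixing a good grid.} Fix $h\in(0, \, h_0]$. We use the same extension to $\Omega'$ for $u$ and all $u_n$; since the extension operator is continuous, $u_n\to u$ strongly in $W^{1,p}(\Omega')$. By Lemma~\ref{lemma:Fubini}, for almost every $y$ the restrictions of $u$ and of every $u_n$ to $R_p(h, \, y, \, \Omega)$ lie in $W^{1,p}$, so $\Sgrid(u_n, \, h, \, y)$ and $\Sgrid(u, \, h, \, y)$ are defined. Only finitely many cubes $K\in\GG(h, \, y, \, \Omega)$ are involved. For each such $K$, Proposition~\ref{prop:smallenergy} applied to $u_{|\partial K}$ (transported to $\SS^p$ by a bi-Lipschitz map) gives $\eta_K(y) > 0$. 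We set $\eta(y) := \min_K \eta_K(y) > 0$ and
\[
 g_n(y) := \norm{u_n - u}_{W^{1,p}(R_p(h, \, y, \, \Omega))}.
\]
If $g_n(y) \leq \eta(y)$, then $u_n$ and $u$ have the same class on every $\partial K$, and so \eqref{Sgridstable} holds. It therefore suffices to show that, for a.e.\ $y$, $g_n(y)\leq \eta(y)$ for all $n$ large.

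\emph{Step 2: full sequence (main obstacle).} Lemma~\ref{lemma:Fubini} gives $\int_{Q^{p+1}} g_n(y)^p \, \d y \lesssim h^{-1}\norm{u_n - u}^p_{W^{1,p}(\Omega')} \to 0$. This is only convergence in $L^1(\d y)$, so it gives a.e.\ convergence along a subsequence only. To obtain the whole sequence, I would not rely on $g_n(y)\to 0$ at a fixed $y$. Instead I would exploit the freedom to translate the grid slightly and use that homotopy classes are locally constant under small translations.

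Fix a good $y$ and a small $r$. Averaging over $y'\in B_r(y)$ gives $\fint_{B_r(y)} g_n^p \lesssim r^{-(p+1)} h^{-1}\norm{u_n-u}^p_{W^{1,p}}\to 0$ along the \emph{whole} sequence. Hence for each large $n$ there is $y'_n\in B_r(y)$ with $g_n(y'_n)$ as small as we like and with $y'_n$ a Lebesgue point of $\eta$. We would like $\eta(y'_n) \geq \eta(y)/2$, so that $\Sgrid(u_n, \, h, \, y'_n)=\Sgrid(u, \, h, \, y'_n)$. This requires a version of Proposition~\ref{prop:smallenergy} in which $\eta$ depends on $u_{|\partial K}$ only through the equi-integrability modulus of $\abs{\nabla u}^p$ on $\partial K$ (the $\eps_*(u)$ of Section~\ref{sect:homotopy_W1p}). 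That modulus is stable under small translations for a.e.\ $y$, by approximating $\abs{\nabla u}^p$ in $L^1$ by continuous functions.

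\emph{Step 3: returning to the original grid.} It remains to transfer from $y'_n$ back to $y$, that is, to show $\sigma(v, \, \partial K_{y'}) = \sigma(v, \, \partial K_y)$ for $v\in\{u, \, u_n\}$ when $\abs{y-y'}<r$. For this, $r$ must be chosen depending on $y$ but \emph{not} on $n$. The plan is to build, as in the proof of Lemma~\ref{lemma:Sbdd}, a map on the $p$-skeleton of the thin prism swept between $\partial K_y$ and $\partial K_{y'}$, using the straight translation and the projection $\pi_\NN$ near the $(p-1)$-skeleton. Its energy is controlled by the energy of $v$ in an $r$-neighbourhood of $R_p(h, \, y)$. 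For $v=u$ this energy is small once $r$ is small. For $v=u_n$ it is small uniformly in $n$, because $\abs{\nabla u_n}^p\to\abs{\nabla u}^p$ in $L^1$, so these are equi-integrable. Lemma~\ref{lemma:cubebounds} together with \eqref{discretenormintro} then forces the difference class to vanish.

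The delicate point in Step 3 is obtaining the $L^\infty$-closeness on the $(p-1)$-skeleton that the projection requires. As in \eqref{Stopbdd-p-1sk}, I would get it from Gagliardo--Nirenberg on the $(p-1)$-cells, applied to $u_n$ on nearby parallel skeleta; the uniform equi-integrability of $\abs{\nabla u_n}^p$ is used there as well. I expect this step, ensuring the thresholds are independent of $n$, to be the main difficulty. If it fails, the fallback is to prove the lemma along subsequences and check that the later proofs only need that weaker form.
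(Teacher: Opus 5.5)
Your Step~1 is correct, and it is simpler than the paper's route. The paper compares $\sigma(U_n,H)$ with $\sigma(U,H)$ on the prism grid $\EE$, using smooth approximants $\varphi_j$ and formula~\eqref{Stopbdd-dipole}. You instead compare $\sigma(u_n,\partial K)$ with $\sigma(u,\partial K)$ directly through Proposition~\ref{prop:smallenergy}, which suffices and does not even use $u\in\Hw^{1,p}(\Omega,\NN)$.

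The genuine gap is in Steps~2--3, and it cannot be repaired: the full-sequence statement is false whenever $\pN\neq 0$. A counterexample runs as follows.
\begin{itemize}
\item Take $\sigma\neq0$ and $u\equiv z_0$.
\item Let $u_n$ be a fixed $(\sigma,-\sigma)$ dipole map rescaled to length $L_n:=h/n$, parallel to $e_1$, placed in a fixed ball inside $\Omega$.
\item Put the first coordinates of its endpoints at $c+s_n$ and $c+s_n+L_n$, where $s_n:=\sum_{k<n}L_k \bmod h$ (a typewriter arrangement).
\end{itemize}
Then $D_p(u_n)=CL_n\to0$, so $u_n\to u$ strongly, and even $\overline{D}_p(u_n)\to0$ by Proposition~\ref{prop:dipole}. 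Since $\sum_nL_n=+\infty$, the arcs $[s_n,s_n+L_n]$ cover $\R/h\Z$ infinitely often. Hence, for a.e.\ $y$, infinitely many of these dipoles are cut by a face of $\GG(h,y)$. For those $n$, \eqref{Sgridnice} gives $\Sgrid(u_n,h,y)=\pm\sigma\bigl(\lb c_{K'}\rb-\lb c_K\rb\bigr)\neq0=\Sgrid(u,h,y)$.

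Concretely, what breaks is your uniform-in-$n$ energy bound in Step~3. The $p$-skeleton of the swept prism contains $\partial K_y$ itself, together with the $p$-dimensional set swept by the $(p-1)$-skeleton. The quantity $\int_{\partial K_y}\abs{\nabla u_n}^p\,\d\H^p$ at a \emph{fixed} $y$ is a trace. Equi-integrability of $\abs{\nabla u_n}^p$ in $\Omega'$ controls it only on average over $y$. In the example, this trace is bounded below by $\abs{\sigma}/C>0$ (Lemma~\ref{lemma:cubebounds}) for infinitely many $n$. Moreover, the classes on $\partial K_y$ and $\partial K_{y'}$ really do differ when $\abs{y-y'}\sim L_n/h$, so no $n$-independent $r$ exists. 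The same objection applies to the Gagliardo--Nirenberg $L^\infty$ bound on the $(p-1)$-cells.

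Your fallback is the right fix, and it is what the paper's own proof actually establishes. The paper's sentence ``for almost every $y$, Fubini theorem implies that the restrictions of $u_n$ \ldots converge'' is only justified along a subsequence. A clean version goes as follows:
\begin{itemize}
\item Pass to a subsequence with $\sum_n\norm{u_n-u}^p_{W^{1,p}(\Omega')}<+\infty$.
\item Lemma~\ref{lemma:Fubini} then gives $\sum_n\int_{Q^{p+1}}g_n^p\,\d y\lesssim h^{-1}\sum_n\norm{u_n-u}^p_{W^{1,p}(\Omega')}<+\infty$ for \emph{every} $h$.
\item Hence $g_n(y)\to0$ for a.e.\ $y$ and all $h$ simultaneously, and your Step~1 concludes.
\end{itemize}
This weaker form is all that Lemma~\ref{lemma:subStop} needs. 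There one only works with subsequences $(j_n)$, $(j'_n)$ and may extract further ones before invoking Statement~\ref{def-flat-convergence} of Proposition~\ref{prop:deformation}.
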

\begin{proof}
 As in the proof of Lemma~\ref{lemma:Sbdd},
 we consider a sequence of smooth maps~$(\varphi_j)_{j\in\N}$
 that converge to~$u$ weakly in~$W^{1,p}(\Omega)$
 and we extend it to a larger
 domain~$\Omega^\prime\supseteq\overline{\Omega}$
 by reflection about~$\partial\Omega$.
 By applying the Gagliardo-Nirenberg interpolation inequality,
 as in the proof of~\eqref{Stopbdd-p-1sk}, we obtain
 \begin{equation} \label{Stopstable-p-1sk}
  \norm{u_n - \varphi_j}_{L^\infty(R_{p-1}}
  \leq \norm{u_n - u}_{L^\infty(R_{p-1}}
   + \norm{u - \varphi_j}_{L^\infty(R_{p-1}} \leq\delta_*,
 \end{equation}
 for sufficiently large values of~$n$ and~$j$.
 Here we have written~$R_{p-1}$ instead of~$R_{p-1}(h,\, y)$.
 Now, we consider the grid~$\EE$ given by~\eqref{Stop-E}
 along with its~$p$-skeleton~$X_p$. In addition to the
 map~$U\colon X_p\to\NN$ we have already defined in the proof 
 of Lemma~\ref{lemma:Sbdd}, we define~$U_n\colon X_p\to\NN$
 as~$U_n(x, \, 0) := u_n(x)$ for~$x\in R_{p-1}$,
 $U_n(x,\, h) := U(x, \, h) = \varphi_j(x)$ for~$x\in R_{p-1}$, and
 \[
  U_n(x, \, t) := \pi_{\NN}\!\left(\left(1 - \frac{t}{h}\right) u_j(x)
   + \frac{t}{h}\varphi_j(x)\right)\!,
  \qquad \textrm{for } (x, \, t)\in R_{p-2}\times [0, \, h]. 
 \]
 The map~$U_n$ is well-defined because of~\eqref{Stopstable-p-1sk}.
 For almost every~$y\in Q^{p-1}$, Fubini theorem implies
 that the restrictions of~$u_n$ to the $(p-1)$-
 and the $(p-2)$-skeletons converge to the restrictions of~$u$,
 strongly in~$W^{1,p}(R_{p-1})$, $W^{1,p}(R_{p-2})$, respectively.
 As a consequence, for almost all~$y\in Q^{p+1}$
 we have~$U_n\to U$ strongly in~$W^{1,p}(X_{p-1})$.
 Given a~$p$-cell~$H$ of~$\GG(h, \, y)$
 with~$H\cap\Omega\neq\varnothing$, let~$\sigma(U_n, \, H)$
 be the homotopy class of~$U_n$ on~$\partial(H\times [0, \, h])$.
 Stability of the homotopy classes with respect 
 to strong~$W^{1,p}$-convergence
 (Proposition~\ref{prop:smallenergy})
 implies that there exists~$n_* = n_*(u, \, h, \, y)$ large enough
 that, for all $p$-cells~$H$ intersecting~$\Omega$ and 
 all~$n\geq n_*$, there holds
 \begin{equation} \label{Stopstable-sigma}
  \sigma(U_n, \, H) = \sigma(U, \, H).
 \end{equation}
 (We can choose~$n_*$ uniformly with respect to~$H$
 because there are only finitely many cells of the given grid
 that intersect~$\Omega$.) 
 Let~$a_H$, $b_H$ be the endpoints of the
 cell~$H^\prime$ dual to~$H$, oriented in such a way that
 $(\tau_1, \, \ldots, \, \tau_{p}, b_H - a_H)$
 is a positive basis of~$\R^p\times\R$
 whenever~$(\tau_1, \, \ldots, \, \tau_{p})$ is
 a positive basis of the $p$-plane containing~$H$.
 Because of~\eqref{Stopbdd-dipole} and~\eqref{Stopstable-sigma},
 we have
 \begin{equation*}
  \begin{split}
   \Sgrid(u_n, \, h, \, y)
   &= \sum_{H\in \GG_p(h, \, y)\colon H\cap\Omega\neq \emptyset}
   \left(\sigma(U_n, \, H) \lb b_H\rb - \sigma(U_n, \, H) \lb a_H\rb\right) \\
   &= \sum_{H\in \GG_p(h, \, y)\colon H\cap\Omega\neq \emptyset}
   \left(\sigma(U, \, H) \lb b_H\rb - \sigma(U, \, H) \lb a_H\rb\right)
   = \Sgrid(u, \, h, \, y)
  \end{split}
 \end{equation*}
 for any~$n\geq n_*$, and the lemma follows.
\end{proof}

% \begin{corollary} \label{cor:Sstable}
%  For all~$u\in W^{1,p}(\Omega, \, \NN)$, all~$h\in (0, \, h_0]$
%  and almost all~$y\in Q^{p+1}$,
%  there exists~$\eta(u, \, h, \, y) > 0$ such that
%  almost all~$z\in Q^{p+1}$ with~$\abs{y - z}\leq\eta(u, \, h, \, y)$
%  satisfies
%  \begin{equation*}
%   \Sgrid(u, \, h, \, y) = \Sgrid(u, \, h, \, z).
%  \end{equation*}
% \end{corollary}
% \begin{proof}
%  Let~$u\in W^{1,p}(\Omega, \, \NN)$, $h\in (0, \, h_0]$
%  and~$y\in Q^{p+1}$ be given. Let~$\tau_{w}u(x) := u(x + w)$
%  denote the traslation of~$u$ by a vector~$w\in\R^{p+1}$.
%  Were the statement false, we would find a sequence of
%  sets~$E_k\subseteq Q^{p+1}$ with positive measure such that
%  $\sup_{z\in E_k} \abs{z - y} \to 0$ as~$k\to+\infty$ and
%  \begin{equation} \label{Sstable-z}
%   \Sgrid(u, \, h, \, y)\neq \Sgrid(u, \, h, \, z)
%   \qquad \textrm{for all } z\in E_k.
%  \end{equation}
%  However, the definition~\eqref{Sgrid}
%  of~$\Sgrid$ immediately implies that
%  \[
%   \Sgrid(u, \, h, \, z) = \Sgrid(\tau_{hz - hy}u, \, h, \, y)
%   \qquad \textrm{for almost all } z\in E_k.
%  \]
%  Therefore, by continuity of the translation operator and
%  Lemma~\ref{lemma:Sstable}, there exist points~$z_k\in E_k$
%  such that~$\Sgrid(u, \, h, \, z) = \Sgrid(u, \, h, \, z_k)$
%  for all~$k$ large enough.
%  This contradicts~\eqref{Sstable-z} and completes the proof.
% \end{proof}

\section{Approximation by maps with nice singularities
and bounded relaxed energy}
\label{sect:RDbar}

The goal of this section is to prove the following result:

\begin{prop} \label{prop:RDbar}
 For all~$u\in\Hw^{1,p}(\Omega, \, \NN)$,
 there exists a sequence~$(u_j)_{j\in\N}$
 in~$R^{1,p}(\Omega, \, \NN)$ that converges to~$u$
 strongly in~$W^{1,p}(\Omega)$ and satisfies
 \[
  \overline{D}_p(u_j) \leq C \overline{D}_p(u)
 \]
 for all~$j\in\N$ and some constant~$C$ that depends
 only on~$\Omega$, $\NN$, $p$.
\end{prop}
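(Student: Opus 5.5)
The natural plan is to build $u_j$ from $u$ by a Bethuel-type projection on a fine grid, where the cubic cells are controlled by $u$ itself rather than by a generic $W^{1,p}$ map. The relaxed energy of $u_j$ is then estimated by gluing in the smooth approximants $\varphi_k$ of $u$.

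Fix $u\in\Hw^{1,p}(\Omega,\NN)$, extend it to $\Omega^\prime$ by~\eqref{extension}, and take smooth maps $\varphi_k\to u$ weakly in $W^{1,p}$ with $D_p(\varphi_k)\to\overline{D}_p(u)$. For a grid $\GG(h,y)$ with $h\to 0$, Lemma~\ref{lemma:Fubini} lets us choose $y$ so that:
\begin{itemize}
\item the $p$-skeleton energy satisfies $h\int_{R_p(h,y)}|\nabla u|^p\lesssim D_p(u)$;
\item the same bound holds for $\varphi_k$ with $k=k(h,y)$ large;
\item $u_{|R_p}$ is the strong limit of $u$ restricted to skeletons of nearby translates.
\end{itemize}

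The approximating map is then $u_h$, defined on each cube $K$ as the $0$-homogeneous extension of $u_{|\partial K}$ from the centre $c_K$. This is Bethuel's construction from Theorem~\ref{th:nicedensity}, but done only at the top dimension, since $u_{|\partial K}$ is already $\NN$-valued in $W^{1,p}$. After a standard smoothing of $u_{|R_p}$ (for instance by Proposition~\ref{prop:densitysmooth} applied on faces, combined with a Lipschitz shrinking near the skeleton), one gets $u_h\in R^{1,p}$ with singularities at the centres $c_K$. The strong convergence $u_h\to u$ in $W^{1,p}$ would follow by averaging over $y$, as in Bethuel's argument: the energy of the homogeneous extension on $K$ is $\lesssim h\int_{\partial K}|\nabla u|^p$, and the averaged difference tends to zero by continuity of translations. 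I would quote this part from the proof of Theorem~\ref{th:nicedensity} rather than redo it.

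The core step is the bound $\overline{D}_p(u_h)\lesssim \overline{D}_p(u)$. To approximate $u_h$ by smooth maps, the singularity at each $c_K$, whose class is $\sigma(u,\partial K)$, has to be removed by dipoles. I would reuse the construction of Lemma~\ref{lemma:Sbdd}: on the skeleton of $\EE$ we have the map $U$ interpolating between $u$ on $R_p$ and $\varphi_k$. Then I would define smooth approximants $\psi$ of $u_h$ in three steps.
\begin{itemize}
\item Inside each cube $K$ I shrink the homogeneous extension to a small ball around $c_K$.
\item In a thin layer I replace $u$ by the interpolation $U$, so that on $\partial K$ the boundary datum becomes $\varphi_k$, which extends smoothly into $K$ with energy $\lesssim h\int_{\partial K}|\nabla\varphi_k|^p$.
\item The leftover classes $\sigma(U,H)$ on each face $H$ are carried by dipoles along the dual edges $H^\prime$. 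Each dipole of class $\sigma$ and length $h$ can be smoothed out at cost $\lesssim h|\sigma|$, the standard ``dipole insertion'' of Bethuel and Brezis--Coron--Lieb, using maps realising the norm~\eqref{norm_plane}.
\end{itemize}
Summing these costs gives $\overline{D}_p(u_h)\lesssim D_p(u)+D_p(\varphi_k)+h\sum_H|\sigma(U,H)|+h^{-p}\|u-\varphi_k\|_{L^p}^p$. By Lemma~\ref{lemma:cubebounds} and~\eqref{Stopbdd-U}, exactly as in the proof of Lemma~\ref{lemma:Sbdd}, this is $\lesssim D_p(u)+D_p(\varphi_k)+o(1)$ as $k\to\infty$. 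Finally, $D_p(u)\le\overline{D}_p(u)$ by lower semicontinuity.

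The main obstacle is this dipole-removal step. One has to check that the map built near the skeleton, where $u$ is only $W^{1,p}$ and not smooth, can be approximated by genuinely smooth maps with energy control, and not merely by maps of class $R^{1,p}$. I would first try to arrange the construction so that $u_h$ coincides, near $R_p$, with a map already in $\Hs^{1,p}$ locally; this works because a neighbourhood of the $p$-skeleton retracts onto a $p$-dimensional set. The smooth approximation is then used only in neighbourhoods of the dual edges, where everything is explicit.
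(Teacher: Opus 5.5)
\textbf{Gap: the map $u_h$ does not converge strongly.} The problem is in your construction of $u_h$, and it breaks the first requirement of the statement. Taking the $0$-homogeneous extension of $u_{|\partial K}$ on \emph{every} cube of $\GG(h,y)$ only gives a sequence bounded in $W^{1,p}$. Your bound $D_p(u_h,K)\lesssim h\int_{\partial K}\abs{\nabla u}^p$ gives exactly this, which means weak convergence and nothing more.

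Strong convergence is false in general. On each cube, $\partial_\nu u_h=0$ for $\nu=(x-c_K)/\abs{x-c_K}$, so
$\norm{\nabla u_h-\nabla u}_{L^p(\Omega)}^p\geq\sum_K\int_{K\cap\Omega}\abs{\partial_\nu u}^p\,\d x$.
For a smooth nonconstant $u$, the right-hand side tends to a positive limit as $h\to0$, uniformly in $y$. So neither averaging over $y$ nor continuity of translations can rescue it.

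This is not Bethuel's construction. In the proof of Theorem~\ref{th:nicedensity}, and in the argument of \cite{BousquetPonceVanSchaftingen-II} that the paper follows in Proposition~\ref{prop:RS}, the homogeneous extension is used only on the \emph{bad} cubes. These are the cubes with large rescaled energy or nontrivial class on $\partial K$; there are $O(h^{-1})$ of them, with total volume $O(h^p)$. On the good cubes one uses adaptive smoothing followed by $\pi_\NN$. An opening step is also needed to control the energy on the boundaries of the bad cubes. Strong convergence comes from this good/bad splitting.

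\textbf{Consequence for the energy bound.} Once $u_h$ is built correctly, your relaxed-energy argument no longer applies as written. The singularities lie only at the centres of the bad cubes, and their classes are those of the opened map $u^{\mathrm{op}}_h$ on $\partial K$, not those of $u$. You still have to show that this configuration is cheap. The paper does this in two decoupled steps.
\begin{itemize}
\item In Proposition~\ref{prop:RS}, three facts combine: $\Snice(u_h)=\Sgrid(u^{\mathrm{op}}_h,h,y_h)$; property~(v) of Proposition~\ref{prop:opening}, which says opening changes the grid chain by a chain of flat norm $\lesssim D_p(u)$; and the averaged bound~\eqref{Sgridbdd}, which is where your interpolation $U$ and Lemma~\ref{lemma:cubebounds} enter. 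Together they give $\F_\Omega(\Snice(u_h))\lesssim\overline{D}_p(u)$.
\item Proposition~\ref{prop:dipole} is a dipole insertion valid for \emph{any} map in $R^{1,p}(\Omega,\NN)$. It gives $\overline{D}_p(u_h)\leq D_p(u_h)+C\,\F_\Omega(\Snice(u_h))$, and $D_p(u_h)\to D_p(u)\leq\overline{D}_p(u)$.
\end{itemize}
Your ingredients for the energy bound (the interpolation $U$, Lemma~\ref{lemma:cubebounds}, dipoles of cost $\lesssim h\abs{\sigma}$) are the right ones. They need to go through a flat-norm estimate of $\Snice(u_h)$ for the correctly constructed $u_h$, not through the all-cubes homogeneous extension.
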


This will be achieved by combining two propositions:

\begin{prop} \label{prop:RS}
 For all~$u\in\Hw^{1,p}(\Omega, \, \NN)$,
 there exists a sequence~$(u_j)_{j\in\N}$
 in~$R^{1,p}(\Omega, \, \NN)$ that converges to~$u$
 strongly in~$W^{1,p}(\Omega)$ and satisfies
 \[
  \F_\Omega(\Snice(u_j)) \leq C \overline{D}_p(u)
 \]
 for some constant~$C$ that depends only on~$\Omega$, $\NN$, $p$.
\end{prop}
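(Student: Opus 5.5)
The plan is to take Bethuel-type approximants with point singularities and then control their topological singular sets by comparing them with the grid chains $\Sgrid$. Fix $u\in\Hw^{1,p}(\Omega,\NN)$ and a sequence $(\varphi_k)$ of smooth maps converging weakly to $u$ with $D_p(\varphi_k)\to\overline{D}_p(u)$ (up to a factor $2$). By Theorem~\ref{th:nicedensity}, choose $v_j\in R^{1,p}(\Omega,\NN)$ with $v_j\to u$ strongly in $W^{1,p}(\Omega)$. In general the chains $\Snice(v_j)$ need not be bounded, because Bethuel's construction may create many spurious dipoles. So the $v_j$ will be modified rather than used directly.

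The first step is to replace $v_j$ with maps built on a grid, in the spirit of Bethuel's proof of Theorem~\ref{th:nicedensity}. Fix $h$ small and a good translation $y$, chosen via Lemma~\ref{lemma:Fubini} and the averaged bound \eqref{Sgridbdd} so that
\[
 \F_\Omega(\Sgrid(u,h,y))\lesssim\overline{D}_p(u)
\]
and $u$ restricted to the $p$-skeleton $R_p(h,y)$ is $W^{1,p}$ with energy controlled by $h^{-1}D_p(u)$. Next, approximate $u$ on $R_p(h,y)$ strongly by continuous maps; this uses Proposition~\ref{prop:densitysmooth} on each face, followed by a gluing along the $(p-1)$-skeleton by projection, as in Lemma~\ref{lemma:Sbdd}. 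Then extend homogeneously (0-homogeneous from the centre $c_K$) into each cube $K$. The resulting map $w$ lies in $R^{1,p}$, its singularities sit at the centres $c_K$, and
\[
 \Snice(w)=\sum_K\sigma(w,\partial K)\lb c_K\rb=\Sgrid(u,h,y),
\]
provided the approximation on the skeleton is close enough that the homotopy classes on each $\partial K$ are preserved (Proposition~\ref{prop:smallenergy}). Homogeneous extension costs energy $\lesssim h\int_{R_p}|\nabla u|^p$, so it is not close to $u$ in $W^{1,p}$. Hence the grid map alone cannot be the approximant, and a further step is needed.

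To reconcile strong convergence with the bound, I will combine the two: take $v_j$ strongly close to $u$ and apply Lemma~\ref{lemma:Sstable}. For fixed $h,y$, this gives $\Sgrid(v_j,h,y)=\Sgrid(u,h,y)$ for $j$ large. Since $v_j\in R^{1,p}$, \eqref{Sgridnice} gives $\Sgrid(v_j,h,y)=P(\Snice(v_j),h,y)$. So the grid projection of $\Snice(v_j)$ has flat norm $\lesssim\overline{D}_p(u)$, and the difference $\Snice(v_j)-P(\Snice(v_j),h,y)$ has flat norm $\lesssim h\,\M(\Snice(v_j))$ only on average, by Proposition~\ref{prop:deformation}\ref{def-mass}.

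The main obstacle is that $\M(\Snice(v_j))$ is uncontrolled as $j\to\infty$. I would therefore choose the order of quantifiers as follows: first $h$, then $y$ among a full-measure set, then $j$ large via Lemma~\ref{lemma:Sstable}. Then I would modify $v_j$ locally inside each cube to \emph{move} its singular points to $c_K$. Inside each cube $K$ of the grid, replace $v_j$ on a slightly shrunk cube $K_\delta$ by the homogeneous extension of $v_j|_{\partial K_\delta}$ from $c_K$. Choosing $\delta$ via Fubini so that $\partial K_\delta$ carries controlled energy, and letting the shrunk region have small measure relative to the energy, the error in $W^{1,p}$ is bounded by
\[
 \int_{\Omega\setminus\bigcup_K K_\delta}|\nabla v_j|^p\ \text{plus}\ \lesssim\delta\text{-type terms}.
\]
I intend this to tend to zero by taking the removed shells thin and $h\to0$ diagonally with $j$. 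The resulting $u_j$ has $\Snice(u_j)=\Sgrid(v_j,h,y)=\Sgrid(u,h,y)$, whose flat norm is $\lesssim\overline{D}_p(u)$.

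The delicate point is to check that this homogeneous replacement on $K_\delta$ with $h$ fixed actually converges strongly. It does not in general, so I expect the right construction is instead to keep $v_j$ unchanged and argue that its extra singularities can be paired into dipoles with total $\F$-cost controlled via the map $U$ of Lemma~\ref{lemma:Sbdd}. That is, apply the dipole decomposition \eqref{Stopbdd-dipole} at finer and finer scales and sum the contributions geometrically. Getting this multiscale bound is the step I expect to be hardest.
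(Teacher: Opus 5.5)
\textbf{There is a genuine gap: the proposal never produces a sequence that is both strongly convergent and controlled in flat norm.} Each construction you try fails, as you partly note yourself, and the step you postpone to the end cannot be completed.

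Homogeneous extension from $c_K$ in every cube, or in every shrunk cube $K_\delta$, does give $\Snice(w)=\Sgrid(u,h,y)$. But it costs energy of order $h\int_{R_p(h,y)}\abs{\nabla u}^p\,\d\H^p$, which is comparable to $D_p(u)$ and does not vanish as $h\to 0$. So it only yields weak convergence.

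Your fallback keeps Bethuel's approximants $v_j$ and pairs their extra singularities through the map $U$ of Lemma~\ref{lemma:Sbdd}. This fails for a structural reason. That construction bounds $\F_\Omega(\Sgrid(v_j,h,y))$, on average in $y$, by $D_p(v_j)+D_p(\varphi_k)+h^{-p}\norm{v_j-\varphi_k}_{L^p}^p$. With $j$ fixed and $\varphi_k\to u\neq v_j$, the last term blows up as $h\to0$. What you would actually need is $\F_\Omega(\Snice(v_j))\lesssim\overline{D}_p(v_j)$, which is Lemma~\ref{lemma:Sbdd} itself. But $\overline{D}_p(v_j)$ is not controlled along an arbitrary strongly convergent sequence; providing approximants for which it is controlled is exactly the role of Propositions~\ref{prop:RS} and~\ref{prop:dipole}.

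No bound in terms of $D_p(v_j)$ alone is available either. For $\NN=\SS^2$ and $p=3$, Bethuel's counterexample~\eqref{nonweakdensity} combined with Proposition~\ref{prop:dipole} yields maps in $R^{1,p}$ with bounded $D_p$ and $\F_\Omega(\Snice)\to+\infty$. So the approximants must be built to have controlled singularities; they cannot simply be taken from Theorem~\ref{th:nicedensity} and then estimated. Lemma~\ref{lemma:Sstable} does not help here: it only identifies the grid projection of $\Snice(v_j)$ at one fixed scale.

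\textbf{The missing idea is to localize the homogeneous extension to a few ``bad'' cubes.} This is what the paper does, following Bousquet--Ponce--Van Schaftingen.

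\begin{enumerate}
\item Choose $y_h$ as you propose, so that \eqref{RS-goodgrid} holds and $\F_\Omega(\Sgrid(u,h,y_h))\lesssim\overline{D}_p(u)$.
\item Call a cube bad if $h^{-1/p}\norm{\nabla u}_{L^p(U_{2\theta h}(K))}$ exceeds a threshold or $\sigma(u,\partial K)\neq0$. There are $\lesssim h^{-1}(1+D_p(u))$ bad cubes, so together with their neighbours they occupy volume $\lesssim h^p\to0$. By absolute continuity, the energy of $u$ there vanishes as $h\to0$.
\item Open $u$ near the boundaries of the cubes in $\mathscr{C}$ (Proposition~\ref{prop:opening}).
\item Convolve with a variable radius. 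Use radius of order $h$ on good cubes, where the small scale-invariant energy keeps the averages within $\delta_*$ of $\NN$. Use a tiny radius $s$ near bad cubes, where the opened map has small mean oscillation at small scales.
\item Extend homogeneously \emph{only} inside bad cubes, then project onto $\NN$.
\end{enumerate}

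The resulting $u_h\in R^{1,p}(\Omega,\NN)$ converges strongly to $u$. Its singularities sit only at centres of bad cubes, with charges $\sigma(u^{\mathrm{op}}_h,\partial K)$, so $\Snice(u_h)=\Sgrid(u^{\mathrm{op}}_h,h,y_h)$. Opening changes the grid chain only by dipoles of length $h$ across $p$-faces, with charges bounded by face energies. This gives \eqref{openingS}, i.e.\ $\F_{\Omega'}(\Sgrid(u^{\mathrm{op}}_h,h,y_h)-\Sgrid(u,h,y_h))\lesssim D_p(u)\le\overline{D}_p(u)$, and the bound follows.

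The approximants are built directly from $u$ at scale $h$. No auxiliary sequence $v_j$, no Lemma~\ref{lemma:Sstable}, and no multiscale pairing are needed.
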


\begin{prop} \label{prop:dipole}
 For all~$u\in R^{1,p}(\Omega, \, \NN)$, there holds
 \[
  \overline{D}_p(u) \leq D_p(u) + C \, \F_\Omega(\Snice(u)),
 \]
 for some constant~$C$ that depends only on~$\Omega$, $\NN$, $p$.
\end{prop}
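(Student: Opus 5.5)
The statement to prove is Proposition~\ref{prop:dipole}. The plan is a dipole-insertion construction in the spirit of Bethuel and Brezis--Coron--Lieb: given $u\in R^{1,p}(\Omega,\NN)$ with $\Snice(u)=\sum_{x\in\Sigma}\sigma(u,x)\lb x\rb$, I will build smooth maps $\varphi_k\to u$ in $L^p$ with $\limsup_k D_p(\varphi_k)\le D_p(u)+C\,\F_\Omega(\Snice(u))+\eps$. Fix $\eps>0$ and a dipolar decomposition $\Snice(u)=\sum_i M_i+\sum_j D_j$ in $\Omega$ with
\[
 \sum_i\abs{M_i}+\sum_j\abs{D_j}\le\F_\Omega(\Snice(u))+\eps .
\]
By Remark~\ref{rk:dipolarOmega} each segment $\overline D_j$ lies in $\overline\Omega$. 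Monopoles $M_i=\gamma_i\lb z_i\rb$ may sit at points of $\partial\Omega$; since the decomposition holds only in $\Omega$, these are exactly the charges that get "pushed to the boundary".

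First I reduce to a single singularity structure. After a small perturbation (moving endpoints slightly, which costs at most $\eps$ in flat norm), I may assume the segments are pairwise disjoint, meet $\Sigma$ only at their endpoints, and each boundary monopole is joined to $\partial\Omega$ by a short segment. The cancellation of coefficients in $\Omega$ means that, at each $x\in\Sigma$, $\sigma(u,x)$ equals the sum of the oriented coefficients of the segments ending at $x$. The core local step is then the following. Along each segment $L$ with coefficient $\sigma$, I modify $u$ inside a thin tube of radius $r\ll1$ around $L$. In each cross-section $p$-disk I insert a $p$-dimensional bubble of class $-\sigma$, with energy at most $\abs{\sigma}+\eps$. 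By \eqref{norm_ball} this bubble can be taken equal to $z_0$ on the boundary of the disk. I glue it to $u$ on the tube boundary through an annular transition, using Lemma~\ref{lemma:actionofpi1} (and assumption~\eqref{hp:single-valued}) to move the basepoint freely; this transition costs $o(1)$ as $r\to0$, because $u$ is Lipschitz away from $\Sigma$. The energy added is then about $(\abs{\sigma}+\eps)\H^1(L)+o(1)$. Near each endpoint $x\in\Sigma$, the bubbles arriving along the segments cancel the class of $u$ on small spheres, by Lemma~\ref{lemma:homotopysum} and the additivity of classes. Boundary monopoles are handled by letting the tube exit $\Omega$ through $\partial\Omega$; this is where the extension to $\Omega'$ is used, and it also explains why the cost is linear in $\abs{\gamma_i}$ with a constant depending on $\Omega$. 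In the end the map has zero homotopy class on every small sphere, so it lies locally in the strong closure, and a Bethuel-type local smoothing near $\Sigma$ yields a smooth map with an energy increase of $o(1)$.

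The step I expect to be the main obstacle is the endpoint surgery. It must be shown that, near a point $x\in\Sigma$ where tube bubbles of classes $-\sigma_{j}$ arrive, the resulting map on a small ball around $x$ with $\H^0$-trivial total class extends continuously with energy $o(1)$. It must also be shown that the modified map with a trivial point singularity can be smoothed at small energy cost. Here I would first try a scaling argument: $u$ satisfies \eqref{nicesing}, so the blow-up of $u$ at $x$ on the unit sphere has bounded energy. Inside a ball of radius $r$, I would glue the homotopy to a constant map (which exists because the total class is zero) at energy cost $O(r)$, since in dimension $p+1$ the $p$-energy of a fixed homotopy scales like $r$. The $L^p$ convergence is clear because all modifications occur on sets of vanishing measure. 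Taking $r\to0$ and then $\eps\to0$ gives $\overline D_p(u)\le D_p(u)+C\F_\Omega(\Snice(u))$, with $C$ depending only on $\Omega$, $\NN$, $p$, through Lemma~\ref{lemma:norm_p}\eqref{item:equivalentnorms} and \eqref{extension}.
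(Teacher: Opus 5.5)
Your architecture matches the paper's. You take a dipole decomposition and join interior monopoles to $\partial\Omega$. These monopoles lie in $\Omega$, so the joining segments can be as long as $\mathrm{diam}(\Omega)$, not short. You then build tubes along the segments that carry the dipole's class on every transversal slice, and remove the resulting homotopically trivial point singularities.

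The endpoint surgery you single out is not the hard part. The paper removes balls of radius $\eps\gg r$ around \emph{every} dipole endpoint in $\Omega$, including those outside $\Sigma$ where the coefficients cancel. Moving endpoints, as you propose, breaks the exact identity $\sum_j D_j=\Snice(u)$ in $\Omega$. The paper then checks that the class on each small sphere is trivial by a ball-with-holes argument. Finally it fills the balls with Lemma~\ref{lemma:removable} at cost $C\eps\bigl(1+\sum_j\abs{\sigma_j}\bigr)\to 0$.

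The genuine gap is the tube step, which you treat as cheap. On each slice you place a fixed bubble pinned at $z_0$ and pass radially to $u$ on $\abs{x'}=r$. Along each ray, this transition runs through a path $\gamma_t$ from $z_0$ to approximately $u(t,0)$, where $t$ is the coordinate along the segment.

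\begin{itemize}
\item The Lipschitz bound on $u$ only controls the last piece, from $u(t,0)$ to $u(t,\cdot)$ on $\abs{x'}=r$.
\item The piece along $\gamma_t$ is scale invariant in the $p$-dimensional cross-section. On an annulus of bounded ratio it costs at least $c\,\ell_t^p$ per unit length of tube, with $\ell_t$ the length of $\gamma_t$, however small $r$ is.
\item The paths $\gamma_t$ must also depend continuously on $t$. Minimising geodesics jump across the cut locus. A continuous family is homotopic rel endpoints to $\gamma_0$ followed by $s\mapsto u(s,0)$, $0\le s\le t$. Its length is therefore bounded below by the displacement of the lifted curve $t\mapsto u(t,0)$ in the universal cover of $\NN$, which has nothing to do with $\Snice(u)$.
\end{itemize}

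Here is a concrete failure. Take $\NN=\SS^1\times\SS^2$ and $p=2$, which satisfy \eqref{hp:single-valued}--\eqref{hp:compactnorm}. Then $u$ can wind $N$ times around the $\SS^1$ factor along the segment, inside a thin tube with a logarithmic cut-off. This costs arbitrarily little extra energy and leaves $\Snice(u)$ unchanged, yet your tube then costs $\gtrsim LN^2$. Lemma~\ref{lemma:actionofpi1} identifies the class of each slice separately; it does not glue the slices together at controlled energy.

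For $p\geq 2$ your route can be repaired by the same capacity effect, since points have zero $p$-capacity in $\R^p$. Put the bubble in a disk of radius $r_1=re^{-M}$. Run the continuation family $\gamma_t$, whose length is $\lesssim 1+L/\eps$ because $\abs{\nabla u}\lesssim\eps^{-1}$ at distance $\geq\eps$ from $\Sigma$, through $r_1<\abs{x'}<r/2$ with a logarithmic radial profile. This costs $\lesssim(1+L/\eps)^pM^{1-p}$ per unit length, plus $\lesssim r^pL\eps^{-p}$ from the $t$-derivative, so the constant in front of $L\abs{\sigma}$ is essentially $1$. For $p=1$ the cross-sections are intervals, there is no such gain, and you would need a separate bound on the winding of $u$ along the segment.

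The paper avoids the issue for all $p$ with Lemmas~\ref{lemma:fullcylinder} and~\ref{lemma:emptycylinder}:
\begin{itemize}
\item It first makes the map equal to $h(x_1):=u(x_1,0)$ on the boundary of an inner tube.
\item It then places near-optimal bubbles, pinned at the \emph{local} value $h(t_j)$, only on slices $t_j$ spaced about $r$ apart. Each has energy $\leq 2\abs{\sigma}_{h(t_j)}\leq C\abs{\sigma}$ by Lemma~\ref{lemma:norm_p}\eqref{item:equivalentnorms}.
\item By Lemma~\ref{lemma:actionofpi1} and \eqref{hp:single-valued}, each slab $Q_j=[t_j,t_{j+1}]\times B^p_r$ has boundary data of class $\sigma-\sigma=0$. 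Lemma~\ref{lemma:removable} therefore fills it at cost $\lesssim r\abs{\sigma}+r^{p+1}\norm{h'}_{L^\infty}^p$.
\item Summing over the $\approx L/r$ slabs gives $\lesssim L\abs{\sigma}+r^pL\norm{h'}_{L^\infty}^p$, whatever the winding of $h$.
\end{itemize}
This works because the basepoint is reset on every slice, so no continuous family of bubbles is ever needed.
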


Proposition~\ref{prop:RDbar} is an immediate consequence
of Propositions~\ref{prop:RS} and~\ref{prop:dipole},
which will be proved in Subsections~\ref{sect:RS}
and~\ref{sect:dipole}, respectively.

\subsection{Proof of Proposition~\ref{prop:RS}}
\label{sect:RS}

Proposition~\ref{prop:RS} is a variant of~\cite[Theorem~2]{Bethuel-Density}
and can be proved by reasoning along the lines
of~\cite[Theorem~3]{BousquetPonceVanSchaftingen-II}.
The first tool in the proof is the following proposition,
which is a variant of~\cite[Proposition~2.1]{BousquetPonceVanSchaftingen-II}.
Let~$u\in W^{1,p}(\Omega, \, \NN)$ be a given map.
Using reflection across the boundary of~$\Omega$, as before,
we construct an extension of~$u$ (denoted by the same symbol)
that belongs to~$W^{1,p}(\Omega, \, \NN)$ and satisfies~\eqref{extension}.
Let~$h_0 := \frac{1}{2\sqrt{n}} \dist(\Omega, \, \partial\Omega^\prime) > 0$.
Let~$\GG = \GG(h, \, y)$ be a grid with size~$h\in (0, \, h_0]$
and~$y\in Q^{p+1}$. By Lemma~\ref{lemma:Fubini} and an averaging argument,
we can select~$y\in Q^{p+1}$ in such a way that, for
each~$j\in\{0, \, 1, \, \ldots, \, p\}$, the trace of~$u$ on
the $j$-skeleton~$R_j(h, \, y)\cap\Omega^\prime$ belongs
to~$W^{1,p}(R_j(h, \, y)\cap\Omega^\prime)$ and satisfies
\begin{equation} \label{hp:goodgrid}
 \int_{R^j(h, \, y)\cap\Omega^\prime} \abs{\nabla u}^p \d\H^j
  \leq C h^{j - n} D_p(u, \, \Omega)
  \qquad \textrm{for all } j\in\{0, \, 1, \, \ldots, p\},
\end{equation}
where the constant~$C$ depends only on~$\Omega$ and~$p$.
Let~$\mathscr{C}\subseteq\GG(h, \, y, \, \Omega)$
be a collection of cubes of the grid~$\GG(h, \, y)$,
all of them having non-empty intersection with~$\Omega$.
We denote by~$R_{\mathscr{C}}$ and~$R_{\partial\mathscr{C}}$,
respectively, the $(p+1)$-dimensional skeleton of~$\mathscr{C}$
and the $p$-dimensional one, that is,
\begin{equation} \label{RC}
 R_{\mathscr{C}} := \bigcup_{K\in\mathscr{C}} K, \qquad
 R_{\partial\mathscr{C}} := \bigcup_{K\in\mathscr{C}} \partial K.
\end{equation}
For all~$t > 0$ and all~$X\in\R^{p+1}$, we define
\begin{equation} \label{U_t}
 U_t(X) := \left\{x\in\Omega^\prime \colon
 \dist(x, \, X) < t\right\} \! .
\end{equation}
In particular, we have~$U_t(R_{\partial\mathscr{C}})\subseteq\Omega^\prime$
if~$h\leq h_0$ and~$t < h_0$.

% In the following result, we consider maps of the form~$u\circ\Phi$,
% where~$u\in W^{1,p}(\Omega, \, \NN)$ and~$\Phi\colon\R^{p+1}\to\R^{p+1}$
% is smooth. Such maps need not be well-defined pointwise,
% for~$u$ is defined pointwise almost everywhere only.
% Instead, $u\circ\Phi$ is defined by approximation:
% given a sequence of smooth maps~$(u_j)_{j\in\N}\subseteq C^\infty(\overline{\Omega^\prime}, \, \R^m)$ that converge
% to~$u$ strongly in~$W^{1,p}(\Omega^\prime)$, under suitabnle assumptions
% on~$\Phi$ it is possible to prove that~$u_j\circ\Phi\to v$
% strongly in~$W^{1,p}(\Omega^\prime)$ as~$j\to+\infty$,
% and the limit is independent of the choice of the approximating sequence
% (see~\cite[Section~2]{BousquetPonceVanSchaftingen-II}).
% We define~$u\circ\Phi := v$.

\begin{prop}%[{\cite[Proposition~2.1 and Addendum~2]
%{BousquetPonceVanSchaftingen-II}}]
\label{prop:opening}
 Let $0 < h \leq h_0$, $y\in\R^{p+1}$, $0 < \theta < 1/2$
 and~$\mathscr{C}\subseteq\GG(h, \, y, \, \Omega)$.
 Let~$u\in W^{1, p}(\Omega, \, \NN)$ be a map
 (which we extend to an element of~$W^{1,p}(\Omega^\prime, \, \NN)$
 satisfying~\eqref{extension}). If the condition~\eqref{hp:goodgrid}
 is satisfied, then there exists a map
 $u^{\mathrm{op}}_h\in W^{1, p}(\Omega^\prime, \, \NN)$
 that satisfies the following properties:
 \begin{enumerate}[label=(\roman*)]
  \item for every cube $K\in\mathscr{C}$ and every boundary
  $j$-face~$H\subseteq\partial K$, $u^{\mathrm{op}}_h$ is constant
  on the $(p - j + 1)$-dimensional cubes of edge length~$2\theta h$
  that are orthogonal to~$H$ and are centred at some point of~$H$;

  \item $u^{\mathrm{op}}_h = u$
  in~$\Omega^\prime\setminus U_{2\theta h}(R_{\partial\mathscr{C}})$;

  \item for every cube $K\in\mathscr{C}$ and every boundary
  $p$-face~$H\subseteq\partial K$, we have
  \begin{equation*}
  \norm{\nabla u^{\mathrm{op}}_h}_{L^p(U_{2\theta h}(H))}
  \leq C \norm{\nabla u}_{L^p(U_{2\theta h}(H))}
  \end{equation*}
  for some constant $C > 0$ depending only on~$p$ and~$\theta$;

  \item we have
  \[
   \begin{split}
    \sup_{a+Q_s^{p+1}\subseteq U_{\theta h}(R_{\partial\mathscr{C}})}
    \frac{1}{s^{2p +2}} \int_{a+Q_s^{p+1}} \int_{a+Q_s^{p+1}}
    \abs{u^{\mathrm{op}}_h(x) - u^{\mathrm{op}}_h(y)} \d x \d y \to 0
   \end{split}
  \]
  as~$s\to 0$;

  \item we have
  \[
   \F_{\Omega^\prime}\!\left(\Sgrid(u, \, h, \, y)
    - \Sgrid(u^{\mathrm{op}}_h, \, h, \, y)\right)
   \leq C \, D_p(u)
  \]
  for some constant $C > 0$ depending only on~$p$ and~$\theta$.
\end{enumerate}
\end{prop}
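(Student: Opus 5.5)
The natural strategy is the ``opening of maps'' construction of Brezis--Li and Bousquet--Ponce--Van Schaftingen, adapted to the present setting. Properties (i)--(iv) are local, and are essentially the content of \cite[Proposition~2.1 and Addendum~2]{BousquetPonceVanSchaftingen-II}. Property (v) is new; I would obtain it from the grid-homotopy argument of Lemma~\ref{lemma:Sbdd}, with $\varphi_j$ replaced by $u^{\mathrm{op}}_h$.

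\emph{Construction.} Proceed by induction on the dimension $j=0,1,\dots,p$ of the faces of the cubes in $\mathscr{C}$. For each $j$-face $H$, choose a Lipschitz map $\Phi_H$ of $\R^{p+1}$ that is the identity outside $U_{2\theta h}(H)$. On the set of points whose projection onto the affine span of $H$ lies in a slightly shrunk copy of $H$, $\Phi_H$ should collapse each orthogonal $(p-j+1)$-cube of side $2\theta h$ centred at a point of $H$ onto that centre point. Then set $u^{\mathrm{op}}_h := u\circ\Phi$, where $\Phi$ is the composition of the $\Phi_H$ taken face by face, lowest dimension first. Property (i) is immediate from this, and property (ii) holds because $\Phi$ is the identity away from $U_{2\theta h}(R_{\partial\mathscr{C}})$.

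The estimate (iii) does not hold for a fixed $\Phi_H$. It is obtained by averaging over translated centres of opening, exactly as in \cite{BousquetPonceVanSchaftingen-II}: the average of $\int\abs{\nabla(u\circ\Phi_{H,\xi})}^p$ over $\xi$ is controlled by $\int_{U_{2\theta h}(H)}\abs{\nabla u}^p$, so a good choice of $\xi$ exists.

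Property (iv) is VMO-type smallness near the skeleton. It follows because, near a $j$-face, $u^{\mathrm{op}}_h$ depends only on $j$ variables through $u|_H$, which lies in $W^{1,p}$ of a $j$-dimensional set with $j\le p$. For $j=p$ this is the critical case, and VMO follows from the Poincar\'e inequality. Condition \eqref{hp:goodgrid} guarantees that these traces are in $W^{1,p}$ with the stated bounds.

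\emph{Estimate (v).} This is the step I expect to be the main obstacle. For almost every $y$, apply the construction of Lemma~\ref{lemma:Sbdd} on the cylinder grid $\EE$ in \eqref{Stop-E}. Put $u$ on the bottom layer $\Omega'\times\{0\}$ and $u^{\mathrm{op}}_h$ on the top layer $\Omega'\times\{h\}$. On $R_{p-1}\times[0,h]$, join the two by a homotopy that stays in $\NN$. Then \eqref{Stopbdd-dipole} and Lemma~\ref{lemma:homotopysum-cubes} express $\Sgrid(u,h,y)-\Sgrid(u^{\mathrm{op}}_h,h,y)$ as a sum of dipoles of length $h$ with coefficients $\sigma(U,H)$. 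Lemma~\ref{lemma:cubebounds} bounds each $\abs{\sigma(U,H)}$ by the $p$-energy of $U$ on $\partial(H\times[0,h])$.

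The difficulty is the lateral interpolation. There is no smallness of $\norm{u-u^{\mathrm{op}}_h}_{L^\infty(R_{p-1})}$, so the projection $\pi_\NN$ of a linear interpolation cannot be used. Instead, use the structure of the opening itself. On $R_{p-1}$ we have $u^{\mathrm{op}}_h=u\circ\Phi$, and $\Phi$ is homotopic to the identity through the maps $\Phi_t:=(1-t)\mathrm{id}+t\Phi$, each of which maps the skeleton into $U_{2\theta h}(R_{p-1})$. So I would define $U(x,t):=u(\Phi_{t/h}(x))$ on the lateral cells.

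Its energy is bounded by that of $u$ on the thickened skeleton. Averaging in $y$ with Lemma~\ref{lemma:Fubini}, as in \eqref{hp:goodgrid}, this gives $h\cdot h^{-1}\int\abs{\nabla u}^p$ up to constants. Summing $h\,\abs{\sigma(U,H)}$ over all $p$-faces, the factor $h$ from the dipole length cancels the $h^{-1}$ from the skeleton density \eqref{hp:goodgrid}. This yields $\F_{\Omega'}\lesssim D_p(u)$, where the sum uses (iii) together with the bounded overlap of the neighbourhoods $U_{2\theta h}(H)$.
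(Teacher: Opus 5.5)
\textbf{There is a genuine gap in your proof of (v).} Your treatment of (i)--(iv), citing the opening construction of \cite{BousquetPonceVanSchaftingen-II}, matches the paper. The rest of the bookkeeping in (v) is also fine: the cylinder grid $\EE$, the length-$h$ dipoles indexed by $p$-faces, Lemma~\ref{lemma:cubebounds}, and the bounds on the bottom and top layers via \eqref{hp:goodgrid} and (i), (iii).

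The gap is the step you flagged: the lateral layer $U(x,t) := u(\Phi_{t/h}(x))$ on $R_{p-1}\times[0,h]$. This is the restriction of $u$ to the $p$-dimensional sheet swept by $R_{p-1}$ under the straight-line homotopy, which is a hypersurface of $\R^{p+1}$.
\begin{itemize}
\item The restriction of a $W^{1,p}$ map to a given hypersurface is in general not in $W^{1,p}$; it only lies in the trace space. So $U$ need not be $W^{1,p}$ on $\partial(H\times[0,h])$, and then $\sigma(U,H)$ and Lemma~\ref{lemma:cubebounds} are not available.
\item More to the point, nothing bounds the tangential energy of $u$ on a fixed hypersurface by $\int_{U_{2\theta h}(R_{p-1})}\abs{\nabla u}^p$. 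The volume energy can be arbitrarily small compared with it, and since $\Phi$ collapses whole cubes you cannot change variables. So ``its energy is bounded by that of $u$ on the thickened skeleton'' is unjustified.
\item The fallback of averaging in $y$ is not available. In the statement $y$ is fixed: it is the grid carrying $\mathscr{C}$, the opening and $\Sgrid$. The sheet also depends on $y$ and on $u$ through the choices made in the opening, so Lemma~\ref{lemma:Fubini} does not apply to it.
\item Averaging over the opening parameters does not obviously help either. All the strips emanate from $R_{p-1}$ itself, so the averaged strip energy carries a weight $\dist(\cdot,R_{p-1})^{-1}$, which \eqref{hp:goodgrid} does not control.
\end{itemize}

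\textbf{How the paper handles (v).} The paper never interpolates between $u$ and $u^{\mathrm{op}}_h$ along the whole $(p-1)$-skeleton. It uses the specific form of the last opening step. Place a $p$-face $H$ of $K\in\mathscr{C}$ in $\{x_1=0\}$. Then $u^{\mathrm{op}}_h(0,x'') = (u\circ\phi^{p-1})(a,x'')$ for some $\abs{a}<\theta h$. Moreover, $u^{\mathrm{op}}_h$ is continuous near the $(p-1)$-skeleton, by property (a) (the normal variables are already frozen there) and Sobolev embedding.

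From this the paper writes $\sigma(u^{\mathrm{op}}_h,\partial K)$ as $\sigma(u,\partial K)$ plus $\sum_{H\subseteq\partial K}\eps(H)\,\sigma(u,\partial H')$. Here $H'$ is the thin slab between $H$ and its translate $\{a\}\times H$, and $\eps(H)$ is the sign of $a$. Each $p$-face is shared by two cubes with opposite orientations, so these corrections assemble into dipoles $D_H$ of length $h$ joining adjacent centres. Lemma~\ref{lemma:cubebounds} then bounds $h\sum_H\abs{\sigma(u,\partial H')}$ by $h\int_{R_{\mathscr{C}^p}}(\abs{\nabla u}^p+\abs{\nabla u^{\mathrm{op}}_h}^p)\,\d\H^p$. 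This is $\lesssim D_p(u)$ by \eqref{hp:goodgrid} and the opening properties.

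The missing idea in your proposal is this localisation of the correction to thin slabs on the $p$-faces. In the paper's argument only $p$-skeleton energies of $u$ and $u^{\mathrm{op}}_h$ enter, instead of the trace of $u$ on an uncontrolled swept hypersurface.
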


The proof of Proposition~\ref{prop:opening} is based on the
following result.

\begin{prop}[{\cite[Proposition~2.2]{BousquetPonceVanSchaftingen-II}}]
\label{prop:openingcube}
 Let $j \in \{0, \, 1, \, \ldots, p\}$, $h > 0$,
 $0 < \theta_1 < \theta_2$ and $A \subseteq \R^j$ be an open set.
 For every $u\in W^{1, p}(A \times Q_{\theta_2 h}^{m - j}, \, \R^m)$,
 there exists a smooth map $\zeta\colon\R^{p - j + 1}\to\R^{p - j + 1}$
 that satisfies the following properties:
 \begin{enumerate}[label=(\roman*)]
  \item $\zeta$ is constant in $Q_{\theta_1h}^{m - j}$;
  \item $\zeta(x) = x$ for all~$x\notin Q_{\theta_2 h}^{m - j}$
  and $\zeta(x)\in Q_{\theta_2h}^{m - j}$ for all~$x\in Q_{\theta_2 h}^{m - j}$;
  \item letting $\phi\colon\R^{p+1}\to\R^{p+1}$ be
  defined as~$\phi(x) = (x', \, \zeta(x''))$,
  $x = (x', x'') \in \R^j\times\R^{p - j + 1}$,
  the map $u\circ\phi$ is well-defined, belongs
  to~$W^{1, p}(A \times Q_{\theta_2h}^{p - j + 1}, \, \R^m)$,
  and satisfies
  \begin{equation*}
   \norm{\nabla(u\circ\phi)}_{L^p({A \times Q^{m-j}_{\theta_2 h}})}
   \leq C \norm{\nabla u}_{L^p({A \times Q^{m-j}_{\theta_2 h}})},
  \end{equation*}
  for some constant $C > 0$ depending only on~$p$, $\theta_1$ and $\theta_2$.
 \end{enumerate}
\end{prop}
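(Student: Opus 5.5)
The plan is to obtain $\zeta$ by averaging over a family of translated versions of one fixed ``opening'' map, in the spirit of the Federer--Fleming averaging trick, rather than constructing it by hand for a given $u$. Write $k := p - j + 1$ for the number of normal variables, so that the statement lives in $\R^j\times\R^k$ and, with $k$ in place of $m - j$, concerns $u$ on $A\times Q^{k}_{\theta_2 h}$. Fix intermediate parameters $\theta_1 < \theta' < \theta'' < \theta_2$, for instance equally spaced, and set $\delta := \min\{\theta' - \theta_1, \, \theta_2 - \theta''\}$. Choose once and for all a smooth map $\psi\colon\R^k\to\R^k$ with $\psi\equiv 0$ on $Q^k_{\theta' h}$, $\psi(z) = z$ for $z\notin Q^k_{\theta'' h}$, and $\psi(Q^k_{\theta_2 h})\subseteq Q^k_{\theta_2 h}$. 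Such a $\psi$ is obtained by rescaling a map built for $h = 1$, so $\abs{\nabla\psi}\leq C(\theta', \theta'')$ uniformly in $h$. For $w\in Q^k_{\delta h}$, define $\zeta_w(z) := \psi(z - w) + w$. Then $\zeta_w$ is constant (equal to $w$) on $Q^k_{\theta_1 h}$, because $Q^k_{\theta_1 h} - w\subseteq Q^k_{\theta' h}$. Moreover $\zeta_w$ is the identity off $Q^k_{\theta'' h} + w\subseteq Q^k_{\theta_2 h}$. I still need to check that $\zeta_w$ maps $Q^k_{\theta_2 h}$ into itself, possibly after shrinking $\delta$ and choosing $\psi$ slightly contracting near the boundary. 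This gives properties (i) and (ii) for every $w$.

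The key step is the averaged energy estimate for $u\circ\phi_w$, where $\phi_w(x) = (x', \, \zeta_w(x''))$. First take $u$ smooth. Since $\abs{\nabla(u\circ\phi_w)(x)}\leq C\abs{\nabla u(\phi_w(x))}$, we integrate in $w$ and substitute $z = x'' - w$:
\[
 \int_{Q^k_{\delta h}}\int_{A\times Q^k_{\theta_2 h}} \abs{\nabla(u\circ\phi_w)}^p \, \d x\,\d w
 \leq C \int_A \int_{Q^k_{\theta_2 h + \delta h}} \int_{Q^k_{\delta h}} \abs{\nabla u(x', \, \psi(z) + w)}^p \, \d w\,\d z\,\d x'.
\]
Here I only keep the points where $\phi_w(x)\in A\times Q^k_{\theta_2 h}$, and outside the opened region $u\circ\phi_w = u$. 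For fixed $z$, the inner integral is bounded by $\int_{Q^k_{\theta_2 h}}\abs{\nabla u(x', \cdot)}^p$. Integrating over $z$ gives a factor $(\theta_2 h + \delta h)^k$. Dividing by $\abs{Q^k_{\delta h}} = (\delta h)^k$, the average over $w$ of $\norm{\nabla(u\circ\phi_w)}_{L^p}^p$ is at most $C(\theta_1, \theta_2)\norm{\nabla u}^p_{L^p}$. The factors of $h$ cancel, which is why the constant does not depend on $h$. Hence a set of $w$ of positive measure gives estimate (iii).

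For general $u\in W^{1,p}$, the obstacle is that $u\circ\phi_w$ need not be meaningful, since $\phi_w$ collapses a whole cube to a point. I would handle this by approximation. Take smooth $u_n\to u$ in $W^{1,p}(A\times Q^k_{\theta_2 h})$. Applying the same averaging computation to $u_n - u_\ell$, and to $u_n - u$ at the level of $L^p$ (the substitution is the same), shows that
\[
 \int_{Q^k_{\delta h}} \norm{u_n\circ\phi_w - u_\ell\circ\phi_w}^p_{W^{1,p}} \, \d w \lesssim \norm{u_n - u_\ell}_{W^{1,p}}^p.
\]
Passing to a subsequence with summable errors, for a.e. $w$ the sequence $u_n\circ\phi_w$ is Cauchy in $W^{1,p}$. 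Its limit coincides a.e. with the pointwise composition, which is measurable by Fubini, and the limit is independent of the approximating sequence. Fatou's lemma then transfers the averaged bound to $u$. Finally, I pick one $w$ in the full-measure good set for which the energy is at most twice the average, and set $\zeta := \zeta_w$. The delicate point I expect to check most carefully is this well-definedness for a.e. $w$, together with the requirement that $\zeta$ maps $Q^k_{\theta_2 h}$ into itself.
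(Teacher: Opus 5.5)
Your proposal is correct and follows the argument of \cite[Proposition~2.2]{BousquetPonceVanSchaftingen-II}, which the paper cites without reproducing. That argument has four steps: average $\norm{\nabla(u\circ\phi_w)}_{L^p}^p$ over the translated openings $\zeta_w=\psi(\cdot-w)+w$; use translation invariance in $w$ to obtain a bound independent of $h$; give meaning to $u\circ\phi_w$ for a.e.~$w$ by smooth approximation; and select a good $w$.

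The loose end you flag is closed by requiring $\psi(Q^k_{\theta''h})\subseteq Q^k_{\theta''h}$ instead of $\psi(Q^k_{\theta_2h})\subseteq Q^k_{\theta_2h}$. For example, take $\psi(z)=f(z)\,z$ with $f$ smooth, $0\le f\le 1$, $f=0$ on $Q^k_{\theta'h}$ and $f=1$ off $Q^k_{\theta''h}$. Then $\zeta_w(Q^k_{\theta_2h})\subseteq Q^k_{\theta_2h}\cup(Q^k_{\theta''h}+w)=Q^k_{\theta_2h}$, with no need to shrink $\delta$.
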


\begin{proof}[Proof of Proposition~\ref{prop:opening}]
 The result follows along the lines of~\cite[Proposition~2.2]{BousquetPonceVanSchaftingen-II}. First, we choose a
 finite sequence $(\theta_i)_{0 \leq i \leq p}$ such that
 \[
  \theta = \theta_p < \ldots < \theta_i < \ldots < \theta_0 < \theta_{-1} = 2\theta.
 \]
%  Let~$u\in W^{1,p}(\Omega, \, \R^m)$.
 For~$i\in\{1, \, \ldots, \, p\}$, let~$\mathscr{C}^i$ be
 the set of all~$i$-dimensional boundary faces of
 the cubes in~$\mathscr{C}$.
 Let~$R_{\mathscr{C}^i} := \bigcup_{H\in\mathscr{C}^i} H$
 be the $i$-skeleton of~$\mathscr{C}$.
 For all~$i \in \{0, \, \ldots, \, p\}$, there exists a map
 $\Phi^i\colon\R^{p+1}\to \R^{p+1}$
 (depending on the given~$u\in W^{1,p}(\Omega, \, \NN)$)
 such that
 \begin{enumerate}[label=(\alph*)]
  \item for every~$H\in\mathscr{C}^j$ with~$0 \leq j \leq i$,
  the map~$\phi^{i}$ is constant on the $p - j + 1$ dimensional
  cubes of edge length $2\theta_i h$ which are orthogonal to~$H$
  and centred at a point of~$H$;

  \item $\phi^i(x) = x$ if
  $\dist(x, \, R_{\mathscr{C}^i}) > \theta_{i-1} h$ and
  $\phi^i(U_{\theta_{i-1} h}(R_{\mathscr{C}^i}))
  \subseteq U_{\theta_{i-1} h}(R_{\mathscr{C}^i})$;

  \item $u\circ \phi^i$ is well-defined,
  belongs to~$W^{1,p}(\Omega^\prime, \, \R^m)$,
  and for every for every $H\in\mathscr{C}^i$ we have
  \begin{equation*}
   \norm{\nabla(u \circ \phi^i)}_{L^p(U_{\theta_{i-1} h}(H))}
   \leq C \norm{\nabla u}_{L^p(U_{\theta_{i-1} h}(H))},
  \end{equation*}
  for some constant $C > 0$ depending only on~$p$ and $\theta$.
 \end{enumerate}
 The construction proceeds inductively.
 To define~$\phi^0$, we apply Proposition~\ref{prop:openingcube}
 to the map $u$ around each vertex~$H\in\mathcal{C}^0$,
 choosing~$A := \varnothing$. This allows us to
 define a map~$\phi_0$ such that~$\phi_0$ is constant on
 each ball~$B_{\theta_0 h}(H)$, with~$H\in\mathscr{C}^0$,
 and agrees with the identity on~$\R^{p+1}\setminus\bigcup_{H\in\mathscr{C}^0} B_{2\theta h}(H)$.
 Once we have constructed~$\phi^{i-1}$,
 we apply again Proposition~\ref{prop:openingcube},
 this time to the map~$u\circ\phi^{i-1}$, choosing as~$A$
 the interior of~$H\in\mathscr{C}^i$.
 As a consequence, for each cube~$H\in\mathscr{C}^i$
 we define a map~$\varphi_H$, which is constant on
 the $(p-i+1)$-dimensional cubes of edge length~$2\theta_i h$
 that are orthogonal to~$H$ and centred at a point of~$H$,
 and agrees with the identity out of~$U_{\theta_{i-1} h}(H)$.
 Then, we define $\phi^i\colon\R^{p+1}\to\R^{p+1}$ as
 \begin{equation} \label{opening-i}
  \phi^i(x)=
  \begin{cases}
   \phi^{i-1}(\varphi_{H}(x))
    &\textrm{if } x\in U_{\theta_{i-1} h}(H) \
    \textrm{ for some } H\in\mathscr{C}^i, \\
   \phi^{i-1}(x) &\mathrm{otherwise.}
  \end{cases}
 \end{equation}
 The proof that~$\phi^i$ is well-defined and
 satisfies~(a)--(c) is contained
 in~\cite{BousquetPonceVanSchaftingen-II}.

 We define~$u_h^{\mathrm{op}} := u\circ\phi^p$.
 Properties~(i)--(iii) in the statement follows by~(a)--(c)
 above, while property~(iv) is proved in~\cite[Addendum~2 to Proposition~2.1]{BousquetPonceVanSchaftingen-II}.
 It only remains to prove~(v). Let~$K\in\mathscr{C}$
 and let~$H\in\mathscr{C}^p$ be one of the boundary faces of~$K$.
%  Let~$K^\prime$ be the cube of~$\GG(h, \, y)$
%  such that~$H = \partial K \cap \partial K^\prime$.
 Up to a translation and a rotation, suppose that~$H$
 is contained in the coordinate hyperplane~$\{x_1 = 0\}$,
 while~$K$ is contained in~$\{x_1 \leq 0\}$.
 Writing~$x = (x_1, \, x'')$ for the variable in~$\R^{p+1}$,
 and keeping Proposition~\ref{prop:openingcube}
 and~\eqref{opening-i} into account, we find~$a\in (-\theta h, \, \theta h)$
 such that
 \[
  u_h^{\mathrm{op}}(x) = (u\circ\phi^{p-1})(a, \, x'')
  \qquad \textrm{for } x = (0, \, x'')\in H.
 \]
 We distinguish now three cases, depending on the sign of~$a$.
 If~$a > 0$, le us set~$H^\prime := [0, \, a]\times H$
 and~$\eps(H) := 1$; if~$a < 0$, we set
 $H^\prime := [-a, \, 0]\times H$ and~$\eps(H) := -1$;
 if~$a = 0$, we set~$\eps(H) := 0$.
 We write~$\sigma(w, \, \partial L)$ for the homotopy class
 of a map~$w\in W^{1,p}(\partial L, \, \NN)$ on the boundary of
 a cuboid~$L$ (in the sense defined in
 Section~\ref{sect:homotopy_W1p}).
 We claim that
 \begin{equation} \label{openinghomotopy}
  \sigma(u^{\mathrm{op}}_h, \, \partial K)
  = \sigma(u, \, \partial K) +
   \sum_{H\subseteq\partial K} \eps(H)
   \, \sigma(u, \, \partial H^\prime)
 \end{equation}
 (on the understanding that~$\eps(H) \,
 \sigma(u, \, \partial H^\prime) = 0$ if~$\eps(H) = 0$,
 even if~$\sigma(u, \, \partial H^\prime)$ is undefined).
 Here the sum is taken over all~$p$-dimensional boundary
 faces~$H$ of~$K$. The claim~\eqref{openinghomotopy}
 follows from the definition of the operation in~$\pN = \GN$,
 by noting that~$\partial K \cup \bigcup_{H\subseteq\partial K} \partial H^\prime$ is a deformation retract of
 (hence, homotopically equivalent to) a disk with several holes
 and that~$u^{\mathrm{op}}_h$ is continuous in a neighbourhood
 of~$R_{\mathscr{C}^{p-1}}$, by property~(a) above and
 Sobolev embedding. With~$H\in\GG_p(h, \, y, \, \Omega)$,
 $K\subseteq\{x_1 = 0\}$ and~$K\in\GG(h, \, y, \, \Omega)$,
 $K\subseteq\{x_1 \leq 0\}$ as above,
 let~$K^\prime$ be the cube of~$\GG(h, \, y)$
 such that~$\partial K\cap\partial K^\prime = H$,
 let~$c_K$, $c_{K^\prime}$ be the centres of~$K$,
 $K^\prime$ respectively, and let~$D_H = \lb c_K\rb
 - \lb c_{K^\prime}\rb\in\F_0(\Omega^\prime; \, \Z)$.
 From the definition~\eqref{Sgrid} of~$\Sgrid$
 and~\eqref{openinghomotopy}, we obtain
 \begin{equation*}
  \Sgrid(u^{\mathrm{op}}_h, \, h, \, y)
  = \Sgrid(u, \, h, \, y)
   + \sum_{H\in\GG_{p}(h, \, y, \, \Omega)}
   \eps(H) \sigma(u, \, \partial H^\prime) D_H
  \qquad \textrm{in } \Omega^\prime.
 \end{equation*}
 Then, using Lemma~\ref{lemma:cubebounds}, we can estimate
 from above the~$\F_{\Omega^\prime}$-distance
 between~$\Sgrid(u^{\mathrm{op}}_h, \, y, \, h)$
 and~$\Sgrid(u, \, h, \, y)$:
 \begin{equation*} %\label{opening-S}
  \begin{split}
   \F_{\Omega^\prime}\!\left(
   \Sgrid(u^{\mathrm{op}}_h, \, h, \, y)
   - \Sgrid(u, \, h, \, y)\right)
   &\leq h \sum_{H\in\GG_{p}(h, \, y, \, \Omega)}
   \abs{\sigma(u, \, \partial H^\prime)} \\
   &\lesssim
    h \int_{R_{\mathscr{C}^p}}
    \left(\abs{\nabla u}^p + \abs{\nabla u_h^{\mathrm{op}}}^p\right)\d\H^{p}
  \end{split}
 \end{equation*}
 The integral of~$\abs{\nabla u}^p$ on~$R_{\mathscr{C}^p}$
 can be bounded from above using the assumption~\eqref{hp:goodgrid},
 while the contribution of~$u_h^{\mathrm{op}}$ can be estimated using
 properties~(a) and~(c) above. Overall, we obtain the inequality
 \begin{equation*} %\label{opening-S}
  \begin{split}
   \F_{\Omega^\prime}\!\left(
   \Sgrid(u^{\mathrm{op}}_h, \, h, \, y)
   - \Sgrid(u, \, h, \, y)\right)
   &\lesssim D_p(u, \, \Omega),
  \end{split}
 \end{equation*}
 which completes the proof.
\end{proof}

The second tool, which we quote from~\cite{BousquetPonceVanSchaftingen-II},
is ``adaptive smoothing''. Let
\begin{equation} \label{Omega_*}
 \Omega_* := \left\{x\in\R^{p+1} \colon
 \dist(x, \, \partial\Omega) <
 \left(1 - \frac{1}{2\sqrt{n}}\right) \dist(\Omega, \, \partial\Omega^\prime) \right\}
\end{equation}
We have~$\overline{\Omega_*}\subseteq\Omega^\prime$ and,
for all positive value~$h\leq h_0 = \frac{1}{2\sqrt{n}}\dist(\Omega, \, \partial\Omega^\prime)$, all grid~$\GG(h, \, y)$, and all cube~$K\in\GG(h, \, y)$
such that~$K\cap\Omega\neq\varnothing$, $K\subseteq\Omega_*$.
Let~$\varphi \in C_{\mathrm{c}}^\infty(B^{p+1})$ be such that
\begin{equation} \label{mollifier}
 \varphi \geq 0\ \textrm{in } B^{p+1}, \qquad
 \int_{B^{p+1}} \varphi(x) \, \d x = 1.
\end{equation}
For every $u\in W^{1,p}(\Omega^\prime, \, \NN)$,
$s\in [0, \, h_0]$ and~$x \in \Omega_*$, we define
\begin{equation} \label{convolution}
 (\varphi_s \ast u)(x) := \int_{B^{p+1}} \varphi(z) \, u(x + s z) \, \d z.
\end{equation}
With this notation, $\varphi_0 \ast u$ is well-defined and we have
$(\varphi_0 \ast u)(x) = u(x)$ for all~$x\in\Omega_*$.
For any non-negative function~$\psi \in C^\infty(\Omega^\prime)$
such that~$\norm{\psi}_{L^\infty(\Omega^\prime)}\leq h_0$,
Equation~\eqref{convolution} defines a
map~$\varphi_\psi \ast u\colon\Omega_*\to\R^m$.
Moreover, if~$\abs{D\psi (a)} < 1$ at some point $a \in \Omega_*$,
then we can perform a change of variable in~\eqref{convolution}
and  deduce that~$\varphi_\psi \ast u$ is smooth
in a neighbourhood of~$a$.

\begin{prop}[{\cite[Propositions~3.1 and~3.2]
{BousquetPonceVanSchaftingen-II}}] \label{prop:convolution}
 Let $\varphi\in C_{\mathrm{c}}^\infty(B^{p+1})$ satisfy~\eqref{mollifier},
 and let~$\psi \in C^\infty(\Omega^\prime)$ be such
 that~$0 \leq \psi\leq h_0$ in~$\Omega^\prime$,
 $\norm{\nabla\psi}_{L^\infty(\Omega^\prime)} < 1$.
 Then, for every $u\in W^{1,p}(\Omega; \NN)$
 (identified with a map~$W^{1,p}(\Omega^\prime, \, \NN)$
 satisfying~\eqref{extension}), the following properties hold:
 \begin{enumerate}[label=(\roman*)]
%   \item $\varphi_{\psi} \ast u \in L^p(\Omega_*, \, \R^m)$ and
%   \begin{equation*}
%    \norm{\varphi_\psi \ast u}_{L^p(\Omega_*)}
%    \leq \frac{1}{(1 - \norm{\nabla\psi}_{L^\infty(\Omega^\prime)})^\frac{1}{p}}
%     \norm{u}_{L^p(\Omega_*)};
%   \end{equation*}

  \item the map~$\varphi_{\psi} \ast u$ belongs
  to~$L^p(\Omega_*, \, \R^m)$ and we have
  \[
   \norm{\varphi_{\psi} \ast u - u}_{L^p(\Omega_*)}
   \leq \sup_{v \in B^{p+1}}{\norm{\tau_{\psi v}u - u}_{L^p(\Omega_*)}},
  \]
  where~$\tau_{\psi v} u (x) = u(x + \psi(x)v)$
  for all~$v\in\R^{p+1}$;

  \item the map $\varphi_{\psi} \ast u$ belongs
  to~$\in W^{1, p}(\Omega_*, \, \R^m)$ and, for any
  open set~$\omega\subseteq\Omega_*$ and~$t := \norm{\psi}_{L^\infty(\psi)}$,
  we have
  \[
   \norm{\nabla(\varphi_{\psi} \ast u)}_{L^p(\omega)}
   \leq \frac{C}{(1 - \norm{\nabla\psi}_{L^\infty(\Omega^\prime)})^{1/p}}
   \norm{\nabla u}_{L^p(U_t(\omega))}
  \]
  for some constant~$C>0$ depending only on~$p$;

  \item we have
  \[
   \begin{split}
    &\norm{\nabla(\varphi_{\psi} \ast u) - \nabla u}_{L^p(\Omega_*)}\\
    &\hspace{2cm}\leq \sup_{v \in B^{p+1}}{\norm{\tau_{\psi v}(\nabla u) - \nabla u}_{L^p(\Omega_*)}}
    + \frac{C'}{(1 - \norm{\nabla\psi}_{L^\infty(\Omega^\prime)})^{1/p}}
    \norm{\nabla u}_{L^p(A)}
   \end{split}
  \]
  for some constant~$C'>0$ depending only on~$p$, where
%   $\tau_{\psi v} (\nabla u) (x) := \nabla u(x + \psi(x)v)$
%   for all~$v\in\R^{p+1}$, $x\in\Omega_*$ and
  \[
   A = \bigcup_{x\in\Omega_*\cap\spt(\nabla\psi)} B_{\psi(x)}^{p+1}(x).
  \]
 \end{enumerate}
\end{prop}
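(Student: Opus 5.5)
The statement is quoted from~\cite{BousquetPonceVanSchaftingen-II}, but I would reprove it directly from the pointwise formula
\[
 (\varphi_\psi\ast u)(x) = \int_{B^{p+1}} \varphi(z)\, \tau_{\psi z}u(x)\,\d z,
 \qquad \tau_{\psi z}u(x) = u(x+\psi(x)z).
\]
The key preliminary fact concerns, for each fixed $z\in B^{p+1}$, the map $T_z(x) := x + \psi(x) z$.

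\textbf{Preliminary step: $T_z$ is a well-behaved change of variables.} The map $T_z - \mathrm{id}$ is Lipschitz with constant $\abs{z}\,\norm{\nabla\psi}_{L^\infty} < 1$. Hence $T_z$ is injective and is a diffeomorphism onto its image. Its Jacobian is
\[
 \det(I + z\otimes\nabla\psi) = 1 + z\cdot\nabla\psi \geq 1 - \norm{\nabla\psi}_{L^\infty} > 0 .
\]
Since $0\leq\psi\leq h_0$, for $x\in\Omega_*$ the point $T_z(x)$ lies in $B_{\psi(x)}(x)\subseteq\Omega^\prime$; this follows from the definition~\eqref{Omega_*} of $\Omega_*$ and of $h_0$. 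Changing variables then gives, for any nonnegative measurable $f$ and any open $\omega\subseteq\Omega_*$,
\[
 \int_\omega f(T_z x)\,\d x \leq (1-\norm{\nabla\psi}_{L^\infty})^{-1} \int_{T_z(\omega)} f\,\d y,
 \qquad T_z(\omega)\subseteq U_t(\omega),
\]
where $t := \norm{\psi}_{L^\infty}$. This is the only place the hypothesis $\norm{\nabla\psi}_{L^\infty}<1$ enters, and it also shows that $\tau_{\psi z}u$ is well-defined a.e.\ and measurable, jointly in $(x,z)$.

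\textbf{Proof of (i).} Since $\int\varphi = 1$, we can write
\[
 \varphi_\psi\ast u - u = \int\varphi(z)\,(\tau_{\psi z}u - u)\,\d z .
\]
Minkowski's integral inequality then bounds the $L^p(\Omega_*)$-norm by
\[
 \int\varphi(z)\,\norm{\tau_{\psi z}u-u}_{L^p(\Omega_*)}\,\d z \leq \sup_{v\in B^{p+1}}\norm{\tau_{\psi v}u-u}_{L^p(\Omega_*)} .
\]
Finiteness of the right-hand side follows from the change-of-variables bound, so $\varphi_\psi\ast u\in L^p(\Omega_*)$.

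\textbf{Proof of (ii) and (iii).} First take $u$ smooth on $\Omega^\prime$ (as an $\R^m$-valued map). The chain rule gives
\[
 \nabla(\tau_{\psi z}u)(x) = \nabla u(T_zx)\,(I + z\otimes\nabla\psi(x)),
 \qquad \abs{I+z\otimes\nabla\psi}\leq 2 .
\]
Differentiating under the integral, applying Minkowski's inequality and then the change of variables yields
\[
 \norm{\nabla(\varphi_\psi\ast u)}_{L^p(\omega)} \leq 2(1-\norm{\nabla\psi}_{L^\infty})^{-1/p}\norm{\nabla u}_{L^p(U_t(\omega))} .
\]
For (iii) I would split
\[
 \nabla(\varphi_\psi\ast u) - \nabla u = \int\varphi(z)\,(\tau_{\psi z}\nabla u - \nabla u)\,\d z + \int\varphi(z)\,\nabla u(T_z x)\, z\otimes\nabla\psi(x)\,\d z .
\]
The first term is handled exactly as in (i). The second term vanishes outside $\spt\nabla\psi$, and for $x$ in that support we have $T_z x\in B_{\psi(x)}(x)\subseteq A$. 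The change of variables therefore bounds it by $C^\prime(1-\norm{\nabla\psi}_{L^\infty})^{-1/p}\norm{\nabla u}_{L^p(A)}$.

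\textbf{Passage to general $u$, the main obstacle.} The remaining and most delicate step is to pass from smooth $u$ to a general $u\in W^{1,p}(\Omega^\prime,\NN)$. I would take smooth $\R^m$-valued maps $u_k\to u$ strongly in $W^{1,p}(\Omega^\prime)$; no manifold constraint is needed here, since all estimates are linear in $u$. The bounds above, applied to $u_k-u_l$, show that $\varphi_\psi\ast u_k$ is Cauchy in $W^{1,p}(\Omega_*)$. By the estimate in (i) applied to $u_k-u$, it also converges to $\varphi_\psi\ast u$ in $L^p$. Moreover $\tau_{\psi z}u_k\to\tau_{\psi z}u$ and $\tau_{\psi z}\nabla u_k\to\tau_{\psi z}\nabla u$ in $L^p$, uniformly in $z$, again by the change of variables. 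This lets all three inequalities pass to the limit and identifies the weak gradient of $\varphi_\psi\ast u$.
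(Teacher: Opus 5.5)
Your argument is correct and is essentially the proof of the cited result in \cite{BousquetPonceVanSchaftingen-II}, which the paper quotes without proof. The ingredients are the same: the change of variables $T_z(x)=x+\psi(x)z$ with Jacobian $1+z\cdot\nabla\psi\geq 1-\norm{\nabla\psi}_{L^\infty}$, Minkowski's integral inequality, the splitting of $\nabla(\varphi_\psi\ast u)-\nabla u$ into a translation term and a term supported on $\spt\nabla\psi$, and a density step (Meyers--Serrin suffices, since everything is linear in $u$).

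The one claim to tighten is injectivity of $T_z$. Since $\Omega_*$ need not be convex, $\psi z$ is not globally $\norm{\nabla\psi}_{L^\infty}\abs{z}$-Lipschitz there. Injectivity still holds: $T_z(x_1)=T_z(x_2)$ forces $\abs{x_1-x_2}=\abs{\psi(x_1)-\psi(x_2)}\,\abs{z}\leq h_0\abs{z}<h_0$, so the segment $[x_1,x_2]$ lies in the ball $B_{h_0}(x_1)\subseteq\Omega^\prime$, where the Lipschitz bound does apply.
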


\begin{proof}[Proof of Proposition~\ref{prop:RS}]
 Let~$u\in\Hw^{1,p}(\Omega, \, \NN)$ be a given map,
 which we identify with a map in~$W^{1,p}(\Omega^\prime, \, \NN)$
 satisfying~\eqref{extension}. Thanks to Lemma~\ref{lemma:Fubini},
 the estimate~\eqref{Sgridbdd}, and an averaging argument,
 for all~$h\in (0, \, h_0]$ we can find~$y_h\in Q^{p+1}$ such that
 \begin{equation} \label{RS-goodgrid}
  \int_{R^j(h, \, y_h)\cap\Omega^\prime} \abs{\nabla u}^p \d\H^j
   \leq C h^{j - n} D_p(u, \, \Omega)
   \qquad \textrm{for all } j\in\{0, \, 1, \, \ldots, p\},
 \end{equation}
 and moreover
 \begin{equation} \label{RS0}
  \F_\Omega(\Sgrid(u, \, h, \, y_h)) \lesssim \overline{D}_p(u).
 \end{equation}
 Therefore, the proposition will follow if we prove
 that there exists a family of maps
 $u_h\in R^{1,p}(\Omega, \, \NN)$ that converges
 $W^{1,p}(\Omega)$-strongly to~$u$ as~$h\to 0$ and satisfies
 \begin{equation} \label{RS-main}
  \F_\Omega\!\left(\Snice(u_h) - \Sgrid(u, \, h, \, y_h)\right)
  \lesssim D_p(u)
 \end{equation}
 for all~$h\in (0, \, h_0]$ small enough.
 This can be achieved by reasoning exactly
 as in~\cite[Theorem~3]{BousquetPonceVanSchaftingen-II};
 we reproduce some of the arguments for the reader's convenience.
 We fix parameters~$\tau$, $\theta$ with
 \[
  0 < \tau < \theta < \frac{1}{2}.
 \]
 We denote by~$C$ several different constants
 that depend on~$\Omega$, $\NN$, $p$, $\tau$, $\theta$ only.
 We split the rest of the proof into several steps.

 \setcounter{step}{0}
 \begin{step}[bad cubes]
  Let~$\delta_* = \delta_*(\NN) > 0$ be such that the
  nearest-point projection~\eqref{projN} onto~$\NN$
  is well-defined in a~$\delta_*$-neighbourhood of~$\NN$.
  Let~$C$ be a constant, whose value will be chosen later,
  depending only on~$\Omega$, $\NN$, $p$, and~$\theta$.
  We define~$\mathscr{B}_1$ --- the set of ``bad cubes of the first type'' ---
%   in the terminology of~\cite{Bethuel-Density} ---
  as the set of cubes~$K\in\GG_{p+1}(h, \, y_h, \, \Omega)$ such that
  \begin{equation} \label{bad_cubes}
   \frac{C}{h^{1/p}} \norm{\nabla u}_{L^p(U_{2\theta h}(K))} > \delta_*.
  \end{equation}
  Since the $(2\theta h)$-neighbourhood of~$K$ %$U_{2\theta h}(K)$,
  overlaps with at most $\kappa_p$ other cubes of the grid
  for some constant~$\kappa_p$ depending only on~$p$, we can estimate
  the number of cubes in~$\mathscr{B}_1$, denoted~$\#\mathscr{B}_1$,
  by raising both sides of~\eqref{bad_cubes} to the power
  of~$p$ and summing over~$K\in\mathscr{B}_1$:
  \begin{equation} \label{number_bad_cubes1}
   \#\mathscr{B}_1 \leq \frac{C \kappa_p}{h\delta_*^p }
  \end{equation}
  We define~$\mathscr{B}_2$ %--- the set of ``bad cubes of the second type'' ---
  as the set of cubes~$K\in\GG(h, \, y, \, \Omega)$ such that
  the homotopy class of~$u$ on~$\partial K$, as defined in Section~\ref{sect:homotopy_W1p}, is non-trivial.
  The number of cubes in~$\mathscr{B}_2$ can be estimated
  using Proposition~\ref{prop:smallenergy} and~\eqref{RS-goodgrid}:
  \begin{equation} \label{number_bad_cubes2}
   \#\mathscr{B}_2 \leq \frac{2C D_p(u)}{h\alpha_p},
  \end{equation}
  where~$\alpha_p > 0$ depends only on~$\NN$ and~$p$.
  We define the set of ``bad cubes''
  as~$\mathscr{B} := \mathscr{B}_1 \cup \mathscr{B}_2$.
  We also define~$\mathscr{C}$ as the collections of
  cubes that are ``either bad or next to bad ones''
  --- namely, the elements of~$\mathscr{C}$ are the
  cubes~$K\in\GG_{p+1}(h, \, y_h, \, \Omega)$
  such that~$K\cap K^\prime\neq\varnothing$
  for some~$K^\prime\in\mathscr{B}$. The numbers of cubes
  in~$\mathscr{C}$ is bounded from above as
  \begin{equation} \label{number_bad_cubes}
   \#\mathscr{C} \leq \kappa_p \, (\#\mathscr{B}_1 + \#\mathscr{B}_2)
   \lesssim h^{-1} \left(1 + D_p(u)\right) \! .
  \end{equation}
  We write $R_{\mathscr{C}} := \bigcup_{K\in\mathscr{C}} K$
  for the skeleton of~$\mathscr{C}$.
  It follows from~\eqref{number_bad_cubes} that the
  total volume occupied by cubes in~$\mathscr{C}$ is small.
  In fact, we have
  \begin{equation} \label{volume_bad_cubes}
%    \abs{R_{\mathscr{C}}} \leq h^{p + 1} (\#\mathscr{C}) \lesssim h^p, \qquad
   \abs{U_{2\theta h}(R_{\mathscr{C}})}
   \leq h^{p + 1} (\#\mathscr{C}) \leq C h^p\left(1 + D_p(u)\right)
  \end{equation}
  for some constant~$C$ that does not depend on~$h$, and hence,
  $\abs{U_{2\theta h}(R_{\mathscr{C}})}\to 0$ as~$h\to 0$.
 \end{step}

 \begin{step}[``opening'']
%   Without loss of generality, and by perturbing slightly
%   the value of~$y$ if necessary, we can assume that
%   the restriction of~$u$ to the~$p$-skeleton~$R_p(h, \, y_h, \, \Omega)$
%   is well-defined and belongs to~$W^{1,p}(R_p(h, \, y_h, \, \Omega), \, \NN)$.
  Let~$u^{\mathrm{op}}_h$ be the map given by
  Proposition~\ref{prop:opening}, given~$u$ and the grid~$\GG(h, \, y_h)$.
  Due to properties~(ii) and~(iii) in Proposition~\ref{prop:opening},
  this map satisfies
  \begin{align}
   \norm{\nabla u_h^{\mathrm{op}}}_{L^p(U_{2\theta h}
    (R_{\mathscr{C}}))}
   &\lesssim \norm{\nabla u}_{L^p(U_{2\theta h}
    (R_{\mathscr{C}}))} \! , \label{RS-opening-bis} \\
   \norm{\nabla u_h^{\mathrm{op}}}_{L^p(U_{2\theta h}
    (R_{\partial\mathscr{C}}))}
   &\lesssim \norm{\nabla u}_{L^p(U_{2\theta h}
    (R_{\partial\mathscr{C}}))} \!, \label{RS-opening-bd} \\
   \norm{\nabla u - \nabla u_h^{\mathrm{op}}}_{L^p(\Omega^\prime)}
   &\lesssim \norm{\nabla u}_{L^p(U_{2\theta h}
    (R_{\partial\mathscr{C}}))} \! . \label{RS-opening}
  \end{align}
%   where~$U_{2\theta h}(R_{\partial\mathscr{C}})$ is defined by~\eqref{U_t}.
  Moreover, property (i) in Proposition~\ref{prop:opening}
  implies that~$u$ ``depends at most on~$p$ variables'' in~$U_{2\theta h}(R_{\partial\mathscr{C}})$
  (more precisely, it is constant on segments of length~$2\theta h$
  orthogonal to one of the $p$-dimensional boundary faces of
  the cubes in~$\mathscr{U}$). By Sobolev embedding, it follows
  that~$u_h^{\mathrm{op}}\in \mathrm{VMO}(U_{2\theta h}(R_{\partial\mathscr{C}}), \, \NN)$.
  In fact, property~(iv) in Proposition~\ref{prop:opening}
  implies that for all~$s \in (0, \, 1)$ small enough, there holds
  \begin{equation} \label{RS-s}
   \begin{split}
    \sup_{a+Q_s^{p+1}\subseteq U_{\theta h}(R_{\partial\mathscr{C}})}
    \frac{1}{s^{2p +2}} \int_{a+Q_s^{p+1}} \int_{a+Q_s^{p+1}}
    \abs{u_h^{\mathrm{op}}(x) - u_h^{\mathrm{op}}(y)} \d x \d y \leq \delta_*.
   \end{split}
  \end{equation}
  while property~(v) reads
  \begin{equation} \label{openingS}
   \F_{\Omega^\prime}\!\left(\Sgrid(u, \, h, \, y_h)
    - \Sgrid(u^{\mathrm{op}}_h, \, h, \, y_h)\right)
   \lesssim D_p(u).
  \end{equation}
  These properties will be useful in the next steps.
 \end{step}

 \begin{step}[``adaptive smoothing'']
  Let~$\zeta\in C^\infty_{\mathrm{c}}(\Omega^\prime)$
  be a cut-off function, such that~$0 \leq \zeta \leq 1$
  in~$\Omega^\prime$, $\zeta = 1$ on the union of the
  bad cubes~$R_{\mathscr{B}}$, $\zeta = 0$ out of~$R_{\mathscr{C}}$
  (the union of the cubes that are ``bad or next to bad ones''),
  and~$\norm{\nabla\zeta}_{L^\infty(\Omega^\prime)} \lesssim \eta^{-1}$.
  In particular, there exists~$\kappa \in (0, \, 1)$
  such that $\kappa h \norm{\nabla\zeta}_{L^\infty(\Omega^\prime)} < 1$.
  Let~$s > 0$ be as in~\eqref{RS-s}. By taking a smaller~$s$ if necessary,
  we can assume that $s < \kappa h$, $s < (\theta - \tau)h$.
  We define the function
  \[
   \psi_h := s\zeta + \min(\kappa h, \, (\theta - \tau)h) (1 - \zeta).
  \]
  We have $0 < s \leq \psi_h \leq (\theta - \tau)h \leq h_*$ in~$\Omega^\prime$,
  with~$\psi_h = s$ in~$R_{\mathscr{B}}$ and~$\psi_h = \min(\kappa, \, \theta - \tau)h$
  out of~$R_{\mathscr{C}}$,
  and~$\norm{\nabla\psi_h}_{L^\infty(\Omega^\prime)} < 1$.
  Let~$\eta\in C^\infty_{\mathrm{c}}(-1, \, 1)$ be
  a non-negative function, such that~$\int_{-1}^1 \eta(t) \, \d t = 1$,
  and let~$\varphi^{p+1}(x) := \eta(x_1) \eta(x_2) \ldots \eta(x_{p+1})$, $x\in\R^{p+1}$.
  If the support of~$\eta$ is small enough,
  then~$\varphi^{p+1}$ is compactly supported in~$B^{p+1}$
  and satisfies~\eqref{mollifier}, hence it is an admissible mollifying kernel.
  On the set~$\Omega_*$ given by~\eqref{Omega_*},
  we define~$u_h^{\mathrm{sm}} := \varphi^{p+1}_{\psi_h}\ast u_h^{\mathrm{op}}$.
  Proposition~\ref{prop:convolution} implies that
  \begin{equation*}
   \begin{split}
    \norm{\nabla u_h^{\mathrm{sm}} - \nabla u_h^{\mathrm{op}}}_{L^p(\Omega_*)}
    \lesssim \sup_{v \in B^{p+1}}{\norm{\tau_{\psi v}(\nabla u_h^{\mathrm{op}}) - \nabla u_h^{\mathrm{op}}}_{L^p(\Omega_*)}}
    + C\norm{\nabla u_h^{\mathrm{op}}}_{L^p(U_{2\theta h}(R_{\partial\mathscr{C}}))} \! .
   \end{split}
  \end{equation*}
  Combining this estimate with~\eqref{RS-opening-bd}, we obtain
  \begin{equation} \label{RS-smoothing}
   \begin{split}
    \norm{\nabla u_h^{\mathrm{sm}} - \nabla u_h}_{L^p(\Omega_*)}
    \lesssim \sup_{v \in B^{p+1}}
     \norm{\tau_{\psi v}(\nabla u) - \nabla u}_{L^p(\Omega_*)}
    + \norm{\nabla u}_{L^p(U_{2\theta h}(R_{\partial\mathscr{C}}))} \! .
   \end{split}
  \end{equation}
  From property~(ii) in Proposition~\ref{prop:convolution}
  and~\eqref{RS-opening-bis}, we also deduce
  \begin{equation} \label{RS-smoothingbis}
   \begin{split}
    \norm{\nabla u_h^{\mathrm{sm}}}_{L^p(U_{\theta h}(R_{\mathscr{C}}))}
    \lesssim \norm{\nabla u_h^{\mathrm{op}}}_{L^p(U_{2\theta h}(R_{\mathscr{C}}))}
    \lesssim \norm{\nabla u}_{L^p(U_{2\theta h}(R_{\mathscr{C}}))} \! .
   \end{split}
  \end{equation}
  Let~$K\in\mathscr{B}$ and~$H\subseteq\partial K$ a boundary $p$-face
  of~$K$. Suppose, for instance, that~$H$ is parallel to
  the coordinate plane~$\{x_1 = 0\}$. By Proposition~\ref{prop:opening},
  we know that~$u_h^{\mathrm{op}}$ is constant on all straight
  line segments orthogonal to~$H$,
  with length~$2\theta h > 2s$ and midpoint in~$H$.
  Since~$u_h^{\mathrm{sm}} = \varphi_{sh}\ast u_h^{\mathrm{op}}$
  and~$\varphi^{p+1}_s(x) = \eta_s(x_1) \,
  \varphi^{p+1}_s(x_2, \, \ldots, \, x_{p+1})$, we deduce
  \begin{equation} \label{RSclassicalconvolution}
   \nabla u_h^{\mathrm{sm}}
   = \varphi_s^p \ast \nabla u_h^{\mathrm{op}}
  \end{equation}
  on each $p$-cell~$H\subseteq R_{\partial\mathscr{B}}$ and, hence,
  \begin{equation} \label{RS-smoothingtris}
   \begin{split}
    h \norm{\nabla u_h^{\mathrm{sm}}}_{L^p(\partial\mathscr{B})}
    \leq h \norm{\nabla u_h^{\mathrm{op}}}_{L^p(\partial\mathscr{B})}
    \lesssim \norm{\nabla u_h^{\mathrm{op}}}_{L^p(U_{2\theta h}
    (R_{\partial\mathscr{C}}))}
    \lesssim \norm{\nabla u}_{L^p(U_{2\theta h}
    (R_{\partial\mathscr{C}}))}\!,
   \end{split}
  \end{equation}
  thanks to~\eqref{RS-opening-bd}.
 \end{step}

 \begin{step}[``thickening'']
  Given a grid~$\GG(h, \, y)$ and a collection of
  cubes~$\mathscr{B}\subseteq\GG_{p+1}(h, \, y, \, \Omega)$,
  we denote by~$\mathscr{B}^\prime$ the dual skeleton of~$\mathscr{B}$,
  that is, the set of all the centres of the cubes of~$\mathscr{B}$.
  We define a map~$u_h^{\mathrm{th}}$ as follows.
  On ``good cubes''~$K\in\GG(h, \, y_h, \Omega)\setminus\mathscr{B}$,
  we set~$u_h^{\mathrm{th}} := u_h^{\mathrm{sm}}$.
  On a bad cube~$K\in\mathscr{B}$, we define~$u_h^{\mathrm{th}}$
  by homogeneous extension of the values of~$u_h^{\mathrm{sm}}$
  on~$\partial K$ --- that is,
  \[
   u_h^{\mathrm{th}}(x) :=
   u_h^{\mathrm{sm}}\left(c_K +
    \frac{x - c_K}{\abs{x - c_K}_\infty}\right) \! ,
    \qquad x\in K,
  \]
  where~$c_K$ is the centre of~$K$ and~$\abs{y}_\infty :=
  \max_{1 \leq j \leq p+1} \abs{y_j}$.
  The map~$u_h^{\mathrm{th}}$ is well-defined and locally Lipschitz
  in~$\overline{\Omega}\setminus\mathscr{B}^\prime$, with
  \begin{equation} \label{nicethicksing}
   \abs{\nabla u_h^{\mathrm{th}}(x)}
   \lesssim \frac{\norm{\nabla u_h^{\mathrm{sm}}}_{L^\infty(\Omega_*)}}
    {\dist(x, \, \mathscr{B}^\prime)},
    \qquad \textrm{for } x\in\overline{\Omega}\setminus\mathscr{B}^\prime.
  \end{equation}
  We have
  \[
   \begin{split}
    \norm{\nabla u_h^{\mathrm{th}} - \nabla u_h^{\mathrm{sm}}}_{L^p(\Omega_*)}
    &\leq \norm{\nabla u_h^{\mathrm{th}}}_{L^p(R_{\mathscr{B}})}
     + \norm{\nabla u_h^{\mathrm{sm}}}_{L^p(R_{\mathscr{B}})} \\
    &\lesssim h\norm{\nabla u_h^{\mathrm{sm}}}_{L^p(R_{\partial\mathscr{B}})}
     + \norm{\nabla u_h^{\mathrm{sm}}}_{L^p(R_{\mathscr{B}})}
   \end{split}
  \]
  and hence, keeping~\eqref{RS-smoothingbis} and~\eqref{RS-smoothingtris}
  into account,
  \[
   \begin{split}
    \norm{\nabla u_h^{\mathrm{th}} - \nabla u_h^{\mathrm{sm}}}_{L^p(\Omega_*)}
    \lesssim \norm{\nabla u}_{L^p(U_{2\theta h}(R_{\mathscr{C}}))} \! .
   \end{split}
  \]
  Combining this estimate with~\eqref{RS-smoothing}, we deduce
  \begin{equation} \label{RS-thickening}
   \begin{split}
    \norm{\nabla u_h^{\mathrm{th}} - \nabla u}_{L^p(\Omega_*)}
    \lesssim \sup_{v \in B^{p+1}}
     \norm{\tau_{\psi v}(\nabla u) - \nabla u}_{L^p(\Omega_*)}
    + \norm{\nabla u}_{L^p(U_{2\theta h}(R_{\mathscr{C}}))} \! .
   \end{split}
  \end{equation}
  Moreover, exactly as in~\cite[Theorem~3, Step~2]{BousquetPonceVanSchaftingen-II},
  there exists a constant~$C$, depending only on~$p$,
  $\tau$ and~$\theta$, such that
  \begin{equation} \label{RS-thick-bis}
   \begin{split}
    \dist(u^\mathrm{th}_h(x), \, \NN)
    &\leq C \max \biggl\{  \max_{K\in \GG(h, \, y_h, \, \Omega) \setminus \mathscr{B}} h^{-1/p} \norm{\nabla u}_{L^p(U_{2\theta h}(K))},\\
    &\qquad \sup_{a \in U_{\tau h}(R_{\partial\mathscr{C}})}
    \frac{1}{\abs{Q_{s}^m}^2} \int_{a + Q_s^m}\int_{a + Q_s^m} \abs{u^\mathrm{op}_h(x) - u^\mathrm{op}_h(y)} \, \d x \, \d y\biggr\}
   \end{split}
  \end{equation}
  for all~$x\in\Omega_*\setminus\mathscr{B}^\prime$.
  Combining~\eqref{RS-thick-bis} with~\eqref{bad_cubes} and~\eqref{RS-s}, we deduce
  \begin{equation} \label{RS-thickN}
   \begin{split}
    \dist(u^\mathrm{th}_h(x), \, \NN)\leq \delta_*
    \qquad \textrm{for all } x\in\Omega_*\setminus\mathscr{B}^\prime,
   \end{split}
  \end{equation}
  where~$\delta_* = \delta_*(\NN)$ is such that
  the nearest point projection~$\pi_{\NN}$ onto~$\NN$,
  given by~\eqref{projN}, is well-defined and smooth in~$U_{\delta_*}(\NN)$.
 \end{step}

 \begin{step}[projection onto~$\NN$]
  Let~$u_h := \pi_{\NN}\circ u^\mathrm{th}_h$.
  Thanks to~\eqref{RS-thickN}, this map is well-defined,
  locally Lipschitz in~$\overline{\Omega}\setminus \mathscr{B}^\prime$,
  and belongs to~$u_h\in R^{1,p}(\Omega, \, \NN)$
  thanks to~\eqref{nicethicksing}. Moreover, we have
  \begin{equation*} %\label{RS-thickening}
   \begin{split}
    \norm{\nabla u_h - \nabla u}_{L^p(\Omega_*)}
    \lesssim \sup_{v \in B^{p+1}}
     \norm{\tau_{\psi v}(\nabla u) - \nabla u}_{L^p(\Omega_*)}
    + \norm{\nabla u}_{L^p(U_{2\theta h}(R_{\mathscr{C}}))} \to 0
   \end{split}
  \end{equation*}
  as~$h\to 0$, because of~\eqref{RS-thickening}.
  We claim that~$u_h \to u$ in $W^{1,p}(\Omega)$ as~$h\to 0$.
  By construction, we have
  imply that~$u_h := \pi_{\NN}\circ u^\mathrm{sm}_h$ out
  of~$R_{\mathscr{B}}$.
  Since~$\abs{R_{\mathscr{B}}}\to 0$ as~$h\to 0$
  (by~\eqref{volume_bad_cubes}) and~$u^\mathrm{sm}_h \to u$
  in measure (by property~(ii) in Proposition~\ref{prop:opening}
  and~(i) in Proposition~\ref{prop:convolution}),
  we conclude that $u_h \to u$ in measure as~$h\to 0$.
  This proves the claim.
 \end{step}

 \begin{step}[Proof of~\eqref{RS-main}]
  To conclude the proof, it only remains to prove~\eqref{RS-main}.
  In light of~\eqref{openingS}, it suffices to show that
  \begin{equation} \label{RS-whatremainstodo}
   \Snice(u_h) = \Sgrid(u^{\mathrm{op}}_h, \, h, \, y)
  \end{equation}
  for all small enough~$h$. By construction, both $\Snice(u_h)$
  and~$\Sgrid(u^{\mathrm{op}}_h, \, h, \, y)$ are supported on
  the centres of bad cubes (recall that, by definition of bad cubes,
  $u$ and hence~$u^{\mathrm{op}}_h$ are homotopically trivial
  on the boundary of each good
  cube~$K\in\GG(h, \, y_h, \, \Omega)\setminus\mathscr{B}$).
  For each~$K\in\mathscr{B}$, we have
  \[
   u_h = \pi_{\NN}\circ u^{\mathrm{th}}_h
   = \pi_{\NN}\circ (\varphi^p_{sh} \ast u^{\mathrm{op}}_h)
   \qquad \textrm{on } \partial K,
  \]
  due to~\eqref{RSclassicalconvolution}, and~$s$ is small
  enough that~\eqref{RS-s} holds. Therefore, the homotopy class
  of~$u^{\mathrm{op}}_h$ on~$\partial K$ coincides with the homotopy class
  of~$u_h$ on~$\partial K$, by the very definition of the latter.
  This implies~\eqref{RS-whatremainstodo}.
  \qedhere
 \end{step}
\end{proof}

\subsection{Proof of Proposition~\ref{prop:dipole}}
\label{sect:dipole}

The proof of this result is based on the so-called
``dipole construction'', introduced in~\cite[Theorem~2]{Bethuel-Density}
and revisited in several places (see e.g.~\cite[Proposition~5.1]{PakzadRiviere}).

\begin{lemma} \label{lemma:removable}
 Let~$g\colon \partial B^{p+1}_r\to\NN$ be a Lipschitz map that
 is homotopic to a constant. Then, there exists
 a Lipschitz extension~$v\colon\overline{B}^{p+1}_r\to\NN$
 of~$g$ that satisfies
 \begin{equation} \label{removable}
  D_p\!\left(v, \, B^p_r\right)
  \leq C r \int_{\partial B^{p+1}_r} \abs{\nabla g}^p\, \d\H^p
 \end{equation}
 for some constant~$C_p$ that depends only on~$p$.
\end{lemma}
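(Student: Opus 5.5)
The plan is to first turn the null-homotopic Lipschitz map into a Lipschitz map with small energy on a thin collar, and then contract. By scaling, it suffices to treat $r = 1$: if $v$ solves the problem on $B^{p+1}_1$ for $g(r\,\cdot)$, then $v(\cdot/r)$ works on $B_r$. Indeed, $D_p$ on $B_r$ scales like $r^{p+1-p} = r$ times the unit-ball energy, while $\int_{\partial B_r}\abs{\nabla g}^p\,\d\H^p$ is scale-invariant. So we aim to prove
\[
 D_p(v, \, B^{p+1}) \leq C \int_{\SS^p}\abs{\nabla g}^p\,\d\H^p =: C\,E.
\]

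We distinguish two cases according to the size of $E$.

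\emph{Small energy.} Suppose $E \leq \eps_0$, with $\eps_0$ small depending on $\NN$. We use the averages $\overline{g}_\eps$ from Section~\ref{sect:homotopy_W1p}. By the Poincar\'e estimate~\eqref{Poincare-closetoN}, for every radius $\eps \in (0, 1]$ the map $\overline{g}_\eps$ stays within $C E^{1/p} \leq \delta_*$ of $\NN$. Define $v(\rho\omega) := \pi_\NN(\overline{g}_{1-\rho}(\omega))$ for $\rho\in[1/2,1]$; this interpolates between $g$ at $\rho = 1$ and the map $\pi_\NN\circ\overline{g}_{1/2}$. Averaging on caps of radius $\gtrsim 1$ shows that the latter has oscillation at most $C E^{1/p} \leq \delta_*$ around a single point $a\in\NN$. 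On $B_{1/2}$, set $v$ equal to $\pi_\NN$ of the straight-line homotopy to $a$ on $[1/4,1/2]$, and $v \equiv a$ inside $B_{1/4}$. The energy bound uses $\abs{\partial_\rho \overline{g}_{1-\rho}} + \abs{\nabla_\omega \overline{g}_{1-\rho}} \lesssim$ the maximal-type averages of $\abs{\nabla g}$; integrating in $\rho$ and using Fubini gives $D_p \lesssim E$. This is the same averaging used for homotopy classes, and Lipschitz regularity is inherited from $g$.

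\emph{Large energy.} Suppose $E > \eps_0$. Since $g$ is homotopic to a constant, there exists \emph{some} Lipschitz extension $w$ of $g$, but with uncontrolled energy. The difficulty is that we need $D_p(v) \lesssim E$, not merely finiteness. This is the main obstacle. My first attempt is a Luckhaus-type argument. Triangulate (or cube-decompose) $\SS^p$ at scale $\lambda \sim (E/\eps_0)^{-1/p}$ so that on each cell $g$ has small scaled energy. Then apply the small-energy construction cell by cell on a collar $\overline{B}_1\setminus B_{1/2}$. This produces a map that, on the inner sphere $\partial B_{1/2}$, is homotopic to $g$ and takes values in the finite skeleton determined by the cell averages, with energy $\lesssim E$.

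What remains is to extend a map whose values form a \emph{piecewise geodesic} configuration with controlled combinatorics. We know it is null-homotopic, and we want an extension with energy bounded linearly in $E$. Because the number of cells is $\lesssim E$, it would suffice to build the extension with energy $\lesssim$ the number of cells. I am not sure this is available without further hypotheses. An alternative route for large $E$ is to note that $E \ge \eps_0$ means the right-hand side is at least a constant, so a bound $D_p(v) \le C(\NN,p)$ would suffice whenever $E \lesssim 1$. However, unbounded $E$ forces the linear dependence, and I expect the paper may instead contain an implicit restriction or a finer argument here. This is where I would concentrate effort, possibly exploiting that $p = \dim\SS^p$ is critical, so conformal invariance lets one concentrate the bad region on small balls with scale-invariant cost.
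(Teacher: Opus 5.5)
Your proposal has a genuine gap. The small-energy case is fine, but the large-energy case is left unproved, and that case is the whole content of the lemma. The direction you suggest for closing it is also off. No quantitative null-homotopy is needed, and in particular no Luckhaus-type extension with cost linear in the number of cells. Nor is the cost of the bad region scale-invariant. The exponent $p$ is critical for the $p$-dimensional sphere but \emph{subcritical} for the $(p+1)$-dimensional ball, and that is what makes the lemma easy. It holds as stated, with no hidden restriction.

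The missing idea, which is the paper's route, is the $0$-homogeneous extension $\hat{g}(x) := g(x/\abs{x})$ (take $r=1$). Since $\abs{\nabla\hat{g}(\rho\omega)} = \rho^{-1}\abs{\nabla_\top g(\omega)}$ and the volume element is $\rho^p\,\d\rho\,\d\H^p(\omega)$, we get exactly
\[
D_p(\hat{g}, B^{p+1}_1) = \int_{\SS^p}\abs{\nabla g}^p\,\d\H^p .
\]
The only defect of $\hat{g}$ is the point singularity at the origin, which is removable because $g$ is null-homotopic. The paper quotes \cite[Lemma~5]{BethuelZheng} for this step, but you can do it by hand as follows.

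Let $H\colon\SS^p\times[0,1]\to\NN$ be a Lipschitz null-homotopy with $H(\cdot,0)=g$ and $H(\cdot,t)$ constant for $t\geq 1/2$. Such an $H$ exists after standard smoothing, since $g$ is Lipschitz and null-homotopic. Set $v:=\hat{g}$ on $\overline{B}_1\setminus B_\eps$ and $v(\rho\omega):=H(\omega,1-\rho/\eps)$ for $\rho<\eps$. Then
\[
D_p(v, B_\eps) \leq C_p\int_0^\eps\int_{\SS^p}\left(\abs{\nabla_\top H}^p + \frac{\rho^p}{\eps^p}\abs{\partial_t H}^p\right)\d\H^p\,\d\rho \leq C_p\,\eps\int_0^1\int_{\SS^p}\left(\abs{\nabla_\top H}^p+\abs{\partial_t H}^p\right)\d\H^p\,\d t .
\]
This tends to $0$ as $\eps\to0$, however large the energy of $H$ is. For $\eps$ small, $v$ is a Lipschitz extension of $g$ (constant on $B_{\eps/2}$) with $D_p(v,B_1)\leq 2\int_{\SS^p}\abs{\nabla g}^p\,\d\H^p$. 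Scaling then gives the factor $r$. No case distinction and no control on the null-homotopy are needed.
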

\begin{proof}
 Since the estimate~\eqref{removable} is scale-invariant,
 up to a rescaling we can assume without loss of
 generality that~$r = 1$. By a regularisation argument
 (using convolution and the projection~\eqref{projN} onto~$\NN$),
 we can also assume that~$g$ is smooth.
 Let~$\hat{g}\colon \overline{B}^{p+1}\to\NN$
 be defined as~$\hat{g}(x) := g(x/\abs{x})$,
 $x\in \overline{B}^{p+1}\setminus\{0\}$.
 Then, \cite[Lemma~5]{BethuelZheng} yields
 \[
  \inf\left\{D_p(v)\colon v\in C^\infty(\overline{B}^{p+1}, \, \NN),
  \, u = g \ \textrm{ on } \partial B^{p+1} \right\}
  \leq \frac{3}{2}\left(3^p + 1\right) D_p(\hat{g})
 \]
 and the lemma follows.
\end{proof}

\begin{remark} \label{rk:smoothremoved}
 If~$g$ is smooth, then we can construct a
 smooth extension~$v\colon\overline{B}^{p+1}_r\to\NN$
 that satisfies~\eqref{removable}, as in \cite[Lemma~5]{BethuelZheng}.
\end{remark}

For the next lemma, we set~$\Lambda(L, \, r) := [0, \, L]\times \overline{B}^{p}_r$ and~$\Gamma(L, \, r) := [0, \, L]\times \partial B^{p}_r$. We write~$x = (x_1, \, x^\prime)$ for the variable in~$\R^{p+1} = \R\times\R^p\supseteq\Lambda(L, \, r)$.

\begin{lemma} \label{lemma:emptycylinder}
 Let~$0 < r < L$, let~$h\colon [0, \, L]\to\NN$ be a Lipschitz function, and let~$\sigma\in\pN$. Then, there exists a Lipschitz map~$v\colon \Lambda(L, \, r)\to\NN$ that is such that~$v(x) = h(x_1)$ for all~$x = (x_1, \, x^\prime)\in\Gamma(L, \, r)$, has homotopy class~$\sigma$ on each slice~$\{x_1\}\times B^p_r$ with~$x_1\in (0, \, L)$, and satisfies
 \begin{align}
  D_p(v, \, \Lambda(L, \, r))
  &\leq C r^p L\norm{h^\prime}^p_{L^\infty(0, \, L)}
   + C L\abs{\sigma}, \label{emptycyl1} \\
  D_p(v, \, \{0, \, L\}\times B^p_r)
  &\leq C \abs{\sigma}, \label{emptycyl2}
 \end{align}
 for some constant~$C$ that depends only on~$p$ and~$\NN$.
\end{lemma}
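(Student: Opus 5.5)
The idea is to build $v$ by first solving the problem on a single slice and then interpolating in the $x_1$ direction. I will work in the formulation \eqref{norm_ball} of the norm. Fix a small $\eps>0$ and choose $w\in (W^{1,p}\cap C)(B^p,\NN)$, equal to $z_0$ on $\partial B^p$, in the class $\sigma$, with $\int_{B^p}\abs{\nabla w}^p\le 2\abs{\sigma}_{z_0}$. By density (Proposition~\ref{prop:densitysmooth}, composed with the projection $\pi_\NN$ and a small homotopy near the boundary) I may take $w$ Lipschitz. The constant boundary value $z_0$ does not match $h(x_1)$. To fix this, I use the base-point change of Lemma~\ref{lemma:actionofpi1}, which keeps the free class equal to $\sigma$ by \eqref{hp:single-valued}. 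For each $t\in[0,L]$ I connect $z_0$ to $h(t)$ by a path $\gamma_t$, chosen to depend Lipschitz-continuously on $t$. The plan is to let
\[
 w_t(x') := \begin{cases} w(2x'/r) & \abs{x'}\le r/2,\\ \gamma_t\bigl(2\abs{x'}/r-1\bigr) & r/2<\abs{x'}\le r,\end{cases}
\]
and then set $v(x_1,x'):=w_{x_1}(x')$.

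The choice of the paths $\gamma_t$ is the delicate step. A naive choice, such as a minimising geodesic from $z_0$ to $h(t)$, is not continuous in $t$ at the cut locus. Two remedies are available. The first is to take the fixed path $\gamma_0$ from $z_0$ to $h(0)$ and concatenate it with $h|_{[0,t]}$; this is continuous in $t$, but its length grows with $t$ and would spoil the bounds. The better remedy is to change the base point of the bubble itself. I replace $z_0$ by $h(t)$ directly: for each $t$, take $w$ translated along $h$ via the Lemma~\ref{lemma:norm_p}(iv) construction. The cleaner way to organise this is to choose a finite partition of $[0,L]$ into intervals of length $\le r$. On each interval the values of $h$ stay within $r\norm{h'}_\infty$ of each other. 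I then use a bubble based at the left-endpoint value, joined on the collar to $h(t)$ by short geodesics, which is legitimate when $r\norm{h'}_\infty$ is below the injectivity radius. When $r\norm{h'}_\infty$ is large instead, the bound $C r^pL\norm{h'}^p_\infty$ dominates anything of size $L$ times a constant, so crude interpolations become affordable. The transitions between consecutive bubbles are handled by a homotopy of length $O(r)$ in $x_1$, with energy $\lesssim r\cdot r^{-1}(\abs{\sigma}+C)$ per transition.

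Next I estimate the energy. On the slice, scaling gives $\int_{B^p_r}\abs{\nabla_{x'} w_t}^p\lesssim \abs{\sigma}+\operatorname{length}(\gamma_t)^p$, and this is $\lesssim\abs{\sigma}$ because the lengths are bounded by the diameter of $\NN$ and $\abs{\sigma}\ge\alpha_p$ by Lemma~\ref{lemma:norm_p}(iii). (If $\sigma=0$, I simply take $v(x)=h(x_1)$.) This gives \eqref{emptycyl2}. The $x_1$-derivative is controlled by $\norm{h'}_\infty$ on the collar; on the inner ball it vanishes within each partition interval, since the bubble there is fixed. Integrating over $\Lambda(L,r)$ then yields $C r^pL\norm{h'}^p_\infty + CL\abs{\sigma}$ plus the transition costs. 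The number of transitions is about $L/r$, which would give $(L/r)\,\abs{\sigma}$, too big unless transitions are avoided. So in practice I keep a single bubble based at $h(0)$ on the whole inner ball and let the collar $r/2\le\abs{x'}\le r$ carry the homotopy $\gamma_t$, with $\gamma_t$ the concatenation of $\gamma_0$ with $h|_{[0,t]}$ reparametrised over the collar. This path is longer, but I can shrink it by homotoping it rel endpoints to the geodesic whenever that is possible.

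The main obstacle is the continuity and length control of $\gamma_t$ in $t$. My first attempt is a lifting argument: cover $\NN$ by finitely many geodesically convex balls of radius below the injectivity radius. While $h(t)$ stays in one ball, I use the geodesic from a fixed centre; when $h$ moves to an overlapping ball, I switch using a homotopy between the two short paths. That homotopy exists inside the union of the two convex balls, so the class is unchanged. These switches cost $O(1)$ energy over an $x_1$-length $\gtrsim \rho/\norm{h'}_\infty$. Their total number is therefore $\lesssim L\norm{h'}_\infty/\rho$, and the total cost $\lesssim r^{p}L\norm{h'}_\infty/r$. This is absorbed by Young's inequality into $r^pL\norm{h'}^p_\infty + L$, and the term $L$ is in turn $\lesssim L\abs{\sigma}$ whenever $\sigma\neq0$.
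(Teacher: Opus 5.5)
There is a genuine gap: the construction you finally adopt cannot meet the bounds once $\pi_1(\NN)\neq 0$, and the route that does work is the one you discarded. Your final construction uses a single bubble with a fixed base point on $B^p_{r/2}$ for every $x_1$, and a collar $r/2\le\abs{x'}\le r$ carrying a path $\gamma_t$ from that base point to $h(t)$. It cannot satisfy \eqref{emptycyl1}--\eqref{emptycyl2} when $\pi_1(\NN)\neq 0$, which the lemma allows.

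Continuity of $v$ on $[0,t]\times\{r/2\le\abs{x'}\le r\}$ forces each radial collar path on the slice $\{t\}\times B^p_r$ to be homotopic rel endpoints to $\gamma_0\ast h_{|[0,t]}$. So you cannot ``shrink it to the geodesic'' unless the geodesic happens to lie in that class.

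The convex-ball switching does not escape this. A geodesic from the centre $c_i$ does not start at the base point, so you have two options, and neither helps. If you prefix a path from the base point to $c_i$, you reproduce the forced class, with the prefix growing by one segment at each switch. If you re-base the bubble at $c_i$, you are back to exactly the bubble transitions you had set aside.

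For a concrete failure, take $p=2$, $\NN=\SS^1\times\SS^2$ (which satisfies \eqref{hp:first}--\eqref{hp:last}) and $h(t)=(e^{ikt},q)$.
\begin{itemize}
\item The collar paths have length $\gtrsim kt-C$, so the slice $\{t\}\times B^p_r$ carries energy $\gtrsim (kt-C)_+^p$.
\item The total energy is then $\gtrsim k^pL^{p+1}$, not $Cr^pLk^p+CL\abs{\sigma}$.
\item The face $\{L\}\times B^p_r$ violates \eqref{emptycyl2}.
\end{itemize}
Separately, the count in your last paragraph does not close. $O(1)$ per switch times $L\norm{h'}_\infty/\rho$ switches is not $\lesssim r^pL\norm{h'}^p_\infty+L$ when $\norm{h'}_\infty$ is large and $r\norm{h'}_\infty$ is small.

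The paper's proof is the route you discarded. Place bubbles $v_j$ with boundary value $h(t_j)$ on the slices $x_1=t_j=jL/n$, where $n=\lfloor L/r\rfloor$. By (iv) of Lemma~\ref{lemma:norm_p}, each satisfies $D_p(v_j,B^p_r)\le 2\abs{\sigma}_{h(t_j)}\le C\abs{\sigma}$. Set $v(x)=h(x_1)$ on $\Gamma(L,r)$, and fill each $Q_j=[t_j,t_{j+1}]\times B^p_r$.

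You rejected this because of a scaling slip. In dimension $p+1$ the $p$-energy scales like length. A cell of size $\sim r$ whose boundary energy is $\lesssim\abs{\sigma}+r^p\norm{h'}^p_\infty$ therefore costs $\lesssim r\abs{\sigma}+r^{p+1}\norm{h'}^p_\infty$, not $\abs{\sigma}$. Summing over at most $L/r$ cells gives exactly \eqref{emptycyl1}.

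You also need a way to build each transition with a constant independent of $\sigma$. An explicit homotopy between two given bubbles has no a priori energy bound in terms of $\abs{\sigma}$, so this needs its own argument.
\begin{itemize}
\item The datum on $\partial Q_j$ is null-homotopic, by Lemma~\ref{lemma:actionofpi1} and \eqref{hp:single-valued}: the end faces contribute $\sigma-\sigma=0$.
\item $Q_j$ is uniformly bi-Lipschitz to a ball.
\item Lemma~\ref{lemma:removable} then gives $D_p(v,Q_j)\lesssim r\int_{\partial Q_j}\abs{\nabla v}^p$. It uses the homogeneous extension from the centre, whose topologically trivial point singularity can be removed at arbitrarily small cost because the $p$-energy is subcritical in dimension $p+1$.
\end{itemize}
The class $\sigma$ on the intermediate slices follows by continuity. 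The bound \eqref{emptycyl2} follows from the bounds on $v_0$ and $v_n$.
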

\begin{proof}
 Let~$n\geq 1$ be the largest integer such that~$n\leq L/r$,
 and let~$t_j := jL/n$, $y_j := h(t_j)$ for~$j\in\{0, \, 1, \, \ldots, \, n\}$.
 For any such~$j$, there exists a smooth map~$v_j\colon\overline{B}^p_r\to\NN$
 such that~$v_j = y_j$ on~$\partial B^p_r$, $v_j$
 belongs to the homotopy class~$\sigma$, and
 \[
  D_p(v_j, \, B^p_r) \leq 2 \abs{\sigma}_{y_j} \! ,
 \]
 where~$\abs{\,\cdot\,}_{y_j}$ is the norm defined by~\eqref{norm_ball}.
 Statement~\eqref{item:equivalentnorms} in Lemma~\ref{lemma:norm_p}
 implies that
 \begin{equation} \label{cyl0}
  D_p(v_j, \, B^p_r) \leq C \abs{\sigma} \! ,
 \end{equation}
 where~$\abs{\,\cdot\,} = \abs{\,\cdot\,}_{z_0}$ is the norm
 relative to the base-point~$z_0$ we chose once and for all.
 Let
 \[
  \Sigma := \bigcup_{j=1}^n (\{t_j\}\times B^p_r)
  \cup\Gamma(L, \, r).
 \]
 We define a map $v\colon\Sigma\to\NN$ as 
 \[
  v(x) := 
  \begin{cases}
   h(x_1) 
    &\textrm{if } x = (x_1, \, x^\prime)\in\Gamma(L, \, r), \\
   v_j(x^\prime) 
    &\textrm{if } x = (x_1, \, x^\prime)\in\{t_j\}\times B^p_r.
  \end{cases}
 \]
 The map~$v$ is well-defined and Lipschitz continuous,
 and satisfies
 \begin{equation} \label{cyl1}
  \begin{split}
   D_p(v, \, \Sigma)
   \leq \norm{h^\prime}_{L^\infty(0, \, L)} \abs{\Gamma(L, \, r)}
   + \sum_{j=0}^{n-1} D_p(v_j, \, B^p_r)
   \leq Cr^{p-1}L\norm{h^\prime}^p_{L^\infty(0, \, L)} 
    + \frac{CL}{r} \abs{\sigma}\!.
  \end{split}
 \end{equation}
 The last inequality follows from~\eqref{cyl0},
 taking into account that~$n \leq L/r$.
 For~$j\in\{0, \, 1, \, \ldots, \, n-1\}$,
 let~$Q_j := [t_j, \, t_{j+1}]\times B^p_r$.
 We claim that, for all~$j$, the homotopy class 
 of~$v_{|\partial Q_j}$ is trivial. Indeed,
 Lemma~\ref{lemma:actionofpi1} implies that 
 the homotopy class of~$v$ restricted 
 to~$\partial Q_j \setminus(\{t_j\}\times B^p_r)$
 is the same as the homotopy class of~$v$ 
 on~$\{t_{j+1}\}\times B^p_r$,
 hence the same as the homotopy class of~$v$
 on~$\{t_{j}\}\times B^p_r$ --- that is, $\sigma$.
 Taking the orientation into account, it follows
 that the homotopy class of~$\partial Q_j$ is~$\sigma - \sigma = 0$.
 Moreover, the set~$Q_j$ is biLipschitz equivalent to a ball
 --- that is, there exists an invertible, Lipschitz map
 $\phi_j\colon Q_j\to \overline{B}^p_r$ with Lipschitz inverse.
 In fact, since~$t_{j+1} - t_j = L/n$
 and~$\mbox{$L/(n+1)$} < r \leq L/n$, we can choose~$\phi_j$ so that
 \[
  \norm{\nabla\phi_j}_{L^\infty(Q_j)}
   + \norm{\nabla(\phi_j^{-1})}_{L^\infty(\overline{B}_r^p)}
   \leq C,
 \]
 where the constant~$C$ depends only on~$p$.
 Then, up to composition with~$\phi_j$,
 we can apply Lemma~\ref{lemma:removable} 
 to extend~$v$ in a Lipschitz way inside each~$Q_j$.
 The extended map --- still denoted~$v$, by abuse 
 of notation --- satisfies
 \[
  D_p(v, \, \Lambda(L, \, r))
  \leq C r \int_{\Sigma} \abs{\nabla v}^p \, \d\H^p
  \leq Cr^pL\norm{h^\prime}^p_{L^\infty(0, \, L)} 
    + CL \abs{\sigma}\!,
 \]
 thanks to~\eqref{removable} and~\eqref{cyl1}.
 The inequality~\eqref{emptycyl1} follows.
 The estimate~\eqref{emptycyl2} is an immediate consequence of~\eqref{cyl0}.
\end{proof}

\begin{lemma} \label{lemma:fullcylinder}
 Let~$0 < r < L$, let~$u\colon\Lambda(L, \, r)\to\NN$ be a Lipschitz function, and let~$\sigma\in\pN$. Then, there exists a Lipschitz map~$v\colon\Lambda(L, \, r)\to\NN$ that satisfies the following properties:
 \begin{enumerate}[label=(\roman*)]
  \item $v = u$ on~$\Gamma(L, \, r)$;
  \item $v(x) = u(x_1, \, 0)$ for~$x \in\Gamma(L, \, r/2)$;
  \item $v$ has homotopy class~$\sigma$ on each
  slice~$\{x_1\}\times B_{r/2}^p$, with~$x_1 \in [0, \, L]$;
  \item there holds
  \begin{align}
   D_p(v, \, \Lambda(L, \, r))
   &\leq C r^p L\norm{\nabla u}^p_{L^\infty(\Lambda(L, \, r))}
    + C L\abs{\sigma}, \label{fullcyl1} \\
   \D_p(v, \, \{0, \, L\}\times B^p_r)
   &\leq C r^p \norm{\nabla u}^p_{L^\infty(\Lambda(L, \, r))}
    + C \abs{\sigma}, \label{fullcyl2}
  \end{align}
  for some constant~$C$ that depends only on~$p$ and~$\NN$.
 \end{enumerate}
\end{lemma}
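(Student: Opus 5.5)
The plan is to reduce Lemma~\ref{lemma:fullcylinder} to Lemma~\ref{lemma:emptycylinder}. First I would replace $u$ in the inner cylinder by a map depending only on $x_1$. Then I would insert, inside a thinner cylinder, the map of Lemma~\ref{lemma:emptycylinder} carrying the class~$\sigma$.

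\emph{Step 1 (interpolation in the annular region).} Set $h(t) := u(t, \, 0)$, which is Lipschitz with $\norm{h^\prime}_{L^\infty}\leq\norm{\nabla u}_{L^\infty(\Lambda(L,r))}$. On the shell $\{(x_1, \, x^\prime)\colon r/2\leq\abs{x^\prime}\leq r\}$, I would define
\[
 v(x_1, \, x^\prime) := u\!\left(x_1, \, \left(2\abs{x^\prime}/r - 1\right)\frac{x^\prime}{\abs{x^\prime}} \cdot r\right)
\]
or, more simply, $v(x_1, x^\prime) := u(x_1, \, \lambda(\abs{x^\prime})\, x^\prime)$, where $\lambda$ is affine in $\abs{x^\prime}$, with $\lambda = 1$ at $\abs{x^\prime} = r$ and $\lambda = 0$ at $\abs{x^\prime} = r/2$. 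This map is Lipschitz and takes values in~$\NN$, with no projection needed. It equals $u$ on $\Gamma(L, \, r)$ and equals $h(x_1)$ on $\Gamma(L, \, r/2)$, which gives (i) and (ii). Its gradient is bounded by $C\norm{\nabla u}_{L^\infty}$, so its energy on the shell is at most $Cr^pL\norm{\nabla u}^p_{L^\infty}$. Its energy on the end annuli $\{0, \, L\}\times(B^p_r\setminus B^p_{r/2})$ is at most $Cr^p\norm{\nabla u}^p_{L^\infty}$.

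\emph{Step 2 (inner cylinder).} On $\Lambda(L, \, r/2)$, I would apply Lemma~\ref{lemma:emptycylinder} with radius $r/2 < L$ and the boundary datum $h$. This gives a Lipschitz $v$ with $v = h(x_1)$ on $\Gamma(L, \, r/2)$, so it glues continuously, and hence Lipschitz, with Step~1. Each slice $\{x_1\}\times B^p_{r/2}$ with $x_1\in(0, \, L)$ then carries class~$\sigma$, and the lemma supplies the bounds $Cr^pL\norm{h^\prime}^p + CL\abs{\sigma}$ and $C\abs{\sigma}$ on the ends. Summing these with the Step~1 bounds yields \eqref{fullcyl1} and \eqref{fullcyl2}.

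\emph{Main obstacle.} The delicate point is (iii) at the endpoints $x_1\in\{0, \, L\}$, since Lemma~\ref{lemma:emptycylinder} only asserts the class on open slices. In its proof, however, the end slices $t_0 = 0$ and $t_n = L$ carry the maps $v_0$ and $v_n$, which lie in class~$\sigma$, so the class is $\sigma$ there as well. Alternatively, the slice class is locally constant in $x_1$ by continuity and Lemma~\ref{lemma:actionofpi1}, since the boundary values $h(x_1)$ vary along a path. I would spell out this endpoint observation explicitly. The rest is routine bookkeeping of constants, using $r < L$ to absorb all terms of order $r^p$ into $r^pL$ where needed.
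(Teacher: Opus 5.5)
Your proposal is correct and is essentially the paper's proof. The paper defines $v(x) := u\bigl(x_1, \, (2 - r/\abs{x^\prime})\,x^\prime\bigr)$ on $\Lambda(L, \, r)\setminus\Lambda(L, \, r/2)$, which is exactly your first formula, and then fills $\Lambda(L, \, r/2)$ by Lemma~\ref{lemma:emptycylinder} with datum $u(x_1, \, 0)$. Your explicit handling of the end slices $x_1\in\{0, \, L\}$ in (iii) fills a point the paper leaves implicit. No absorption of $r^p$-terms into $r^pL$ is needed (and it would not be valid for $L<1$), because the shell and end-annulus bounds already have the required form.
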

\begin{proof}
 We define~$v\colon\Lambda(L, \, r)\setminus\Lambda(L, \, r/2)\to\NN$ by
 \[
  v(x) := u\!\left(x_1, \, 
   \left(2 - \frac{r}{\abs{x^\prime}}\right)x^\prime \right)
 \]
 for~$x = (x_1, \, x^\prime)\in [0, \, L]\times 
 \overline{B}^p_r\setminus B^p_{r/2}$.
 The map~$v$ is Lipschitz continuous and satisfies~(i), (ii).
 We extend~$v$ in~$\Lambda(L, \, r/2)$ by applying
 Lemma~\ref{lemma:emptycylinder}.
\end{proof}

\begin{proof}[Proof of Proposition~\ref{prop:dipole}]
 Let~$u\in R^{1,p}(\Omega, \, \NN)$ be a given map,
 locally Lipschitz in the complement of a finite set 
 $\Sigma = \Sigma(u)\subseteq\Omega$.

 \setcounter{step}{0}
 \begin{step}[decomposing~$\Snice(u)$ as a sum of dipoles]
  By definition~\eqref{Stop_nice},
  the chain~$\Snice(u)$ is supported in~$\Sigma$.
  Let us consider a dipolar decomposition of~$\Snice(u)$,
  \[
   \Snice(u) = \sum_{i=1}^m M_i + \sum_{j=1}^d D_j
   \qquad \textrm{in } \Omega,
  \]
  such that all the monopoles~$M_i$ and the dipoles~$D_j$
  are supported in~$\overline{\Omega}$ and
  \begin{equation*} %\label{dipole0}
   \sum_{i=1}^m \abs{M_i} + \sum_{j=1}^d \abs{D_j}
   \leq 2 \, \F_\Omega(\Snice(u))
  \end{equation*}
  (see~\eqref{flat} and Remark~\eqref{rk:dipolarOmega}).
  We can assume without loss of generality that
  all the monopoles~$M_i$ are zero, for otherwise, we
  replace~$M_i = \sigma_i \lb x_i\rb$ with a
  dipole of the form~$D^*_i := \sigma_i(\lb x_i \rb - \lb y_i \rb)$,
  where~$y_i\in\partial\Omega$, and note that
  $|D^*_i|\leq \mathrm{diam}_\Omega \abs{M_i}$,
  where~$\mathrm{diam}_\Omega$ is the diameter of~$\Omega$.
  Therefore, we can write
  \begin{equation} \label{dipole0}
   \Snice(u) = \sum_{j=1}^d D_j \quad \textrm{in } \Omega,
   \qquad \textrm{where} \qquad
   \sum_{j=0}^d \abs{D_j} \leq C \, \F_\Omega(\Snice(u))
  \end{equation}
  for some constant~$C$ depending only on~$\Omega$.
 \end{step}

 \begin{step}[modifying~$u$ ``along the dipoles''~$D_j$]
  Let
  \[
   \Sigma^\prime := \Sigma \cup \bigcup_{j=1}^d (\spt D_j\cap\Omega)
   \subseteq\Omega.
  \]
  Let~$\eps > 0$ be a parameter small enough that the closed
  balls~$\overline{B}^{p+1}_{2\eps}(x)$, for~$x\in\Sigma^\prime$,
  are pairwise disjoint and contained in~$\overline{\Omega}$, and let
  \[
   \Omega_\eps := \Omega \setminus
    \bigcup_{x\in\Sigma^\prime} \overline{B}_\eps^{p+1}(x),
   \qquad \Gamma_\eps :=
    \bigcup_{x\in\Sigma^\prime} \partial B_\eps^{p+1}(x).
  \]
  The map~$u$ is Lipschitz continuous in~$\overline{\Omega}_\eps$ and satisfies
  \begin{equation} \label{dipole1}
   \norm{\nabla u}_{L^\infty(\Omega_\eps)} \leq \frac{C}{\eps}
  \end{equation}
  for some~$\eps$-independent constant~$C$ (see~\eqref{nicesing}).
  For all~$j\in\{1, \, \ldots, \, d\}$, let~$\overline{D}_j$
  be the straight line segments between the points in~$\spt D_j$.
  Given a parameter~$r\in (0, \, \eps/100)$, we define
  \begin{align}
   K_{\eps,j}(r) &:= \left\{x\in\overline{\Omega_\eps}\colon
    \dist(x, \, \overline{D}_j) \leq r\right\} \! , \label{K_eps,j,r} \\
   \Gamma_{\eps,j}(r) &:= \left\{x\in\overline{\Omega_\eps}\colon
    \dist(x, \, \overline{D}_j) = r\right\} \! , \label{Gamma_eps,j,r}
  \end{align}
  for~$j\in\{1, \, \ldots, \, d\}$.
  Reducing the value of~$r$ if necessary,
  we can assume that the sets~$K_{\eps,j}$ are pairwise disjoint.
  The sets~$K_{\eps,j}$ are not cylinders, but are homeomorphic
  to cylinders. More precisely, letting~$L_j$ denote the length
  of~$\overline{D}_j$, for each~$j$ there exists an invertible, Lipschitz
  map~$\phi_j\colon K_{\eps,j}\to\Lambda(L_j-2\eps, \, r)$
  with Lipschitz inverse, such that
  \[
   \norm{\nabla\phi_j}_{L^\infty(K_{\eps,j})}
    + \norm{\nabla(\phi_j^{-1})}_{L^\infty(\Lambda(L_j-2\eps, \, r))}
    \leq C
  \]
  for some constant~$C$ that does not depend on~$\eps$, $r$.
  Therefore, up to composing with~$\phi_j$ and its inverse, we
  can apply Lemma~\ref{lemma:fullcylinder} and
  construct a map~$v_{\eps,j}$ that coincides with~$u$
  on~$\Gamma_{\eps,j}(r)$, depends only on the variable
  along the direction of~$\overline{D}_j$ on~$\Gamma_{\eps,j}(r/2)$,
  has a given homotopy class~$\sigma_j\in\pN$ on each
  transversal slice of radius~$r/2$ across~$\overline{D}_j$,
  and satisfies
  \begin{align}
   D_p(v_{\eps,j}, \, K_{\eps,j})
   &\leq \frac{Cr^p L_j}{\eps^p}
    + C L_j \abs{\sigma_j}, \label{dipole2} \\
   D_p(v_{\eps,j}, \, \Gamma_\eps\cap\overline{K}_{\eps,j})
   &\leq \frac{Cr^p}{\eps^p}
    + C \abs{\sigma_j}, \label{dipole2bis}
  \end{align}
  The inequalities~\eqref{dipole2}, \eqref{dipole2bis} follow
  from Statement~(iv) in Lemma~\ref{lemma:fullcylinder}
  and~\eqref{dipole1}.
  We choose the homotopy class~$\sigma_j\in\pN$
  as the multiplicity of the dipole~$D_j = \sigma_j(\lb x_j + v_j\rb - \lb x_j\rb)$.
  Note that the homotopy class of~$v_{\eps,j}$ on the
  transversal slices depend on the orientation of the slices.
  We choose the orientation of the slices consistent with
  that of the dipole~$D_j$; in other words, $(\tau_1, \, \ldots, \, \tau_p)$
  is a positive basis of the orthogonal~$p$-plane to~$\overline{D}_j$
  if and only if~$(v_j, \, \tau_1, \, \ldots, \, \tau_p)$
  is a positive basis of~$\R^{p+1}$.
 \end{step}

 \begin{step}[definition of~$v_\eps$]
  We define a map~$v_\eps\colon\Omega_\eps\to\NN$
  as~$v_\eps := v_{\eps,j}$ in~$K_{\eps,j}$ for all~$j$,
  $v_\eps := u$ in~$\Omega_\eps\setminus\bigcup_{j=1}^d K_{\eps,j}$.
  Thanks to~\eqref{dipole0} and~\eqref{dipole2},
  the map~$v_\eps$ satisfies
  \begin{equation*} %\label{dipole2}
   D_p(v_\eps, \, \Omega_\eps)
   \leq D_p(u, \, \Omega)
    + \frac{Cr^p}{\eps^p} \FS_\Omega(\Snice(u))
    + C \, \F_\Omega(\Snice(u))
  \end{equation*}
  and hence, applying~\eqref{flatsize-flat-mass}
  and recalling that~$r < \eps/100$,
  \begin{equation} \label{dipole3}
   D_p(v_\eps, \, \Omega_\eps)
   \leq D_p(u, \, \Omega)
    + C \, \F_\Omega(\Snice(u)).
  \end{equation}
  Moreover, the inequalities~\eqref{dipole0} and~\eqref{dipole2bis} yield
  \begin{equation} \label{dipole3bis}
   D_p(v_\eps, \, \Gamma_\eps)
   \leq C + C\sum_{j=1}^d \abs{\sigma_j}.
  \end{equation}
  Finally, since~$v_\eps$ agrees with~$u$
  out of~$\bigcup_{j=1}^d K_j$ and takes values
  in the compact manifold~$\NN$, choosing~$r$ small enough
  we can make sure that
  \begin{equation} \label{dipole4}
   \norm{u - v_\eps}_{L^p(\Omega_\eps)}
   \leq \eps .
  \end{equation}
 \end{step}

 \begin{step}[$v_\eps$ is homotopically trivial
 on~$\partial B_{\eps}^{p+1}(x)$ for all~$x\in\Sigma^\prime$]
  Let~$x\in\Sigma^\prime$. Up to a relabelling of the indices,
  suppose that~$x$ belongs to the support of the dipoles~$D_1$, \ldots, $D_q$
  and does not belong to the support of~$D_{q+1}$, \ldots, $D_d$.
  By possibly changing the sign of~$\sigma_j$, we assume that
  $D_j = \sigma(\lb x_j + v_j \rb - \lb x_j\rb)$ for
  all~$j\in\{1, \, \ldots, \, q\}$. Let~$\sigma(u, \, x)$ be the homotopy
  class of~$u$ restricted to~$\partial B^{p+1}_\eps(x)$,
  and let~$\sigma(v_\eps, \, x)$ be the homotopy
  class of~$v_\eps$ restricted to the same sphere.
  We claim that
  \begin{equation} \label{dipole-homotopy}
   \sigma(v_\eps, \, x) = \sigma(u, \, x) + \sum_{j=1}^q \sigma_j.
  \end{equation}
  Keeping into account that
  $\Snice(u) = \sum_{j=1}^d D_j$ in~$\Omega$
  (see~\eqref{dipole0}), and hence~$\sigma(u, \, x) = -\sum_{j=1}^q \sigma_j$,
  from~\eqref{dipole-homotopy} we will deduce
  \begin{equation} \label{dipole5}
   \sigma(v_\eps, \, x) = 0.
  \end{equation}
  The proof of~\eqref{dipole-homotopy} is based on the definition of the
  operation on~$\GN = \pN$ and consists of a topological construction,
  illustrated in Figure~\ref{fig:homotopycolour}:
  we define a continuous map~$w\colon \overline{D}\to\NN$,
  where~$D$ is the ball~$B_\eps^{p+1}(x)$
  with~$q+1$ holes~$H_0$, $H_1$ \ldots, $H_q$,
  such that~$w$ agrees with~$v_\eps$
  on~$\partial B^{p+1}_\eps(x)$, is homotopic to~$u$ on~$\partial H_0$,
  and has homotopy class~$\sigma_j$ on~$\partial H_j$.
  More precisely, let
  \[
   \psi\colon \overline{B}^{p+1}_\eps(x)\to\partial B^{p+1}_\eps(x), \qquad
   \psi(y) := \frac{\eps(y - x)}{\abs{y - x}}.
  \]
  We define $H_0 := \overline{B}_{\eps/4}^{p+1}(x)$ and
  \[
   H_j := \left\{y\in B^{p+1}_\eps\colon
   \frac{\eps}{2} \leq \abs{y - x} \leq \frac{3\eps}{4}, \
   \psi(y)\in \Lambda_{\eps,j}(r/2) \right\}
  \]
  for~$j\in\{1, \, \ldots, \, q\}$, where~$\Lambda_{\eps,j}(r/2)$
  is defined in~\eqref{Gamma_eps,j,r}. The sets~$H_j$,
  for~$1\leq j \leq q$, are not disks but are homeomorphic to disks
  and moreover, $D := B^{p+1}_\eps(x) \setminus\bigcup_{j=0}^q H_j$
  is homeomorphic to a ball with spherical holes.
  For all~$y\in\overline{D}$ satisfying either~$\abs{y} > \eps/2$
  or~$\psi(y)\notin\Lambda_{\eps,j}(r/2)$,
  we define $w(y) := v_\eps(\psi(y))$.
  Since~$v_\eps$ is constant on~$\Gamma_{\eps,j}(r/2)$
  (as it follows by~(ii) in Lemma~\ref{lemma:fullcylinder}),
  we can extend~$w$ continuously to~$\overline{D}$,
  by setting~$w(y)$ equal to a suitable constant for all~$y$
  such that~$\eps/4 \leq \abs{y} \leq \eps/2$
  and~$\psi(y)\in\Lambda_{\eps,j}(r/2)$.
  The restriction of~$w$ to~$\partial B^{p+1}_{\eps/4}(x)$
  does not agree with~$u\circ\psi$, but it is homotopic to the latter;
  one defines a homotopy by shrinking the disks
  $\psi^{-1}(\Lambda_{\eps,j}(r/2))\cap \partial B^{p+1}_{\eps/4}(x)$,
  on which~$w$ is constant, to points.
 \end{step}

 \begin{figure}%[h]
  \centering
  \includegraphics[height=0.3\textheight]{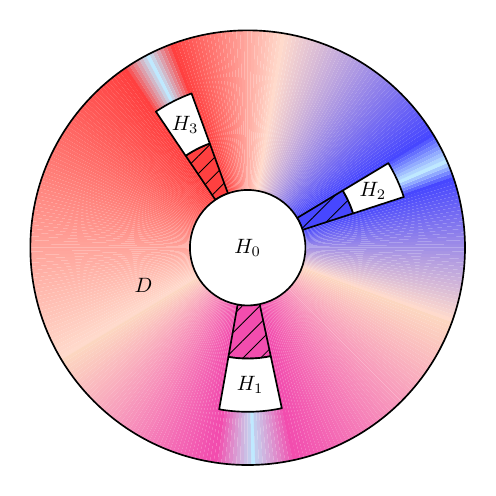}
  \caption{Proof of~\eqref{dipole-homotopy}: construction of
  the map~$w\colon\overline{D}\to\NN$. The map~$w$ is obtained by
  extending radially the boundary data on the outer sphere,
  except in the three hatched regions, where~$w$ is constant. }
  \label{fig:homotopycolour}
 \end{figure}

 \begin{step}[conclusion]
  Smooth functions are dense in~$W^{1,\infty}(\Omega_\eps, \, \NN)$
  (as can be proved, e.g., using convolution
  and the projection~\eqref{projN} onto~$\NN$).
  Therefore, we can replace~$v_\eps$ with a smooth function~$\tilde{v}_\eps\colon\overline{\Omega}_\eps\to\NN$
  that still satisfies~\eqref{dipole3}, \eqref{dipole3bis},
  \eqref{dipole4}, \eqref{dipole5}.
  It only remain to extend~$\tilde{v}_\eps$ inside each
  ball~$B_\eps^{p+1}(x)$, $x\in\Sigma^\prime$.
  Due to~\eqref{dipole5}, the restriction of~$\tilde{v}_\eps$
  on~$\partial B(x, \, \eps)$ is homotopically trivial,
  for all~$x\in\Sigma^\prime$. Therefore,
  applying Lemma~\ref{lemma:removable} in each~$B(x, \, \eps)$
  (and a smoothing argument near $\partial B(x, \, \eps)$),
  we construct a smooth map~$u_\eps\colon\overline{\Omega}\to\NN$
  that coincides with~$\tilde{v}_\eps$ in~$\overline{\Omega}_\eps$ and satisfies
  \begin{equation*}
   D_p(u_\eps, \, \Omega)
    \leq D_p(\tilde{v}_\eps, \, \Omega_\eps)
    + C\eps D_p(\tilde{v}_\eps, \, \Gamma_\eps)
    \leq D_p(u, \, \Omega) + C \, \F_\Omega(\Snice(u))
    + C\eps\sum_{j=1}^d \abs{\sigma_j} + C\eps,
  \end{equation*}
  where the last inequality follows from~\eqref{dipole3} and~\eqref{dipole3bis}.
  Thanks to~\eqref{dipole4}, we also deduce
  \[
   \norm{u - u_\eps}_{L^p(\Omega)} \leq C\eps.
  \]
  The proposition follows. \qedhere
 \end{step}
\end{proof}

\section{Proof of Theorem~\ref{th:Stop}}
\label{sect:proofmainthm}

We are finally ready to prove Theorem~\ref{th:Stop}.
The bulk of the argument is the proof of the following statement.

\begin{lemma} \label{lemma:subStop}
 Let~$(u_j)_{j\in\N}$ be a Cauchy sequence
 in~$R^{1,p}(\Omega, \, \NN)$
 such that~$\sup_{j\in\N} \overline{D}_p(u_j) < +\infty$.
 Then, $(\Snice(u_j))_{j\in\N}$ is a Cauchy
 sequence in~$\FS_0(\Omega; \, \pN)$.
\end{lemma}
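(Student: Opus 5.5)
The plan is a three-term splitting through grid approximations. The grid chains come from Section~\ref{sect:Sgrid}: they are stable under strong convergence (Lemma~\ref{lemma:Sstable}) and uniformly bounded in flat norm by the relaxed energy (estimate~\eqref{Sgridbdd}). We also know that $P(\cdot,h,y)$ applied to $\Snice$ gives exactly $\Sgrid$ (identity~\eqref{Sgridnice}).

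Fix $\eps>0$ and set $\Lambda:=\sup_j\overline{D}_p(u_j)$. For $j,k$ and a grid $\GG(h,y)$, I would write
\[
 \Snice(u_j)-\Snice(u_k) = \big(\Snice(u_j)-\Sgrid(u_j,h,y)\big) + \big(\Sgrid(u_j,h,y)-\Sgrid(u_k,h,y)\big) + \big(\Sgrid(u_k,h,y)-\Snice(u_k)\big),
\]
and bound each piece in $\FS_\Omega$.

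\textbf{Middle term.} Let $u$ be the strong $W^{1,p}$-limit of the Cauchy sequence. By weak lower semicontinuity, $\overline{D}_p(u)\le\Lambda$, so $u\in\Hw^{1,p}$. Lemma~\ref{lemma:Sstable} then shows that, for fixed $h$ and a.e.\ $y$, $\Sgrid(u_j,h,y)=\Sgrid(u,h,y)$ for all $j\ge n_*(h,y)$. Hence the middle term vanishes for $j,k\ge n_*$. Since $n_*$ depends on $y$, I choose a set of $y$ of measure at least $1/2$ on which $n_*\le N(h)$, which is possible by measurability and monotonicity in $N$.

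\textbf{Outer terms.} This is the main obstacle. Proposition~\ref{prop:deformation}(ii) gives $\int_{Q}\F_\Omega(S-P(S,h,y))\,dy\lesssim h\,\M(S)$, but $\M(\Snice(u_j))$ is not uniformly bounded in $j$. I would instead use the flat-size analogue. Decompose $\Snice(u_j)$ into monopoles plus dipoles with total cost at most $2\F_\Omega(\Snice(u_j))\lesssim\Lambda$, using Lemma~\ref{lemma:Sbdd}. Then estimate $\int_Q\FS_\Omega(D-P(D,h,y))\,dy$ dipole by dipole:
\begin{itemize}
\item for a monopole, the averaged error is at most $h$;
\item for a dipole with $\size(D)\ge h$, $D-P(D)$ consists of two displacements of length at most $\sqrt n h$, each of flat-size cost at most $\alpha\sqrt n h\le\alpha\sqrt n\,\size(D)$;
\item for a small dipole, the cost is $\lesssim\size(D)$, as in the proof of Proposition~\ref{prop:deformation}(i).
\end{itemize}
This gives a bound $C\,\alpha\sum_i\min(h,\size(D_i))$, and I still need smallness. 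I would split the dipoles at a threshold $\lambda$. Dipoles of size at least $\lambda$ have coefficient of norm at most $\Lambda/\lambda$, and so, using that each coefficient has norm at least $\alpha$ and the cost bound $\lesssim\Lambda$, there are at most $C\Lambda/(\alpha\lambda)$ of them; their total contribution is $\lesssim h\Lambda/(\alpha\lambda)$. Dipoles of size below $\lambda$ contribute $\lesssim\sum\min(h,\size)$, which is not obviously small either.

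A cleaner fallback is to compare at two scales. Apply the deformation to $\Snice(u_j)-\Snice(u_k)$ itself and use the bound $\int\FS(P(S))\lesssim\FS(S)$ from Proposition~\ref{prop:deformation}(i). That is circular, however. So I would prefer to exploit the mass bound: each $u_j$ is fixed with finite mass, so $\FS_\Omega(\Snice(u_j)-\Sgrid(u_j,h,y))\to0$ as $h\to0$ for each $j$ individually, but not uniformly in $j$. To get uniformity I would use the dipole-splitting bound above, with $\lambda=\sqrt h$, together with the observation that dipoles of size below $\sqrt h$ have total size controlled. The total size is at most $\sum|D_i|/\alpha\lesssim\Lambda$, which is bounded but not small, so this route also needs refinement.

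The refinement I would try is to replace the $\F_\Omega$ cost by the fact that small dipoles ($\size<h$) are killed by the grid with probability $1-O(\size/h)$ and are otherwise shifted by $O(h)$. This gives $\FS$-error $O(\size)$ per dipole in expectation and only $O(h\cdot\#)$ after cancellation, and I expect this to be the delicate point.

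Once the outer terms are made smaller than $\eps/3$ on average, uniformly in $j$, for a suitable $h$, I pick $y$ in the good set of measure at least $1/2$ where all three bounds hold. Then $\FS_\Omega(\Snice(u_j)-\Snice(u_k))<\eps$ for $j,k\ge N(h)$, which is the Cauchy property.
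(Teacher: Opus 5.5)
\textbf{The gap is in the outer terms.} Your splitting is a reasonable plan, and the middle term is handled correctly: Lemma~\ref{lemma:Sstable} applies, with $\overline{D}_p(u)\le\Lambda$ by lower semicontinuity. The argument, however, stands or falls with the estimate
$\sup_{j\ge N}\int_{Q^{p+1}}\FS_\Omega\big(\Snice(u_j)-\Sgrid(u_j,h,y)\big)\,\d y\to0$ as $h\to0$, and you give no proof of it.

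This estimate cannot come from what you use, namely $\F_\Omega(\Snice(u_j))\lesssim\Lambda$ and Proposition~\ref{prop:deformation}, because it is false for chains with merely bounded flat norm. Take $S_N=\sum_{i=1}^N\sigma(\lb a_i+v_N\rb-\lb a_i\rb)$ with $\sigma\neq0$ fixed, $\abs{v_N}=1/N$, and the $a_i$ at mutual distance $\gg1/N$. Then $\F_\Omega(S_N)\le\abs{\sigma}$. Yet for fixed $h$, $N\to\infty$ and most $y$, all but $O(1/h)$ dipoles lie inside a single cube and are erased by $P(\cdot,h,y)$. So $S_N-P(S_N,h,y)$ still contains about $N$ isolated dipoles of size $1/N$, and the ball argument in the proof of Lemma~\ref{lemma:norms} gives $\FS_\Omega(S_N-P(S_N,h,y))\gtrsim\alpha$. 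This is exactly why your threshold splitting, the two-scale fallback and the survival-probability refinement all stall at $\sum_i\min(h,\size(D_i))$, which is bounded but not small.

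\textbf{What is missing.} The extra input has to come from the \emph{strong} convergence of $(u_j)$, which your treatment of the outer terms never uses. For example, let $x$ be a singular point of $u_j$ and let $r$ be less than half its distance to the other singular points and to $\partial\Omega$. The sphere version of Lemma~\ref{lemma:cubebounds}, applied on each $\partial B_t(x)$ with $t<r$, gives $\int_{B_r(x)}\abs{\nabla u_j}^p\ge c\,\abs{\sigma(u_j,x)}\,r$. Hence a cloud of tiny dipoles of total size $\approx1$ forces energy $\gtrsim\alpha$ on a set of vanishing measure, which the equi-integrability of $\abs{\nabla u_j}^p$ rules out. Your route needs some estimate of this type, turning equi-integrability into control of $\Snice(u_j)$ below scale $h$ uniformly in $j$, and the proposal does not contain one.

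\textbf{The paper's route.} The paper avoids any uniform estimate. It extracts $\FS_\Omega$-convergent subsequences via Lemma~\ref{lemma:FScompactness} and shows that any two limits $S,S'$ agree, in three steps:
\begin{itemize}
\item Proposition~\ref{prop:deformation}(i) plus Fatou, applied to $\Snice(u_{j_n})-S$, gives $\Sgrid(u_{j_n},h,y)\to P(S,h,y)$ for a.e.\ $y$. This is the non-circular use of (i) you were looking for, since it only needs $\FS_\Omega(\Snice(u_{j_n})-S)\to0$ at fixed $h$.
\item Lemma~\ref{lemma:Sstable} then gives $P(S,h,y)=\Sgrid(u,h,y)=P(S',h,y)$.
\item Proposition~\ref{prop:deformation}(iii) gives $S=S'$.
\end{itemize}

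\textbf{A caveat on that route.} It relocates the difficulty rather than removing it. Under a bare $\F_\Omega$-bound, \eqref{Hcomp1.66} can only be arranged for $k$ large depending on $j$, so the passage to the limit in \eqref{Hcomp4} is unjustified. In fact Lemma~\ref{lemma:FScompactness} fails as stated. Take $V=(0,1)^2$ in the plane and let $S_k$ consist of $2^{k-1}$ dipoles with coefficient $\sigma$ and length $2\rho 2^{-k}$ on a horizontal segment, placed so that $\I(S_k,V+y)$ equals $\sigma$ times the $k$-th binary digit of $(y_1+\rho)/(2\rho)$. Then $\F_\Omega(S_k)\le\rho\abs{\sigma}$, but $(S_k)$ has no $\FS_\Omega$-Cauchy subsequence. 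Indeed, a Borel--Cantelli version of the proof of Lemma~\ref{lemma:I} would force those digits to be eventually constant for a.e.\ $y_1$, which they are not. So on either route, the decisive ingredient is small-scale control of $\Snice(u_j)$ coming from the strong convergence of $(u_j)$.
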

\begin{proof}
 The estimate~\eqref{Snicebdd} in Lemma~\ref{lemma:Sbdd}
 and the assumption that~$\overline{D}_p(u_j)$ is bounded
 imply that $\sup_{j\in\N}\F_\Omega(\Snice(u_j)) < +\infty$.
 By Lemma~\ref{lemma:FScompactness}, we can extract a
 subsequence~$(j_n)_{n\in\N}$ and a find a
 chain~$S\in\F_0(\Omega; \, \pN)$ in such a way that
 \begin{equation} \label{subStop-S}
  \FS_\Omega(\Snice(u_{j_n}) - S) \to 0 \qquad
  \textrm{as } n\to+\infty.
 \end{equation}
 In order to complete the proof, it suffices to
 show that \emph{all} converging subsequences
 have the same limit~$S$. To this end, let~$(j^\prime_n)_{n\in\N}$
 be another subsequence and~$S^\prime\in\F_0(\Omega; \, \pN)$
 be another chain, such that
 \begin{equation} \label{subStop-S'}
  \FS_\Omega(\Snice(u_{j^\prime_n}) - S^\prime) \to 0 \qquad
  \textrm{as } n\to+\infty.
 \end{equation}
 We claim that $S = S^\prime$.
 Because of~\eqref{Sgridnice}, we know that
 \[
  \Sgrid(u_{j_n}, \, h, \, y)
  = P(\Snice(u_{j_n}), \, h, \, y)
 \]
 for all~$n\in\N$, $h\in (0, \, h_0]$ and almost
 all~$y\in Q^{p+1}$, where~$P(\cdot, \, h, \, y)$ is
 the approximation operator induced by the grid~$\GG(h, \, y)$,
 as in~\eqref{deformation}. Using Fatou lemma,
 Statement~(i) in Proposition~\ref{prop:deformation},
 and~\eqref{subStop-S}, we deduce
 \begin{equation*}
  \begin{split}
   &\int_{Q^{p+1}} \liminf_{n\to+\infty}
    \FS_\Omega(\Sgrid(u_{j_n}, \, h, \, y)
    - P(S, \, h, \, y)) \, \d y \\
   &\hspace{2cm} \leq
   \liminf_{n\to+\infty}
    \int_{Q^{p+1}} \FS_\Omega(P(\Snice(u_{j_n}) - S, \, h, \, y)) \, \d y \\
   &\hspace{2cm} \leq
   \liminf_{n\to+\infty} \FS_\Omega(\Snice(u_{j_n}) - S) = 0.
  \end{split}
 \end{equation*}
 As a consequence, upon extraction of a (non-relabelled)
 subsequence, we have the pointwise convergence
 \begin{equation} \label{subStop1}
  \begin{split}
   \FS_\Omega(\Sgrid(u_{j_n}, \, h, \, y) - P(S, \, h, \, y))
   \to 0 \qquad \textrm{ for all } h\in (0, \, h_0],
   \textrm{ a.e. } y\in Q^{p+1}.
  \end{split}
 \end{equation}
 In a similar way, passing to a (non-relabelled) subsequence,
 we have
 \begin{equation} \label{subStop2}
  \begin{split}
   \FS_\Omega(\Sgrid(u_{j^\prime_n}, \, h, \, y)
   - P(S^\prime, \, h, \, y))
   \to 0 \qquad \textrm{ for all } h\in (0, \, h_0],
   \textrm{ a.e. } y\in Q^{p+1}.
  \end{split}
 \end{equation}
 Since~$(u_j)_{j\in\N}$ is a Cauchy sequence,
 it converges $W^{1,p}$-strongly to a
 map~$u\in W^{1,p}(\Omega, \, \NN)$.
 By Lemma~\ref{lemma:Sstable}, for all~$h\in (0, \, h_0]$
 and almost all~$y\in Q^{p+1}$ there is~$n_*(u, \, h, \, y)$
 such that
 \begin{equation} \label{subStop3}
  \Sgrid(u_{j_n}, \, h, \, y)
  = \Sgrid(u_{j^\prime_n}, \, h, \, y)
  = \Sgrid(u, \, h, \, y)
  \qquad \textrm{for all } n\geq n_*(u, \, h, \, y).
 \end{equation}
 Combining~\eqref{subStop1}, \eqref{subStop2},
 and~\eqref{subStop3}, we obtain
 \begin{equation*} %\label{subStop3}
  P(S, \, h, \, y) = P(S^\prime, \, h, \, y)
  \qquad \textrm{for all } h\in (0, \, h_0],
  \textrm{ a.e. } y\in Q^{p+1}.
 \end{equation*}
 From Statement~\ref{def-flat-convergence}
 in Proposition~\ref{prop:deformation}, we conclude
 that~$S = S^\prime$, as claimed. The lemma follows.
\end{proof}

\begin{remark} \label{rk:SgridS}
 As we will do in the sequel, let us write~$\S(u)$ for the unique limit~$S$
 (with respect to the~$\FS_\Omega$-norm) of the sequence~$(\Snice(u_j))_{j\in\N}$.
 Combining~\eqref{subStop1} with~\eqref{subStop3}, we
 also obtain
 \begin{equation*} %\label{subStop3}
  \Sgrid(u, \, h, \, y) = P(\S(u), \, h, \, y)
  \qquad \textrm{for all } h\in (0, \, h_0],
  \textrm{ a.e. } y\in Q^{p+1}.
 \end{equation*}
 Then, Statement~(ii) in Proposition~\ref{prop:deformation}
 implies that for any sequence~$h_k\to 0$ there exists a (non-relabelled)
 subsequence such that $\F_\Omega(\S(u, h_k, y) - \S(u))\to 0$
 as~$k\to+\infty$ for almost every~$y\in Q^{p+1}$.
\end{remark}

In the proof of Theorem~\ref{th:Stop}, we will also
use the following characterisation of strong approximability
by smooth maps:

\begin{prop} \label{prop:H}
 A map~$u\in W^{1,p}(B^{p+1}, \, \NN)$ belongs to~$\Hs^{1,p}(B^{p+1}, \, \NN)$
 if and only if, for all $(p+1)$-dimensional closed
 simplex~$T\subseteq B^{p+1}$ and almost all~$y\in\R^{p+1}$
 such that~$\abs{y} < \dist(T, \, \partial B^{p+1})$,
 the restriction~$u_{|y + \partial T}$ is homotopic to a constant
 (in the sense defined in Section~\ref{sect:homotopy_W1p}).
\end{prop}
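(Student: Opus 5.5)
The plan is to prove the two implications separately, working with the nearest-point projection~$\pi_\NN$ and the homotopy classes for $W^{1,p}$ traces from Section~\ref{sect:homotopy_W1p}.

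\emph{Necessity.} Let $u\in\Hs^{1,p}(B^{p+1},\NN)$ and take smooth $\varphi_j\to u$ strongly in $W^{1,p}$. Fix a closed simplex $T$. By Fubini's theorem on the translates $y+\partial T$, up to a subsequence we have $\varphi_j|_{y+\partial T}\to u|_{y+\partial T}$ strongly in $W^{1,p}(y+\partial T)$ for a.e.\ admissible $y$. Since $\partial T$ is bi-Lipschitz equivalent to $\SS^p$, Proposition~\ref{prop:smallenergy} shows that $u|_{y+\partial T}$ has the same class as $\varphi_j|_{y+\partial T}$ for large $j$. That class is trivial, because $\varphi_j$ extends continuously to $y+T$. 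Here it is important to use free homotopy and $\GN=\pN$ under~\eqref{hp:single-valued}.

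\emph{Sufficiency.} This is the substantial direction. I would mimic the proof of Proposition~\ref{prop:RS}, but replace the cubical grid by a triangulated one. Fix a compact ball $B'\Subset B^{p+1}$ (a standard dilation argument lets us reduce to this), and subdivide each cube of $\GG(h,y)$ into simplices. The hypothesis and Fubini (Lemma~\ref{lemma:Fubini}) give a set of good $y$ of full measure. On it, $u$ restricted to the boundary of every $(p+1)$-simplex of the triangulated grid is homotopically trivial, and the energy bounds~\eqref{hp:goodgrid} hold on all skeleta. Since only finitely many simplices meet $B'$, and the hypothesis holds for a.e.\ translate of each fixed simplex, this can be arranged simultaneously by a countable-intersection argument.

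Next run Steps~1--5 of the proof of Proposition~\ref{prop:RS}: opening, adaptive smoothing, thickening, and projection. The output is $u_h\in R^{1,p}$ converging strongly to $u$, whose singularities sit at the centres of bad cells. By Step~6, the class of $u_h$ around each singular point equals the class of $u^{\mathrm{op}}_h$, hence of $u$, on the boundary of the cell. By the previous paragraph this class is trivial. Lemma~\ref{lemma:removable} and Remark~\ref{rk:smoothremoved} then let us replace $u_h$ inside each bad cell by a Lipschitz extension with energy $\lesssim h\int_{\partial K}|\nabla u_h|^p$. Summing over bad cells, this energy is controlled by~\eqref{RS-smoothingtris} and tends to zero as $h\to0$. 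The resulting maps are Lipschitz, hence in $\Hs^{1,p}$ by smoothing and projection, and they converge to $u$ strongly.

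\emph{Main obstacle.} The most delicate point is that the opening step modifies $u$ near the $p$-skeleton. I must therefore check that the class of $u^{\mathrm{op}}_h$ on each cell boundary is still trivial. Formula~\eqref{openinghomotopy} shows the class changes by $\pm\sigma(u,\partial H')$ for thin boxes $H'$ adjacent to the faces. I would handle this by choosing $y$ so that the hypothesis also applies to those boxes, since they are almost-everywhere translates of fixed cuboids. If instead the hypothesis is only available for simplices, I would subdivide the cuboids into simplices and use the additivity in Lemma~\ref{lemma:homotopysum-cubes}.
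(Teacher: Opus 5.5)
\textbf{Necessity and overall plan.} Your necessity argument is correct. For sufficiency you take a different route from the paper. The paper does not prove Proposition~\ref{prop:H}; it quotes it from the generic-screening theorems of \cite{BousquetPonceVanSchaftingen2025}. Much of your skeleton is sound:
\begin{itemize}
\item good translations of a triangulated grid, with additivity giving $\sigma(u,\partial K)=0$ for every cube, so that $\mathscr{B}_2=\varnothing$;
\item Steps~1--5 of Proposition~\ref{prop:RS};
\item filling each bad cube via Lemma~\ref{lemma:removable}, at total cost $\lesssim \int_{U_{2\theta h}(R_{\partial\mathscr{C}})}\abs{\nabla u}^p\to 0$.
\end{itemize}
However, everything hinges on the claim that $u^{\mathrm{op}}_h$ is null-homotopic on $\partial K$ for every bad cube $K$. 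The argument you give for that claim does not work.

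\textbf{Why the proposed fix fails.} First, the boxes $H'=[0,a]\times H$ in \eqref{openinghomotopy} are not almost-everywhere translates of fixed cuboids. One face of $H'$ is the grid face $H$ itself. The thickness $a$ is an output of the opening construction: in \cite{BousquetPonceVanSchaftingen-II} it comes from a choice of translation parameter making the estimates of Proposition~\ref{prop:openingcube} hold. So $a$ depends on $u$ and on the already chosen $y$, and genericity in $y$ alone says nothing about the particular $a$ you obtain.

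Second, and more seriously, \eqref{openinghomotopy} only describes the last stage of the opening. On the lateral part $[0,a]\times\partial H$, and on $\{a\}\times H$ near $\partial H$, the relevant map is $u\circ\phi^{p-1}$, not $u$. The map $\phi^{p-1}$ has already collapsed neighbourhoods of the $0$-, \ldots, $(p-1)$-skeleta onto translates chosen in the same $u$-dependent way. The correction classes are therefore classes of $u$ on degenerate, parameter-dependent surfaces $\phi^{p-1}(\partial H')$. These are not boundaries of translates of fixed simplices, so the hypothesis of the proposition does not apply to them, even after triangulating.

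\textbf{What would close the gap.} You would have to reopen the construction of Proposition~\ref{prop:opening} and do the following at each stage $i=0,\ldots,p$:
\begin{itemize}
\item restrict the translation parameters to the intersection of two sets: the positive-measure set on which the energy estimates hold, and a full-measure set obtained by Fubini in the joint variables (grid translation, opening parameters);
\item prove by induction on $i$ that each stage leaves the homotopy classes on cube boundaries unchanged.
\end{itemize}
That induction is essentially the ``generic screening'' content supplied by the result the paper cites. An alternative is to fill bad cubes directly from the trace of $u$ on $\partial K$, but you would then need to recover the VMO control near bad cubes that the opening was providing. Without one of these, the sufficiency direction is not proved.
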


Proposition~\ref{prop:H} follows, e.g., from~\cite[Theorems~1.10 and 1.12]{BousquetPonceVanSchaftingen2025}.

\begin{proof}[Proof of Theorem~\ref{th:Stop}]
 Proposition~\ref{prop:RDbar} and Lemma~\ref{lemma:subStop},
 combined with standard density arguments, imply that~$\Snice$
 can be extended to an operator
 \begin{equation*}
  \S\colon\Hw^{1, p}(\Omega, \, \NN) \to \F_0(\Omega; \, \pN)
 \end{equation*}
 that is continuous in the following sense:
 if~$(u_j)_{j\in\N}$ is a sequence in~$\Hw^{1,p}(\Omega, \, \NN)$
 such that $u_j\to u$ strongly in $W^{1,p}(\Omega)$
 for some~$u\in\Hw^{1,p}(\Omega, \, \NN)$
 and $\sup_{j\in\N} \overline{D}_p(u_j, \Omega) < +\infty$,
 then $\FS_\Omega(\S(u_j) - \S(u))\to 0$ as~$j\to+\infty$.
 The estimate~\eqref{Stopflat} follows from Lemma~\ref{lemma:Sbdd},
 because the~$\F_{\Omega}$-norm is lower semicontinuous with respect
 to~$\FS_\Omega$-convergence (Lemma~\ref{lemma:FScompactness}).
 For later convenience, let us also note that
 the operator~$\S$ is local, that is, for any Lipschitz
 subdomain~$V\subseteq\Omega$ and any~$u\in\Hw^{1,p}(\Omega, \, \NN)$,
 there holds
 \begin{equation} \label{restrS}
  \S(u_{|V}) = \S(u) \mres V.
 \end{equation}
 Equality~\eqref{restrS} is an immediate consequence
 of~\eqref{restrV} and~\eqref{Stop_nice} in case~$u\in R^{1,p}(\Omega, \, \NN)$,
 and remains true in general, by a density argument.

 It only remains to prove that a map~$u\in\Hw^{1,p}(\Omega, \, \NN)$
 belongs to~$\Hsl^{1,p}(\Omega, \, \NN)$ if and only if~$\S(u) = 0$.
 Let~$u\in\Hsl^{1,p}(\Omega, \, \NN)$, let~$B$ be a ball
 such that~$\overline{B}\subseteq\Omega$,
 an let~$(\varphi_k)_{k\in\N}$ be a sequence of smooth maps
 $\overline{B}\to\NN$ that converges $W^{1,p}(B)$-strongly to~$u_{|B}$.
 Since~$C^\infty(\overline{B}, \, \NN)\subseteq R^{1,p}(B, \, \NN)$,
 the definition~\eqref{Stop_nice} of~$\Snice$ immediately
 gives~$\S(\varphi_k) = 0$. By continuity of~$\S$ and~\eqref{restrS},
 we obtain $\S(u)\mres B = \S(u_{|B}) = 0$ and, since the ball~$B$
 is arbitrary, we conclude that~$\S(u) = 0$.
%  using Lemma~\ref{goal:Srestr0}.
 For the opposite implication, assume
 that~$u\in\Hw^{1,p}(\Omega, \, \NN)$ satisfies~$\S(u) = 0$.
 In light of Proposition~\ref{prop:H}, proving that~$u\in\Hsl^{1,p}(\Omega, \, \NN)$
 is equivalent to proving that, for any ball~$B$
 compactly contained in~$\Omega$, any $(p+1)$-dimensional
 simplex~$T\subseteq B$ and any vector~$y\in\R^{p+1}$
 with~$\abs{y} < \dist(T, \, \partial B)$,
 the restriction~$u_{|y+\partial T}$ is homotopic to a constant.
 Let~$(u_k)_{k\in\N}$ be a sequence in~$R^{1,p}(\Omega, \, \NN)$
 such that~$u_k\to u$ strongly in~$W^{1,p}(\Omega)$
 and~$\sup_{k\in\N} \overline{D}_p(u_k) < +\infty$;
 such a sequence exists, by Proposition~\ref{prop:RDbar}.
 By continuity of~$\S$, we have~$\FS_\Omega(\S(u_k))\to 0$
 as~$k\to+\infty$. Given a simplex~$T$ as above, Lemma~\ref{lemma:I}
 implies that
 \[
  \I(\S(u_k), \, T + y) = 0
 \]
 for all large enough~$k$ and almost all small enough~$y$.
 (Here~$\I$ is the intersection index, defined by~\eqref{I}.)
 For any such~$k$ and~$y$, the restriction of~$u_k$
 to~$\partial T + y$ is homotopic to a constant, due to~\eqref{SI}.
 Since the restriction~$u_{k|\partial T + y}$
 converges $W^{1,p}$-strongly to~$u_{|\partial T + y}$ for almost every~$y$,
 by Fubini theorem, and since homotopy classes are stable with respect
 to strong~$W^{1,p}$-convergence, by Proposition~\ref{prop:smallenergy},
 we deduce that~$u_{|\partial T + y}$ is homotopic to a constant for
 almost every small enough~$y$. This completes the proof.
\end{proof}

\paragraph{Acknowledgments}
The authors are members of GNAMPA--INdAM.
G.C.'s work has been partially supported by GNAMPA project E53C25002010001.
G.C. is also part of the ANR project ``Singularities of energy-minimizing vector-valued maps'', ref.~ANR-22-CE40-0006.

\bibliographystyle{plain}
\bibliography{singular_set}

\newcommand{\noop}[1]{}
\begin{thebibliography}{10}

\bibitem{ABO1}
G.~Alberti, S.~Baldo, and G.~Orlandi.
\newblock Functions with prescribed singularities.
\newblock {\em J. Eur. Math. Soc. (JEMS)}, 5(3):275--311, 2003.

\bibitem{ABO2}
G.~Alberti, S.~Baldo, and G.~Orlandi.
\newblock Variational convergence for functionals of {G}inzburg-{L}andau type.
\newblock {\em Indiana Univ. Math. J.}, 54(5):1411--1472, 2005.

\bibitem{Bethuel-Dipole}
F.~Bethuel.
\newblock A characterization of maps in {$H^1(B^3,S^2)$} which can be
  approximated by smooth maps.
\newblock {\em Annales de l’Institut Henri Poincar{\'e}. C, Analyse non
  lin{\'e}aire}, 7(4):269--286, 1990.

\bibitem{Bethuel-Density}
F.~Bethuel.
\newblock The approximation problem for {S}obolev maps between two manifolds.
\newblock {\em Acta Math.}, 167(3-4):153--206, 1991.

\bibitem{Bethuel-DensityTrace}
F.~Bethuel.
\newblock Approximations in trace spaces defined between manifolds.
\newblock {\em Nonlinear Anal.}, 24(1):121--130, 1995.

\bibitem{Bethuel-Extension}
F.~Bethuel.
\newblock A new obstruction to the extension problem for {S}obolev maps between
  manifolds.
\newblock {\em J. Fixed Point Theory Appli.}, 15(1):155--183, Mar 2014.

\bibitem{Bethuel-Inventiones}
F.~Bethuel.
\newblock A counterexample to the weak density of smooth maps between manifolds
  in {S}obolev spaces.
\newblock {\em Invent. Math.}, 219(2):507--651, 2020.

\bibitem{BethuelChiron}
F.~Bethuel and D.~Chiron.
\newblock Some questions related to the lifting problem in {S}obolev spaces.
\newblock {\em {Contemporary Mathematics}}, 446:125--152, 2007.

\bibitem{BethuelCoronDemengelHelein}
F.~Bethuel, J.-M. Coron, F.~Demengel, and F.~H\'elein.
\newblock A cohomological criterion for density of smooth maps in {S}obolev
  spaces between two manifolds.
\newblock In {\em Nematics ({O}rsay, 1990)}, volume 332 of {\em NATO Adv. Sci.
  Inst. Ser. C: Math. Phys. Sci.}, pages 15--23. Kluwer Acad. Publ., Dordrecht,
  1991.

\bibitem{BethuelDemengel}
F.~Bethuel and F.~Demengel.
\newblock Extensions for {S}obolev mappings between manifolds.
\newblock {\em Cal. Var. Partial Differential Equations}, 3(4):475--491, 1995.

\bibitem{BethuelZheng}
F.~Bethuel and X.~Zheng.
\newblock Density of smooth functions between two manifolds in {S}obolev
  spaces.
\newblock {\em J. Funct. Anal.}, 80(1):60 -- 75, 1988.

\bibitem{BourgainBrezisMironescu}
J.~Bourgain, H.~Brezis, and P.~Mironescu.
\newblock Lifting in {S}obolev spaces.
\newblock {\em Journal d'Analyse Math\'ematique}, 80(1):37--86, 2000.

\bibitem{Bousquet2007}
P.~Bousquet.
\newblock Topological singularities in ${W}^{s,p}({S}^{N},{S}^1)$.
\newblock {\em Journal d'Analyse Math\'ematique}, 102:311--346, 2007.

\bibitem{BousquetPonceVanSchaftingen-I}
P.~Bousquet, A.~C. Ponce, and J.~Van~Schaftingen.
\newblock Density of smooth maps for fractional {S}obolev spaces ${W}^{s,p}$
  into $\ell$-simply connected manifolds when $s\geq 1$.
\newblock {\em Confluentes Mathematici}, 5(2):3--24, 2013.

\bibitem{BousquetPonceVanSchaftingen2014}
P.~Bousquet, A.~C. Ponce, and J.~Van~Schaftingen.
\newblock Strong approximation of fractional {S}obolev maps.
\newblock {\em J. Fixed Point Theory Appl.}, 15(2):133--153, 2014.

\bibitem{BousquetPonceVanSchaftingen-II}
P.~Bousquet, A.~C. Ponce, and J.~Van~Schaftingen.
\newblock Strong density for higher order {S}obolev spaces into compact
  manifolds.
\newblock {\em J. Eur. Math. Soc.}, 17(4):763--817, 2015.

\bibitem{BousquetPonceVanSchaftingen2025}
P.~Bousquet, A.~C. Ponce, and J.~Van~Schaftingen.
\newblock Generic topological screening and approximation of sobolev maps.
\newblock Preprint arXiv 2501.18149, 2025.

\bibitem{BrezisMironescu2015}
H.~Brezis and P.~Mironescu.
\newblock Density in ${W}^{s,p}({\Omega}; {N})$.
\newblock {\em Journal of Functional Analysis}, 269(7):2045--2109, 2015.

\bibitem{BN1}
H.~Brezis and L.~Nirenberg.
\newblock Degree theory and {BMO}. {I}. {C}ompact manifolds without boundaries.
\newblock {\em Selecta Math. (N.S.)}, 1(2):197--263, 1995.

\bibitem{BN2}
H.~Brezis and L.~Nirenberg.
\newblock Degree theory and {BMO}. {II}. {C}ompact manifolds with boundaries.
\newblock {\em Selecta Math. (N.S.)}, 2(3):309--368, 1996.
\newblock With an appendix by the authors and Petru Mironescu.

\bibitem{CLOBO}
G.~Canevari, V.~P.~C. Le, R.~Oliver-Bonafoux, and G.~Orlandi.
\newblock ${\Gamma}$-convergence of the $p$-{D}irichlet energy for
  manifold-valued maps.
\newblock Preprint, arXiv:2505.21257 [math.AP], 2025.

\bibitem{CO1}
G.~Canevari and G.~Orlandi.
\newblock Topological singular set of vector-valued maps, {I}: {A}pplications
  to manifold-constrained {S}obolev and {BV} spaces.
\newblock {\em Calc. Var. Partial Differ. Equ.}, 58(2):72, Mar 2019.

\bibitem{CO2}
G.~Canevari and G.~Orlandi.
\newblock Topological singular set of vector-valued maps, {II}:
  {$\Gamma$}-convergence for ginzburg--landau type functionals.
\newblock {\em Arch. Rational Mech. Anal.}, 241(2):1065--1135, 2021.

\bibitem{CaselliFregugliaPicenni2025}
M.~Caselli, M.~Freguglia, and N.~Picenni.
\newblock Coercivity and gamma-convergence of the {$p$}-energy of sphere-valued
  sobolev maps.
\newblock Preprint, arXiv:2503.14142, 2025.

\bibitem{CrossFields}
A.~Chemin, F.~Henrotte, J.-F. Remacle, and J.~van Schaftingen.
\newblock {\em Representing Three-Dimensional Cross Fields Using Fourth Order
  Tensors}, pages 89--108.
\newblock Springer International Publishing, Cham, 2019.

\bibitem{DePauwHardt}
T.~De~Pauw and R.~Hardt.
\newblock Rectifiable and flat {$G$} chains in a metric space.
\newblock {\em Amer. J. Math.}, 134(1):1--69, 2012.

\bibitem{Detaille2023}
A.~Detaille.
\newblock An improved dense class in {S}obolev spaces to manifolds.
\newblock {\em J. Funct. Anal.}, 289(2):110894, 2025.

\bibitem{DetailleVanSchaftingen}
A.~Detaille and J.~Van~Schaftingen.
\newblock Analytical obstructions to the weak approximation of {S}obolev
  mappings into manifolds.
\newblock Preprint, arXiv:2412.12889, 2024.

\bibitem{EellsLemaire}
J.~Eells and L.~Lemaire.
\newblock A report on harmonic maps [{B}ull.\ {L}ondon {M}ath.\ {S}oc.\ {\bf
  10} (1978), no.\ 1, 1--68; {MR}0495450 (82b:58033)].
\newblock In {\em Two reports on harmonic maps}, pages 1--68. World Sci. Publ.,
  River Edge, NJ, 1995.

\bibitem{Federer}
H.~Federer.
\newblock {\em Geometric measure theory}.
\newblock Die Grundlehren der mathematischen Wissenschaften, Band 153.
  Springer-Verlag New York Inc., New York, 1969.

\bibitem{FedererFleming}
H.~Federer and W.~H. Fleming.
\newblock Normal and integral currents.
\newblock {\em Ann. Math. (2)}, 72:458--520, 1960.

\bibitem{Fleming}
W.~H. Fleming.
\newblock Flat chains over a finite coefficient group.
\newblock {\em Trans. Amer. Math. Soc.}, 121:160--186, 1966.

\bibitem{GiaquintaModicaSoucek-I}
M.~Giaquinta, G.~Modica, and J.~Sou{\v{c}}ek.
\newblock {\em Cartesian currents in the calculus of variations. {I}},
  volume~37 of {\em Ergebnisse der Mathematik und ihrer Grenzgebiete. 3. Folge.
  A Series of Modern Surveys in Mathematics [Results in Mathematics and Related
  Areas. 3rd Series. A Series of Modern Surveys in Mathematics]}.
\newblock Springer-Verlag, Berlin, 1998.
\newblock Cartesian currents.

\bibitem{Hajlasz}
P.~Haj{\l}asz.
\newblock Approximation of {S}obolev mappings.
\newblock {\em Nonlinear Anal.}, 22(12):1579--1591, 1994.

\bibitem{HangLin-III}
F.~Hang and F.-H. Lin.
\newblock Topology of {S}obolev mappings. {III}.
\newblock {\em Comm. Pure Appl. Math.}, 56(10):1383--1415, 2003.

\bibitem{HKL}
R.~Hardt, D.~Kinderlehrer, and F.-H. Lin.
\newblock Existence and partial regularity of static liquid crystal
  configurations.
\newblock {\em Comm. Math. Phys.}, 105(4):547--570, 1986.

\bibitem{HardtRiviere}
R.~Hardt and T.~Rivi\`ere.
\newblock Connecting rational homotopy type singularities.
\newblock {\em Acta Math.}, 200(1):15--83, 2008.

\bibitem{HeleinWood}
F.~H{\'e}lein and J.~C. Wood.
\newblock Harmonic maps.
\newblock In {\em {H}andbook of {G}lobal {A}nalysis}, pages 417--491, 1213.
  Elsevier Sci. B. V., Amsterdam, 2008.

\bibitem{Luckhaus-PartialReg}
S.~Luckhaus.
\newblock Partial {H}\"older continuity for minima of certain energies among
  maps into a {R}iemannian manifold.
\newblock {\em Indiana Univ. Math. J.}, 37(2):349--367, 1988.

\bibitem{Mermin}
N.~D. Mermin.
\newblock The topological theory of defects in ordered media.
\newblock {\em Rev. Modern Phys.}, 51(3):591--648, 1979.

\bibitem{MironescuVanSchaftingen-Lifting}
P.~Mironescu and J.~Van~Schaftingen.
\newblock Lifting in compact covering spaces for fractional {S}obolev mappings.
\newblock {\em Anal. PDE}, 14(6):1851--1871, 2021.

\bibitem{MironescuVanSchaftingen-Trace}
P.~Mironescu and J.~Van~Schaftingen.
\newblock Trace theory for {S}obolev mappings into a manifold.
\newblock {\em Ann. Fac. Sci. Toulouse Math. (6)}, 30(2):281--299, 2021.

\bibitem{Mucci2009}
D.~Mucci.
\newblock Strong density results in trace spaces of maps between manifolds.
\newblock {\em Manuscripta Mathematica}, 128(4):421--441, 2009.

\bibitem{PakzadRiviere}
M.~R. Pakzad and T.~Rivi{\`e}re.
\newblock Weak density of smooth maps for the {D}irichlet energy between
  manifolds.
\newblock {\em Geom. Funct. Anal.}, 13(1):223--257, 2003.

\bibitem{Parker}
T.~H. Parker.
\newblock Bubble tree convergence for harmonic maps.
\newblock {\em J. Diff. Geom.}, 44(3):595--633, 1996.

\bibitem{Riviere}
T.~Rivi{\`e}re.
\newblock Minimizing fibrations and $p$-harmonic maps in homotopy classes from
  ${S}^3$ into ${S}^2$.
\newblock {\em Comm. Anal. Geom.}, 6(3):427--483, 1998.

\bibitem{SchoenUhlenbeck2}
R.~Schoen and K.~Uhlenbeck.
\newblock Boundary regularity and the {D}irichlet problem for harmonic maps.
\newblock {\em J. Diff. Geom.}, 18(2):253--268, 1983.

\bibitem{White86}
B.~White.
\newblock Infima of energy functionals in homotopy classes of mappings.
\newblock {\em J. Diff. Geom.}, 23(2):127--142, 1986.

\bibitem{White-Deformation}
B.~White.
\newblock The deformation theorem for flat chains.
\newblock {\em Acta Math.}, 183(2):255--271, 1999.

\bibitem{White-Rectifiability}
B.~White.
\newblock Rectifiability of flat chains.
\newblock {\em Ann. Math. (2)}, 150(1):165--184, 1999.

\bibitem{Whitney-GIT}
H.~Whitney.
\newblock {\em Geometric integration theory}.
\newblock Princeton University Press, Princeton, N. J., 1957.

\end{thebibliography}

\end{document}